\documentclass[11pt,amssymb]{amsart}
\usepackage{amssymb}
\DeclareFontFamily{OT1}{rsfs}{}
\DeclareFontShape{OT1}{rsfs}{n}{it}{<-> rsfs10}{}
\DeclareMathAlphabet{\mathscr}{OT1}{rsfs}{n}{it}

\newcommand{\Z}{{\mathbb Z}}

\newcommand{\C}{{\mathbb C}}
\newcommand{\Q}{{\mathbb Q}}
\newcommand{\Nrd}{\mathrm{Nrd}}

\newcommand{\cI}{\mathscr{I}}
\newcommand{\R}{{\mathbb R}}
\newcommand{\id}{\mathrm{id}}

\newcommand{\rk}{\mathrm{rk}}
\newcommand{\Br}{\mathrm{Br}}
\newcommand{\Cor}{\mathrm{Cor}}
\newcommand{\Res}{\mathrm{Res}}
\newcommand{\cF}{\mathscr{F}}
\newcommand{\Tr}{\mathrm{Tr}}
\newcommand{\cK}{\mathscr{K}}
\newcommand{\cL}{\mathscr{L}}

\newcommand{\cH}{\mathscr{H}}
\newcommand{\cB}{\mathcal{B}}

\newcommand{\cE}{\mathscr{E}}
\newcommand{\cG}{\mathscr{G}}

\newcommand{\cT}{\mathscr{T}}

\newcommand{\cO}{\mathcal{O}}

\newcommand{\cS}{\mathcal{S}}
\newcommand{\Int}{\mathrm{Int}\: }

\newcommand{\Ga}{\mathrm{Gal}}
\newtheorem{thm}{Theorem}[section]
\newtheorem{lemma}[thm]{Lemma}
\newtheorem{prop}[thm]{Proposition}
\newtheorem{cor}[thm]{Corollary}

\begin{document}

\title[Local-global principles]{Local-global principles for embedding of fields with involution
into simple algebras with involution}\author[Prasad]{Gopal Prasad}
\author[Rapinchuk]{Andrei S. Rapinchuk}

\address{Department of Mathematics, University of Michigan, Ann
Arbor, MI 48109}

\email{gprasad@umich.edu}

\address{Department of Mathematics, University of Virginia,
Charlottesville, VA 22904}

\email{asr3x@virginia.edu}
\maketitle
\centerline{\it Dedicated to Jean-Pierre Serre}
\begin{abstract}
In this paper we prove local-global principles for the existence of
an embedding $(E , \sigma) \hookrightarrow (A , \tau)$ of a given
global field $E$ endowed with an involutive automorphism $\sigma$
into a simple algebra $A$ given with an
involution $\tau$ in all situations except where $A$ is a matrix algebra of even
degree over a quaternion division algebra
and $\tau$ is orthogonal (Theorem A of the
introduction). Rather surprisingly, in the latter case we have a
result which in some sense is opposite to the local-global
principle, viz. algebras with involution locally isomorphic to
$(A , \tau)$ are distinguished by their maximal subfields invariant
under the involution (Theorem B of the introduction). These results
can be used in the study of classical groups over global fields. In
particular, we use Theorem B to complete the analysis of weakly
commensurable Zariski-dense $S$-arithmetic groups in all absolutely
simple algebraic groups of type different from $D_4$ which was
initiated in our paper \cite{PR2}. More precisely, we prove that in
a group of type $D_n,$ $n$ even $ > 4$, two weakly commensurable
Zariski-dense S-arithmetic subgroups are actually commensurable. As
indicated in \cite{PR2}, this fact leads to results about length-commesurable
and isospectral compact hyperbolic manifolds of dimension $4n+7$, with $n\geqslant 1$.
The appendix contains a Galois-cohomological interpretation of our
embedding theorems.
\end{abstract}

\section{Introduction}\label{S:I}

Let $A$ be a central simple algebra of dimension $n^2$ over a field $L$,
and let $\tau$ be an involution of
$A.$ Set $K = L^{\tau}.$ We recall that $\tau$ is said to be of the
{\it first} (resp.,\:{\it second}) kind if the restriction $\tau
\vert L$ is trivial (resp.,\:nontrivial); involutions of the second
kind are often called {\it unitary}. While dealing with central
simple algebras with involution of the first kind, we will always
assume that the center is a field  of characteristic $\ne 2$. If $\tau$ is an
involution of the first kind, then it is either of {\it symplectic}
type (if $\dim_L A^{\tau} = n(n-1)/2$) or of {\it orthogonal}
type (if $\dim_L A^{\tau} = n(n+1)/2$), cf.\,\cite{BoI}, Proposition 2.6.
Now, let $E$ be an $n$-dimensional commutative \'etale $L$-algebra
endowed with an automorphism $\sigma$ of order two such that
$\sigma \vert L = \tau \vert L.$ In this paper, we will
investigate the validity of the local-global principle for
the existence of an $L$-embedding $ \iota \colon (E , \sigma)
\hookrightarrow (A , \tau)$ of algebras with involution (i.e.,
satisfying $\iota \circ \sigma = \tau \circ \iota$) in case
$K$ is a global field. More precisely, if $K$ is a global field,
we say that the
local-global principle for embeddings holds (for a particular class
of commutative \'etale algebras with involution $(E , \sigma),$ or
for a particular class of central simple algebras with involution
$(A , \tau)$) if the existence of $(L \otimes_K K_v)$-embeddings
$$
\iota_v \colon (E \otimes_K K_v , \sigma \otimes \id_{K_v})
\hookrightarrow (A \otimes_K K_v , \tau \otimes \id_{K_v}) \ \
\text{for all} \ \ v \in V^K
$$
(here $V^K$ denotes the
set of all places of $K$) implies the
existence of an $L$-embedding $\iota \colon (E , \sigma)
\hookrightarrow (A , \tau)$ as above. We will only be interested in
the commutative \'etale $L$-algebras $E$ with involution $\sigma$ such that
\begin{equation}\label{E:I1}
\dim_K E^{\sigma} = \left\{\begin{array}{ccc} n & \text{if} & \sigma
\vert L \neq \id_{L} \\ \left[ \frac{n+1}{2} \right] & \text{if} &
\sigma \vert L = \id_{L} \end{array} \right.
\end{equation}
as the $\tau$-invariant maximal commutative \'etale subalgebras of
$A$ satisfying this condition (for $\sigma = \tau \vert E$)
correspond to the maximal $K$-tori of the associated (special)
unitary group $\mathrm{SU}(A , \tau)$ (cf.\:Proposition \ref{P:E11}).
So, (\ref{E:I1}) will be tacitly assumed to hold for all algebras
$(E , \sigma)$ considered in the paper (notice that (\ref{E:I1}) is
satisfied automatically if either $E$ is a field or $\sigma \vert L
\neq \id_L,$ cf.\:Proposition \ref{P:E10}).
\vskip1mm

It turns out that the local-global principle holds unconditionally
(i.e., without any additional restriction on $(E , \sigma)$) only if
$\tau$ is a symplectic involution of $A,$ and moreover, in this case,
provided that there exists an embedding $E \hookrightarrow A$ as
algebras without involutions, one needs to check the local
conditions only for real $v$ -- cf.\,Theorem \ref{T:Sym1} and
Corollary \ref{C:Sym1} for the precise statements. In most of the
other cases, the local-global principle holds if $E$ is a field
extension of $L$ (as opposed to a general commutative \'etale
$L$-algebra). The following theorem combines the
essential parts of Theorems \ref{T:U1}, \ref{T:O-101} and
\ref{T:M-1}.

\vskip3mm

\noindent {\bf Theorem A.} {\it Let $L$ be global field. Let $A$ be a central simple
$L$-algebra of dimension $n^2$ with an involution $\tau,$ and let
$E/L$ be a {\rm field extension} of degree $n$ endowed with an involutive
automorphism $\sigma$ such that $\sigma \vert L = \tau \vert L$.
Then the local-global principle for the
existence of an embedding $\iota \colon (E , \sigma) \hookrightarrow
(A , \tau)$ holds in each of the following situations:

\vskip2mm

{\rm (i)} $\tau$ is an involution of the second kind;

\vskip1mm

{\rm (ii)} $A = M_n(K)$, and $\tau$ is an orthogonal involution;

\vskip1mm

{\rm (iii)} \parbox[t]{11cm}{$A = M_m(D)$, where $D$ is a quaternion
division algebra, $m$ is odd, and $\tau$ is an orthogonal
involution.} }

\vskip3mm

Assertion (i) of the above theorem for $n$ odd was
established earlier in our paper \cite{PR1} (Proposition A.2 in
Appendix A) where it was used to compute the metaplectic kernel for
absolutely simple simply connected groups of outer type $A_n.$
The other assertions of Theorem A were
unknown prior to this work (however as this work progressed we became aware of
the fact that the questions about existence of local-global principles for
embeddings were raised in various contexts by different mathematicians).
The results of \S\S\,\ref{S:U}, \ref{S:O} and
\ref{S:OS} furnish local-global principles for embedding of commutative \'etale algebras with
involution in more
general situations. On the other hand, the examples constructed in
\S\S\,\ref{S:U} and \ref{S:OS} show that the local-global principle
may fail in general if $E$ is not a field.

The only case not covered by the above theorem is  $A = M_m(D)$,
where $D$ is a quaternion division algebra, $m$ is even, and $\tau$
is an orthogonal involution of $A$ (then the corresponding algebraic
group $\mathrm{SU}(A , \tau)$ is of type $D_m$). For us, this case
was, in fact, the main motivation to investigate the
local-global principle for embeddings since it is linked to a question
left open in the original version of our paper \cite{PR2}; this question has now
been resolved using Theorem B of this paper. The main focus in \cite{PR2} was
to determine when the ``weak commensurability" of arithmetic groups
implies their commensurability. Since the relevant definitions are
somewhat technical, we will postpone them until \S \ref{S:Ap}, and
instead discuss here a closely related problem whether two forms over a
number field $K,$ of an absolutely simple simply connected algebraic group $G,$
are $K$-isomorphic if they have the same
$K$-isomorphism classes of maximal $K$-tori. It was shown in
\cite{PR2}, Theorem 7.3, that the latter condition indeed forces the
forms to be $K$-isomorphic if the type of $G$ is different from
$A_n$ $(n > 1),$ $D_n$ $(n \geqslant 4)$ or $E_6.$ On the other
hand, in \S 9 of \cite{PR2} we developed a Galois-cohomological
construction of nonisomorphic $K$-forms having the same
$K$-isomorphism classes of maximal $K$-tori for each of the
following types: $A_n,$ $n>1$,  $D_{n}$ with $n$ odd $>1$, and $E_6.$ We will now
explain how examples of this kind (for classical types) can be
produced using Theorem~A.

Suppose we are able to construct two central simple
$L$-algebras $A_1$ and $A_2$  of
dimension $n^2$ endowed with involutions $\tau_1$ and $\tau_2$
of the same kind and
type such that

\vskip2mm

(a) $(A_1 , \tau_1)$ is not isomorphic to $(A_2 , \tau_2)$ or its
opposite;

\vskip1mm

(b) \parbox[t]{11cm}{for each $v \in V^K,$ the algebra $(A_1
\otimes_K K_v , \tau_1 \otimes \id_{K_v})$ is isomorphic as a $(L
\otimes_K K_v)$-algebra to either $(A_2 \otimes_K K_v , \tau_2 \otimes
\id_{K_v})$ or its opposite.}

\vskip2mm

\noindent Then the corresponding special unitary groups $G_i =
\mathrm{SU}(A_i , \tau_i)$ are not isomorphic over $K$ but are
isomorphic over $K_v$ for all $v \in V^K.$ Furthermore, any maximal
$K$-torus of $G_1$ corresponds to a maximal commutative \'etale
$\tau_1$-invariant subalgebra $E_1$ of  $A_1$ satisfying
(\ref{E:I1}). Condition (b) implies that for each $v \in V^K$, there
is an embedding $$(E_1 \otimes_K K_v , (\tau_1 \vert E_1) \otimes
\id_{K_v}) \hookrightarrow (A_2 \otimes_K K_v , \tau_2 \otimes
\id_{K_v})$$ of algebras with involution. So, if the local-global principle
for embeddings holds for $(E_1 , \tau_1 \vert E_1),$ there exists an
embedding $(E_1 , \tau_1 \vert E_1) \hookrightarrow (A_2 , \tau_2).$
Thus, under appropriate assumptions, we obtain that $A_1$ and $A_2$
have the same isomorphism classes of maximal commutative \'etale
subalgebras, invariant under the involutions and satisfying
(\ref{E:I1}), hence the groups $G_1$ and $G_2$ have the same isomorphism
classes of maximal $K$-tori.

It is simplest to implement this construction by taking for $A_1$
and $A_2$ suitable {\it division} algebras with involutions of the
second kind as then, by Theorem A\,(i), the local-global principle for
embeddings holds for all maximal commutative \'etale subalgebras
invariant under involutions. (This was actually done in Example 6.6
in \cite{PR2} for $n$ odd - the restriction on $n$ was due to the
fact that while working on \cite{PR2} we did not know if the local-global
principle for embeddings of fields holds for arbitrary $n$.) Along the
same lines, one can  construct, for each {\it odd} $m \geqslant 3,$
a central simple $K$-algebra $A$ of dimension $n^2$, with $n = 2m$,
and two orthogonal involutions $\tau_1$ and $\tau_2$ such that $(A ,
\tau_1) \not\simeq (A , \tau_2)$ but $(A \otimes_K K_v , \tau_1
\otimes \id_{K_v}) \simeq (A \otimes_K K_v , \tau_2 \otimes
\id_{K_v})$ for all $v \in V^K,$ and then use Theorem A\,(iii)  to conclude
that $(A , \tau_1)$ and $(A , \tau_2)$ have at least the same
isomorphism classes of maximal subfields invariant under the
involutions (existence of  involutions which give the same
isomorphism classes of {\it all} maximal commutative \'etale
subalgebras, invariant under the involutions and satisfying
(\ref{E:I1}), is more subtle and requires the Galois-cohomological
constructions described in \cite{PR2}, \S 9). Theorem A, however,
does not provide information that would allow one to construct
similar examples if $m$ is even. Rather surprisingly, it turned out
that such examples simply do not exist in this case, so in effect algebras
of dimension $n^2$,
with  $4| n$, endowed with
orthogonal involutions {\it are differentiated} by the
isomorphism classes of maximal commutative \'etale subalgebras
invariant under the involutions and satisfying (\ref{E:I1}) (and
even by the isomorphism classes of maximal invariant subfields).

\vskip2mm

\noindent {\bf Theorem B.} (i) {\it Let $A_1$ and $A_2 $ be two central
simple $K$-algebras, of dimension $n^2$, $n \geqslant 3,$ endowed
with orthogonal involutions $\tau_1$ and $\tau_2$ respectively. If $A_1$ and
$A_2$ have the same isomorphism classes of
$n$-dimensional commutative \'etale subalgebras invariant under the
involutions and satisfying {\rm (\ref{E:I1})} (i.e., for any
$n$-dimensional $\tau_1$-invariant commutative \'etale subalgebra
$E_1$ of $A_1$ satisfying {\rm (\ref{E:I1})}, there exists an
embedding $(E_1 , \tau_1 \vert E_1) \hookrightarrow (A_2 , \tau_2),$
and vice versa), then
$$
(A_1 \otimes_K K_v , \tau_1 \otimes \id_{K_v}) \simeq (A_2 \otimes_K
K_v , \tau_2 \otimes \id_{K_v}) \ \ \text{for all} \ \  v \in V^K,
$$
and hence, in particular, $A_1 \simeq A_2.$ If $n$ is even, then the
same conclusion holds if $(A_1 , \tau_1)$ and $(A_2 , \tau_2)$ just
have the same isomorphism classes of maximal {\rm fields}
invariant under the involutions.}

\vskip1mm

(ii) {\it Let $A$  be a central simple $K$-algebra with an
orthogonal involution $\tau,$ of dimension $n^2$ with $4|n.$ Let
$\cI = \cI(A , \tau)$ be the set of orthogonal involutions $\eta$ on $A$
such that $(A\otimes_K K_v , \tau \otimes \id_{K_v}) \simeq (A \otimes_K K_v ,
\eta \otimes \id_{K_v})$ for all $v \in V^K.$ Then given $\eta \in \cI$, one can find
an $\eta$-invariant maximal field
$E_{\eta}$ in $A$ so that if $\nu \in \cI$ is such that there exists
an embedding $(E_{\eta} , \eta \vert E_{\eta}) \hookrightarrow (A ,
\nu)$, then $(A , \eta) \simeq (A , \nu).$}

\vskip2mm

We notice that since $\cI$ in general contains more than one
isomorphism class (cf.\:\cite{LUG} in conjunction with Proposition \ref{P:G111} below),
the local-global principle does not hold even for embeddings of
fields with involution when $n$ is a multiple of four (cf.\:Remark
8.6).

\vskip1mm

Theorem B can be used to resolve the ambiguity left open in the
original version of \cite{PR2} for groups of type $D_{2r}:$ we show
in \S \ref{S:Ap} that at least when $r > 2,$ weak commensurability
of two arithmetic subgroups of an absolutely simple group of this
type implies their commensurability (see Theorem \ref{T:Ap1} below
for the precise formulation). To describe some geometric consequences of this result,
we will now recall the main geometric results of \cite{PR2}.  Given a connected absolutely simple real algebraic group $G$, let $X$ be the symmetric space of $G(\R)$ and $\Gamma_1$ and $\Gamma_2$ be two torsion-free lattices in the latter, at least one of which is arithmetic. Let $L(X/\Gamma_1)$ and $L(X/\Gamma_2)$ be the set of lengths of closed geodesics on $X/\Gamma_1$ and $X/\Gamma_2$ respectively. $X/\Gamma_1$ and $X/\Gamma_2$ are said to be {\it length-commensurable} if\, $\Q\cdot L(X/\Gamma_1) = \Q\cdot L(X/\Gamma_2)$. We have proved in \cite{PR2} that if either $X/\Gamma_1$ and $X/\Gamma_2$ are length-commensurable, or they are compact and  isospectral, and $G$ is of type other than $A_n$ ($n>1$), $D_n$ ($n\geqslant 4$) and $E_6$, then $X/\Gamma_1$ and $X/\Gamma_2$ are commensurable (i.e., they admit a common finite-sheeted cover). Theorem \ref{T:Ap1} of this paper allows us to draw the same conclusion if $G$ is of type $D_{2r}$ with $r>2$, for example, if $X$ is the hyperbolic space of dimension  $4r-1$, with $r>2$. It has been shown in \cite{PR2}, \S 9, that if $G$ is of type $A_r$, $D_{2r+1}$, $r>1$, or $E_6$, then the above conclusion fails in general.

\vskip1mm

In the Appendix, we interpret the problem of the existence of an
embedding $(E , \sigma) \hookrightarrow (A , \tau)$ in terms of
Galois cohomology and also relate it to the problem of finding a
rational point on a certain homogeneous space.

\vskip2mm

{\bf Notation.} For a field $K$, $\overline{K}$ will denote an algebraic closure. If $K$ is a global field,  $V^K$ will denote the set of all places of
$K$, and $V^K_r$ (resp., $V^K_f$) the set of real (resp., finite)
places.

\vskip3mm

{\bf Acknowledgments.} Both the authors were partially supported by
the NSF (grants DMS-0653512 and DMS-0502120), BSF (grant 2004083) and
the Humboldt Foundation.

It is a pleasure to thank Jean-Louis Colliot-Th\'el\`ene and
Jean-Pierre Tignol for their comments. We thank the
referee for suggestions that helped to improve the exposition.

\section{On commutative \'etale algebras with involution}\label{S:E}

In \S\S \ref{S:E}, \ref{S:G}, we collect, with partial proofs, some
known results about \'etale algebras and their embeddings into
central simple algebras. In these two sections, $L$ will denote an arbitrary infinite field.
Let $E$ be a commutative \'etale
$L$-algebra of dimension $n$.  Then $E = \prod_{i = 1}^r E_i$, where
$E_i/L$ is a separable field extension and $\sum_{i = 1}^r [E_i : L]
= n.$ As usual, for $x = (x_1, \ldots , x_r) \in E,$ we set
$N_{E/L}(x) = \prod_{i = 1}^r N_{E_i/L}(x_i).$ Let $\sigma$ be a
ring automorphism of $E$ of order two leaving $L$ invariant.
\begin{prop}\label{P:E10}
{\rm (1)} Assume that $\sigma \vert L \neq \id_L$ and set $K =
L^{\sigma}.$ Then $\dim_K E^{\sigma} = n$ and any $x \in E$ such
that $x\sigma(x) = 1$ is of the form $x = y\sigma(y)^{-1}$ for some
$y \in E^{\times}.$

\vskip.1mm

\noindent {\rm (2)} Let now $\sigma \vert L = \id_L,$ and assume
that $\displaystyle \dim_L E^{\sigma} = \left[ \frac{n+1}{2}
\right].$ If $x \in E$ satisfies $x\sigma(x) = 1$, then in each of
the following cases: (i) $n$ is even, or (ii) $n$ is odd and
$N_{E/L}(x) = 1,$ we have $x =y \sigma(y)^{-1}$ for some $y \in
E^{\times}.$
\end{prop}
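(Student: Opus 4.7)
The plan is to analyze both parts by decomposing $E = \prod_{i=1}^r E_i$ and tracking how the order-two automorphism $\sigma$ permutes the simple factors. Each $E_i$ falls into one of three types: (A) $\sigma(E_i) = E_i$ with $\sigma \vert E_i = \id$; (A$'$) $\sigma(E_i) = E_i$ with $\sigma \vert E_i$ nontrivial of order two; (B) $\sigma(E_i) = E_j$ for some $j \neq i$, in which case $E_i$ and $E_j$ are swapped (and isomorphic via $\sigma$). I would then handle the dimension count and the Hilbert 90-type assertion componentwise.

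For part (1), since $\sigma \vert L \neq \id_L$, the extension $L/K$ is quadratic. In type (A), $\sigma$ acts on $E_i$ nontrivially (since it is nontrivial on $L \subset E_i$), and $E_i = F_i \otimes_K L$ where $F_i = E_i^{\sigma}$, giving $\dim_K F_i = [E_i : L]$. In type (B), the fixed space of $\sigma$ on $E_i \oplus E_j$ is $\{(x, \sigma(x)) : x \in E_i\}$, of $K$-dimension $2[E_i : L]$. Summing, $\dim_K E^{\sigma} = n$. For the norm-one statement, apply classical Hilbert 90 to each quadratic extension $E_i/F_i$ (type A), and in type (B), if $x = (x_i, x_j)$ with $x \sigma(x) = 1$ (which forces $x_j = \sigma(x_i)^{-1}$), simply take $y = (x_i, 1)$ to verify that $y \sigma(y)^{-1} = x$.

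For part (2), $\sigma$ is now $L$-linear, so type (A) means $\sigma \vert E_i = \id_{E_i}$ (contributing $[E_i : L]$ to both $n$ and $\dim_L E^{\sigma}$), type (A$'$) contributes $[E_i : L]$ to $n$ and $[E_i : L]/2$ to $\dim_L E^{\sigma}$, and type (B) pairs contribute $2[E_i : L]$ to $n$ and $[E_i : L]$ to $\dim_L E^{\sigma}$. Setting $a$ equal to the total type-(A) contribution, the hypothesis $\dim_L E^{\sigma} = [(n+1)/2]$ forces $a = 0$ if $n$ is even, and $a = 1$ with a single type-(A) component equal to $L$ if $n$ is odd. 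For the Hilbert 90 assertion, components of type (A$'$) and (B) are handled as in part (1) (classical Hilbert 90, respectively $y = (x_i,1)$). The only obstruction sits in the single $L$-component present when $n$ is odd: there $x \sigma(x) = x^2 = 1$ forces $x = \pm 1$, and $x = -1$ cannot be written as $y/\sigma(y) = y/y$.

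The main technical point, and where the hypothesis in case (ii) enters, is the computation of $N_{E/L}(x)$: the norm of $x_i$ on a type-(A$'$) component equals $N_{E_i^{\sigma}/L}(x_i \sigma(x_i)) = 1$, and on a type-(B) pair equals $N_{E_i/L}(x_i) \cdot N_{E_i/L}(\sigma(x_i)^{-1}) = 1$ (using $\sigma \vert L = \id_L$). Hence $N_{E/L}(x)$ equals the product of the $x_i$ over the type-(A) components, which in the odd-$n$ case is precisely the single value $x_i \in \{\pm 1\}$. The assumption $N_{E/L}(x) = 1$ thus forces $x_i = 1$ on that component, removing the obstruction and allowing $y_i = 1$. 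This completes the argument in all cases.
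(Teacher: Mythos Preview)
Your argument is correct and follows essentially the same route as the paper: decompose $E$ into $\sigma$-orbits of simple factors, classify the resulting invariant pieces, apply Hilbert 90 on the quadratic pieces and the explicit $y=(x_i,1)$ on the swapped pairs, and for odd $n$ use the norm hypothesis to kill the single $L$-component. One small wording slip: in part (1) you write ``In type (A), $\sigma$ acts on $E_i$ nontrivially'', which contradicts your own definition of type (A); what you mean (and what the paper also uses) is that type (A) cannot occur when $\sigma\vert L\neq\id_L$, so every $\sigma$-stable factor is automatically of type (A$'$).
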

\begin{proof}
(1) We have $E = E^{\sigma} \otimes_K L$ (cf.\:\cite{Bor}, AG 14.2),
so $\dim_K E^{\sigma} = n.$ Clearly, $E$ is a direct product of
$\sigma$-invariant subalgebras $R$ of one of the following types:
(a) $R$ is a separable field extension of $L,$ or (b) $R = R' \times R''$
with $R', R''$ being separable field extensions of $L$ interchanged by
$\sigma,$ and it is enough to prove the second assertion of (1) for each of
these types of algebras. In case (a), the claim follows from the Hilbert's Theorem
90. In case (b), we have $x = (x' , x'')$ with
$x'\sigma(x'') = 1_{R'}$ and $x''\sigma(x') = 1_{R''}.$ Set $y =
(x' , 1_{R''}).$ Then $x =y\sigma(y)^{-1},$ as required.

\vskip1mm

(2) Here $E$ is a direct product of $\sigma$-invariant subalgebras $R$
of the following three types: (a) $R$ is a separable field
extension of $L$ and $\sigma \vert R \neq \id_R;$ (b) same $R$ but
$\sigma \vert R = \id_R;$ (c) $R = R' \times R''$ where $R' , R''$
are separable field extensions of $L$ interchanged by $\sigma.$ In cases (a)
and (c), we have $\dim_L R^{\sigma} = (1/2) \dim_L R,$ and the same
argument as in (1) shows that any $x \in R$ satisfying $x\sigma(x) =
1$ is of the form $x = y\sigma(y)^{-1}$ for some $y \in
R^{\times},$ in particular, $N_{R/L}(x) = 1.$ The assumption
$\displaystyle \dim_L E^{\sigma} = \left[\frac{n + 1}{2} \right]$
implies that if $n$ is even, then $E$ does not have components of
type (b), and our assertion follows. If $n$ is odd, then there is
only one component of type (b), and this component is 1-dimensional,
i.e. $E = E' \times E''$ where $E'$ is a direct product of components of
types (a) and (c), and $E'' = L.$ Writing $x = (x' , x''),$ we observe
that $N_{E/L}(x) = 1$ implies that $x'' = 1,$ and our assertion
again follows.
\end{proof}

\vskip2mm

\begin{prop}\label{P:E12}
We assume that $L$ is not of characteristic $2$.
Let $E $ be a commutative \'etale $L$-algebra with an involution
$\sigma$ such that $\sigma \vert L = \id_L,$ with $n := \dim_L E$
even. Set $F = E^{\sigma}$ and assume that $\dim_L F = n/2.$ Then
there exists $d \in F^{\times}$ such that
$$
(E , \sigma) \simeq (F[x]/(x^2 - d) , \theta)
$$
where $\theta$ is defined by $x \mapsto -x.$
\end{prop}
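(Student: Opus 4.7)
The plan is to reduce to the ``indecomposable'' case by splitting $E$ into $\sigma$-invariant minimal components, use the dimension hypothesis to rule out one of the three possible local shapes, and then assemble the desired presentation componentwise.

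First, following exactly the decomposition used in the proof of Proposition \ref{P:E10}(2), I would write $E = \prod_i R_i$ as a product of $\sigma$-invariant subalgebras $R_i$, each of one of three types: (a) $R_i$ is a separable field extension of $L$ with $\sigma \vert R_i \neq \id_{R_i}$; (b) $R_i$ is a separable field extension of $L$ with $\sigma \vert R_i = \id_{R_i}$; or (c) $R_i = R_i' \times R_i''$ with $R_i', R_i''$ field extensions of $L$ interchanged by $\sigma$. In types (a) and (c) one has $\dim_L R_i^{\sigma} = \tfrac{1}{2}\dim_L R_i$, whereas in type (b) one has $\dim_L R_i^{\sigma} = \dim_L R_i$.

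Next, I would use the hypothesis $\dim_L F = n/2$ to exclude type (b): summing contributions, the equality forces the total $L$-dimension of the type-(b) components to be zero. Hence $E = \prod_i R_i$ with each $R_i$ of type (a) or (c), and correspondingly $F = \prod_i F_i$, where $F_i := R_i^{\sigma}$ is a field in both remaining cases ($F_i$ is the subfield fixed by $\sigma$ in case (a), and the diagonal copy of $R_i' \simeq R_i''$ in case (c)).

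Then I would treat each component. In case (a), $R_i/F_i$ is a separable quadratic extension; since $\mathrm{char}\,L \neq 2$, write $R_i = F_i(\sqrt{d_i})$ for some $d_i \in F_i^{\times}$, and $\sigma \vert R_i$ is the nontrivial $F_i$-automorphism, so $\sqrt{d_i} \mapsto -\sqrt{d_i}$ yields
\[
(R_i , \sigma \vert R_i) \;\simeq\; \bigl(F_i[x]/(x^2 - d_i),\; x \mapsto -x\bigr).
\]
In case (c), $R_i = F_i \times F_i$ with $\sigma$ the swap, and the map $(a,b) \mapsto \tfrac{a+b}{2} + \tfrac{a-b}{2}\,x$ gives
\[
(R_i, \sigma \vert R_i) \;\simeq\; \bigl(F_i[x]/(x^2 - 1),\; x \mapsto -x\bigr),
\]
so one may take $d_i = 1$. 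Finally, set $d := (d_i)_i \in F^{\times} = \prod_i F_i^{\times}$. The canonical identification $F[x]/(x^2 - d) = \prod_i F_i[x]/(x^2 - d_i)$ then glues the componentwise isomorphisms into an isomorphism $(E, \sigma) \simeq (F[x]/(x^2 - d), \theta)$ of $L$-algebras with involution.

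The argument is largely bookkeeping; the only point requiring any thought is the dimension count that rules out type (b) components, and the mild observation that one must use $\mathrm{char}\,L \neq 2$ both to diagonalize the quadratic extensions in case (a) and to perform the $(a,b) \mapsto \tfrac{a\pm b}{2}$ change of basis in case (c).
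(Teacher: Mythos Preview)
Your proof is correct and follows essentially the same approach as the paper: reduce to the $\sigma$-invariant indecomposable pieces via the decomposition from Proposition~\ref{P:E10}(2), rule out type (b) by the dimension hypothesis, handle type (a) by the standard presentation of a separable quadratic extension, and handle type (c) by taking $d_i = 1$. The paper's proof is terser (it says type (a) is ``well-known'' and writes the type (c) isomorphism in the other direction via $x \mapsto (1,-1)$), but the content is the same.
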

\begin{proof}
We have seen in the proof of Proposition \ref{P:E10}(2) that $E$ is
a direct sum of $\sigma$-invariant subalgebras $R$ of type (a) or
(c) introduced therein, and it is enough to prove our claim for
algebras of each of those types. If $R$ is of type (a), then the
assertion is well-known. So, let $R = R' \times R''$ where $R'$ and
$R''$ are separable extensions of $L$ such that $\sigma(R') = R''.$
Then $F = R^{\sigma}$ coincides with $\{ (a , \sigma(a)) \vert a \in
R' \},$ using which it is easy to see that the map $F[x] \to E,$ $x
\mapsto (1 , -1),$ yields an isomorphism
$$
(F[x]/(x^2 - 1) , \theta) \simeq (E , \sigma),
$$
so we can take $d = 1.$
\end{proof}

\vskip2mm

Now, let $A$ be a central simple $L$-algebra with an
involution $\tau,$  $\dim_L A = n^2.$ Set $K = L^{\tau},$ and let $H
= \mathrm{U}(A , \tau)$ and $G = \mathrm{SU}(A , \tau)$ be the
corresponding algebraic $K$-groups. Given an $n$-dimensional
$\tau$-invariant (maximal) commutative  \'etale $L$-subalgebra $E$ of
$A,$ we consider the associated maximal $K$-torus
$\mathrm{R}_{E/K}(\mathrm{GL}_1) \subset
\mathrm{R}_{L/K}(\mathrm{GL}_{1 , A}),$ and then define the
corresponding $K$-tori
$$
S = (\mathrm{R}_{E/K}(\mathrm{GL}_1) \cap H)^{\circ} \ \ \text{and}
\ \ T = (\mathrm{R}_{E/K}(\mathrm{GL}_1) \cap G)^{\circ}
$$
in $H$ and $G,$ respectively.
\begin{prop}\label{P:E11}
$S$ is a  maximal torus in $H$ (resp., $T$ is a maximal torus in
$G$) if and only if {\rm (\ref{E:I1})} holds (for $\sigma =
\tau |_E$). Any maximal $K$-torus in $H$ (resp., $G$) corresponds to
an $n$-dimensional  $\tau$-invariant commutative \'etale $L$-subalgebra
$E$ of $A$ for which {\rm (\ref{E:I1})} holds.
\end{prop}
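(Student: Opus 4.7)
The plan is to compute $\dim_K S$ by base change to $\overline K$, compare it with the ranks of $H$ and $G$, and then obtain the converse assertion by taking centralizers of maximal tori. Extending scalars to $\overline K$, the torus $R_{E/K}(\mathrm{GL}_1)_{\overline K}$ decomposes as $\prod_{\phi}\mathrm{GL}_1$ indexed by the $n[L:K]$ $K$-algebra homomorphisms $\phi\colon E\to\overline K$, and $\sigma$ induces the permutation $\phi\mapsto\phi\circ\sigma$. The fixed points of this permutation are precisely those $\phi$ factoring through $E^\sigma$, so there are $f:=\dim_K E^\sigma$ of them, leaving $s=(n[L:K]-f)/2$ swapped pairs. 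Writing $x\sigma(x)=1$ coordinate-wise, each fixed coordinate yields a factor of $\mu_2$ while each swapped pair yields a one-dimensional torus factor, so $\dim_K S=s=(n[L:K]-\dim_K E^\sigma)/2$.

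Over $\overline K$ the pair $(A,\tau)$ becomes split, and one reads off $\mathrm{rk}\,H=[n/2]$ in the orthogonal case, $n/2$ in the symplectic case, and $n$ in the unitary case, while $\mathrm{rk}\,G=\mathrm{rk}\,H$ except in the unitary case where $\mathrm{rk}\,G=n-1$. Substituting (\ref{E:I1}): if $\sigma \vert L=\id_L$ then $f=[(n+1)/2]$, giving $\dim_K S=n-[(n+1)/2]=[n/2]$; if $\sigma \vert L\neq\id_L$ then $f=n$ by Proposition \ref{P:E10}(1), giving $\dim_K S=n$. In both cases this matches $\mathrm{rk}\,H$, so $S$ is a maximal torus of $H$. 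In the first-kind case $T=S$; in the second-kind case $T$ is cut out of $S$ by the reduced norm, giving $\dim_K T=n-1=\mathrm{rk}\,G$. Conversely, if (\ref{E:I1}) fails one has $\dim_K S<\mathrm{rk}\,H$, so $S$ is not maximal; note that (\ref{E:I1}) is automatic when $\sigma \vert L\neq\id_L$ by Proposition \ref{P:E10}(1).

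For the second assertion, given a maximal $K$-torus $\widetilde{T}\subset H$, set $E:=Z_A(\widetilde{T})$. Over $\overline K$, $\widetilde{T}$ is conjugate to a standard maximal torus of the corresponding split classical group, and its centralizer in $A\otimes_K\overline K$ is the algebra of diagonal matrices, an $n$-dimensional \'etale subalgebra; by Galois descent $E$ is an $n$-dimensional commutative \'etale $L$-subalgebra of $A$. Since $\tau(t)=t^{-1}$ for $t\in\widetilde{T}$, a routine check shows that $E$ is $\tau$-stable. By construction $\widetilde{T}\subset (R_{E/K}(\mathrm{GL}_1)\cap H)^\circ=S$, and the maximality of $\widetilde{T}$ forces $\widetilde{T}=S$, whence (\ref{E:I1}) holds by the first direction; the argument for $\widetilde{T}\subset G$ is identical. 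The main conceptual obstacle is precisely this last step, identifying $Z_A(\widetilde{T})$ as an $n$-dimensional \'etale subalgebra of $A$, which reduces to the explicit description of centralizers of maximal tori in the split classical groups.
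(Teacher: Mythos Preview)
Your overall strategy---base-change to $\overline K$, decompose $R_{E/K}(\mathrm{GL}_1)$ as $\prod_\phi \mathrm{GL}_1$, and read off $\dim S$ from the orbit structure of $\phi\mapsto\phi\circ\sigma$---is perfectly sound, and it is a slightly different route from the paper's argument (which bounds $\dim S$ from below via the map $x\mapsto\tau(x)x^{-1}$ and from above via the Cayley map $s\mapsto(1-s)(1+s)^{-1}$). Both approaches land on the same formula $\dim_K S=\dim_K E-\dim_K E^{\sigma}$.

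However, your derivation of that formula contains a genuine error. You assert that the fixed points of $\phi\mapsto\phi\circ\sigma$ are ``those $\phi$ factoring through $E^{\sigma}$'' and number $\dim_K E^{\sigma}$. This is false: restricting along $E^{\sigma}\hookrightarrow E$ gives a surjection from the set of $K$-algebra maps $E\to\overline K$ onto the set of $K$-algebra maps $E^{\sigma}\to\overline K$, and the fibers are precisely the $\sigma$-orbits. Hence $\dim_K E^{\sigma}$ counts the \emph{orbits}, not the fixed points; the number of swapped pairs is $\dim_K E-\dim_K E^{\sigma}$, and the number of fixed points is $2\dim_K E^{\sigma}-\dim_K E$. (Concretely: if $\tau$ is of the second kind and $E$ is a field, there are \emph{no} fixed $\phi$, not $n$ of them.) With your stated formula $s=(n[L{:}K]-f)/2$ and $f=\dim_K E^{\sigma}$, one gets $\dim_K S=n/2$ in the unitary case and $n/4$ in the first-kind even case---both wrong. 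Your subsequent numerical answers ($\dim_K S=n$, resp.\ $[n/2]$) are correct, but they do not follow from the formula you wrote; you silently switched to computing $\dim_K E-\dim_K E^{\sigma}$. Fixing the orbit/fixed-point confusion repairs the argument completely.

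Your treatment of the second assertion (take $E=Z_A(\widetilde T)$, identify it over $\overline K$ with the diagonal subalgebra, descend, and use maximality to force $\widetilde T=S$) is correct and close in spirit to the paper's argument, which instead takes the $K$-subalgebra generated by $\widetilde T(K)$ and invokes Zariski density.
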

\begin{proof}
The involution $\tau$ induces an automorphism of
$\mathrm{R}_{E/K}(\mathrm{GL}_1),$ and we then get a homomorphism
$$
\varphi \colon \mathrm{R}_{E/K}(\mathrm{GL}_1) \longrightarrow S, \
\ x \mapsto \tau(y)y^{-1}.
$$
Clearly, $\ker \varphi = \mathrm{R}_{E^{\tau}/K}(\mathrm{GL}_1),$
yielding the bound
$$
\dim S \geqslant \dim_K E - \dim_K E^{\tau} = \dim_K E_{-1},
$$
where $E_{-1}$ is the $(-1)$-eigenspace of $\tau$ in $E$. On the other
hand, the Cayley-Dickson parametrization $s \mapsto (1 - s)(1 +
s)^{-1}$ gives an injective rational map of $S$ into the affine
space corresponding to $E_{-1},$ providing the opposite bound.
Therefore,
\begin{equation}\label{E:E100}
\dim S = \dim_K E - \dim_K E^{\tau} = \dim_K E_{-1}
\end{equation}
in all cases. If $\tau \vert L \neq \id_L$,  then, on the one hand,
$\dim_K E^{\tau} = n$ (Proposition \ref{P:E10}(1)), hence $\dim S =
n,$ and on the other hand, $\rk\: H = n.$ So, $S$ is a maximal torus
of $H.$ Furthermore, $\dim T \geqslant n - 1$ and $\rk G = n - 1,$
so $T$ is a maximal torus of $G.$ Now, suppose $\tau \vert L =
\id_L.$ Then $G = H^{\circ}$ and $S = T.$ If $n$ is even, then for
both orthogonal and symplectic involutions we have $\rk\: G = n/2,$
and in view of (\ref{E:E100}), the fact that $\dim S = n/2$ is
equivalent to $\dim_K E^{\tau} = n/2,$ i.e., to (\ref{E:I1}). In $n$ is
odd, then the involution is necessarily orthogonal and $\rk\: G = (n
- 1)/2.$ Then again from (\ref{E:E100}) we obtain that $\dim S =
(n-1)/2$ is equivalent to the assertion that
$\dim_K E^{\tau} = (n + 1)/2,$ which is
again (\ref{E:I1}).

Using the well-known description of the possibilities for $(A
\otimes_K \overline{K} , \tau \otimes \id_{\overline{K}}),$ one easily produces a
maximal torus $T_0$ of $G$ which generates an  $\overline{K}$-subalgebra
of dimension $n$ if $\sigma \vert L = \id_L,$ and of dimension $2n$
otherwise, and in the latter case this subalgebra is an algebra over
$L \otimes_K \overline{K}.$ Then in view of the conjugacy of maximal tori
(\cite{Bor}, 11.3), we see that the same is true for any maximal
torus. Now, if $T$ is a maximal $K$-torus of $G$, then the
Zariski-density of $T(K)$ in $T$ (\cite{Bor}, 8.14) implies that the
$K$-subalgebra $E$ of $A$ generated by $T(K)$ (which is
automatically \'etale and $\tau$-invariant) is an $n$-dimensional
$L$-algebra. Since $T$ is maximal, (\ref{E:I1}) holds for $E$
by the first part of the proof. The argument for
maximal tori in $H$ is similar.
\end{proof}

\vskip1mm

The connection between the subalgebras satisfying (\ref{E:I1}) and
the maximal tori of the corresponding unitary group can be used to
prove the following.
\begin{prop}\label{P:E50}
Let $A$ be a central simple algebra over a global field
$L,$ of dimension $n^2,$ with an involution $\tau,$ and let $G =
\mathrm{SU}(A , \tau).$ Suppose that we are given a finite set $V$
of places of $K = L^{\tau},$ and for each $v \in V$, an
$n$-dimensional $(\tau \otimes \id_{K_v})$-invariant commutative \'etale $(L
\otimes_K K_v)$-subalgebra $E(v)$ of  $A \otimes_K K_v$ satisfying
(\ref{E:I1}) of  \S \ref{S:I}. Then there exists an $n$-dimensional
$\tau$-invariant commutative \'etale $L$-subalgebra $E$ of  $A$ satisfying
(\ref{E:I1}) of  \S \ref{S:I} such that
$$
E(v) = g_v^{-1}(E \otimes_K K_v)g_v \ \ \text{with} \ \ g_v \in
G(K_v),
$$
in particular, $(E(v) , (\tau \otimes \id_{K_v}) \vert E(v)) \simeq
(E \otimes_K K_v , (\tau \vert E) \otimes \id_{K_v})$ as $L\otimes_K K_v$-algebras with involutions,  for all $v
\in V.$
\end{prop}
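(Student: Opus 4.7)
The plan is to translate the problem, via the torus--subalgebra dictionary of Proposition \ref{P:E11}, into one about maximal tori of $G=\mathrm{SU}(A,\tau)$, solve the translated problem with a weak approximation argument for the variety of maximal tori of $G$, and then translate back.

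For each $v\in V$, Proposition \ref{P:E11} applied over $K_v$ attaches to $E(v)$ a maximal $K_v$-torus $T(v)\subset G_{K_v}:=G\otimes_K K_v$; the proof of that proposition in fact recovers $E(v)$ as the $(L\otimes_K K_v)$-subalgebra of $A\otimes_K K_v$ generated by $T(v)(K_v)$, via Zariski density of $T(v)(K_v)$ in $T(v)$. The main step is then to produce a maximal $K$-torus $T$ of $G$ such that, for each $v\in V$, $T\otimes_K K_v$ is $G(K_v)$-conjugate to $T(v)$, say $T\otimes_K K_v = g_v T(v) g_v^{-1}$ with $g_v\in G(K_v)$. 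This rests on the classical weak approximation statement for the variety $\fX=G/N_G(T_0)$ of maximal tori of $G$: the morphism $G\to\fX$ is smooth, so the $G(K_v)$-orbits on $\fX(K_v)$ are open in the $v$-adic topology, and weak approximation on the homogeneous space $\fX$ at a finite set of places (a standard result for reductive groups over global fields) supplies a $K$-point of $\fX$ lying simultaneously in the open sets $G(K_v)\cdot [T(v)]$ for all $v\in V$.

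Once $T$ is in hand, I would take $E$ to be the $L$-subalgebra of $A$ generated by $T(K)$. By (the proof of) Proposition \ref{P:E11}, $E$ is an $n$-dimensional $\tau$-invariant commutative \'etale $L$-subalgebra of $A$ satisfying (\ref{E:I1}). Zariski density of $T(K_v)$ in $T\otimes_K K_v$ then shows that $E\otimes_K K_v$ is precisely the $(L\otimes_K K_v)$-subalgebra of $A\otimes_K K_v$ generated by $T(K_v)=g_v T(v)(K_v) g_v^{-1}$. Hence $E\otimes_K K_v = g_v E(v) g_v^{-1}$, i.e., $E(v)=g_v^{-1}(E\otimes_K K_v)g_v$, which is the required conclusion. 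The main obstacle is the weak approximation step for $\fX$; however, only approximation up to the $G(K_v)$-action (whose orbits on $\fX(K_v)$ are open) at finitely many places is needed, which is considerably weaker than full weak approximation and is available in the required generality.
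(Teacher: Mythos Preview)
Your proposal is correct and follows essentially the same route as the paper: pass from the given subalgebras $E(v)$ to the corresponding maximal $K_v$-tori $T(v)$ via Proposition~\ref{P:E11}, invoke weak approximation in the variety of maximal tori of $G$ (the paper cites \cite{PlR}, Corollary~3 in \S7.2) to find a maximal $K$-torus $T$ that is $G(K_v)$-conjugate to $T(v)$ for each $v\in V$, and then recover $E$ from $T$. The only point you leave implicit is that, since $g_v\in G(K_v)\subset\mathrm{U}(A,\tau)(K_v)$, conjugation by $g_v$ commutes with $\tau\otimes\id_{K_v}$, so the resulting isomorphism $E(v)\to E\otimes_K K_v$ respects the involutions; the paper notes this explicitly.
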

\begin{proof}
Corresponding to $E(v),$ there is a maximal $K_v$-torus $T(v)$ of
$G.$ Using weak approximation in the variety of maximal tori
of $G$ (cf.\:\cite{PlR}, Corollary~3 in \S 7.2), we can find a
maximal $K$-torus $T$ of $G$ such that for all $v\in V$, $T(v) = g_v^{-1} T g_v$ for
some $g_v \in G(K_v).$ By Proposition \ref{P:E11}, $T$ corresponds
to an $n$-dimensional $\tau$-invariant commutative \'etale $L$-subalgebra $E$ of
$A,$ which is as required (notice that since $g_v \in
G(K_v),$ the $K_v$-algebra isomorphism $a \mapsto g_v a g_v^{-1}$, $E(v) \to E \otimes_K
K_v$, respects involutions).
\end{proof}

\vskip1mm

Next, we will recall the definition of a class of maximal tori in a
given semi-simple group which will play an important role in \S
\ref{S:Ap} (cf.\:also \cite{PR4}, \cite{PR2}). Let $G$ be a connected
semi-simple group defined over a field $F.$ Fix  a maximal $F$-torus
$T$ of $G,$ and let $\Phi = \Phi(G , T)$ denote the corresponding
root system. Furthermore, let $F_T$ be the minimal splitting field
of $T$ (over $F$). Then the action of the Galois group $\Ga(F_T/F)$
on the character group $X(T)$  of $T$ induces an injective group homomorphism
$\theta_T \colon \Ga(F_T/F) \longrightarrow \mathrm{Aut}(\Phi).$ In
the sequel, we will identify the Weyl group $W(\Phi)$ of the root
system $\Phi$ with the Weyl group $W(G , T).$ We say that $T$ is
{\it generic} (over $F$) if $\theta_T(\Ga(F_T/F)) \supset W(G , T).$
\begin{prop}\label{P:E55}
Let $(A , \tau)$ be a central simple $L$-algebra with involution, of
dimension $n^2$, with $n > 2.$ Set $K = L^{\tau},$ and let $G =
\mathrm{SU}(A , \tau)$ be the corresponding algebraic $K$-group.
Furthermore, let $E$ be an $n$-dimensional
$\tau$-invariant commutative \'etale $L$-subalgebra of $A$ that satisfies (\ref{E:I1}) of
 \S 1, and let $T$ be the corresponding maximal $K$-torus of $G.$ Assume
that $T$ is generic over $K.$

\vskip2mm

\noindent $\bullet$ \parbox[t]{12cm}{If either $\tau$ is of the
first kind and $n$ is even, or $\tau$ is of the second kind, then $E$
is a field extension of $L.$}

\vskip2mm

\noindent $\bullet$ \parbox[t]{12cm}{If $\tau$ is of the first kind
and $n$ is odd, then $E = E' \times K$ where $E'$ is a field
extension of $K = L.$}
\end{prop}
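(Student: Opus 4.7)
The plan is to translate the assumption that $T$ is generic into a transitivity statement for the natural Galois action on the set of $K$-embeddings of $E$ into $\overline{K}$, and then to read off the $L$-algebra structure of $E$ from the resulting orbit pattern. Throughout, $\sigma = \tau\vert E$ acts by pre-composition on $Y := \mathrm{Hom}_K(E, \overline{K})$. When $\tau$ is of the first kind, $Y = \mathrm{Hom}_L(E, \overline{L})$ has cardinality $n$; combining Proposition \ref{P:E10} with the rank computation (\ref{E:E100}), $\sigma$ has $f$ fixed points on $Y$, where $f = 0$ except when $\tau$ is orthogonal with $n$ odd, in which case $f = 1$ and the unique fixed character corresponds to a direct summand $L$ of $E$ on which $\sigma$ is trivial. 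When $\tau$ is of the second kind, $Y$ has cardinality $2n$, $\sigma$ acts without fixed points, and $X = \mathrm{Hom}_L(E, \overline{L})$ is a set of representatives for the $\sigma$-orbits.

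First I would identify $X^*(T)$ explicitly. Since $T$ is the identity component of $\{x \in R_{E/K}(\mathrm{GL}_1) : x\sigma(x) = 1,\ \Nrd(x) = 1\}$, a direct calculation (implicit in the proof of Proposition \ref{P:E11}) shows that $X^*(T)$ is, modulo torsion, freely generated by one chosen representative from each nontrivial $\sigma$-orbit, subject to $[\sigma\phi] = -[\phi]$, with $\sigma$-fixed orbits contributing zero and, in the second-kind case, the additional relation $\sum_{\phi \in X}[\phi] = 0$ coming from the reduced norm. Because $\sigma$ is defined over the base field, the Galois action preserves the $\sigma$-orbit partition of $Y$, so its induced action on $X^*(T)$ factors through the signed-permutation group of the nontrivial orbits. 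I would then locate the Weyl group $W(G, T)$ inside this signed-permutation group: it is the full signed-permutation group for types $C_{n/2}$ and $B_{(n-1)/2}$, the index-two subgroup of even signed permutations for type $D_{n/2}$, and the pure permutation group $S_n$ in the outer-type $A_{n-1}$ case.

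Next comes the (elementary but essential) check that in every one of these cases, $W$ already acts transitively on the $n-f$ non-$\sigma$-fixed elements of $X$. For types $B_m$, $C_m$ and the outer $A_{n-1}$ case this is manifest. The $D_m$ case deserves a separate line: the hypothesis $n > 2$ forces $m \geqslant 2$, so one can swap $\phi$ with $\sigma\phi$ inside a single pair by doing a simultaneous swap in two distinct pairs (an even number of sign changes) and then correcting via an $S_m$-transposition.

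The hard part is to upgrade the containment ``$W \subset \theta_T(\mathrm{Gal}(F_T/K))$'' from an automorphism statement for $X^*(T)$ to a genuine statement about the action on $X$: a priori a Galois lift of $w \in W$ need not act on $X$ in the prescribed way, and in outer types it could a priori lie outside the image of $\mathrm{Gal}(\overline L/L)$. I would dispose of both difficulties with two observations. First, the map from the non-fixed part of $X$ into $X^*(T)$ is injective --- distinct non-fixed characters cannot be congruent modulo $\langle [\psi] + [\sigma\psi]\rangle$ --- so any signed permutation on $X^*(T)$ uniquely determines a permutation on the non-fixed part of $X$. Second, in the outer $A_{n-1}$ case every element of $\mathrm{Aut}(\Phi)$ whose image lies in $W = S_n$ has trivial component in $\mathrm{Out}(\Phi)$, so $W$ is already contained in the image of $\mathrm{Gal}(\overline K/L) = \mathrm{Gal}(\overline L/L)$. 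Combining these points, $\mathrm{Gal}(\overline L/L)$ acts transitively on the non-fixed part of $X$, so those $n-f$ characters form a single Galois orbit, producing a single field factor $E'$ of $E$ with $[E' : L] = n - f$, while the $f$ fixed characters, when present, contribute an additional factor $L$. This yields exactly the two assertions of the proposition.
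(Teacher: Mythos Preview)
Your approach is correct and genuinely different from the paper's. You work combinatorially with the set $Y$ of $K$-embeddings, locate the Weyl group inside the signed-permutation group acting on the image of $Y$ in $X^*(T)$, check case by case that this action is transitive on the non-$\sigma$-fixed orbits, and then lift back to $Y$ using injectivity. The only place where the write-up wobbles is the $D_m$ sentence: the ``correct via an $S_m$-transposition'' is unnecessary --- for $m\geqslant 2$ the single even sign change in pairs $1$ and $2$ already sends $\phi_1$ to $\sigma\phi_1$, and $S_m$ then carries $\sigma\phi_1$ anywhere you like. In the second-kind case your ``trivial component in $\mathrm{Out}(\Phi)$'' step is really the standard fact that the composite $\mathrm{Gal}(\overline K/K)\xrightarrow{\theta_T}\mathrm{Aut}(\Phi)\to\mathrm{Aut}(\Phi)/W$ is the $*$-action, which here factors faithfully through $\mathrm{Gal}(L/K)$; it would help the reader to say so explicitly.

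The paper's argument is much shorter and more conceptual: it uses only that $W$ acts irreducibly on $X^*(T)\otimes_{\Z}\Q$, concludes immediately that a generic $T$ is $K$-anisotropic and has no proper $K$-subtori, and then reads off the structure of $E$ by contradiction --- any nontrivial $\tau$-stable decomposition of $E$ would produce either a proper $K$-subtorus or a $K$-split one-dimensional subtorus. This avoids all the type-by-type casework and never needs to name the Weyl group. Your route, on the other hand, is entirely explicit, treats all types uniformly in a single transitivity statement, and makes the connection between genericity and the field-ness of $E$ visible at the level of individual embeddings rather than via subtori; that can be pedagogically valuable even if it is longer.
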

\begin{proof}
Since the Weyl group acts on $X(T)\otimes_{\Z}\Q$
(nontrivially and) irreducibly, the assumption that $T$
is generic over $K$ implies that $T$ does not contain proper $K$-subtori
and is $K$-anisotropic. Assume that $\tau$ is of the first
kind. If $E$ is not as described in the statement of the proposition,
then (cf.\:the proof of Proposition \ref{P:E10}) there is a
nontrivial decomposition $E = E_1 \times E_2$ such that $E_2 \neq K$ and $E_1$
is either a $\tau$-stable field extension of $K$ such that $\tau \vert E_1$ is
nontrivial, or is of the form $E_1 = E' \times E''$ and $\tau$
interchanges $E'$ and $E''.$ But in the first case $T$ has a proper
$K$-subtorus corresponding to $E_1$, and in the second case a
1-dimensional $K$-split subtorus coming from the subalgebra $K
\times K \subset E' \times E'',$ which is impossible.

Let now $\tau$
be of the second kind. Then $E \simeq L \otimes_K F$ where $F =
E^{\tau}.$ Given a  $K$-subalgebra $F'$ of  $F$ of dimension $n',$
corresponding to it there is a $K$-subtorus of  $T$ of dimension
$n' - 1.$ As $T$ does not contain proper $K$-subtori, we conclude that $F$ does
not contain any proper $K$-subalgebra of dimension $>1$. Since by our assumption, $n > 2$, we see that $F$
must be a field extension of $K.$ To prove that $E$ is a field, we
need to show that $L$ and $F$ are linearly disjoint over $K.$
If $L$ and $F$ are not linearly disjoint over $K$,
$E$ contains a subalgebra of the form $L \otimes_K L$
(with the involution acting on the first factor). Corresponding to
this subalgebra, we have a $K$-torus $S \subset H = \mathrm{U}(A ,
\tau)$ which is $K$-isomorphic to $\mathrm{R}_{L/K}(\mathrm{GL}_1).$
Since $H/G \simeq \mathrm{R}^{(1)}_{L/K}(\mathrm{GL}_1)$ is
$K$-anisotropic, the 1-dimensional $K$-split subtorus of $S$ is
contained in $G,$ hence in $T,$ a contradiction.
\end{proof}

\vskip1mm

We will now formulate, for the convenience of future reference, two
propositions about embeddings of commutative \'etale algebras into central
simple algebras. The first proposition is a particular case of
Proposition 4.3 in \cite{C-KM}.
\begin{prop}\label{P:E1}
Let $A$ be a  central simple algebra of dimension $n^2$ over a field
$L,$ and let $E$ be an $n$-dimensional commutative \'etale
$L$-algebra. If $ E = \prod_{j = 1}^{\ell} E_j,$ where
$E_j$ is a (separable) field extension of $L,$ then $E$ admits an
$L$-embedding into $A$ if and only if each $E_j$ splits $A,$ or,
equivalently, $A \otimes_L E$ is a direct sum of matrix algebras
over field extensions of $L.$
\end{prop}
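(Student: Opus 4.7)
The plan is to reduce both directions to the classical embedding criterion for fields into central simple algebras (Albert-Brauer-Noether): for a CSA $B$ of degree $q$ over $L$ and a finite separable field extension $F/L$ of degree $p$, $F$ embeds as an $L$-subalgebra of $B$ iff $p \mid q$ and $\mathrm{ind}(B \otimes_L F) \mid q/p$; in particular, when $p = q$, $F \hookrightarrow B$ iff $F$ splits $B$. Write $A = M_m(D)$ with $D$ a central division $L$-algebra of degree $d$, so $n = md$. The equivalence of the two formulations of the condition is immediate: $A \otimes_L E = \prod_j A \otimes_L E_j$, and each factor $A \otimes_L E_j$ is a CSA over $E_j$ of degree $n$, which is a matrix algebra over a field iff it is split over $E_j$, iff $E_j$ splits $A$.

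For the ``if'' direction, set $n_j := [E_j : L]$. The splitting of $A$ by $E_j$, combined with the standard identity $\mathrm{ind}(A) \mid [E_j : L] \cdot \mathrm{ind}(A \otimes_L E_j)$, forces $d \mid n_j$. Putting $k_j := n_j/d$ gives $\sum_j k_j = n/d = m$. Each $M_{k_j}(D)$ has degree $k_j d = n_j$ and is split by $E_j$, so the classical criterion embeds $E_j$ as a maximal subfield of $M_{k_j}(D)$; assembling these block-diagonally yields the desired embedding $E = \prod_j E_j \hookrightarrow \prod_j M_{k_j}(D) \subset M_m(D) = A$.

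For the converse, let $\iota \colon E \hookrightarrow A$ be given, and let $e_1, \ldots, e_\ell \in E$ denote the primitive idempotents. The decomposition $A = \bigoplus_j \iota(e_j) A$ as a right $A$-module expresses each summand as $\iota(e_j)A \cong V^{\oplus k_j}$, where $V$ is the unique simple right $A$-module and $\sum_j k_j = m$. Standard Morita theory then gives $\iota(e_j) A \iota(e_j) \cong M_{k_j}(D)$, and this corner algebra contains $\iota(E_j)$. Applying the classical criterion to $E_j \hookrightarrow M_{k_j}(D)$ yields $n_j \mid k_j d$ and $\mathrm{ind}(D \otimes_L E_j) \mid k_j d/n_j$. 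The first divisibility implies $k_j \geq n_j/\gcd(n_j,d) \geq n_j/d$; summing and using $\sum_j k_j = m = \sum_j n_j/d$ compels equality throughout, whence $d \mid n_j$, $k_j = n_j/d$, and consequently $\mathrm{ind}(D \otimes_L E_j) \mid 1$, i.e.\ $E_j$ splits $A$.

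The main obstacle is invoking the classical field-embedding criterion as input in both directions; once it is granted, the proof reduces to idempotent bookkeeping and the tight inequality $\sum_j k_j \geq \sum_j n_j/d$, whose forced equality is what pins down the splitting condition on each $E_j$. As indicated in the excerpt, this is a particular case of Proposition 4.3 of \cite{C-KM}, which one can equally well cite directly.
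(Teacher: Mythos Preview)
Your argument is correct. The paper itself offers no proof of this proposition: it simply records that the statement is a particular case of Proposition~4.3 in \cite{C-KM} and moves on. You supply a genuine proof, reducing both directions to the classical field-embedding criterion (a field $F$ of degree $p$ embeds in a CSA $B$ of degree $q$ iff $p\mid q$ and $\mathrm{ind}(B\otimes_L F)\mid q/p$), and then handling the bookkeeping with idempotents and the tight inequality $\sum_j k_j\geq\sum_j n_j/d$. The forced equality is exactly what pins down $d\mid n_j$ and $k_jd/n_j=1$, giving the splitting. One small point worth making explicit for the reader: the step ``$n_j\mid k_jd$ implies $k_j\geq n_j/d$'' goes through $n_j/\gcd(n_j,d)\mid k_j$, so in fact $k_j\geq n_j/\gcd(n_j,d)\geq n_j/d$, and equality in the sum forces $\gcd(n_j,d)=d$; you have this, but the intermediate inequality is doing real work and could be spelled out. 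Otherwise the proof is clean and self-contained, and your closing remark that one may alternatively cite \cite{C-KM} matches what the paper actually does.
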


\begin{prop}\label{P:E2}
Let $A$ be a central simple algebra of dimension $n^2$ over a {\rm
global} field $L,$ and $E$ be an $n$-dimensional commutative \'etale
$L$-algebra. Then an $L$-embedding $\varepsilon \colon E
\hookrightarrow A$ exists if and only if for every $w \in V^L$ there
exists an $L_w$-embedding $\varepsilon_w \colon E \otimes_L L_w
\hookrightarrow A \otimes_L L_w.$
\end{prop}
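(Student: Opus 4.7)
The forward direction is immediate: given a global embedding $\varepsilon : E \hookrightarrow A$, base change to $L_w$ yields the required local embeddings for every $w \in V^L$.

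For the nontrivial direction, the plan is to reduce the embedding problem to a question about splitting of Brauer classes and then apply the Albert--Brauer--Hasse--Noether theorem. Write $E = \prod_{j = 1}^{\ell} E_j$ with each $E_j$ a separable field extension of $L$. By Proposition \ref{P:E1}, producing an $L$-embedding $E \hookrightarrow A$ is equivalent to showing that each $E_j$ splits $A$, i.e. that the Brauer class of $A \otimes_L E_j$ is trivial in $\Br(E_j)$. Thus the proof reduces to establishing, for each fixed $j$, that $A \otimes_L E_j$ is Brauer-trivial over $E_j$.

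I would fix $j$ and exploit the hypothesis at every place $w$ of $L$: by Proposition \ref{P:E1} applied over $L_w$, the local embedding $\varepsilon_w$ forces each field factor of $E \otimes_L L_w$ to split $A \otimes_L L_w$. In particular, for each $j$ and each $w$, every completion $(E_j)_{w'}$, where $w'$ runs through the places of $E_j$ lying above $w$, splits $A \otimes_L L_w$. Equivalently, $(A \otimes_L E_j) \otimes_{E_j} (E_j)_{w'}$ is trivial in $\Br((E_j)_{w'})$ for every place $w'$ of $E_j$.

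At this point I would invoke the Albert--Brauer--Hasse--Noether local-global principle for the Brauer group of the global field $E_j$: a class in $\Br(E_j)$ is trivial if and only if it is trivial in $\Br((E_j)_{w'})$ for every place $w'$ of $E_j$. This applies to the class of $A \otimes_L E_j$ and yields that $E_j$ splits $A$. Doing this for each $j$ and invoking Proposition \ref{P:E1} once more produces the desired global $L$-embedding $E \hookrightarrow A$. The only nontrivial ingredient is Albert--Brauer--Hasse--Noether; once it is in hand, the argument is a direct bookkeeping over the factor decomposition of $E$ and its completions.
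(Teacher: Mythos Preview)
Your proof is correct and follows essentially the same approach as the paper: reduce via Proposition~\ref{P:E1} to the assertion that each field factor $E_j$ splits $A$, and then deduce this from the local data using the injectivity of $\Br(E_j) \to \bigoplus_{w'} \Br((E_j)_{w'})$ (Albert--Brauer--Hasse--Noether). The paper states this in a single sentence, while you have spelled out the bookkeeping over the completions $(E_j)_{w'}$ explicitly, but the argument is the same.
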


This follows from Proposition \ref{P:E1} and the fact that for a
global field $F,$ the map $\Br(F) \longrightarrow  \bigoplus_{w \in
V^F} \Br(F_w)$ is injective (cf.\:\cite{Pie}, \S 18.4).

\section{Embeddings of commutative \'etale algebras with involution
into central simple algebras with involution}\label{S:G}

In this section, $L$ is an arbitrary field, $A$ is a central simple $L$-algebra of
dimension $n^2,$ and $\tau$ an involution on $A.$ Let $E$ be an $n$-dimensional
commutative \'etale
$L$-algebra with an involutive automorphism $\sigma$ such that
$\sigma \vert L = \tau \vert L$ and condition (\ref{E:I1}) of the
introduction holds. Let $F = E^{\sigma}$. Let
$\varepsilon \colon E \hookrightarrow A$ be an $L$-embedding
which may not respect the given involutions.
\begin{prop}\label{P:G1}
{\rm (cf.\:\cite{K}, \S 2.5)} There exists a $\tau$-symmetric $g\in
A^{\times}$ such that for
$$\theta = \tau \circ \Int g = \Int g^{-1} \circ \tau ,$$ we have
\begin{equation}\label{E:15}
\varepsilon(\sigma(x)) = \theta(\varepsilon(x)) \ \ \text{for all} \
\ x \in E,
\end{equation} i.e., \: $\varepsilon: (E,\sigma)\hookrightarrow (A,\theta )$ is an
$L$-embedding of algebras with involution.
\end{prop}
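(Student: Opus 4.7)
The plan is to twist the given embedding by the involutions to produce a second embedding $\varepsilon' \colon E \to A$, use Skolem--Noether to conjugate $\varepsilon$ onto $\varepsilon'$, and then correct the conjugator so that it becomes $\tau$-symmetric. Specifically, set $\varepsilon'(x) := \tau(\varepsilon(\sigma(x)))$. A priori this map is $L$-linear and reverses multiplication, but its image $\tau(\varepsilon(E))$ is commutative, so $\varepsilon'$ is in fact an $L$-algebra embedding. Both $\varepsilon(E)$ and $\varepsilon'(E)$ are $n$-dimensional commutative \'etale subalgebras of the central simple algebra $A$ of dimension $n^2$, hence are maximal commutative. By Skolem--Noether applied to commutative \'etale subalgebras---equivalently, by the triviality of the right torsor of conjugators under $\mathrm{R}_{E/L}(\mathrm{GL}_1)$, which vanishes by Shapiro's lemma and Hilbert~90---there exists $g_0 \in A^{\times}$ such that $\varepsilon'(x) = g_0\,\varepsilon(x)\,g_0^{-1}$ for all $x \in E$.

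It remains to replace $g_0$ by a $\tau$-symmetric element preserving this conjugation relation. Applying $\tau$ to the relation and using $\tau\circ\varepsilon' = \varepsilon\circ\sigma$, a short calculation shows that $g_0^{-1}\tau(g_0)$ commutes with $\varepsilon(E)$, hence lies in the maximal commutative subalgebra $\varepsilon(E)$. Write $g_0^{-1}\tau(g_0) = \varepsilon(u)$ for some $u \in E^{\times}$; a second application of $\tau$ (using $\tau^2 = \id$) yields the ``cocycle condition'' $u\,\sigma(u) = 1$. Proposition~\ref{P:E10} then provides $y \in E^{\times}$ with $u = y\,\sigma(y)^{-1}$, and setting $g := g_0\,\varepsilon(y)$, one verifies directly, using the commutativity of $\varepsilon(E)$, that $g\,\varepsilon(x)\,g^{-1} = \varepsilon'(x)$ and $\tau(g) = g$. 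Consequently $\theta := \tau\circ\Int g$ satisfies $\theta(\varepsilon(x)) = \tau(\varepsilon'(x)) = \varepsilon(\sigma(x))$, as required.

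The only delicate step is invoking Proposition~\ref{P:E10} when $\sigma|_L = \id_L$ and $n$ is odd, where the additional hypothesis $N_{E/L}(u) = 1$ is needed; this is the main technical obstacle. It resolves automatically in our setting: $\tau$ is then of the first kind, so $\Nrd\circ\tau = \Nrd$; applying $\Nrd$ to $\tau(g_0) = g_0\,\varepsilon(u)$ and using that the reduced norm restricts to $N_{E/L}$ on the maximal \'etale subalgebra $\varepsilon(E)$ forces $N_{E/L}(u) = 1$, completing the argument.
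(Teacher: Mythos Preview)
Your proof is correct and follows essentially the same approach as the paper's: both define the twisted embedding $\tau\circ\varepsilon\circ\sigma$, invoke Skolem--Noether for maximal \'etale subalgebras to obtain a conjugator, show the defect $g_0^{-1}\tau(g_0)$ lies in $\varepsilon(E)^\times$ and satisfies $u\,\sigma(u)=1$, and then correct $g_0$ via Proposition~\ref{P:E10}. Your handling of the odd-$n$ case via $\Nrd\circ\tau=\Nrd$ is exactly the paper's observation that $N_{E/L}(u)=\Nrd_{A/L}(g_0^{-1}\tau(g_0))=1$.
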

\begin{proof}
Since $\tau \circ \varepsilon \circ \sigma$ is an $L$-embedding of
$E$ into $A,$ according to the ``Skolem-Noether Theorem" for
commutative \'etale subalgebras of dimension $n$ (see \cite{K0},
Hilfssatz 3.5, or \cite{K}, p.\,37)\footnote{We would like to point
out the fact, apparently missing in the literature, that this form
of the Skolem-Noether Theorem immediately follows from ``Hilbert's Theorem
90." More precisely, let $A$ be a central simple $L$-algebra of
dimension $n^2,$ and let $E$ be a commutative \'etale $L$-algebra of dimension
$n.$ Let us show that given two $L$-embeddings $\iota_i \colon E
\hookrightarrow A$ for $i = 1 , 2,$ there exists $g \in A^{\times}$
such that $\iota_2(x) = g^{-1} \iota_1(x) g$ for all $x \in E.$ We
will use $\iota_i$ to also denote its natural extension $E
\otimes_L L_{sep} \hookrightarrow A \otimes_L L_{sep},$ where
$L_{sep}$ is a separable closure of $L.$ There exists $a \in E
\otimes_L L_{sep}$ whose characteristic polynomial $p(t)$ has $n$
distinct roots, and then $E \otimes_L L_{sep} = L_{sep}[a].$ The
matrices $\iota_1(a) , \iota_2(a) \in A \otimes_L L_{sep} =
M_n(L_{sep})$ have $p(t)$ as their common characteristic polynomial,
and are therefore conjugate to each other. It follows that there exists $h \in (A
\otimes_L L_{sep})^{\times}$ such that $\iota_2(x) = h^{-1}
\iota_1(x) h$ for all $x \in E \otimes_L L_{sep}.$ Then for any
$\theta \in \Ga(L_{sep}/L),$ the element $h\theta(h)^{-1}$
centralizes $\iota_1(E),$ and hence there exists $\xi_{\theta}
\in (E \otimes_L L_{sep})^{\times}$ such that $\iota_1(\xi_{\theta})
= h\theta(h)^{-1}.$ Then the family $\xi = \{ \xi_{\theta} \}$ is
a Galois 1-cocyle with values in $T(L_{sep}) = (E \otimes_L
L_{sep})^{\times},$ where $T = \mathrm{R}_{E/L}(\mathrm{GL}_1)$ in
the standard notations. Since $H^1(L , T) = \{1\}$ (``Hilbert's Theorem 90"),
there exists $t \in (E \otimes_L L_{sep})^{\times}$ such that
$\xi_{\theta} = t\theta(t)^{-1}$ for all $\theta \in
\Ga(L_{sep}/L).$ Set $g = \iota_1(t)^{-1}h \in (A \otimes_L
L_{sep})^{\times}.$ Then $\theta(g) = g$ for every $\theta,$ implying
that $g \in A^{\times}.$ At the same time, $\iota_2(x) =
g^{-1}\iota_1(x)g$ for all $x \in E,$ as
required.} there exists $g \in A^{\times}$ such that
$$
\varepsilon(x) = g^{-1} (\tau \circ \varepsilon \circ \sigma)(x) g \
\ \text{for all} \ \ x \in E.
$$
Substituting $\sigma(x)$ for $x,$ we obtain
\begin{equation}\label{E:HF2}
\varepsilon(\sigma(x)) = g^{-1} \tau(\varepsilon(x))g.
\end{equation}
Now
$$
\varepsilon(x) = g^{-1} (\tau \circ \varepsilon \circ \sigma)(x) g =
g^{-1} \tau\left(g^{-1}\tau(\varepsilon(x))g\right)g
=(g^{-1}\tau(g)) \varepsilon(x) (\tau(g)^{-1}g),$$ for all $x \in
E$. Since $\varepsilon(E)$ is its own centralizer in $A,$ we see
that
$$
g^{-1}\tau(g) = \varepsilon(a) \ \ \text{for some} \ \ a \in E.
$$
Furthermore,
$$
\varepsilon(\sigma(a)) = g^{-1}\tau(\varepsilon(a))g =
g^{-1}\tau(g^{-1}\tau(g))g = \tau(g)^{-1}g = \varepsilon(a^{-1}).
$$
Therefore, $a\sigma(a) = 1,$ so according to Proposition \ref{P:E10}, $a = b\sigma(b)^{-1}$ for some $b \in E^{\times}$ (one
needs to observe that if  $\sigma \vert L = \id_L$  and $n$ is odd,
$N_{E/L}(a) = \mathrm{Nrd}_{A/L}(g^{-1}\tau(g)) = 1$). Set
$h = g\varepsilon(b).$ Then we have
$$\varepsilon(\sigma(x)) = \varepsilon(b)^{-1} \varepsilon(\sigma(x))
\varepsilon(b) = h^{-1} \tau(\varepsilon(x)) h \ \ \text{for} \ \ x
\in E,$$ and, in addition,
$$
\tau(h) = \tau(\varepsilon(b))\tau(g) = g \varepsilon(\sigma(b))
g^{-1}\tau(g) = g\varepsilon(\sigma(b)a) = g\varepsilon(b)= h.
$$
So, we could have assumed from the very beginning that $g$ in
(\ref{E:HF2}) is $\tau$-symmetric. Then
$$
\theta : = \Int g^{-1} \circ \tau = \tau \circ \Int g
$$
is an involution, and it follows from (\ref{E:HF2}) that (\ref{E:15}) holds.
\end{proof}

\vskip2mm

Fix an involution $\theta = \tau \circ \Int g,$ where $\tau(g) = g,$
satisfying (\ref{E:15}).

\begin{thm}\label{T:G1}
The following conditions are equivalent:

\vskip2mm

\noindent { (i)} \parbox[t]{12cm}{There exists an $L$-embedding
$\iota \colon (E , \sigma) \to (A , \tau)$ of algebras with
involution.}

\vskip2mm

\noindent {(ii)} \parbox[t]{12cm}{There exists an $a \in
F^{\times}$ such that $(A , \theta_a) \simeq (A , \tau)$
as algebras with involution, where  for $x\in
F^{\times}$, we set $\theta_x = \theta\circ \Int
\varepsilon(x)= \tau\circ\Int(g\varepsilon(x)).$}

\vskip2mm

\noindent {(iii)} \parbox[t]{12cm}{$g\varepsilon(b) = \tau(h)h $ for
some $b \in F^{\times}$ and $h \in A^{\times}.$ }
\end{thm}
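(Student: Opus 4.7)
The plan is to prove (i)$\Leftrightarrow$(iii) and (ii)$\Leftrightarrow$(iii) by direct computation, using two forms of the Skolem--Noether theorem: the one recalled in the footnote to Proposition~\ref{P:G1} to write any other $L$-embedding $\iota \colon E \hookrightarrow A$ as $\iota = \Int h \circ \varepsilon$ for some $h \in A^{\times}$, and the classical one to write any $L$-algebra isomorphism $(A, \theta_a) \to (A, \tau)$ as inner, $\Int v$ for some $v \in A^{\times}$. Throughout I will rely on two standing facts. First, $\tau(\varepsilon(E))$, being an $n$-dimensional commutative \'etale subalgebra of the central simple algebra $A$ of dimension $n^2$, equals its own centralizer. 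Second, for every $a \in F = E^{\sigma}$ the element $g\varepsilon(a)$ is $\tau$-symmetric: the relation $\varepsilon \circ \sigma = \theta \circ \varepsilon$ together with $\tau(g) = g$ gives $\tau(\varepsilon(a)) = g\varepsilon(\sigma(a))g^{-1} = g\varepsilon(a)g^{-1}$, hence $\tau(g\varepsilon(a)) = \tau(\varepsilon(a))\,g = g\varepsilon(a)$. In particular $\theta_a$ is an involution for every $a \in F^{\times}$.

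For the equivalence (i)$\Leftrightarrow$(iii) I will take $\iota = \Int h \circ \varepsilon$ and impose $\iota \circ \sigma = \tau \circ \iota$. Substituting $\varepsilon \circ \sigma = \Int g^{-1} \circ \tau \circ \varepsilon$ and rearranging collapses the condition to: the element $\tau(h)h\,g^{-1}$ commutes with every $\tau(\varepsilon(x))$, $x \in E$. By the first standing fact, this forces $\tau(h)h\,g^{-1} = \tau(\varepsilon(b))$ for some $b \in E^{\times}$. Applying $\tau$ (using $\tau(g) = g$) then produces both $\tau(h)h = g\varepsilon(b)$ and the auxiliary identity $\tau(\varepsilon(b))\,g = g\varepsilon(b)$; the latter reads $\theta(\varepsilon(b)) = \varepsilon(b)$, which is equivalent to $b \in F^{\times}$. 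This yields (iii). For the converse I start from (iii), set $\iota := \Int h \circ \varepsilon$, and run the same computation backwards; at the crucial step I use $b \in F$ to recognise $g\varepsilon(b)g^{-1} = \tau(\varepsilon(b)) \in \tau(\varepsilon(E))$, which gives the required commutation.

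For (ii)$\Leftrightarrow$(iii) I will write $\phi = \Int v$ and expand $\Int v \circ \theta_a = \tau \circ \Int v$ using $\theta_a = \tau \circ \Int(g\varepsilon(a))$. A short calculation combined with the $\tau$-symmetry of $g\varepsilon(a)$ collapses the intertwining condition to $v\,\tau(g\varepsilon(a))^{-1}\,\tau(v) \in Z(A) = L$, i.e. $\tau(v)\,v = \lambda\,g\varepsilon(a)$ for some $\lambda \in L^{\times}$. Taking $\tau$ of both sides and comparing (using that both $\tau(v)v$ and $g\varepsilon(a)$ are $\tau$-symmetric) forces $\tau(\lambda) = \lambda$, so $\lambda \in K^{\times} \subseteq F^{\times}$. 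Setting $b := \lambda a$, which lies in $F^{\times}$ because $\sigma \vert L = \tau \vert L$ and $a \in F^{\times}$, I obtain $g\varepsilon(b) = \lambda g\varepsilon(a) = \tau(v)v$, establishing (iii) with $h = v$. The reverse implication is immediate: given (iii), take $a = b$, $v = h$, $\lambda = 1$, and reverse the same computation to see that $\Int h$ intertwines $\theta_b$ and $\tau$.

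The only step requiring real care is the central scalar $\lambda$ appearing in the (ii)$\Rightarrow$(iii) direction: one must use $\tau$-symmetry on both sides of $\tau(v)v = \lambda\,g\varepsilon(a)$ to conclude $\lambda \in K$, and then the hypothesis $\sigma \vert L = \tau \vert L$ to pass from $a \in F^{\times}$ to $\lambda a \in F^{\times}$. Without both of these observations one would only know $b \in E^{\times}$, which is not enough for (iii). Beyond this, the whole proof is bookkeeping with the two standing facts and the defining relation $\varepsilon \circ \sigma = \Int g^{-1} \circ \tau \circ \varepsilon$.
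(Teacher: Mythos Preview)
Your proof is correct. The paper proves the cycle (i)$\Rightarrow$(ii)$\Rightarrow$(iii)$\Rightarrow$(i), whereas you prove (i)$\Leftrightarrow$(iii) and (ii)$\Leftrightarrow$(iii) directly; your (ii)$\Leftrightarrow$(iii) is essentially identical to the paper's (ii)$\Rightarrow$(iii) and its reversal.

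The genuine difference is in how you extract $b\in F^{\times}$ in the implication starting from (i). The paper, in its (i)$\Rightarrow$(ii), writes $\iota=\Int s\circ\varepsilon$, obtains $\tau\circ\psi=\psi\circ\theta_b$ for some $b\in E^{\times}$, and then argues separately: if $\sigma\vert L=\id_L$ it squares the relation to get $\sigma(b)^{-1}b=\pm 1$ and rules out $-1$ by comparing involution types; if $\sigma\vert L\neq\id_L$ it invokes Hilbert~90 to adjust $b$ by an element of $L^{\times}$. Your route bypasses this entirely: from $\tau(h)h\,g^{-1}=\tau(\varepsilon(b))$ you apply $\tau$ to obtain simultaneously $\tau(h)h=g\varepsilon(b)$ and $\tau(\varepsilon(b))g=g\varepsilon(b)$, which says $\theta(\varepsilon(b))=\varepsilon(b)$, hence $\sigma(b)=b$ and $b\in F^{\times}$ in one stroke, uniformly in both kinds of involution. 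This is a cleaner argument. What the paper's longer route buys is that (ii) is established first as an intermediate normal form (useful elsewhere in the paper), but logically your direct (i)$\Leftrightarrow$(iii) is at least as efficient.
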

\begin{proof}
$(i) \Rightarrow (ii):$ Using the Skolem-Noether Theorem, we see
that there exists $s \in A^{\times}$, such that $\iota = \Int s
\circ \varepsilon.$ By our assumption, $\iota \circ \sigma = \tau
\circ \iota$ on $E,$ and by our construction of $\theta,$ we have
$\varepsilon \circ \sigma = \theta \circ \varepsilon$ on $E.$  Let
$\psi = \Int s$. Then
$$
\psi \circ \theta \circ\varepsilon = \psi \circ \varepsilon \circ
\sigma = \tau \circ \psi \circ \varepsilon \ \ \text{on} \ \ E.
$$
So, there exists $b \in E^{\times}$ such that
\begin{equation}\label{E:11}
\tau \circ \psi = \psi \circ \theta \circ \Int \varepsilon(b),
\end{equation}
i.e.,
\begin{equation}\label{E:12}
\tau \circ \psi = \psi \circ \theta_b.
\end{equation}
From
$$
\mathrm{id}_A = (\psi^{-1} \circ \tau \circ \psi)^2 = \left(\theta
\circ \Int \varepsilon(b) \right)^2 = \Int
\varepsilon(\sigma(b)^{-1} b),
$$
it follows that $t := \sigma(b)^{-1} b \in L,$ and clearly
$\sigma(t) = t^{-1}.$ If $\sigma \vert L = \mathrm{id}_L$, then $t =
\pm 1.$ However, if $t = - 1$, then $\theta_b$ is an involution of
type different from that of $\theta$ and $\tau$ (cf.\,\cite{BoI},
Proposition 2.7(3)), and (\ref{E:12}) would be impossible. So, $t =
1$ and $b \in F^{\times},$ as desired. If $\sigma \vert L
\neq \mathrm{id}_L$, then $N_{L/K}(t) = 1,$ and therefore by
Hilbert's Theorem 90, we can write
$$
t = \sigma(b)^{-1} b = \sigma(c)c^{-1} \ \ \text{for some} \ \ c \in
L^{\times}.
$$
Then $\sigma(bc) = bc$ and $\theta_b= \theta_{bc}$. Take $a = bc$.

\vskip2mm

$(ii) \Rightarrow (iii):$ Let $\varphi \colon (A , \theta_a) \to (A
, \tau)$ be an isomorphism of $L$-algebras with involution. Then
$\varphi = \Int h$ for some $h\in A^{\times}$. Equation (\ref{E:HF2}) implies that
$$
\varepsilon(a) = \varepsilon(\sigma(a)) = g^{-1}\tau(\varepsilon(a)) g,
$$
so
$$
\tau(g\varepsilon(a))  = \tau(\varepsilon(a))\tau(g)
=\tau(\varepsilon(a))g=
 g\varepsilon(a),
$$
i.e.,  $g\varepsilon(a)$ is $\tau$-symmetric. Using the equality
$\varphi \circ \theta_a = \tau \circ \varphi$ we obtain that
$$
\Int h \circ \theta_a = \Int h\circ\tau\circ \Int (g\varepsilon(a))
= \tau \circ \Int (\tau(h)^{-1}g\varepsilon(a)) = \tau\circ\Int h.
$$
Therefore, $(g\varepsilon(a))^{-1}\tau(h)h \in L^{\times},$ i.e.,
$\tau(h)h=\lambda g\varepsilon(a) $ for some $\lambda \in
L^{\times}.$ Since $g\varepsilon(a)$ is $\tau$-symmetric, $\lambda$
must lie  in $K^{\times}$. Let $b=a\lambda\in
F^{\times}$. Then  $g\varepsilon(b) = \tau(h)h$.

\vskip2mm

$(iii) \Rightarrow (i):$ Suppose $g\varepsilon(b) = \tau(h)h$ for
some $b\in F^{\times}$ and $h\in A^{\times}$. Set
$\varphi = \Int h.$ Then
$$
\varphi \circ \theta_b = \Int h \circ \tau \circ
\Int(g\varepsilon(b)) = \tau \circ \Int(\tau(h)^{-1}g\varepsilon(b))
= \tau \circ \Int h = \tau\circ \varphi.
$$
It follows that for $\iota = \varphi \circ \varepsilon$ we have
$$
\iota \circ \sigma = \varphi \circ \varepsilon \circ \sigma =
\varphi \circ \theta \circ \varepsilon = \varphi \circ \theta_b
\circ \varepsilon = \tau \circ \varphi \circ \varepsilon = \tau
\circ \iota,
$$
as required.
\end{proof}

%\begin{lemma}\label{L:HF1}
%Let $P/L$ be a finite extension with an involution $\sigma,$ and set
%$$
%U(P , \sigma) = \{ s \in P^{\times} \ \vert \ \sigma(s) \cdot s = 1
%\}.
%$$
%If $f \colon P \times P \to L$ is a Hermitian form such that
%$$
%f(s \cdot x , s \cdot y) = f(x , y) \ \ \text{for all} \ \ x , y \in
%P, \ s \in U(P , \sigma)
%$$
%then there exists $a \in P^{\sigma}$ such that
%$$
%f(x , y) = \tr_{P/L}(ax\sigma(y)).
%$$
%\end{lemma}
%\begin{proof}
%Clearly, the form given by
%$$
%f_0(x , y) = \tr_{P/L}(x\sigma(y))
%$$
%is hermitian and nondegenerate, so there exists $\varphi \in
%\mathrm{End}_L(P)$ such that $f(x , y) = f_0(\varphi(x) , y).$ Both
%forms, $f$ and $f_0,$ are invariant under multiplication by elements
%of $U(P , \sigma),$ so for any $s \in U(P , \sigma)$ we have
%$$
%f(x , y) = f(s \cdot x , s \cdot y) = f_0(\varphi(s \cdot x) , s
%\cdot y) =
%$$
%$$
%f_0(s^{-1} \cdot \varphi(s \cdot x) , y) = f_0(\varphi(x) , y).
%$$
%This means that $\varphi$ commutes with $U(P , \sigma).$ However,
%$U(P , \sigma)$ generates $P$ as $L$-algebra, so $\varphi$ in fact
%commutes with multiplication by any element of $P.$ It follows that
%$$\varphi(x) = ax$ for some $a \in P.$ Furthermore, we have
%$$
%f(y , x) = \tr_{P/L}(ay\sigma(x)) =
%\sigma(\tr_{P/L}(\sigma(a)x\sigma(y))) =
%\sigma(\tr_{P/L}(ax\sigma(y))),
%$$
%implying that $a \in P^{\sigma}.$
%\end{proof}

\vskip1mm

We conclude this section with the following well-known fact.
\begin{prop}\label{P:G111}
Let $A = M_m(D)$, where $D$ is a central division algebra over $L$ endowed with an
involution $a \mapsto \bar{a}$, and define an
involution $x \mapsto x^*$ of $A$ by $(x_{ij}) \mapsto
(\overline{x_{ji}}).$ Let $\epsilon$ be either $+1$ or $-1$.
For $i=1, 2$, let $Q_i \in A^{\times}$
be such that $Q_i^* = \epsilon Q_i$,
and define involutions $\tau_i$ by $\tau_i(x) = Q^{-1}_i x^*
Q_i.$ Then $(A , \tau_1) \simeq (A , \tau_2)$ as $L$-algebras with
involution if and only if there exist $z \in A^{\times}$ and
$\lambda \in K^{\times}$ (where $K = L^{\tau}$) such that $Q_2 =
\lambda z^* Q_1 z.$
\end{prop}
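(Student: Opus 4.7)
The plan is to apply the Skolem--Noether theorem and translate the condition $(A,\tau_1)\simeq(A,\tau_2)$ into a matrix identity between $Q_1$ and $Q_2$.

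For the ``only if'' direction, suppose $\varphi\colon(A,\tau_1)\to(A,\tau_2)$ is an isomorphism of $L$-algebras with involution. Since $\varphi$ is an $L$-algebra automorphism of the central simple algebra $A$, Skolem--Noether yields $w\in A^{\times}$ with $\varphi=\Int w$. I would then unravel $\tau_2=\varphi\circ\tau_1\circ\varphi^{-1}$ using $\tau_i(x)=Q_i^{-1}x^*Q_i$ and $(wxw^{-1})^*=(w^{-1})^*x^*w^*$, obtaining
\[
\tau_2(x) = (wQ_1^{-1}w^*)\,x^*\,(wQ_1^{-1}w^*)^{-1}.
\]
Comparing with $\tau_2(x)=Q_2^{-1}x^*Q_2$, the product $Q_2wQ_1^{-1}w^*$ must centralize all of $A$ and hence equal some scalar $\mu\in L^{\times}$. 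Setting $z=w^{-1}$, this rearranges to $Q_2=\mu\, z^*Q_1 z$.

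Next I would verify $\mu\in K$. Applying $*$ to $Q_2=\mu z^*Q_1 z$ and using $Q_i^*=\epsilon Q_i$ together with the identity $(\mu B)^*=\bar{\mu}B^*$ for $\mu\in L$, one finds $Q_2^*=\bar{\mu}\epsilon\, z^*Q_1 z$; comparing with $Q_2^*=\epsilon Q_2=\epsilon\mu\, z^*Q_1 z$ forces $\bar{\mu}=\mu$, i.e., $\mu\in K$. The same observation clarifies, from a consistency standpoint, why $\lambda$ must lie in $K$ in the statement: the element $\lambda z^*Q_1 z$ is $\epsilon$-symmetric precisely when $\bar{\lambda}=\lambda$.

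For the converse, given $Q_2=\lambda z^*Q_1 z$ with $\lambda\in K^{\times}$, I would set $\varphi=\Int(z^{-1})$ and run the above calculation in reverse; centrality of $\lambda$ in $A$ allows it to be moved freely past the matrix factors and cancel at the end, so one verifies directly that $\varphi\circ\tau_1=\tau_2\circ\varphi$. The entire argument is essentially formal once Skolem--Noether is in place; the only genuine care required is bookkeeping -- keeping the direction of inverses straight (the $z$ of the statement is the \emph{inverse} of the Skolem--Noether conjugator) and tracking the interaction of $*$ with scalars from $L$.
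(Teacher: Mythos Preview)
Your proof is correct and follows essentially the same approach as the paper: both use Skolem--Noether to write the automorphism as an inner one, compute that the compatibility with the involutions forces a scalar relation $Q_2 = \lambda z^* Q_1 z$ with $\lambda$ central, and then apply $^*$ to conclude $\lambda \in K$. The only cosmetic difference is that the paper writes the inner automorphism directly as $x \mapsto z^{-1}xz$ (your $z = w^{-1}$) and leaves the computation implicit, whereas you spell it out.
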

\begin{proof}
Any $L$-algebra automorphism $\varphi \colon A \longrightarrow A$
is inner, i.e., it is of the form $x \mapsto z^{-1}xz$ for some $z \in A^{\times}.$
Furthermore, a direct computation shows that the condition $\tau_2(\varphi(x)) =
\varphi(\tau_1(x))$, for all $x \in A$, is equivalent to the fact that
$\lambda := (z^*)^{-1} Q_2 z^{-1} Q_1^{-1}$ belongs to $Z(A) = L.$
Then $Q_2 = \lambda z^*Q_1z,$ and applying $^*$ we obtain that
actually $\lambda \in K.$
\end{proof}

We notice that the matrix equation relating $Q_1$ and $Q_2$ says
that the associated  (skew)-hermitian forms are {\it similar,}
i.e., an appropriate scalar multiple of one is equivalent to the other.

\section{Algebras with an involution of the second kind}\label{S:U}

In this section, we will establish a local-global principle for
embedding of fields with an involutive automorphism into simple
algebras with an involution of the second kind, which is assertion (i) of
Theorem A (of the introduction). A partial result (with some extra
conditions) in this direction was obtained earlier in our paper
\cite{PR1}, Proposition A.2, and the argument below is a
modification of the argument given therein. What has not been
previously observed is that the local-global principle {\it fails}
for general commutative \'etale algebras (see Example 4.6
below).
\begin{thm}\label{T:U1}
Let $A $ be a central simple algebra over a global field
$L,$ of dimension $n^2,$ with an involution $\tau$ of the second
kind, $K=L^{\tau}$, and let $E/L$ be a field extension of degree $n$
provided with an involutive automorphism $\sigma$ such that $\tau
\vert L = \sigma \vert L.$ Suppose that for each $v \in V^K$ there
exists an $(L \otimes_K K_v)$-embedding
$$
\iota_v \colon (E \otimes_K K_v , \sigma \otimes \mathrm{id}_{K_v})
\hookrightarrow (A \otimes_K K_v , \tau \otimes \mathrm{id}_{K_v})
$$
of algebras with involutions. Then there exists an $L$-embedding
$$
\iota \colon (E , \sigma) \hookrightarrow (A , \tau)
$$
of algebras with involutions.
\end{thm}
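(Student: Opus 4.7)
The plan is to reduce the problem, via the machinery of Section~\ref{S:G}, to a Hasse-principle statement for a norm-type condition and then solve it using Landherr's theorem together with weak approximation in $F^{\times}$, where $F = E^{\sigma}$.

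First I carry out the reductions. Applying Proposition~\ref{P:E2} to the local embeddings (forgetting involutions) produces a global $L$-embedding $\varepsilon \colon E \hookrightarrow A$. By Proposition~\ref{P:G1}, one finds a $\tau$-symmetric $g \in A^{\times}$ such that $\varepsilon$ becomes an embedding $(E, \sigma) \hookrightarrow (A, \theta)$ of algebras with involution, where $\theta = \tau \circ \Int g$. By the equivalence (i)$\,\Leftrightarrow\,$(iii) of Theorem~\ref{T:G1}, producing a global embedding $(E, \sigma) \hookrightarrow (A, \tau)$ is equivalent to finding $b \in F^{\times}$ and $h \in A^{\times}$ satisfying
$$
g\,\varepsilon(b) \;=\; \tau(h)\,h.
$$
Applying Theorem~\ref{T:G1} over each completion $K_v$ (with the same $g$ and $\varepsilon \otimes \id_{K_v}$) converts the given local hypothesis into the existence of local solutions $b_v \in (F \otimes_K K_v)^{\times}$ and $h_v \in (A \otimes_K K_v)^{\times}$ with $g\,\varepsilon(b_v) = (\tau \otimes \id_{K_v})(h_v)\,h_v$ at every $v \in V^K$.

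The heart of the argument is to globalize the $b_v$'s. The equation $g\varepsilon(b) = \tau(h)h$ asserts that the $\tau$-hermitian form with Gram element $g\varepsilon(b)$ lies in the similarity class of the standard hermitian form, equivalently (Proposition~\ref{P:G111} applied to $A = M_m(D)$) that the involution $\theta_b = \tau \circ \Int(g\varepsilon(b))$ is isomorphic to $\tau$. By Landherr's Hasse principle for hermitian forms over the quadratic extension $L/K$, the similarity class of such a form is determined by local invariants: the dimension, the class of the reduced norm $\Nrd(g\varepsilon(b)) = \Nrd(g)\cdot N_{F/K}(b)$ in $K^{\times}/N_{L/K}(L^{\times})$, and the archimedean signatures at the real places of $K$ at which $L$ is complex. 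Using weak approximation in $F^{\times} = \mathrm{R}_{F/K}(\mathrm{GL}_1)(K)$, select $b \in F^{\times}$ that is $v$-adically close to $b_v$ for every $v$ in a finite set $S$ containing all archimedean places and all finite places where $A$ ramifies. Continuity of the local hermitian-form invariants guarantees that $g\varepsilon(b)$ remains in the correct local similarity class at each $v \in S$, while at $v \notin S$ the algebra $A \otimes_K K_v$ is split and unramified, so the local condition reduces to a discriminant match that is automatic away from $S$.

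The main obstacle is the remaining global discriminant condition $\Nrd(g)\cdot N_{F/K}(b) \in N_{L/K}(L^{\times})$. The local data furnish the analogous membership in each $N_{L_v/K_v}(L_v^{\times})$, and the Hasse norm theorem for the cyclic extension $L/K$ lifts this to the global norm group once $b$ is chosen close enough to each $b_v$ at the controlling places. For odd $n$, treated already in \cite{PR1}, this step is essentially automatic because $N_{F/K}(F^{\times})$ surjects onto a sufficiently large quotient to correct any discriminant class and the odd-dimensional signature constraints do not interfere; for even $n$ one must carefully refine $b$ within its approximation ball to land in the correct global norm class, and it is this bookkeeping -- reconciling the approximation step with the global discriminant -- that constitutes the most delicate part of the argument.
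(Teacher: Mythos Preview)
Your initial reductions---obtaining $\varepsilon$ via Proposition~\ref{P:E2}, the $\tau$-symmetric $g$ via Proposition~\ref{P:G1}, and the reformulation through Theorem~\ref{T:G1}---are correct and coincide with the paper's. The gap is in the globalization step.

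You choose $b \in F^{\times}$ by weak approximation at a finite set $S$ and then assert that at $v \notin S$ ``the local condition reduces to a discriminant match that is automatic.'' This is false. At a finite $v$ where $L \otimes_K K_v$ is a field, solvability of $g\varepsilon(b) = \tau(h_v)h_v$ forces $\Nrd(g)\cdot N_{F/K}(b) \in N_{L_v/K_v}(L_v^{\times})$, and this depends on your \emph{global} choice of $b$ at that $v$. Weak approximation at $S$ gives you no control at the infinitely many such $v \notin S$. Appealing afterwards to the Hasse norm theorem for $L/K$ is circular: you would first need the local norm condition everywhere, which is exactly what is not established.

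What is actually required is the \emph{multinorm principle} for the pair $(F,L)$: if $z \in K^{\times}$ lies in $N_{F_v/K_v}\cdot N_{L_v/K_v}$ for every $v$, then $z \in N_{F/K}(F^{\times})\cdot N_{L/K}(L^{\times})$. This is Proposition~\ref{P:U1}, and its proof genuinely uses that $F$ is a field linearly disjoint from $L$ over $K$ (via the Artin map for $E = FL$ over $F$). The single Hasse norm theorem for $L/K$ does not suffice. That the field hypothesis on $E$ must enter nontrivially is confirmed by Example~4.6, where the local-global principle fails for an \'etale $E$ that is not a field; your outline never invokes this hypothesis in a substantive way.

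Even after the multinorm step yields global $(a,b)$ with $\Nrd(g) = N_{F/K}(a)N_{L/K}(b)$, the paper still has to adjust $(a,b)$ via weak approximation on an auxiliary torus to secure the real-place conditions, invoke Eichler's norm theorem to realize $b$ as a reduced norm, and finish with a Hasse principle for $\mathrm{SU}(A,\tau)$ (Lemma~\ref{L:U10}). Your single appeal to Landherr is the right spirit for this last phase, but it cannot absorb the multinorm obstruction that precedes it.
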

\begin{proof}
First, we observe that the existence of $\iota_v$ for all $v \in
V^K$ implies the existence of an $L_w$-embedding $\varepsilon_w
\colon E \otimes_L L_w \hookrightarrow A \otimes_L L_w,$ for all $w
\in V^L.$ Indeed, fix a $w$ and let $v \in V^K$ be such that $w
\vert v.$ If $L \otimes_K K_v$ is a field, then it coincides with
$L_w,$ and then $\varepsilon_w = \iota_v$ is the required embedding.
On the other hand, if $L \otimes_K K_v$ is not a field, then $v$ has
two extension to $L,$ one of which is $w$ and the other will be
denoted $w'.$ We have
$$L \otimes_K K_v \simeq L_w \times L_{w'} \simeq K_v \times K_v ,$$
and
$$
E \otimes_K K_v \simeq E \otimes_L (L \otimes_K K_v) \simeq (E
\otimes_L L_w) \times (E \otimes_L L_{w'}).
$$
Furthermore,
\begin{equation}\label{E:U50}
A \otimes_K K_v \simeq A \otimes_L (L \otimes_K K_v) \simeq (A
\otimes_L L_w) \times (A \otimes_L L_{w'}).
\end{equation}
It follows that the restriction of $\iota_v$ to the component $E
\otimes_L L_w$ provides the required embedding $\varepsilon_w.$ Now,
by Proposition \ref{P:E2}, the existence of the embeddings
$\varepsilon_w$ for $w \in V^L$ implies the existence of an
$L$-embedding $\varepsilon \colon E \hookrightarrow A,$ which we
will fix.

\vskip1mm

Next, using Proposition \ref{P:G1}, we can find an involution
$\theta$ on $A$ of the form
$$
\theta = \tau \circ \Int g = \Int g^{-1} \circ \tau
$$
that satisfies $\theta(\varepsilon(x)) = \varepsilon(\sigma(x))$ for
all $x \in E.$ Then according to Theorem~\ref{T:G1}, an
$L$-embedding $\iota \colon (E , \sigma) \hookrightarrow (A , \tau)$
as algebras with involutions exists if and only if we can find $a
\in F^{\times},$ where $F = E^{\sigma},$ and $h \in A^{\times}$ so
that
\begin{equation}\label{E:U20}
g = \tau(h)h \varepsilon(a).
\end{equation}
For $v \in V^K,$ the existence of $\iota_v$ implies the existence of
$a_v \in (F \otimes_K K_v)^{\times}$ and  $h_v \in (A \otimes_K
K_v)^{\times}$ such that
\begin{equation}\label{E:U21}
g = \tau(h_v)h_v\varepsilon(a_v)
\end{equation}
(to avoid cumbersome notations, we write $\varepsilon$ and $\tau$
instead of $\varepsilon \otimes \mathrm{id}_{K_v}$ and $\tau \otimes
\mathrm{id}_{K_v}$). Indeed, if $L \otimes_K K_v$ is a field, this
immediately follows from Theorem \ref{T:G1}.

To treat the case where $L \otimes_K K_v$ is not a field, we first
note the following fact that will be used repeatedly: as in
(\ref{E:U50}), we have an isomorphism $A \otimes_K K_v \simeq A_1
\times A_2,$ where $A_1,$ $A_2$ are simple $K_v$-algebras, and
$\tau$ interchanges $A_1$ and $A_2.$ Thus, $A_2$ can be identified
with the opposite algebra $A_1^{\mathrm{op}},$ and moreover, this
identification can be chosen so that $\tau$ corresponds to the
exchange involution $(x_1 , x_2) \mapsto (x_2 , x_1).$ It follows
that any $\tau$-symmetric element in $A \otimes_K K_v$ (i.e.,\:any
element in $A^{\tau} \otimes_K K_v$) can be written in the form
$\tau(h_v)h_v$ for some $h_v \in A \otimes_K K_v.$\footnote{We
note here for future use that the the same argument shows that any
$\tau$-symmetric element in $A \otimes_K K_v$ with reduced norm 1
can be written in the form $\tau(h_v)h_v$ with $h_v \in A \otimes_K
K_v$ of reduced norm 1 - one only needs to observe that the natural
extension $\Nrd_{A \otimes_K K_v/L \otimes_K K_v}$ of the reduced
norm map $\Nrd_{A/L}$ coincides with $(\Nrd_{A_1/K_v} ,
\Nrd_{A_2/K_v})$ in terms of the above identification} In
particular, it follows that (\ref{E:U21}) has a solution with $a_v =
1.$

Taking reduced norms in (\ref{E:U21}), we obtain
\begin{equation}\label{E:U12}
\Nrd_{A/L}(g) = N_{F \otimes_KK_v/K_v}(a_v)N_{L \otimes_K K_v/K_v}(b_v) ,
\end{equation}
where $b_v = \Nrd_{A \otimes_K K_v/L \otimes_K K_v}(h_v).$ We will
now make use of the following.
\begin{prop}\label{P:U1}
Let $L/K$ be an abelian Galois extension of degree $m$ that
satisfies the Hasse norm principle (which is automatically the case
if $L/K$ is cyclic), and $F/K$ be a finite extension linearly
disjoint from $L$ over $K$. Then the pair $F$ and $L$ satisfies the
Hasse multinorm principle over $K,$ i.e.,
\begin{equation}\label{E:MN}
N_{F/K}(J_{F}) N_{L/K}(J_{L}) \cap K^{\times} =
N_{F/K}(F^{\times}) N_{L/K}(L^{\times}),
\end{equation}
where $J_F$ and $J_L$ denote the group of id\`eles  of $F$ and $L$ respectively.
\end{prop}
\begin{proof}
%We will generally follow the proof of the multinorm principle given
%in \cite{PlR}, Proposition 6.11 for the case where $P_1$ and $P_2$
%are finite extensions of $K$ with disjoint normal closures $L_1$ and
%$L_2,$ with $P_1$ satisfying the Hasse norm principle, making
%necessary adjustments for a different set-up. Let $M_i$ be the
%maximal abelian extension of $K$ contained in $P_i,$ and let $M$ be
%the maximal abelian extension of $K$ contained in $P.$ We claim that
%
%\vskip3mm
%
%\noindent (1) $M = M_1M_2;$
%
%\vskip2mm
%
%\noindent (2) \parbox[t]{12.5cm}{$P_iM_{3-i}$ is the maximal abelian
%extension of $P_i$ contained in $P,$ for $i = 1 , 2.$}
%
%\vskip3mm
%
%\noindent Indeed, the restriction map $\rho \colon G \to \Ga(M/K)$
%obviously factors through $G^{\mathrm{ab}}.$ Since
%$$
%M_1 = M \cap P_1 = M^{G^{\mathrm{ab}}},
%$$
%we have $\Ga(M/M_1) \simeq G^{\mathrm{ab}}.$
%(cf. the proof of Proposition 6.11 in \cite{PlR}).
Let $E = FL$. By our assumption, the restriction map
$$
\Ga(E/F) \stackrel{\theta}{\longrightarrow} \Ga(L/K)
$$
is an isomorphism. Using the commutative diagram (cf.\:\cite{ANT},
Ch.\:VII, Proposition 4.3)
$$
\begin{array}{rcc}
J_{F} & \stackrel{\psi_{E/F}}{\longrightarrow} & \Ga(E/F) \\
N_{F/K} \downarrow &  & \downarrow \theta \\
J_K & \stackrel{\psi_{L/K}}{\longrightarrow} & \Ga(L/K),
\end{array}
$$
in which $\psi_{E/F}$ and $\psi_{L/K}$ are the corresponding
Artin maps, we see that $N_{F/K}$ induces an isomorphism
\begin{equation}\label{E:HF1}
J_{F}/F^{\times}N_{E/F}(J_E) \simeq J_K/K^{\times}
N_{L/K}(J_{L}).
\end{equation}
Now, suppose
$$
a = N_{F/K}(x) N_{L/K}(y)
$$
where $a \in K^{\times}$,  $x \in J_{F}$ and $y\in J_L$. Then
$$
N_{F/K}(x) = a N_{L/K}(y)^{-1}.
$$
So, it follows from the isomorphism (\ref{E:HF1}) that $x \in
F^{\times} N_{E/F}(J_E),$ i.e.
$$
x = x' N_{E/F}(z) \ \ \text{with} \ \ x' \in F^{\times}, \ z
\in J_E.
$$
Then
$$
aN_{F/K}(x')^{-1} = N_{L/K}(y)N_{E/K}(z)  =
N_{L/K}(yN_{E/L}(z)) \in N_{L/K}(J_{L}).
$$
Since $L/K$ satisfies the Hasse norm principle, we see that
$$
aN_{F/K}(x')^{-1} = N_{L/K}(y') \ \ \text{for some} \ \ y'
\in L^{\times},
$$
as required.
\end{proof}

\vskip2mm

Continuing with the notations introduced in the previous
proposition, we notice that given $z\in K^{\times},$ for any $v \in
V^K_f$ which is unramified in both $F$ and $L$, and $z$ is a unit in $K_v^{\times}$, $z$
is automatically the norm of a {\it unit}. Since all but finitely many $v
\in V^K_f$ satisfy the above conditions, we see that if for every $v \in V^K$,
$$
z\in N_{F \otimes_K K_v/K_v}((F \otimes_K K_v)^{\times}) N_{L
\otimes_K K_v/K_v}((L \otimes_K K_v)^{\times}),
$$
then actually
$$
z \in N_{F/K}(J_{F}) N_{L/K}(J_{L}).
$$
This remark in conjunction with (\ref{E:U12}) implies that
Proposition \ref{P:U1} can be applied in our situation with
$F = E^{\sigma}$, which yields
the existence of $a \in F^{\times},$ $b \in L^{\times}$ such that
\begin{equation}\label{E:U15}
\Nrd_{A/L}(g) = N_{F/K}(a) N_{L/K}(b) = \Nrd_{A/L}(\varepsilon(a)) N_{L/K}(b).
\end{equation}
We claim that a solution $(a , b)$ to (\ref{E:U15}) can be chosen so
that
\begin{equation}\label{E:U16}
g\varepsilon(a)^{-1} \in \Sigma(v) := \{ \tau(h_v)h_v \: \vert \:
h_v \in (A \otimes_K K_v)^{\times} \}
\end{equation}
and
\begin{equation}\label{E:U16a}
b \in \Theta(v) := \Nrd_{A \otimes_K K_v/L \otimes_K K_v}((A
\otimes_K K_v)^{\times})
\end{equation}
for all $v \in V^K_r$. To see this, we consider the $K$-torus
$$
T = \{ (x , y) \in \mathrm{R}_{F/K}(\mathrm{GL}_1) \times
\mathrm{R}_{L/K}(\mathrm{GL}_1) \ \vert \ N_{F/K}(x) N_{L/K}(y) = 1
\}.
$$
Fix a solution $(a , b)$ to (\ref{E:U15}). Then for $(a_v , b_v =
\Nrd_{A\otimes_KK_v/L\otimes_KK_v}(h_v)),$ where $(a_v , h_v)$ is a solution to (\ref{E:U21}), we
have
$$
t := (a_va^{-1} , b_vb^{-1})_{v \in V^K_r} \in T(V^K_r) := \prod_{v
\in V^K_r} T(K_v).
$$
Since $\Sigma(v) = \Sigma(v)^{-1}$ and $\Theta(v) = \Theta(v)^{-1}$
are open in $(A^{\tau} \otimes_K K_v)^{\times}$ and $(L \otimes_K
K_v)^{\times}$ respectively, the set $\Omega =
\prod_{v \in V^K_r} \Omega(v),$ where
$$
\Omega(v) = \{ (x , y) \in T(K_v) \ \vert \ x \in
\Sigma(v)g\varepsilon(a)^{-1}, \ \ y \in \Theta(v)b^{-1} \},
$$
is an open neighborhood of $t$ in $T(V^K_r).$ However, $T$
has the weak approximation property with respect to $V^K_r$
(cf.\:\cite{PlR}, Proposition 7.8, or \cite{Vo}, \S 11.5). So, $\Omega$
contains an element $(a_0 , b_0)\in T(K).$ Then
$$
\Nrd_{A/L}(g) = N_{F/K}(a_0a) N_{L/K}(b_0b)
$$
and $g\varepsilon (a_0a)^{-1} \in \Sigma(v)$ and $b_0b \in
\Theta(v),$ for all $v \in V^K_r.$ After replacing $a$ with $a_0a$, and $b$ with $b_0b$,
we will assume that $a \in F^{\times}$ and $b
\in L^{\times}$ satisfy (\ref{E:U15}), (\ref{E:U16}) and (\ref{E:U16a}). Then
it follows from Eichler's Norm Theorem (cf.\:\cite{PlR}, Theorem 1.13 and
\S 6.7) that there exists $h_0 \in A^{\times}$ such that
$\Nrd_{A/L}(h_0) = b.$ To complete the argument, we need the
following.
\begin{lemma}\label{L:U10}
Let $\mathscr{S}$ be the variety of $\tau$-symmetric elements in $M
= \mathrm{SL}_{1 , A}.$ If $x \in \mathscr{S}(K)$ is such that $x
\in \Sigma(v) = \{ \tau (h_v)h_v \:  | \:  h_v \in (A\otimes_K K_v
)^{\times} \}$ for all $v \in V^K_r$, then $x = \tau(h)h$ for some
$h \in M(K).$
\end{lemma}
\begin{proof}
We can write $x = \tau(y) y$ for some $y \in M(K_{sep}),$ where
$K_{sep}$ is a separable closure of $K.$ Then $\xi_{\gamma} :=
y\gamma(y)^{-1}$ for $\gamma \in \Ga(K_{sep}/K)$ defines a Galois
1-cocycle $\xi$ with values in $G = \mathrm{SU}(A , \tau).$ It is
enough to show that $\xi$ defines the trivial element of $H^1(K ,
G).$ Indeed, then there exists $z \in G(K_{sep})$ with the property
$$
\xi_{\gamma} = y\gamma(y)^{-1} = z^{-1}\gamma(z) \ \ \text{for all}
\ \gamma \in \Ga(K_{sep}/K).
$$
It follows that $h := zy \in M(K)$, and obviously, $x = \tau(h)h,$
as required. It is known that $H^1(K , G)$ is trivial if $K$ is
either a global function field \cite{Ha} or a totally imaginary
number field (cf.\:\cite{PlR}, \S 6.7), so our assertion follows
immediately. To prove the assertion in the general case, we will use
the Hasse principle for $G,$ i.e., the fact that the map
$$
H^1(K , G) \longrightarrow \prod_{v \in V^K_r} H^1(K_v , G)
$$
is injective (cf.\:\cite{PlR}, Theorem 6.6). So, it is enough to
show that the image of $\xi$ in $H^1(K_v , G)$ is trivial, for all
$v \in V^K_r,$ which, by the argument above, is equivalent to the
fact that $x = \tau(h_v)h_v$ for some $h_v \in M(K_v).$ But if $L
\otimes_K K_v$ is not a field, then according to the observation
made in a footnote above, any $x \in \mathscr{S}(K_v)$ can be
written in the form $\tau(h_v)h_v$ for some $h_v \in M(K_v),$ and
there is nothing to prove. Thus, it remains to consider the case
where $L \otimes_K K_v$ is a field (which, of course, coincides with
$\C$). Let $H = \mathrm{U}(A , \tau).$ The fact that $x \in
\Sigma(v)$ implies that the image of $\xi$ in $H^1(K_v , H)$ is
trivial, and it is enough to show that in this situation, the map
$H^1(K_v , G) \to H^1(K_v , H)$ has trivial kernel. But over $K_v =
\R,$ we have compatible isomorphisms
$$
H \simeq \mathrm{U}(f) \ \ \text{and} \ \ G \simeq \mathrm{SU}(f)
$$
for some nondegenerate hermitian form $f.$ The exact sequence
$$
1 \to \mathrm{SU}(f) \longrightarrow \mathrm{U}(f)
\stackrel{\det}{\longrightarrow} T \to 1,
$$
where $T= \mathrm{R}^{(1)}_{\C/\R}(\mathrm{GL}_1),$ gives rise to
the following exact cohomological sequence
$$
\mathrm{U}(f)({\R}) \stackrel{\det}{\longrightarrow} T({\R})
\longrightarrow H^1(\R , \mathrm{SU}(f)) \longrightarrow H^1(\R ,
\mathrm{U}(f)).
$$
Since the first map is obviously surjective, the third map has
trivial kernel, as required.
\end{proof}

\vskip1mm

We will now complete the proof of Theorem \ref{T:U1}. It follows
from our construction that $x =
\tau(h_0)^{-1}(g\varepsilon(a)^{-1})h_0^{-1}$ satisfies the
assumptions of Lemma \ref{L:U10}. So, it can be written in the form
$\tau(h)h$ for some $h \in A^{\times},$ and therefore the same is
true for $g\varepsilon(a)^{-1},$ yielding the required presentation
(\ref{E:U20}) for $g$.
\end{proof}

\vskip2mm

\noindent {\bf Remarks 4.4.} (1) In the notations of Lemma
\ref{L:U10}, for any $v \in V^K_f,$ we have $H^1(K_v , G) = \{1\},$
so the argument therein yields the following fact: any $x \in
\mathscr{S}(K_v)$ can be written in the form $\tau(h_v)h_v$ for some
$h_v \in (A \otimes_K K_v)^{\times}.$ We will use this observation
in the example below.

\vskip1mm

(2) Using Theorem \ref{T:U1}, it  has been proved in \cite{GG} that
if either $K$ is totally complex, or the degree $n$ of $A$ is odd,
there exists a cyclic Galois extension $F$ of $K$ such that $(F
\otimes_K L , \id_F\otimes\tau)$ embeds in $(A , \tau)$.

\vskip1mm

(3) Some sufficient conditions for the existence of $\iota_v$ at a
particular $v \in V^K$ are given in \cite{PR1}, Propositions A.3 and
A.4. We will use these conditions in the proof of the following
corollary.

\addtocounter{thm}{1}

\vskip2mm

\begin{cor}\label{C:U1}
Let $(A_1 , \tau_1)$ and $(A_2 , \tau_2)$ be two central simple
algebras with involutions of the second kind over a global field
$L.$ Assume that
$$
\dim_L A_1 = \dim_L A_2 =: n^2 \ \ \text{and} \ \ \tau_1 \vert L =
\tau_2 \vert L =: \tau.
$$
Then there exists a field extension $E/L$ of degree $n$ with an
involutive automorphism $\sigma$ satisfying $\sigma(L) = L$ and
$\sigma \vert L = \tau,$ such that $(E , \sigma)$ embeds into $(A_i
, \tau_i)$ as an algebra with involution, for $i = 1 , 2.$
\end{cor}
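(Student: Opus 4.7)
The plan is to reduce the corollary to Theorem~\ref{T:U1} applied separately to $(A_1,\tau_1)$ and $(A_2,\tau_2)$. Thus it suffices to construct a degree $n$ field extension $E/L$ equipped with an involutive automorphism $\sigma$ satisfying $\sigma|L=\tau$, such that for every $v\in V^K$ there exist local embeddings
$(E\otimes_K K_v,\sigma\otimes\id_{K_v})\hookrightarrow(A_i\otimes_K K_v,\tau_i\otimes\id_{K_v})$
of algebras with involution for both $i=1,2$. Since $\sigma|L\neq \id_L$, Proposition~\ref{P:E10}(1) shows that $F:=E^{\sigma}$ has $\dim_K F=n$; conversely, given a degree $n$ field extension $F/K$ linearly disjoint from $L$ over $K$, the tensor product $(F\otimes_K L,\,\id_F\otimes\tau)$ furnishes a valid $(E,\sigma)$ with $E$ a field. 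The construction therefore reduces to choosing $F/K$ with the right local profile.

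Next I would pin down sufficient local conditions using Propositions~A.3 and~A.4 of \cite{PR1} (cited in Remark~4.4(3)), which provide explicit criteria at a fixed place $v$ on $F\otimes_K K_v$ guaranteeing an embedding of $(F\otimes_K L\otimes_K K_v,\,\id\otimes\tau)$ into $(A_i\otimes_K K_v,\tau_i)$. At a finite $v$ split in $L/K$, the criterion reduces via Proposition~\ref{P:E1} to the requirement that $F\otimes_K K_v$ splits $A_i\otimes_K K_v$; at archimedean real places it becomes a hermitian-signature condition. I would then select a finite set $S\subset V^K$ containing all archimedean places together with the finite places at which either $L/K$ is ramified or one of the $A_i$ is non-split; at each $v\in S$ I would pick an étale $K_v$-algebra $F(v)$ of dimension $n$, with at least one genuine field component, meeting the local criteria for both $i=1$ and $i=2$ simultaneously.

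Finally I would globalize by Krasner's lemma combined with weak approximation applied to the coefficients of the minimal polynomial: produce a monic separable $p(t)\in K[t]$ of degree $n$ whose $v$-adic factorizations yield the prescribed $F(v)$ for $v\in S$, and which is irreducible over $L$ (linear disjointness from $L$ is an open condition that can be enforced by splitting behavior at one auxiliary finite place). Put $F=K[t]/(p(t))$, $E=F\otimes_K L$, and $\sigma=\id_F\otimes\tau$. The local embeddings then hold at $v\in S$ by design and at the remaining places because the relevant algebras are essentially matrix algebras over unramified étale algebras where the Brauer condition of Proposition~\ref{P:E1} is automatic. Theorem~\ref{T:U1}, applied twice, delivers the desired global embeddings $(E,\sigma)\hookrightarrow (A_i,\tau_i)$ for $i=1,2$.

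The principal obstacle is the simultaneous local realization at the archimedean and ramified places: the admissible hermitian signatures (or Brauer invariants) associated to $\tau_1$ and $\tau_2$ need not coincide, so one must verify that the flexibility afforded by condition (\ref{E:I1}) allows an étale algebra $F(v)$ compatible with both involutions at each such place. This is presumably handled by a place-by-place analysis along the lines of \cite{PR1}, Propositions~A.3--A.4, exploiting the freedom to mix field and split components in the étale structure of $F(v)$.
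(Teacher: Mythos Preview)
Your overall architecture matches the paper's: construct $F/K$ of degree $n$ linearly disjoint from $L$, set $(E,\sigma)=(F\otimes_K L,\id_F\otimes\tau)$, verify local embeddings into both $(A_i,\tau_i)$, then invoke Theorem~\ref{T:U1} twice. The globalization step via approximation is also in the same spirit.

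However, the step you yourself flag as the ``principal obstacle'' is precisely the heart of the proof, and you have not carried it out. Your description of the bad set $S$ (archimedean plus ramified in $L/K$ plus non-split $A_i$) and your case analysis (split finite places vs.\ real places) are both off-target, and the worry about clashing hermitian signatures for $\tau_1$ versus $\tau_2$ is a symptom of not having found the right local prescription. The paper's resolution is much cleaner and, crucially, \emph{uniform} in $i$: take the bad set to be $V=V_1\cup V_2$ where $V_i$ is the (finite) set of places at which $G_i=\mathrm{SU}(A_i,\tau_i)$ fails to be quasi-split; partition $V$ according to whether $v$ splits in $L/K$ or not; at the split places require $F\otimes_K K_v$ to be a \emph{field}, and at the non-split places require $F\otimes_K K_v\simeq K_v^n$ to be \emph{totally split}. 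These two choices are exactly what Propositions~A.3 and~A.4 of \cite{PR1} need, respectively, and since the prescription depends only on the splitting behavior of $L$ at $v$---not on the particular hermitian data of $\tau_1$ or $\tau_2$---it works for both algebras simultaneously. At places outside $V$ both $G_i$ are quasi-split and the local embeddings are automatic. Thus no signature balancing is ever required; the ``freedom to mix field and split components'' you allude to is exercised in this very specific way.
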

\begin{proof}
Let $G_i = \mathrm{SU}(A_i , \tau_i),$ and let $V_i$ be the finite
set of all $v \in V^K$ such that $G_i$ is not quasi-split over
$K_v$ (cf.\:\cite{PlR}, Theorem 6.7). Set $V = V_1 \cup V_2,$ and let
$$
S_1 = \{ v \in V \: \vert \: L \otimes_K K_v \simeq K_v \times K_v
\}, \ \ S_2 = V \setminus S_1.
$$
Pick an extension $F/K$ of degree $n$ which is linearly disjoint
from $L$ over $K$ and satisfies the following conditions: $F
\otimes_K K_v$ is a field for $v \in S_1,$ and $F \otimes_K K_v
\simeq K_v^n$ for $v \in S_2.$ Set $E = FL = F \otimes_K L$ and let
$\sigma$ be the involution $\id_F\otimes\tau$ of $E$.
Then it follows from Proposition A.3 (resp., Proposition A.4) in
\cite{PR1} that there exist embeddings $\iota^i_v \colon (E \otimes_K K_v ,
\sigma \otimes \id_{K_v}) \hookrightarrow (A_i \otimes_K K_v ,
\tau_i \otimes \id_{K_v})$ for $v \in S_1$ (resp., $v \in
S_2$) and $i = 1 , 2.$ On the other hand, for $v \notin V$ and
any $i = 1 , 2,$ the existence of $\iota^i_v$ follows from the fact
that $G_i$ is quasi-split over $K$ (cf.\:\cite{PlR}, p.\:340).
Applying Theorem \ref{T:U1}, we obtain the existence of embeddings
$\iota^i \colon (E , \sigma) \hookrightarrow (A_i , \tau_i)$, for $i
= 1 , 2.$
\end{proof}

\vskip2mm

\addtocounter{thm}{1}

We will now construct an example showing that the assertion of
Theorem~\ref{T:U1} does not extend to embeddings of \'etale
algebras.

\vskip2mm

\noindent {\bf Example 4.6.} Let $K$ be a number field. Pick $a \in
K^{\times} \setminus {K^{\times}}^ 2$ so that $a > 0$ in all real
completions of $K,$ and set $L = K(\sqrt{a}).$ Furthermore, pick two
nonarchimedean places $v_1 , v_2$ of $K$ so that $a \in
{K_{v_i}^{\times}}^2$ for $i = 1 , 2,$ and then pick $b \in
K^{\times}$ with the property $b \notin {K_{v_i}^{\times}}^2$ for $i =
1, 2.$ Set
$$
F_1 = K(\sqrt{b}) \ \ , \ \ F_2 = K(\sqrt{ab}),
$$
and let
$$
F = F_1L = F_2L = K(\sqrt{a} , \sqrt{b}).
$$
Let $\sigma_i \in \Ga(F/F_i)$ be the nontrivial automorphism for $i
= 1 , 2;$ notice that both $\sigma_1$ and $\sigma_2$ act
nontrivially on $L.$ Consider the commutative \'etale $L$-algebra $E
= F \times F$ with the involutive automorphism $\sigma = (\sigma_1 ,
\sigma_2);$ clearly, $ E^{\sigma} = F_1 \times F_2.$

Now, let $D_0$ be the quaternion division algebra over $K$ with local
invariant $1/2 \in \Q/\Z$ at $v_1$ and $v_2,$ and $0$ everywhere
else. Then both $F_1$ and $F_2$ are isomorphic to, and
henceforth will be identified with, maximal subfields of $D_0.$ Fix
a basis $1, {\mathbf i}, {\mathbf j}, {\mathbf k}$ of $D_0$
over $K$ such that ${\mathbf i}^2 = \alpha,$ ${\mathbf j}^2 = \beta$
for some $\alpha , \beta \in K^{\times}$, and
$\mathbf{i}\mathbf{j}=\mathbf{k}=-\mathbf{j}\mathbf{i}$. Let
$\delta$ be the standard involution of $D_0,$ and $D_0^+ = K$ and
$D_0^- = K{\mathbf i} + K{\mathbf j} + K{\mathbf k}$ be the spaces
of $\delta$-symmetric and $\delta$-skew-symmetric elements,
respectively. Let $D = D_0 \otimes_K L$ with the involution $\mu =
\delta \otimes \tau_0$, where $\tau_0$ is the nontrivial automorphism
of $L/K,$ and let $D^{\mu}$ be the set of $\mu$-symmetric elements.
\begin{lemma}\label{L:U2}
$\Nrd_{D/L}(D^{\mu}) = K.$
\end{lemma}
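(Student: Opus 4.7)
The plan is to realize the restriction of $\Nrd_{D/L}$ to $D^{\mu}$ as an explicit non-degenerate four-dimensional quadratic form $\phi$ over $K$ taking values in $K$, and then to prove that $\phi$ is universal by means of Hasse--Minkowski.

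First I would describe $D^{\mu}$ concretely. Using $L = K \oplus K\sqrt{a}$ we may write $D = D_0 \otimes_K L = D_0 \oplus D_0\sqrt{a}$ as a $K$-module, and the relation $\tau_0(\sqrt{a}) = -\sqrt{a}$ gives
$$\mu(x_1 + x_2\sqrt{a}) = \delta(x_1) - \delta(x_2)\sqrt{a},$$
so that $D^{\mu} = D_0^+ \oplus D_0^-\sqrt{a} = K \oplus (K\mathbf{i}+K\mathbf{j}+K\mathbf{k})\sqrt{a}$, a $K$-vector space of dimension $4$ with basis $1, \mathbf{i}\sqrt{a}, \mathbf{j}\sqrt{a}, \mathbf{k}\sqrt{a}$. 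The canonical involution on the quaternion $L$-algebra $D$ is $\delta \otimes \id_L$, so for $x = \lambda + y\sqrt{a}$ with $\lambda \in K$ and $y \in D_0^-$ we compute
$$\Nrd_{D/L}(x) = (\lambda + y\sqrt{a})(\lambda - y\sqrt{a}) = \lambda^2 - a y^2,$$
and since $y^2 = -\Nrd_{D_0/K}(y) \in K$ this lies in $K$. Evaluating on the basis above identifies the restriction of $\Nrd_{D/L}$ to $D^{\mu}$ with the $K$-quadratic form
$$\phi = \langle 1, -a\alpha, -a\beta, a\alpha\beta\rangle,$$
which is non-degenerate since all diagonal entries lie in $K^{\times}$. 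In particular $\Nrd_{D/L}(D^{\mu}) \subseteq K$.

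It remains to show $\phi$ represents every element of $K^{\times}$. By Hasse--Minkowski this reduces to universality of $\phi$ at each completion. At a non-archimedean $v$ the $u$-invariant of $K_v$ equals $4$, so the five-dimensional form $\phi \perp \langle -c \rangle$ is isotropic for every $c \in K_v^{\times}$: either the isotropic vector lies entirely in $\phi$ (whence $\phi$ is isotropic over $K_v$, hence universal) or its last coordinate is nonzero (whence $\phi$ represents $c$). So $\phi_v$ is universal for all non-archimedean $v$. For a real place $v$ I would check that $\phi_v$ is indefinite. Here the hypotheses enter: by assumption $a > 0$ at $v$, and since $D_0$ ramifies only at the non-archimedean places $v_1, v_2$, the algebra $(\alpha,\beta)_{K_v}$ is split, which rules out the sign pattern $\alpha_v, \beta_v < 0$. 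Inspecting the remaining three possibilities $(+,+), (+,-), (-,+)$ for $(\alpha_v, \beta_v)$, the signs of the diagonal of $\phi_v$ are respectively $(+,-,-,+), (+,-,+,-), (+,+,-,-)$; in each case $\phi_v$ contains both signs and is therefore indefinite, hence universal.

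The main obstacle---or at least the only step where the hypotheses on $a$ and on the ramification of $D_0$ really enter---is the archimedean check; at non-archimedean places universality is automatic from the $u$-invariant. Assembling these local verifications, Hasse--Minkowski yields that $\phi$ is universal over $K$, and combined with the already established inclusion $\Nrd_{D/L}(D^{\mu}) \subseteq K$ this gives $\Nrd_{D/L}(D^{\mu}) = K$.
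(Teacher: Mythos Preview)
Your proof is correct and follows essentially the same approach as the paper: both identify $\Nrd_{D/L}\vert_{D^{\mu}}$ with the quaternary $K$-form $\langle 1, -a\alpha, -a\beta, a\alpha\beta\rangle$ and reduce universality (via Hasse--Minkowski) to indefiniteness at the real places, which is forced by $a>0$ together with the fact that $D_0$ splits at every real place. The only difference is presentational: the paper compresses the non-archimedean step into the standing fact that indefiniteness at all real places suffices, and argues real indefiniteness by noting that the pure-quaternion form $\alpha x_1^2+\beta x_2^2-\alpha\beta x_3^2$ is not negative definite when $(\alpha,\beta)$ splits, rather than by your explicit sign-pattern case check.
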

\begin{proof} We obviously have
$$
D^{\mu} = D_0^+ + \sqrt{a}D_0^- = K + \sqrt{a}(K{\mathbf i} +
K{\mathbf j} + K{\mathbf k}),
$$
from which it follows that ${\mathrm Nrd}_{D/L}(D^{\mu})$ is the set
of elements represented by $q = x_0^2 - a\alpha x_1^2 - a\beta x_2^2
+ a\alpha\beta x_3^2$ over $K.$ To show that this set coincides with
$K,$ it is enough to show that the quadratic form $q$ is indefinite
at all real places of $K.$ But by our construction, at those places
the algebra $D_0$ splits, so the form $\alpha x_1^2 + \beta x_2^2 -
\alpha\beta x_3^2$ is not negative definite. Since $a > 0,$ the same
is true for the form $a(\alpha x_1^2 + \beta x_2^2 - \alpha\beta
x_3^2),$ and the required fact follows.
\end{proof}

Now, we observe that
$$
F_1 \otimes_K L \simeq F_2 \otimes_K L \simeq F,
$$
and
$$
(F_1 \otimes_K L)^{\mu} = F_2 \ \ \text{and} \ \ (F_2 \otimes_K
L)^{\mu} = F_1.
$$
Thus, $F$ has two embeddings $\nu_{i} \colon F \to D,$ where $i = 1
, 2,$ such that $\nu_i(F)$ is $\mu$-invariant and
$$
\nu_1^{-1} \circ \mu\circ \nu_1 = \sigma_2 \ \ \text{and} \ \
\nu_2^{-1} \circ \mu \circ \nu_2 = \sigma_1.
$$
Consider the embedding
$$
\varepsilon \colon E = F \times F \to M_2(D) =: A, \ \
\varepsilon(x_1 , x_2) = \left(\begin{array}{cc} \nu_1(x_2) & 0 \\ 0
& \nu_2(x_1) \end{array} \right).
$$
It follows from our construction that if we endow $A$ with the
involution $\theta((x_{ij})) = (\mu(x_{ji}))$, then $\varepsilon
\colon (E , \sigma) \to (A , \theta)$ is an embedding of algebras
with involutions.

\vskip2mm

We now need to recall the following, which is actually Exercise 5.2
in \cite{ANT}.
\begin{lemma}\label{L:U1}
Let $F = K(\sqrt{a} , \sqrt{b})$ be a bi-quadratic extension of a
number field $K.$ Assume that for all $v \in V^K,$ the local degree
$[F_v  : K_v]$ is $\leqslant 2.$ Let $K_i = K(\sqrt{a_i})$ for $i =
1, 2, 3,$ be the three quadratic subfields of $F,$ and set
$$
N_i = N_{K_i/K}(K_i^{\times}) \ \ \text{and} \ \ N_i^v = N_{{K_i}_
v/K_v}({K_i} _v^{\times}) \ \text{for} \ v \in V^K.
$$
Then $N_1^v N_2^v N_3^v = K_v^{\times}$ for all $v \in V^K,$ but
$N_1N_2N_3 \neq K^{\times}.$
\end{lemma}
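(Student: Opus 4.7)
The local assertion is straightforward. At each place $v$ of $K,$ the decomposition group $D_v \subset G = \Ga(F/K) \cong \Z/2 \times \Z/2$ has order $[F_v : K_v] \leqslant 2,$ hence is contained in at least one of the three index-$2$ subgroups $H_i = \Ga(F/K_i).$ The corresponding $K_i$ splits completely at $v,$ so $K_{i,v} = K_v,$ yielding $N_i^v = K_v^{\times}$ and hence $N_1^v N_2^v N_3^v = K_v^{\times}.$

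For the global assertion, my plan is to recast the question cohomologically. I would introduce the $K$-torus
\[
T = \ker\!\left(N \colon \prod_{i = 1}^{3} \mathrm{R}_{K_i/K}(\mathrm{GL}_1) \longrightarrow \mathrm{GL}_1\right), \quad N(a_1, a_2, a_3) = N_{K_1/K}(a_1) N_{K_2/K}(a_2) N_{K_3/K}(a_3).
\]
Using Shapiro's lemma and Hilbert's Theorem 90, which together give $H^1(K, \mathrm{R}_{K_i/K}(\mathrm{GL}_1)) = 0,$ the long exact cohomology sequence of $1 \to T \to \prod_i \mathrm{R}_{K_i/K}(\mathrm{GL}_1) \to \mathrm{GL}_1 \to 1$ yields
\[
K^{\times}/N_1 N_2 N_3 \simeq H^1(K, T), \quad K_v^{\times}/N_1^v N_2^v N_3^v \simeq H^1(K_v, T).
\]
The first part of the lemma, already established, is then equivalent to the vanishing of $H^1(K_v, T)$ for all $v,$ while the second part reduces to the non-vanishing of the Tate--Shafarevich group of $T$ over $K.$

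To exhibit a non-trivial element of this Tate--Shafarevich group, I would invoke Poitou--Tate duality, which identifies it with the Pontryagin dual of the analogous group in $H^2$ of the character module $\widehat{T} = X^*(T).$ The latter sits in an exact sequence of $G$-modules
\[
0 \to \Z \to \bigoplus_{i = 1}^{3} \Z[G/H_i] \to \widehat{T} \to 0.
\]
Since $H^*(G, \Z[G/H_i]) \simeq H^*(H_i, \Z)$ by Shapiro's lemma, the analysis of $H^2(G, \widehat{T})$ reduces to explicit computations with the cohomology of $G \cong \Z/2 \times \Z/2$ and its order-$2$ subgroups, while the hypothesis $|D_v| \leqslant 2$ forces each local restriction to factor through the cohomology of a proper subgroup of $G,$ which is precisely what permits a global class to survive although all of its localisations vanish.

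The main obstacle is the bookkeeping required to verify that a suitable class in $H^2(G, \widehat{T})$ truly lies in the Shafarevich kernel, i.e.\ restricts to zero at every decomposition subgroup $D_v$; carrying this out simultaneously at all places of $K$ is where the structure of the biquadratic extension $F/K$ and the local degree hypothesis must be used in full. A more elementary alternative -- which I would likely pursue in practice -- is to express the condition $c \in N_1 N_2 N_3$ via Hilbert symbols $(c_i , a_i)_v$ subject to $c_1 c_2 c_3 = c,$ and then to use Hilbert reciprocity to construct an explicit $c \in K^{\times}$ that evades every such factorisation.
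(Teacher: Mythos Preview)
Your local argument coincides with the paper's. For the global assertion, the paper does \emph{not} go through the Tate--Shafarevich group of the multinorm torus; it takes precisely the ``elementary alternative'' you mention at the end and carries it out explicitly. The paper sets $S_i = \{\, v \in V^K \mid K_{i,v} = K_v \,\}$ and defines a homomorphism $\varphi \colon K^{\times} \to \{\pm 1\}$ by
\[
\varphi(x) \;=\; \prod_{v \in S_1} (a_2 , x)_v,
\]
then checks, using the hypothesis $V^K = S_1 \cup S_2 \cup S_3$ together with Hilbert reciprocity, that this same product admits five further descriptions (cyclically permuting the indices), from which it follows at once that $\varphi(N_i) = 1$ for each $i$. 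Non-triviality of $\varphi$ is obtained by choosing, via Chebotarev, places $u_1 \in S_1$ and $u_2 \notin S_1$ with $a_2 \notin K_{u_j}^{\times 2}$, and then invoking the standard existence result for prescribed Hilbert symbols to produce an $x$ with $(a_2,x)_{u_1} = (a_2,x)_{u_2} = -1$ and $(a_2,x)_u = 1$ elsewhere, so that $\varphi(x) = -1$.

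Your cohomological plan is sound and would also work: the identification $K^{\times}/N_1N_2N_3 \simeq H^1(K,T)$ is correct, the local vanishing you already proved shows $H^1(K,T) = \text{Sha}^1(K,T)$, and the Poitou--Tate computation with the character module (a lattice whose $G$-cohomology is that of $(\Z/2)^2$ versus its cyclic subgroups) does produce a non-trivial class. What this buys is a conceptual explanation---the obstruction lives in $H^3(G,\Z)$ for $G = (\Z/2)^2$, which vanishes on every cyclic subgroup---and the argument generalises readily to other multinorm situations. What the paper's approach buys is self-containment: it needs nothing beyond Hilbert reciprocity and Chebotarev, and it hands you the obstructing element $x \in K^{\times}$ explicitly rather than via duality. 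Since the paper uses this lemma only to build a concrete counterexample (Example~4.6), the explicit form is exactly what is wanted there.
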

\begin{proof}
For those who did not have a chance to work out all the details in
Exercise 5.2 in \cite{ANT}, we briefly sketch the argument. First,
by our assumption, for any $v \in V^K,$ we have ${K_i}_ v = K_v$ for
at least one $i,$ and therefore $N_1^v N_2^v N_3^v = K_v^{\times}.$
Next, set $S_i = \{ v \in V^K \ \vert \ {K_i}_v = K_v \}.$ Then,
letting $(* , *)_v$ denote the Hilbert symbol over $K_v,$ we can
define the following homomorphism $\varphi \colon K^{\times} \to \{
\pm 1 \},$
$$
\varphi(x) = \prod_{v \in S_1} (a_2 , x)_v \stackrel{1)}{=} \prod_{v
\in S_1} (a_3 , x)_v \stackrel{2)}{=} \prod_{v \in S_2} (a_3 , x)_v
=
$$
$$
= \prod_{v \in S_2} (a_1 , x)_v = \prod_{v \in S_3} (a_1 , x)_v =
\prod_{v \in S_3} (a_2 , x)_v.
$$
We notice that  equality 1) follows from the fact that for $v \in
S_1$ we have $a_2a_3^{-1} \in {K_v^{\times}}^2.$ To prove equality 2),
we observe that by our assumption $V^K = S_1 \cup S_2 \cup S_3,$ so
the product formula for the Hilbert symbol combined with the facts
that $S_1 \cap S_2 \subset S_3$ and $a_3 \in {K_v^{\times}}^2$ for $v
\in S_3,$ yields
$$
1 =\prod_{v \in V^K} (a_3 , x)_v = \prod_{v \in S_1 \cup S_2} (a_3 ,
x)_v = \prod_{v \in S_1} (a_3 , x)_v \cdot \prod_{v \in S_2} (a_3 ,
x)_v,
$$
as required. All other equalities are established similarly. It
follows from the appropriate description of $\varphi$ that
$\varphi(N_i) = 1$ for all $i = 1, 2, 3.$ Thus, $\varphi(N_1N_2N_3)
= 1.$ On the other hand, it follows from Chebotarev's Density
Theorem that one can pick $u_1 \in S_1$ and $u_2 \notin S_1$ so that
$a_2 \notin K_{u_j}^{\times 2}$ for $j = 1 , 2.$ Using Exercise 2.16
in \cite{ANT}\footnote{For the reader's convenience, we recall the
statement of this result, which will be used again in \S \ref{S:O}:
Let $a \in K^{\times},$ and suppose that for each $v \in V^K,$ we
are given $\varepsilon_v \in \{ \pm 1 \}$ so that the following
three conditions are satisfied: (i) $\varepsilon_v = 1$ for all but finitely many
$v;$ (ii) $\prod_v \varepsilon_v = 1;$ (iii) for each $v \in
V^K,$ there exists $x_v \in K_v^{\times}$ such that $(a , x_v)_v =
\varepsilon_v.$ Then there exists $x \in K^{\times}$ such that $(a ,
x)_v = \varepsilon_v$ for all $v.$ }, we can find $x \in K^{\times}$
satisfying
$$
(a_2 , x)_{u_1} = (a_2 , x)_{u_2} = -1 \ \ \text{and} \ \ (a_2 ,
x)_u = 1 \ \text{for all} \ u \in V^K \setminus \{ u_1 , u_2 \}.
$$
Then $\varphi(x) = -1,$ implying that $N_1N_2N_3 \neq K^{\times}.$
\end{proof}

We will assume henceforth that $a , b \in K^{\times}$ are chosen so
that $F = K(\sqrt{a} , \sqrt{b})$ satisfies our previous assumptions
and those of Lemma~\ref{L:U1}, i.e., the local degree $[F_v : K_v]$
is $\leqslant 2$ for all $v \in V^K.$ (Explicit example: $K = \Q,$
$a = 13,$ $b = 17;$ then one can take for $v_1 , v_2$ the $p$-adic places
of $\Q$ corresponding to the primes $p = 3$ and $23$.) According to Lemma
\ref{L:U1},  one can choose $s \in K^{\times}$ so that
\begin{equation}\label{E:U10}
s \notin
N_{K(\sqrt{a})/K}(K(\sqrt{a})^{\times})N_{K(\sqrt{b})/K}(K(\sqrt{b})^{\times})
N_{K(\sqrt{ab})/K}(K(\sqrt{ab})^{\times})
\end{equation}
It follows from Lemma \ref{L:U2} that there exists $g \in
A^{\theta}$ such that $\mathrm{Nrd}_{A/L}(g) = s$ (in fact, we can
choose such a $g$ of the form $\mathrm{diag}(t , 1)$ where $t \in
D^{\mu}$). Consider the involution $\tau = \Int g \circ \theta.$ We
claim that the equation
\begin{equation}\label{E:U11}
g \varepsilon(x) = h \tau(h) \ \ \text{for} \ \ x \in
{(E^{\sigma})}^{\times}, \ \ h \in A^{\times},
\end{equation}
is solvable everywhere locally, but not globally. Then  one can
embed $(E \otimes_K K_v , \sigma \otimes \mathrm{id}_{K_v})$  into
$(A \otimes_K K_v , \tau \otimes \mathrm{id}_{K_v})$ for all $v \in
V^K,$ but one cannot embed $(E, \sigma)$ into $(A , \tau).$

First, suppose (\ref{E:U11}) holds for some $x \in (E^{\sigma})^{\times}$ and
$h \in A^{\times}.$ Since $E^{\sigma} = K(\sqrt{b}) \times
K(\sqrt{ab}),$ taking reduced norms, we obtain
$$
s = \mathrm{Nrd}_{A/L}(g)  \hskip8cm
$$
$$
\ \ \ \ \ \ \ \ \ \ \in
N_{K(\sqrt{a})/K}(K(\sqrt{a})^{\times})N_{K(\sqrt{b})/K}(K(\sqrt{b})^{\times})
N_{K(\sqrt{ab})/K}(K(\sqrt{ab})^{\times}),
$$
which contradicts (\ref{E:U10}).

Now, fix $v \in V^K.$ If $v \in V^K_r$, then by our construction $L
\otimes_K K_v$ is not a field. Then every $\tau$-symmetric element
in $(A \otimes_K K_v)^{\times}$ can be written in the form
$\tau(h_v)h_v$ for some $h_v \in (A \otimes_K K_v)^{\times},$ and
there is nothing to prove. So, assume now that $v \in V^K_f.$ Since
$v$ splits in at least one of the extensions $K(\sqrt{a}),$
$K(\sqrt{b})$ and $K(\sqrt{ab})$, and $E^{\sigma} = K(\sqrt{b}) \times
K(\sqrt{ab}),$ we see that there exits $s_v \in (E^{\sigma}
\otimes_K K_v)^{\times}$ and $t_v \in (L \otimes_K K_v)^{\times}$
such that
$$
\Nrd_{A/L}(g) = N_{E^{\sigma} \otimes_K K_v/K_v}(s_v) N_{L \otimes_K
K_v/K_v}(t_v).
$$
Furthermore, the homomorphism of reduced norm $$\Nrd_{A \otimes_K
K_v/L \otimes_K K_v} \colon (A \otimes_K K_v)^{\times} \to (L
\otimes_K K_v)^{\times}$$ is surjective, so there exists $z_v \in A
\otimes_K K_v$ such that $\Nrd(z_v) = t_v.$ Then $$x =
\tau(z_v)^{-1}g\varepsilon(s_v)^{-1}z_v^{-1}$$ is a $\tau$-symmetric
element in $A \otimes_K K_v$ of reduced norm one. So, using
Remark 4.4(1),  we conclude that $x$ can be written in the form
$\tau(h_v)h_v$ with $h_v \in (A \otimes_K K_v)^{\times},$ and then
the same is true for $g\varepsilon(a_v)^{-1},$ yielding a local
solution to (\ref{E:U11}) at $v.$

\vskip3mm

\noindent {\bf Remark 4.9.} It should be pointed out that the proof of the local-global
principle for embeddings of fields with involution in a central simple algebra with an involution of the second kind (Theorem \ref{T:U1}) depends in a very essential
way on the multinorm principle (i.e., (\ref{E:MN})). Proposition
\ref{P:U1} describes one situation in which this principle holds;
some other sufficient conditions are given in Proposition 6.11 of
\cite{PlR}. In fact, we are not aware of any examples where the
multinorm principle (for two fields) fails, and it is probably
safe to conjecture that it always holds if one of the fields
satisfies the usual Hasse norm principle and the extensions are linearly disjoint over $K$.
On the other hand, Lemma
\ref{L:U1} demonstrates that the multinorm principle may fail for
three fields, even when all the fields are quadratic extensions. It
would be interesting to complete the investigation of the multinorm
principle, and in particular, provide an explicit computation of the
obstruction, at least in the case where all fields are Galois
extensions.

After a preliminary version of this paper was circulated, J-L.\,Colliot-Th\'el\`ene informed us about an unpublished joint work of his with J-J.\,Sansuc in which they gave two proofs of a multinorm principle for a pair of extensions, one of which is cyclic.

\vskip5mm

 {\it In the remainder of this paper, we will
work exclusively with simple algebras $A$  endowed with an involution $\tau$
of the {\rm first kind}. The center of $A$, which is fixed point-wise under $\tau$,
will be denoted $K$ (instead of $L$) and  will be assumed to
be a global field of characteristic $\ne 2$. $E$ will be a commutative
\'etale algebra of dimension $n=\sqrt{\dim A}$ equipped with an involution $\sigma$.}

\vskip5mm

\section{Algebras with a symplectic involution}\label{S:Sym}

In this section, $A$ will denote a central simple
$K$-algebra, of dimension $n^2,$ with a {\it symplectic} involution
$\tau$ (then, of course, $n$ is necessarily even). Our goal is to
prove the local-global principle for embedding of an $n$-dimensional
commutative \'etale $K$-algebra $E$ given with an involutive
$K$-automorphism $\sigma$ (Corollary \ref{C:Sym1}). In fact, in this
case one has the following more convenient criterion for the
existence of an embedding.
\begin{thm}\label{T:Sym1}
With notations as above, assume that there exists an embedding
$\varepsilon \colon E \hookrightarrow A$ as algebras without
involutions, and that for each {\rm real} $v \in V^K$ there exists a
$K_v$-embedding
$$
\iota_v \colon (E \otimes_K K_v , \sigma \otimes \id_{K_v})
\hookrightarrow (A \otimes_K K_v , \tau \otimes \mathrm{id}_{K_v})
$$
of algebras with involutions. Then there exists a $K$-embedding
$$
\iota \colon (E, \sigma) \hookrightarrow (A , \tau)
$$
of algebras with involutions.
\end{thm}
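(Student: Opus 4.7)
The plan is to reduce, via Proposition~\ref{P:G1} and Theorem~\ref{T:G1}, to a single equation of the form $g\varepsilon(b)=\tau(h)h$ in $(A^{\tau})^{\times}$, and then to solve it by combining weak approximation with the Hasse principle for the simply connected classical group $\mathrm{Sp}(A,\tau)$.

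Applying Proposition~\ref{P:G1} to the given embedding $\varepsilon$, I first obtain a $\tau$-symmetric element $g\in A^{\times}$ such that $\theta:=\tau\circ\Int g$ satisfies $\varepsilon\circ\sigma=\theta\circ\varepsilon$; since $g$ is $\tau$-symmetric and $\tau$ is symplectic, $\theta$ is again a symplectic involution. By the equivalence (i)$\Leftrightarrow$(iii) of Theorem~\ref{T:G1}, producing the desired embedding $(E,\sigma)\hookrightarrow(A,\tau)$ of algebras with involution is equivalent to finding $b\in F^{\times}$ (where $F=E^{\sigma}$) and $h\in A^{\times}$ satisfying $g\varepsilon(b)=\tau(h)h$. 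A short computation using $\sigma(b)=b$ and $\varepsilon\circ\sigma=\theta\circ\varepsilon$ shows that $g\varepsilon(b)$ is automatically $\tau$-symmetric, so the task reduces to finding $b\in F^{\times}$ with $g\varepsilon(b)$ lying in the subset
$$
\Sigma:=\{\tau(h)h:h\in A^{\times}\}\subset (A^{\tau})^{\times}.
$$

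I now analyze this locally. For each $v\in V^K$, set $\Sigma(v):=\{\tau(h_v)h_v:h_v\in(A\otimes_K K_v)^{\times}\}$. The $K$-group $G:=\mathrm{Sp}(A,\tau)$ is simply connected (a $K$-form of type $C$), so Kneser's theorem gives $H^1(K_v,G)=\{1\}$ at every non-archimedean $v$ and at every complex $v$; consequently $\Sigma(v)=(A^{\tau}\otimes_K K_v)^{\times}$ at such places, and no condition on $b$ is imposed. At each real $v$, the hypothesized embedding $\iota_v$ combined with Theorem~\ref{T:G1} applied over $K_v$ furnishes $b_v\in(F\otimes_K K_v)^{\times}$ with $g\varepsilon(b_v)\in\Sigma(v)$. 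The differential of $h\mapsto\tau(h)h$ at the identity is $X\mapsto\tau(X)+X$, which is surjective onto $A^{\tau}$, so each $\Sigma(v)$ is open in $(A^{\tau}\otimes_K K_v)^{\times}$. Applying weak approximation to the $K$-torus $\mathrm{R}_{F/K}(\mathrm{GL}_1)$ at the finite set $V^K_r$ of real places, I may choose $b\in F^{\times}$ so close to $b_v$ at each real $v$ that $g\varepsilon(b)\in\Sigma(v)$ there; combined with the analysis at the remaining places, this yields $g\varepsilon(b)\in\Sigma(v)$ for every $v\in V^K$.

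The main obstacle is passing from this local solvability to global solvability of $g\varepsilon(b)=\tau(h)h$. The fiber over $x\in(A^{\tau})^{\times}(K)$ of the morphism $h\mapsto\tau(h)h$ is a $G$-torsor (the stabilizer of $1$ under the action $h\cdot z=\tau(h)zh$ is precisely $G=\mathrm{Sp}(A,\tau)$), defining a class $c(x)\in H^1(K,G)$ which vanishes if and only if $x=\tau(h)h$ for some $h\in A^{\times}$. The Hasse principle for simply connected semisimple groups over global fields (Kneser--Harder--Chernousov) provides the injection
$$
H^1(K,G)\hookrightarrow\prod_{v\in V^K_r}H^1(K_v,G),
$$
so the vanishing of $c(g\varepsilon(b))$ at every real $v$, established above, forces its vanishing globally. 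This produces the required $h\in A^{\times}$, and Theorem~\ref{T:G1}(iii)$\Rightarrow$(i) yields the desired embedding $(E,\sigma)\hookrightarrow(A,\tau)$.
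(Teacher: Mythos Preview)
Your proof is correct and follows essentially the same route as the paper's: reduce via Proposition~\ref{P:G1} and Theorem~\ref{T:G1} to solving $g\varepsilon(b)=\tau(h)h$, handle the real places by weak approximation, and pass from local to global using the Hasse principle for the simply connected group $G=\mathrm{Sp}(A,\tau)$. The only cosmetic differences are that the paper approximates $b$ modulo squares in $(F\otimes_K K_v)^{\times}$ (and then checks explicitly that $g\varepsilon(b)\in\Sigma(v)$), whereas you invoke openness of $\Sigma(v)$ directly, and the paper packages the Hasse-principle step as a separate lemma (Lemma~\ref{L:Sym1}) rather than speaking of the torsor class $c(x)\in H^1(K,G)$.
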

The proof relies on the following lemma which is analogous to
Lemma~\ref{L:U10}. We will denote the involution $\tau \otimes \id_{K_v}$ of
$A\otimes_KK_v$ simply by $\tau$ in the following lemma and in the proof of Theorem \ref{T:Sym1}.
\begin{lemma}\label{L:Sym1}
Let $x \in A^{\times}$ be a $\tau$-symmetric element. Assume that
for every real $v \in V^K,$ there is $h_v \in (A \otimes_K
K_v)^{\times}$ such that $x = \tau(h_v)h_v.$ Then there is $h \in
A^{\times}$ such that $x = \tau(h)h.$
\end{lemma}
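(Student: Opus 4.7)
The plan is to run the cohomological argument of Lemma \ref{L:U10}, taking full advantage of the fact that in the symplectic case $G = \mathrm{U}(A,\tau)$ is itself connected, simply connected, and coincides with $\mathrm{SU}(A,\tau)$. Concretely, set $G = \mathrm{U}(A,\tau)$; over $\overline{K}$ the pair $(A,\tau)$ becomes $(M_n(\overline{K}),\ x \mapsto J^{-1}x^TJ)$ for some skew-symmetric $J$, so $G$ is an inner form of $\mathrm{Sp}_n$ and is absolutely almost simple simply connected of type $C_{n/2}$. I would then consider the action of $M = \mathrm{GL}_{1,A}$ on the variety $\mathscr{S}^{\times}$ of invertible $\tau$-symmetric elements given by $h \cdot s = \tau(h)sh$. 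Over $K_{sep}$ this action is transitive on $\mathscr{S}^{\times}$ (any two non-degenerate alternating forms on $K_{sep}^n$ are equivalent) with stabilizer $G$, so I can choose $y \in (A \otimes_K K_{sep})^{\times}$ with $x = \tau(y)y$.

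Next I would define the Galois $1$-cocycle $\xi_{\gamma} = \gamma(y)y^{-1}$ for $\gamma \in \Ga(K_{sep}/K)$. Since $\gamma(x) = x$, a direct calculation gives
$$
\tau(\xi_{\gamma})\xi_{\gamma} \,=\, \tau(y)^{-1}\tau(\gamma(y))\gamma(y)y^{-1} \,=\, \tau(y)^{-1}\gamma(x)y^{-1} \,=\, \tau(y)^{-1}\tau(y)yy^{-1} \,=\, 1,
$$
so $\xi$ takes values in $G(K_{sep})$. Moreover, if $[\xi] = 1$ in $H^1(K,G)$, then $\xi_{\gamma} = z^{-1}\gamma(z)$ for some $z \in G(K_{sep})$, and the element $h := zy$ is fixed by $\Ga(K_{sep}/K)$, hence lies in $A^{\times}$, and satisfies
$$
\tau(h)h \,=\, \tau(y)\tau(z)zy \,=\, \tau(y)y \,=\, x,
$$
which is exactly the conclusion sought.

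It remains to show that $[\xi]$ is trivial in $H^1(K,G)$. By hypothesis, the local class $[\xi]_v \in H^1(K_v,G)$ is trivial for every real $v \in V^K$, since $x = \tau(h_v)h_v$ there. At every non-archimedean $v$, the fact that $G$ is simply connected absolutely almost simple implies $H^1(K_v,G) = 1$ (Kneser's theorem in the number field case; Bruhat-Tits-Harder in the function field case, cf.\:\cite{PlR}, \S 6.1), so $[\xi]_v$ is automatically trivial there too. The global triviality of $[\xi]$ then follows from the Hasse principle for simply connected semisimple groups over global fields (Kneser-Harder-Chernousov, cf.\:\cite{PlR}, Theorem 6.6), which asserts that the restriction map
$$
H^1(K,G) \longrightarrow \prod_{v \in V^K_r} H^1(K_v,G)
$$
is injective.

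The one point that really uses the symplectic hypothesis is the identification of $G$ as a simply connected inner form of $\mathrm{Sp}_n$; this is what makes both the vanishing of $H^1$ at non-archimedean places and the Hasse principle applicable. Unlike in Lemma \ref{L:U10}, where an additional step was required at real places to descend from $\mathrm{U}(A,\tau)$ to $\mathrm{SU}(A,\tau)$, here $\mathrm{U}(A,\tau) = \mathrm{SU}(A,\tau)$ from the outset, so no such extra argument is needed and the proof is in fact somewhat simpler than its second-kind analogue.
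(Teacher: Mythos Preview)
Your approach is exactly that of the paper: write $x=\tau(y)y$ over $K_{sep}$, form the associated Galois $1$-cocycle with values in $G=\mathrm{U}(A,\tau)=\mathrm{SU}(A,\tau)$, and kill it using the Hasse principle for the simply connected group $G$ (a form of $\mathrm{Sp}_n$), with local triviality at real places coming from the hypothesis and at finite places from Kneser's theorem. One small slip: with your convention $\xi_\gamma=\gamma(y)y^{-1}$ the map is not a $1$-cocycle in the standard sense and the conclusion $\gamma(zy)=zy$ does not follow from $\xi_\gamma=z^{-1}\gamma(z)$; use $\xi_\gamma=y\gamma(y)^{-1}$ (as the paper does), and then $\xi_\gamma=z^{-1}\gamma(z)$ indeed gives $h:=zy\in A^\times$ with $\tau(h)h=x$.
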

\begin{proof}
Since $\tau$ is symplectic, $G = \mathrm{U}(A , \tau) =
\mathrm{SU}(A , \tau)$ is a form of $\mathrm{Sp}_{n},$ hence it is
connected, absolutely almost simple and simply connected. This
implies that the map
$$
H^1(K , G) \stackrel{\rho}{\longrightarrow} \prod_{v \in V^K_r}
H^1(K_v , G)
$$
is bijective (cf.\,\cite{PlR}, Theorem 6.6, for number fields, and
\cite{Ha} for global fields of positive characteristic). Let
$K_{sep}$ be a fixed separable closure of $K$. Pick $y \in (A
\otimes_K K_{sep})^{\times}$ so that $x = \tau(y)y.$ Then the map
$$
\gamma \mapsto \xi_{\gamma} := y \gamma(y)^{-1}, \ \ \gamma \in
\Ga(K_{sep}/K),
$$
is a Galois 1-cocycle with values in $G.$ The fact that $x =
\tau(h_v)h_v$, with $h_v \in (A \otimes_K K_v)^{\times},$ for each
$v \in V^K_r,$ means that the corresponding cohomology class lies in
the kernel of $\rho.$ It follows from the injectivity of $\rho$ that
the class is trivial, i.e., there exist $z \in G(K_{sep})$ such that
$$
\xi_{\gamma} = y\gamma(y)^{-1} = z^{-1}\gamma(z) \ \ \text{for all}
\ \ \gamma \in \Ga(K_{sep}/K).
$$
Then $h := zy \in A^{\times}$ and $x = \tau(h)h,$ as required.
\end{proof}

\vskip2mm

{\it Proof of Theorem \ref{T:Sym1}.} By Proposition \ref{P:G1},
there exists an involution $\theta = \tau \circ \mathrm{Int}\: g$ on
$A,$ where $g \in A^{\times}$ is  $\tau$-symmetric, such that
$\varepsilon \colon (E , \sigma) \hookrightarrow (A , \theta)$ is an
embedding of algebras with involutions. Set $F = E^{\sigma}.$ It
follows from our assumptions and the equivalence ({\it i})
$\Leftrightarrow$ ({\it iii}) in Theorem \ref{T:G1} that for each $v
\in V^K_r$ there exists $b_v \in (F \otimes_K K_v)^{\times}$ such
that
$$
g\varepsilon_v(b_v) = \tau(h_v)h_v \ \ \text{for some} \ \ h_v \in
(A \otimes_K K_v)^{\times}.
$$
Since the subgroup $({F \otimes_K K_v)^{\times}}^{2} \subset (F
\otimes_K K_v)^{\times}$ is open, by weak approximation, there
exists $b \in F^{\times}$ such that
$$
b = b_vt_v^2 \ \ \text{with} \ \ t_v \in (F \otimes_K K_v)^{\times}
$$
for each $v \in V^K_r.$ Using the facts that $t_v$ is
$\sigma_v$-symmetric and that $\varepsilon$ intertwines $\sigma$ and
$\theta,$ one finds that $g\varepsilon_v(t_v) =
\tau(\varepsilon_v(t_v))g,$ so
$$
g\varepsilon(b) = \tau(\varepsilon_v(t_v))g\varepsilon_v(b_v)
\varepsilon_v(t_v) =
\tau(h_v\varepsilon_v(t_v))(h_v\varepsilon_v(t_v)).
$$
Then by Lemma \ref{L:Sym1}, we have $g\varepsilon(b) = \tau(h)h$ for
some $h \in A^{\times},$ and invoking Theorem \ref{T:G1}, we see
that there is an embedding $\iota \colon (E , \sigma)
\hookrightarrow (A , \tau).$

\vskip2mm

\begin{cor}\label{C:Sym1}
Let $A$ and $E$ be as above and assume that for every $v \in V^K$
there is a $K_v$-embedding
$$
\iota_v \colon (E \otimes_K K_v , \sigma \otimes \mathrm{id}_{K_v})
\hookrightarrow (A \otimes_K K_v , \tau \otimes \mathrm{id}_{K_v})
$$
of algebras with involutions. Then there exists a $K$-embedding
$$
\iota \colon (E , \sigma) \hookrightarrow (A , \tau)
$$
of algebras with involutions.
\end{cor}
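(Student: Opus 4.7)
The plan is to reduce the corollary to Theorem \ref{T:Sym1}, whose hypotheses are formally weaker: it requires only an $L$-embedding $\varepsilon\colon E\hookrightarrow A$ as algebras without involutions together with local embeddings $\iota_v$ at the \emph{real} places of $K$. The corollary gives us local embeddings $\iota_v$ at \emph{all} places of $K$, respecting the involutions, so the real-place condition of Theorem \ref{T:Sym1} is supplied directly; the only thing to produce is the plain (involution-free) global embedding $\varepsilon$.

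For this, I would forget the involutions in the hypothesis: each $\iota_v\colon (E\otimes_K K_v,\sigma\otimes\id_{K_v})\hookrightarrow (A\otimes_K K_v,\tau\otimes\id_{K_v})$ is in particular a $K_v$-algebra embedding
\[
\iota_v\colon E\otimes_K K_v\hookrightarrow A\otimes_K K_v
\]
for every $v\in V^K$. Since $\tau$ is of the first kind, the center of $A$ is $K$ itself, so the ``$L$'' of Proposition \ref{P:E2} coincides with $K$ here. Applying that proposition (whose proof rests only on the injectivity of $\Br(K)\to\bigoplus_{v\in V^K}\Br(K_v)$ together with Proposition \ref{P:E1}), we obtain a global $K$-embedding
\[
\varepsilon\colon E\hookrightarrow A
\]
of algebras, ignoring the involutions.

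With $\varepsilon$ in hand, the hypotheses of Theorem \ref{T:Sym1} are satisfied: the involution-respecting embeddings $\iota_v$ for $v\in V^K_r$ are provided by the stronger hypothesis of the corollary. Invoking Theorem \ref{T:Sym1} then yields the desired $K$-embedding $\iota\colon(E,\sigma)\hookrightarrow(A,\tau)$ as algebras with involutions, completing the proof. There is no substantive obstacle here; the corollary is essentially a repackaging of Theorem \ref{T:Sym1} combined with the classical local-global principle for embeddings of commutative \'etale algebras into central simple algebras.
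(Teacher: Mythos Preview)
Your proposal is correct and follows essentially the same route as the paper: forget the involutions to obtain local plain embeddings, invoke Proposition~\ref{P:E2} to get a global $K$-embedding $\varepsilon\colon E\hookrightarrow A$, and then apply Theorem~\ref{T:Sym1} using the real-place involution-preserving embeddings supplied by hypothesis. The paper's proof is the one-line observation that Proposition~\ref{P:E2} furnishes the required $\varepsilon$, which is exactly what you do.
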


Indeed, in view of Proposition \ref{P:E2}, the existence of
$\iota_v$ for all $v \in V^K$ implies the existence of an embedding
$\varepsilon \colon E \hookrightarrow A$ as algebras without
involutions.

%The following is a particular case of the previous corollary.
%\begin{cor}\label{C:Sym2}
%Let $A$ and $E$ be as above and assume that $K$ does not embed in $\R$
%(which is automatically the case if $\mathrm{char}\: K >
%0$). If there exists an embedding $E \hookrightarrow A$ as algebras without
%involutions, then there is also an embedding $(E , \sigma)
%\hookrightarrow (A , \tau)$ as algebras with involutions.
%\end{cor}

\vskip5mm

\section{Algebras with orthogonal involutions: nonsplit case}\label{S:O}

Let $A$ be a central simple algebra over a global field
$K$ of characteristic $\ne 2$, of dimension $n^2,$ endowed with an involution $\tau$ of the
first kind. Then, if $A \simeq M_m(D),$ with $D$ a division
algebra, then the class $[D] \in \Br(K)$ has exponent $\leqslant 2,$
and therefore either $D = K,$ or $D$ is a quaternion central division
algebra over $K$ (cf.\:\cite{Pie}, \S 18.6). Thus, either $A =
M_n(K)$, or $A = M_m(D)$, where $D$ is a quaternion central division algebra
over $K$, and $n=2m$. We will refer to the first possibility as the {\it split
case,} and to the second as the {\it nonsplit case.} Henceforth, we
will work only with {\it orthogonal}\, involutions, and in this
section will focus on the nonsplit case. Thus, $n$ will be even
throughout the section, and $m = n/2$.

Now, let $E$ be an $n$-dimensional commutative \'etale
$K$-algebra given with a $K$-involution $\sigma$ such that $F =
E^{\sigma}$ is of dimension $m$ (so
(\ref{E:I1}) of  \S \ref{S:I} holds). Then, according to Proposition
\ref{P:E12} we can identify $E$ with $F[x]/(x^2 - d)$ for some $d
\in F^{\times}$ so that $\sigma$ is defined by $x \mapsto -x.$
Theorem \ref{T:O-101} below (which implies assertion (iii) of
Theorem A of the introduction) is formulated for the case where $F$
is a field extension of $K$ and $m$ is odd, however most of our
considerations apply to a much more general situation (cf., in
particular, Theorem \ref{T:O-102}). So, we will assume that $F =
\prod_{j = 1}^r F_j$, $F_j$ a separable field
extension of $K$, and in terms of this decomposition the element $d \in
F^{\times}$ that defines $E$ is written as $d = (d_1, \ldots ,
d_r).$
\begin{thm}\label{T:O-101}
In the above notations, assume that $F$ is a field extension of $K$ of
degree $m$, and $m$ is odd.  If for every $v \in V^K$ there exists a
$K_v$-embedding
$$
\iota_v \colon (E \otimes_K K_v , \sigma \otimes \id_{K_v})
\hookrightarrow (A \otimes_K K_v , \tau \otimes \id_{K_v}),
$$
then there exists a $K$-embedding $\iota \colon (E , \sigma)
\hookrightarrow (A , \tau).$
\end{thm}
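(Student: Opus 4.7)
The plan is to combine the Skolem--Noether setup of Sections \ref{S:E}--\ref{S:G} with a Hasse principle, mirroring the strategy of Sections \ref{S:U} and \ref{S:Sym}. First, by restricting the local embeddings $\iota_v$ to the underlying algebra structure, one gets local $K_v$-embeddings $E \otimes_K K_v \hookrightarrow A \otimes_K K_v$ for all $v$, so Proposition \ref{P:E2} produces a global $K$-embedding $\varepsilon \colon E \hookrightarrow A$. Proposition \ref{P:G1} then furnishes a $\tau$-symmetric element $g \in A^\times$ such that the twisted involution $\theta := \tau \circ \mathrm{Int}\, g$ satisfies $\varepsilon \circ \sigma = \theta \circ \varepsilon$. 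By the equivalence ($i$) $\Leftrightarrow$ ($iii$) of Theorem \ref{T:G1}, the problem reduces to producing $b \in F^\times$ and $h \in A^\times$ satisfying
$$ g\varepsilon(b) = \tau(h) h. \qquad (\ast) $$

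Second, each local embedding $\iota_v$ supplies, again via Theorem \ref{T:G1}, a local pair $(b_v, h_v)$ solving $(\ast)$ over $K_v$. Taking reduced norms gives $\Nrd(g) \cdot N_{F/K}(b_v)^2 = \Nrd(h_v)^2$, so $\Nrd(g) \in K_v^{\times 2}$ for every $v$ and therefore $\Nrd(g) \in K^{\times 2}$ by Hasse--Minkowski. I would then use weak approximation for the torus $\mathrm{R}_{F/K}(\mathrm{GL}_1)$ (modulo its subgroup of squares) at the archimedean places to select a single $b \in F^\times$ whose square class is close enough to each $b_v$ that the local solvability of $(\ast)$ at real $v$ is preserved; for non-archimedean $v$ the local solvability of $(\ast)$ for this $b$ should be automatic from the surjectivity of the reduced norm on $A \otimes_K K_v$ and the triviality of the relevant cohomology, in the spirit of Remark 4.4(1).

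The main obstacle is the final step: establishing an orthogonal-nonsplit analog of Lemma \ref{L:U10} and Lemma \ref{L:Sym1}, namely that a $\tau$-symmetric element of $A^\times$ that is locally of the form $\tau(h_v)h_v$ at every real $v$ is globally of that form. Equivalently, the skew-hermitian form on $D^m$ associated to $g\varepsilon(b)$ must be globally isometric to the one associated to $1$, given that they are locally isometric everywhere. Since $G = \mathrm{SU}(A,\tau)$ is here of type $D_m$ and is not simply connected, the Hasse principle for $H^1(K,G)$ at real places is more delicate than in the symplectic case; the oddness of $m$, together with $D$ being a quaternion division algebra, should enter crucially in controlling the spinor-norm (Clifford-invariant) obstruction. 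I would aim to conclude either by lifting everything to the simply connected cover $\mathrm{Spin}(A,\tau)$, where the Hasse principle is unobstructed, and descending via a careful analysis of the spinor-norm map, or by directly invoking the Hasse principle for skew-hermitian forms over quaternion algebras. That such an argument should succeed only for $m$ odd is consistent with the genuine local-global failures exhibited by Theorem B in the case $4 \mid n$.
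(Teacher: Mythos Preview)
Your initial reduction (Proposition \ref{P:E2}, Proposition \ref{P:G1}, Theorem \ref{T:G1}) is correct and matches the paper. The difficulty is entirely in the endgame, and there your plan has a genuine gap.

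The step you call ``the main obstacle'' is in fact fatal as stated. The Hasse principle for skew-hermitian forms over a quaternion \emph{division} algebra with its canonical involution \emph{fails}; equivalently, $H^1(K,\mathrm{U}(A,\tau))$ does not inject into the product of local cohomologies (see \cite{K}, \S 5.11, or \cite{Sch}, Ch.~10, \S 4). So there is no orthogonal-nonsplit analogue of Lemma \ref{L:U10}/\ref{L:Sym1}: a $\tau$-symmetric element that is everywhere locally of the form $\tau(h_v)h_v$ need not be so globally. Oddness of $m$ does not rescue this; the failure already occurs for small odd $m$. Lifting to $\mathrm{Spin}(A,\tau)$ does not help either, because the obstruction lives in the cokernel of the spinor-norm map, not in $H^1$ of the simply connected cover. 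Your secondary claim, that for nonarchimedean $v$ the local solvability of $(\ast)$ with your chosen $b$ is ``automatic,'' is also wrong at the (infinitely many) places where $D\otimes_K K_v$ splits: there the problem becomes one of quadratic forms, and the Hasse invariant of the form attached to $\theta_b$ genuinely depends on the class of $b$ through the symbol $(b,d)_{F\otimes_K K_v}$.

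The paper's route is completely different and does not attempt a Hasse principle for $H$. It replaces ``locally isomorphic everywhere $\Rightarrow$ globally isomorphic'' by the Lewis--Tignol criterion \cite{LT}: $(A,\theta_a)\simeq(A,\tau)$ if and only if the signatures agree at all real places \emph{and} the Clifford algebras $C(A,\theta_a)$ and $C(A,\tau)$ are $K$-isomorphic. The Clifford class is controlled via the formula
\[
[C(A,\theta_a,\phi_a)] = [C(A,\theta,\phi)] + \Res_{Z/K}\Cor_{F/K}(a,d)_F
\]
(Proposition 5.3 of \cite{BC-KM}), and the argument then reduces to manufacturing $a\in F^\times$ with prescribed local Hilbert symbols $(a,d)$---this is where Lemmas \ref{L:HS} and \ref{L:HS1} enter. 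The hypotheses that $F$ is a field and $m$ is odd are used precisely to verify two auxiliary conditions, $(*)$ and $(\#)$, that make this symbol-matching possible (Proposition \ref{P:O-101} and Theorem \ref{T:O-102}). Clifford algebras do not appear in your proposal at all; they are the mechanism by which the paper bypasses the failure of the Hasse principle you were hoping to invoke.
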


\vskip2mm

{\bf Some facts about Clifford algebras.} The main difficulty in the
proof of Theorem \ref{T:O-101} is that orthogonal involutions on $A
= M_m(D),$ where $D$ is a quaternion division algebra, correspond to
(the similarity classes of) $m$-dimensional skew-hermitian forms
(with respect to the standard involution on $D$), and the Hasse
principle for (the equivalence of) such forms generally fails
(cf.\:\cite{K}, \S 5.11 or  \cite{Sch}, Ch.\:10, \S 4). However, one can
still use local-global considerations via an analysis of the associated
Clifford algebras. We refer the reader to \cite{BoI}, Ch.\:II, \S 8B, for
the notion and the structure of the Clifford
algebra $C(A , \nu)$ associated to a simple algebra $A$ with an
involution $\nu.$

\vskip1mm

We will crucially use a result of Lewis and Tignol \cite{LT}
which asserts that {\it for two orthogonal involutions $\tau_1$ and
$\tau_2$ of $A$ as above, $(A , \tau_1) \simeq (A , \tau_2)$}
(that is, $\tau_1$ and $\tau_2$ are conjugate in the
terminology of \cite{LT}) {\it if and only if they have the same
signature at every real place $v$ of $K$ (i.e., $(A \otimes_K K_v ,
\tau_1 \otimes \mathrm{id}_{K_v}) \simeq (A \otimes_K K_v , \tau_2
\otimes \mathrm{id}_{K_v})$), and the Clifford algebras $C(A ,
\tau_1)$ and $C(A , \tau_2)$ are $K$-isomorphic}. (This result
follows from Theorems A and B (see also Proposition 11) of \cite{LT}
since for a global field $K,$ the
fundamental ideal $I(K)$ of the Witt ring $W(K)$ has the property
that $I(K)^3$ (which is commonly denoted by $I^3(K)$ in the literature) is torsion-free,
and it is $\{ 0\}$ if $K$ does not embed in $\R$, cf., for example, \cite{Sch}, Theorem
14.6 in Ch.\:2 together with Corollary 6.6(vi) in
Ch.\:6.)

\vskip1mm

Another ingredient is the computation of classes in the Brauer group
corresponding to certain Clifford algebras. To formulate these
results, we need to make some preliminary remarks. If $\cE =
\prod_{j = 1}^r \cE_j$ is a commutative \'etale algebra over a field $\cK,$
where the $\cE_j$'s are finite separable field extensions of $\cK,$ then
$\Br(\cE)$ is defined to be $\bigoplus_{j = 1}^r \Br(\cE_j).$
Furthermore, the restriction and corestriction maps are defined by
$$
\Res_{\cE/\cK} \colon \Br(\cK) \to \Br(\cE), \ \ \alpha \mapsto
(\Res_{\cE_1/\cK}(\alpha), \ldots , \Res_{\cE_r/\cK}(\alpha)),
$$
and
$$
\Cor_{\cE/\cK} \colon \Br(\cE) \to \Br(\cK), \ \ (\alpha_1, \ldots ,
\alpha_r) \mapsto \Cor_{\cE_1/\cK}(\alpha_1) + \cdots +
\Cor_{\cE_r/\cK}(\alpha_r).
$$
For $a = (a_1, \ldots , a_r), b = (b_1, \ldots , b_r) \in
\cE^{\times},$ we define
$$
(a , b)_{\cE} = ((a_1 , b_1)_{\cE_1}, \ldots , (a_r , b_r)_{\cE_r})
\in \Br(\cE),
$$
where $(a_j , b_j)_{\cE_j}$ is the class in $\Br(\cE_j)$ of the
quaternion $\cE_j$-algebra defined by the pair $a_j , b_j.$ As
usual, if $\cE$ is a local field, then we identify $\Br(\cE)_2$ with
$\{ \pm 1 \},$ which makes $(a , b)_{\cE}$ into the Hilbert symbol.
(If $\cF$ is a global field and $v \in V^{\cF}$, then instead of
$(\cdot , \cdot)_{\cF_v}$ we will occasionally write $(\cdot ,
\cdot)_v$ if this is not likely to lead to  confusion.)
We note that if $\cK$ is a local field and $\cF$ is a quadratic field extension
of $\cK$, then $\Res_{\cF/\cK}(\Br(\cK)_2) = 0$ (cf.\:\cite{ANT}, Theorem 1.3 in Ch.\:VI).

\vskip1mm

Let now $A$ be a central simple $K$-algebra with an
orthogonal involution $\nu.$ Then the center $Z(C(A , \nu))$ of the
corresponding Clifford algebra $C(A, \nu)$ is a quadratic \'etale $K$-algebra
(cf.\:\cite{BoI}, Ch.\:II, Theorem 8.10), i.e., either a (separable) quadratic
field extension of $K$, or $K\times K  .$ Moreover, $C(A , \nu)$ is a
``simple" $Z(C(A , \nu))$-algebra, which in the case $Z(C(A , \nu))
= K \times K$ means that $C(A , \nu) = C_1 \times C_2$, where $C_1$
and $C_2$ are simple $K$-algebras. In all cases, one can consider
the corresponding class $[C(A , \nu)] \in \Br(Z(C(A , \nu))).$ Now,
fix a quadratic \'etale $K$-algebra $Z,$ and suppose that there
exists a $K$-isomorphism $\phi \colon Z \to Z(C(A , \nu)).$ Then
one can consider the simple $Z$-algebra $C(A , \nu, \phi)$ obtained
from $C(A , \nu)$ by change of scalars using $\phi,$ and also the
corresponding class $[C(A , \nu, \phi)] \in \Br(Z).$ Let
$\overline{\phi} \colon Z \to Z(C(A , \nu))$ be the other
$K$-isomorphism. Then
\begin{equation}\label{E:O-150}
[C(A, \nu, \overline{\phi})] - [C(A, \nu , \phi)] = \Res_{Z/K}([A])
\end{equation}
(cf.\:\cite{BoI}, (9.9) and Proposition 1.10). It follows that {\it if $\nu_1$ and
$\nu_2$ are two orthogonal involutions of $A$ such that the centers
of $C(A , \nu_i)$ are isomorphic to $Z$ for $i = 1 , 2,$ then $C(A ,
\nu_1) \simeq C(A , \nu_2)$ if and only if for some (equivalently,
any) isomorphisms $\phi_i \colon Z \to Z(C(A , \nu_i))$, one of the
following two conditions holds:
\vskip3mm

\hskip1cm$[C(A, \nu_1, \phi_1)] = [C(A, \nu_2, \phi_2)]$}

\vskip2mm

\noindent {or}

\hskip1cm $[C(A, \nu_1, \phi_1)] = [C(A, \nu_2, \phi_2)] +
\Res_{Z/K}([A]).$

\vskip3mm

After these recollections, we are ready to embark on our investigation of
the local-global principle in the situation described prior to Theorem
\ref{T:O-101}. First, we observe that the existence of
$K_v$-embeddings $\iota_v$, for all $v \in V^K$, as in the statement
of Theorem \ref{T:O-101} implies that

\vskip2mm

\noindent $\bullet$ \ \parbox[t]{11.5cm}{\it there exists a
$K$-embedding $\varepsilon \colon E \hookrightarrow A$ which may or
may not respect involutions.}

\vskip2mm

\noindent Next, using Proposition \ref{P:G1}, we can construct an
involution $\theta$ of $A$ for which (\ref{E:15}) holds. For $a \in
F^{\times},$ we let $\theta_a$ denote the involution $\theta \circ
\Int \varepsilon(a)$ (then
(\ref{E:15}), with $\theta$ replaced by $\theta_a$, holds). According to Theorem \ref{T:G1}, the existence of
$\iota_v$ is equivalent to the existence of $a_v \in (F \otimes_K
K_v)^{\times}$ such that
\begin{equation}\label{E:O-151}
(A \otimes_K K_v , (\theta \otimes \mathrm{id}_{K_v})_{a_v}) \simeq
(A \otimes_K K_v , \tau \otimes \mathrm{id}_{K_v}).
\end{equation}
We now observe that the centers of the Clifford algebras
$C(A \otimes_K K_v , (\theta\otimes \mathrm{id}_{K_v})_{a_v})$ and
$C(A \otimes_K K_v , \theta\otimes \mathrm{id}_{K_v})$ are isomorphic - this follows from the
description of the center given in \cite{BoI}, Theorem 8.10, the
definition of the discriminant of an orthogonal involution, {\it
loc.\:cit.,} \S 7A, and the fact that
$$
\mathrm{Nrd}_{A \otimes_K K_v/K_v}(a_v) = N_{E \otimes_K
K_v/K_v}(a_v) = N_{F \otimes_K K_v/K_v}(a_v)^2 \in {K_v^{\times}}^2,
$$
from which we deduce that
$$
Z(C(A , \theta)) \otimes_K K_v \simeq Z(C(A \otimes_K K_v , (\theta
\otimes \mathrm{id}_{K_v})_{a_v})) \simeq Z(C(A , \tau)) \otimes_K
K_v
$$
for all $v \in V^K.$ Using Tchebotarev's Density Theorem, we
conclude that

\vskip2mm

\noindent $\bullet$ $Z(C(A , \theta)) \simeq Z(C(A , \tau)).$

\vskip2mm

\noindent We will denote this quadratic \'etale $K$-algebra
by $Z,$ and fix isomorphisms $\phi \colon Z \to
Z(C(A, \theta))$ and $\psi \colon Z \to Z(C(A , \tau)).$ A
fundamental role in our analysis is played by the following
computation of the class of the Clifford algebra $C(A, \theta_a)$ valid over an arbitrary field $K$ of characteristic $\ne 2$
(cf.\:\cite{BC-KM}, Proposition 5.3):
\begin{equation}\label{E:O-155}
[C(A, \theta_a, \phi_a)] = [C(A, \theta, \phi)]  + \Res_{Z/K}
\Cor_{F/K}((a , d)_F).
\end{equation}
In our argument, we will not need the precise description of the
isomorphism $\phi_a$ involved in this equation, the only property
that will be used is that $\phi_a$ depends only on the coset
$aN_{E/F}(E^{\times}) \in F^{\times}/N_{E/F}(E^{\times})$,
cf.\:\cite{BC-KM}, p.\:99;  in particular, $\phi_a = \phi$ if $a \in
{F^{\times}}^{ 2}.$

\vskip1mm

According to Theorem \ref{T:G1}, the existence of $\iota: (E,\sigma)\hookrightarrow (A,\tau)$ is
equivalent to the existence of an $a \in F^{\times}$ such that $(A ,
\theta_a) \simeq (A , \tau),$ and we are now in a position to prove
the following local-global principle for that.
\begin{prop}\label{P:O-101}
Suppose that for each place $v \in V^K$ one can choose an element
$a_v \in (F \otimes_K K_v)^{\times}$ so that the following
conditions are satisfied:

\vskip3mm

\noindent {\rm (a)} $(A \otimes_K K_v , (\theta \otimes
\id_{K_v})_{a_v}) \simeq (A \otimes_K K_v , \tau \otimes \id_{K_v})$
for all $v \in V^K_r;$

\vskip2mm

\noindent {\rm (b)} \parbox[t]{12cm}{one of the following two families of
equalities in $\Br(Z \otimes_K K_v):$
$$
[C(A \otimes_K K_v , (\theta \otimes \id_{K_v})_{a_v}, \phi_{a_v})]
= [C(A, \tau, \psi) \otimes_K K_v] \hskip2.5cm
$$
and
$$
[C(A \otimes_K K_v , (\theta \otimes \id_{K_v})_{a_v}, \phi_{a_v})]=
[C(A , \tau, \psi) \otimes_K K_v]  \hskip2.5cm
$$
$$
\hskip7cm + \Res_{Z \otimes_K K_v/K_v}[A \otimes_K K_v],
$$
\vskip2mm
holds for all $v \in V^K.$}

\vskip3mm

\noindent Assume also that the following condition holds:

\vskip2mm

\noindent {$(*)$} \parbox[t]{12cm}{for any finite subset $V$ of  $V^K$,
there exists $v_0 \in V^K \setminus V$ such that for $j\leqslant r$, if $d_j \notin {F_j^{\times}}^{2},$
then $d_j\notin {(F_j\otimes_K K_{v_0})^{\times}}^{2}$,  and moreover, $Z \otimes_K
K_{v_0}$ is a field if $Z$ is a field.}

\vskip3mm

\noindent Then there exists an $a \in F^{\times}$ such that $(A ,
\theta_a) \simeq (A , \tau).$ Furthermore, condition~{$(*)$} holds
automatically if $F/K$ is a field extension of odd degree.
\end{prop}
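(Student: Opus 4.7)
The plan is to combine the Lewis--Tignol criterion with formula (\ref{E:O-155}) to reduce the existence of $a$ to a Hilbert-symbol realization problem in $\Br(Z)_2$, and then to patch the local data via Exercise~2.16 of \cite{ANT}, using the auxiliary place supplied by $(*)$ to absorb the global obstruction. Setting $b := [C(A,\tau,\psi)] - [C(A,\theta,\phi)] \in \Br(Z)$, Lewis--Tignol together with (\ref{E:O-150}) and (\ref{E:O-155}) says that $(A,\theta_a) \simeq (A,\tau)$ is equivalent to (i) the signatures of $\theta_a$ and $\tau$ matching at each real place and (ii) the global identity
\[
\Res_{Z/K}\Cor_{F/K}\bigl((a,d)_F\bigr) \ \in \ \bigl\{\,b,\ b + \Res_{Z/K}([A])\,\bigr\}.
\]

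I would first choose a finite set $V \subset V^K$ containing all real places, all places above $2$, and all places where $F$, $Z$, or $A$ is ramified or nonsplit. Using weak approximation in $F$, pick $a^{(0)} \in F^{\times}$ with $a^{(0)}/a_v \in ((F\otimes_K K_v)^{\times})^2$ for every $v \in V$; since $(F^{\times})^2 \subseteq N_{E/F}(E^{\times})$, the isomorphism $\phi_{a^{(0)}}$ agrees locally with $\phi_{a_v}$ at these places, and the local Hilbert symbols $(a^{(0)},d) = (a_v,d)$ in $\Br(F\otimes_K K_v)$ coincide. Together with hypotheses (a) and (b), this secures (i) at the real places and makes the local Clifford-class discrepancy $\delta_v \in \Br(Z\otimes K_v)$ lie in $\{0,\Res_{Z/K}([A])|_v\}$ at every $v \in V$; at $v \notin V$ all data are unramified and $\Res_{Z/K}([A])|_v = 0$, so $\delta_v = 0$ there automatically.

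The core step is to correct $a^{(0)}$ by a factor $c \in F^{\times}$, with $c$ a square at every $v \in V$, so that $a := a^{(0)} c$ satisfies (ii) globally. The change is $\Res_{Z/K}\Cor_{F/K}((c,d)_F) \in \Br(Z)_2$, a sum over components $j = 1,\ldots,r$ of restrictions and corestrictions of Hilbert symbols $(c_j, d_j)$ on $F_j$. I would apply Exercise~2.16 of \cite{ANT} factor by factor to prescribe the local values of $(c_j, d_j)_{(F_j)_w}$ at every place $w$ of $F_j$, subject to the product-formula parity constraint on each $F_j$. The auxiliary place $v_0$ from $(*)$ absorbs these parities: the non-squareness of each $d_j$ at $v_0$ provides a place $w_j$ of $F_j$ at which both signs of $(c_{j,w_j}, d_j)$ are attainable, while the field property of $Z\otimes K_{v_0}$ forces $\Res_{Z/K}([A])|_{v_0} = 0$ (by the quadratic-extension fact $\Res_{\cF/\cK}(\Br(\cK)_2) = 0$ recalled in the text), so that the two permissible global targets $b$ and $b + \Res_{Z/K}([A])$ coincide at $v_0$, leaving the choice between them unconstrained there. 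The main obstacle I anticipate is the simultaneous coordination of these per-factor corrections with the global target $\epsilon \in \{0,1\}$, requiring careful use of the product formula and of the vanishing of $\Res_{Z/K}([A])|_{v_0}$. Once $a = a^{(0)}c$ is produced, Lewis--Tignol yields $(A,\theta_a) \simeq (A,\tau)$ and hence the desired embedding by Theorem~\ref{T:G1}.

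Finally, when $F/K$ is a field extension of odd degree, so $r = 1$ and $F_1 = F$, condition $(*)$ holds automatically by Tchebotarev density applied to the finite extension $F(\sqrt d) \cdot Z$ of $K$: if $d \notin F^{\times 2}$ and $Z$ is a field, the set of $v_0 \in V^K$ whose Frobenius acts nontrivially on $\sqrt d$ through some place of $F$ above $v_0$ and simultaneously acts nontrivially on $Z$ has positive density, so any prescribed finite $V$ can be avoided.
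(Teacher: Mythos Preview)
Your overall strategy coincides with the paper's: reduce via Lewis--Tignol to matching signatures and Clifford classes, translate the Clifford condition through (\ref{E:O-155}) into a Hilbert-symbol realization problem, and use an auxiliary place $v_0$ from $(*)$ to close the argument. However, there is a genuine gap in your choice of the finite set $V$. You include the real places, the places above $2$, and the places where $F$, $Z$, or $A$ ramifies, and then assert that for $v \notin V$ ``all data are unramified \ldots\ so $\delta_v = 0$ there automatically.'' This is false: the classes $[C(A,\theta,\phi)]$ and $[C(A,\tau,\psi)]$ depend on the \emph{involutions} $\theta,\tau$, not merely on $A$, so your element $b$ may be nontrivial at places where $A$, $F$, $Z$ are all unramified. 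The paper addresses this by explicitly including in its controlling set $S$ the finite set $S_2 = \{\,v : [C(A,\theta,\phi)\otimes_K K_v] \neq [C(A,\tau,\psi)\otimes_K K_v]\,\}$; you must enlarge $V$ likewise.

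Two further points. First, your two-step scheme (approximate by $a^{(0)}$, then correct by $c$ via Exercise~2.16 of \cite{ANT}) runs into the difficulty that Exercise~2.16 controls only the Hilbert symbols $(c_j,d_j)_w$, not the square class of $c$; you need $c$ to be an actual square at the real places in $V$ to preserve the signature match, and $(c,d)_v=1$ does not force this when $d$ is a local square. The paper avoids this by packaging both requirements into Lemma~\ref{L:HS1} (built on Lemma~\ref{L:HS}, which is Exercise~2.16 followed by a weak-approximation step inside the norm group), producing $a$ in one stroke with $aa_v^{-1}$ a square on $S$ and $(a,d)_{F_v}=1$ off $S\cup\{v_0\}$. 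Second, the decisive role of the field condition on $Z\otimes_K K_{v_0}$ is not that $\Res_{Z/K}([A])|_{v_0}=0$ (true, but insufficient), but that it makes $\Br(Z)\to\bigoplus_{v\neq v_0}\Br(Z\otimes_K K_v)$ injective, since one is omitting a \emph{single} place of $Z$; this is what lets you skip verification at $v_0$ altogether. Your sketch of the automatic $(*)$ for odd-degree $F/K$ is on the right track, but the paper's cleaner route is to apply Tchebotarev to $EZ/F$ (using that odd degree forces $Z\not\subset F$, hence $\Ga(EZ/F)$ contains an element nontrivial on both $E$ and $Z$), find $w_0\in V^F$, and set $v_0=w_0|_K$.
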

\vskip2mm

For the proof of this proposition, we need the following two lemmas
about the Hilbert symbol. (In essence, these lemmas
are well-known, but we have not been able to locate suitable
references for them.)
\begin{lemma}\label{L:HS}
Let $\cF$ be a global field of characteristic $\neq 2,$ and $t \in
\cF^{\times}.$ Suppose that for each $v \in V^{\cF}$ we are given
$\alpha_v \in \{ \pm 1 \}$ and $s_v \in \cF_v^{\times}$ so that
$(s_v , t)_v = \alpha_v$ for all $v \in V^{\cF},$  $\alpha_v = 1$
for all but finitely many  $v \in V^{\cF},$ and $\displaystyle \prod_{v \in
V^{\cF}} \alpha_v = 1$ (here $(\cdot , \cdot)_v$ denotes the Hilbert
symbol on $\cF_v$). Then for any finite subset $\cS$ of
$V^{\cF},$ there exists $s \in \cF^{\times}$ such that $(s , t)_v =
\alpha_v$ for all $v \in V^{\cF}$, and $s \in s_v{\cF_v^{\times}}^{ 2}$
for all $v \in \cS.$
\end{lemma}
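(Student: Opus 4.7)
The strategy has two stages: first use weak approximation to arrange the desired congruence modulo squares at $\cS$, then correct the Hilbert symbols at the remaining places via Exercise 2.16 of \cite{ANT}. The subtlety is that the Hilbert-symbol correction must not perturb the congruences at $\cS$, which we arrange by multiplying by a suitable global norm from $E := \cF(\sqrt{t})$. If $t \in {\cF^\times}^2$, then $\alpha_v = 1$ for every $v$ and the lemma reduces to ordinary weak approximation modulo squares; so assume henceforth that $t$ is not a square in $\cF$.

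In the first stage, by weak approximation applied to $\cF$ together with the openness of ${\cF_v^\times}^2$ in $\cF_v^\times$, we find $s_1 \in \cF^\times$ with $s_1 \in s_v{\cF_v^\times}^2$ at every $v \in \cS$; in particular, $(s_1,t)_v = \alpha_v$ at such $v$. Set $\gamma_v := \alpha_v(s_1,t)_v$. Then $\gamma_v = 1$ for $v \in \cS$, $\gamma_v = 1$ for almost all $v$, and $\prod_v \gamma_v = 1$ by Hilbert reciprocity applied to $s_1$. Local realizability of $\gamma_v$ (as needed for the cited Exercise 2.16 of \cite{ANT}) is immediate: take $x_v = 1$ for $v \in \cS$ and $x_v = s_vs_1^{-1}$ for $v \notin \cS$. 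Applying that exercise to the pair $(t,(\gamma_v))$ yields $u_0 \in \cF^\times$ with $(u_0,t)_v = \gamma_v$ for every $v$.

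The second stage replaces $u_0$ by $u_0 w_0^{-1}$ for an appropriate global norm $w_0 \in N_{E/\cF}(E^\times)$ so that the result becomes a square at each $v \in \cS$; since $(w_0,t)_v = 1$ for all $v$ when $w_0$ is a global norm, this modification does not alter the Hilbert symbols. Let $c_v \in \cF_v^\times$ represent the class of $u_0$ modulo squares at $v \in \cS$. Since $(c_v,t)_v = \gamma_v = 1$, the element $c_v$ lies in the image of the local norm $N_{E\otimes_\cF\cF_v/\cF_v}$ (which equals $\ker((\cdot,t)_v)$, the split case being trivial), so we can pick $z_v \in (E\otimes_\cF\cF_v)^\times$ with $N(z_v) = c_v$. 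By weak approximation in the global field $E^\times$ at the finitely many places of $E$ lying over $\cS$, there exists $z \in E^\times$ approximating each $z_v$; then $w_0 := N_{E/\cF}(z) \in \cF^\times$ is a global norm, hence $(w_0,t)_v = 1$ for all $v$, and, by continuity of the norm and openness of squares, $w_0 \in c_v{\cF_v^\times}^2$ for every $v \in \cS$. Setting $s := s_1 u_0 w_0^{-1}$, we obtain $(s,t)_v = (s_1,t)_v\gamma_v = \alpha_v$ for all $v$, and $s \equiv s_1 \equiv s_v \pmod{{\cF_v^\times}^2}$ at each $v \in \cS$. The main obstacle is precisely this final norm-correction step: translating the condition $(c_v,t)_v = 1$ into the existence of a global norm with prescribed square classes at $\cS$ rests on the local identification of the norm image with $\ker((\cdot,t)_v)$ combined with weak approximation in the quadratic extension $E/\cF$.
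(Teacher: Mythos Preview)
Your proof is correct and uses exactly the same two ingredients as the paper's argument: the global realization of prescribed Hilbert symbols (Exercise~2.16 in \cite{ANT}) and a correction by a global norm from $E=\cF(\sqrt{t})$ obtained via weak approximation, exploiting that global norms have trivial Hilbert symbol with $t$ at every place. The only difference is the order of operations. The paper first applies Exercise~2.16 to produce $s_0$ with $(s_0,t)_v=\alpha_v$ everywhere, observes that then $s_0s_v^{-1}$ lies in the local norm group $N_v$ for each $v\in\cS$, and uses density of $N=N_{E/\cF}(E^\times)$ in $\prod_{v\in\cS}N_v$ to find $z\in N$ with $s_0z^{-1}\in s_v{\cF_v^\times}^2$; this gives $s=s_0z^{-1}$ in two steps. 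You instead fix the square classes at $\cS$ first, then the Hilbert symbols, and then must re-correct the square classes at $\cS$ by a norm, giving three steps rather than two. The paper's ordering is marginally more economical, but the mathematical content is identical.
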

\begin{proof}
The existence of $s_0 \in \cF^{\times}$ satisfying $(s_0 , t)_v =
\alpha_v$ for all $v \in V^{\cF}$ follows from the result described
in the footnote in the proof of Lemma \ref{L:U1}.
So, we will only indicate how to modify $s_0$ so that the resulting
$s$ would also satisfy the additional condition $s \in
s_v{\cF_v^{\times}}^{2}$ for $v \in \cS.$ Let $\cE = \cF(\sqrt{t})$
and $\cE_v = \cF_v(\sqrt{t})$ for $v \in V^{\cF},$ and consider the
corresponding norm groups
$$
N = N_{\cE/\cF}(\cE^{\times}) \ \ , \ \ N_v = N_{\cE \otimes_{\cF}
\cF_v/\cF_v}((\cE \otimes_{\cF} \cF_v)^{\times}) =
N_{\cE_v/\cF_v}(\cE_v^{\times}).
$$
It follows from the weak approximation property that $N$ is dense in
$\prod_{v \in \cS} N_v,$ and therefore,
\begin{equation}\label{E:HS}
\prod_{v \in \cS} N_v = N \cdot \left(\prod_{v \in \cS}
{\cF_v^{\times}}^{2} \right)
\end{equation}
Since $(s_0 , t)_v = (s_v , t)_v$ for all $v \in \cS,$ we see that
$(s_0s_v^{-1})_{v \in \cS} \in \prod_{v \in \cS} N_v.$ So by
(\ref{E:HS}), there exists $z \in N$ such that $s_0s_v^{-1}z^{-1}
\in {\cF_v^{\times}}^{2}$ for all $v \in \cS.$ Then, for $s = s_0z^{-1}$

$$
(s , t)_v = (s_0 , t)_v = \alpha_v \ \  \text{for all} \ \  v \in
V^{\cF},
$$
and $s \in s_v{\cF_v^{\times}}^{2},$ as required.
\end{proof}

\begin{lemma}\label{L:HS1}
Let $\cF = \prod_{j = 1}^r \cF_j$ be a commutative \'etale
algebra over a global field $\cK,$ and $t = (t_1, \ldots , t_r) \in
\cF^{\times}.$ For $v\in V^{\cK}$, let $\cF_v
=\cF\otimes_{\cK}\cK_v$. Suppose we are given a finite subset $\cS
\subset V^{\cK},$ and for each $v \in \cS$, an element $s_v \in \cF^{\times}_v.$ Furthermore, let $v_0 \in V^{\cK}
\setminus \cS$ be such that for each $j\leqslant r$ with $t_j \notin
{\cF_j^{\times}}^{2}$, we have $t_j \notin {(\cF_j \otimes_{\cK}
\cK_{v_0})^{\times}}^2.$ Then there exists $s \in \cF^{\times}$ such
that $ss_v^{-1} \in {\cF_v^{\times}}^{ 2}$ for
all $v \in \cS$, and  $(s , t)_{\cF_v} = 1$ for all $v \in V^{\cK}
\setminus (\cS \cup \{ v_0 \}).$
\end{lemma}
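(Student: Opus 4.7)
The plan is to reduce the problem, coordinate by coordinate, to Lemma~\ref{L:HS} applied to each of the global fields $\cF_j$. Writing $s = (s^{(1)},\ldots,s^{(r)}) \in \cF^{\times}$ and using the factorization $\cF_j \otimes_{\cK} \cK_v = \prod_{w \mid v} \cF_{j,w}$, both conditions ``$s s_v^{-1} \in (\cF_v^{\times})^2$ for $v \in \cS$'' and ``$(s,t)_{\cF_v} = 1$ for $v \notin \cS \cup \{v_0\}$'' decouple in the index $j$ and in the places of $\cF_j$ over $v$. Thus it suffices, for each $j$, to produce $s^{(j)} \in \cF_j^{\times}$ such that $s^{(j)}$ lies in the square class of the $w$-component of $s_v^{(j)}$ in $\cF_{j,w}$ whenever $w \mid v \in \cS$, and such that $(s^{(j)}, t_j)_w = 1$ at every place $w$ of $\cF_j$ that does not lie above $\cS \cup \{v_0\}$.

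Fix $j$ and set up input for Lemma~\ref{L:HS} over $\cF_j$ with the element $t_j \in \cF_j^{\times}$. At each place $w \mid v$ with $v \in \cS$, let $s_w$ be the $w$-component of $s_v^{(j)}$ and set $\alpha_w := (s_w, t_j)_w$. At each place $w \mid v$ with $v \in V^{\cK} \setminus (\cS \cup \{v_0\})$, set $\alpha_w := 1$ and $s_w := 1$. Put $\beta_j := \prod_{v \in \cS} \prod_{w \mid v} \alpha_w \in \{\pm 1\}$. The outstanding task is to assign values $\alpha_w$ and elements $s_w$ at the finitely many $w \mid v_0$, realising $\alpha_w = (s_w, t_j)_w$, and such that the global product $\prod_{w \in V^{\cF_j}} \alpha_w$ equals $1$, so that Lemma~\ref{L:HS} becomes applicable.

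If $t_j \in (\cF_j^{\times})^2$, every Hilbert symbol against $t_j$ is trivial, so $\beta_j = 1$ and one takes $\alpha_w = 1$, $s_w = 1$ for all $w \mid v_0$. Otherwise, the hypothesis on $v_0$ forces $t_j$ not to be a square in $\cF_j \otimes_{\cK} \cK_{v_0} = \prod_{w \mid v_0} \cF_{j,w}$, so there is a place $w_0 \mid v_0$ at which $t_j$ is a non-square. The pairing $(\,\cdot\,, t_j)_{w_0}$ is then a non-trivial character on $\cF_{j,w_0}^{\times}/(\cF_{j,w_0}^{\times})^2$ and attains both values $\pm 1$; choose $\alpha_{w_0}$ so that $\alpha_{w_0}\beta_j = 1$, pick $s_{w_0}$ realising it, and set $\alpha_w = 1$, $s_w = 1$ at the remaining $w \mid v_0$. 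Lemma~\ref{L:HS}, applied with its finite set taken to be the places of $\cF_j$ lying above $\cS$, now yields the required $s^{(j)} \in \cF_j^{\times}$. Assembling $s = (s^{(1)}, \ldots, s^{(r)})$ finishes the proof.

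The only real obstacle is the product-formula constraint needed to invoke Lemma~\ref{L:HS}; the condition on $v_0$ is precisely what supplies enough local freedom to absorb the sign $\beta_j$, and without it the conclusion would fail (consistent with the non-removability of the hypothesis $\prod_v \alpha_v = 1$ in Lemma~\ref{L:HS}).
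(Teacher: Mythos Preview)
Your proof is correct and follows essentially the same approach as the paper's own proof: both reduce to the field factors $\cF_j$, handle the square case trivially, and in the non-square case use the hypothesis on $v_0$ to locate a place $w_0 \mid v_0$ where $t_j$ is a non-square, then adjust $\alpha_{w_0}$ to balance the product formula before invoking Lemma~\ref{L:HS}. The two arguments are indistinguishable in substance.
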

\begin{proof}
It is enough to consider the case where $\cF$ is a field and $t
\notin {\cF^{\times}}^{2}$ (indeed, if $t \in {\cF^{\times}}^{ 2}$, then
everything boils down to proving the existence of an $s \in
\cF^{\times}$ such that $s \in s_v{\cF_v^{\times}}^{2}$ for all $v \in
\cS,$ which is obvious). We now define $\alpha_w \in \{ \pm 1 \}$
for all $w \in V^{\cF}$ as follows. For $v \in V^{\cK},$ we let
$w^{(1)}, \ldots , w^{(\ell_v)}$ denote all the extensions of $v$ to
$\cF.$ Then we have
$$
\cF_v=\cF \otimes_{\cK} \cK_{v} = \prod_{k = 1}^{\ell_v} \cF_{w^{(k)}}.
$$
In particular, for $v \in \cS,$ in terms of this decomposition, we write
$$
s_v = (s_{w^{(1)}}, \ldots , s_{w^{(\ell_v)}}),
$$
and we then set $\alpha_{w^{(k)}} = (s_{w^{(k)}} ,
t)_{\cF_{w^{(k)}}}$ for $k \leqslant \ell_v.$ Furthermore, if $w
\in V^{\cF}$ lies over $v \in V^{\cK} \setminus (\cS \cup \{ v_0
\}),$ we set $\alpha_w = 1.$ Finally, if $w_0^{(1)}, \ldots ,
w_0^{({\ell_0})}$ are the extensions of $v_0,$ then by our assumption,
there exists $k_0\leqslant \ell_0$ such that $t \notin
{\cF_{w_0^{(k_0)}}^{\times^2}}$. We then set $\alpha_{w_0^{(k)}} = 1$
for $k \neq k_0,$ and let $\alpha_{w_0^{(k_0)}} = \prod_{w \neq
w_0^{(k_0)}} \alpha_w$ where the product is taken over all $w \in
V^{\cF} \setminus \{ w_0^{(k_0)} \}$ (notice that the $\alpha_w$'s
for all these places have already been defined). Then $\prod_{w \in
V^{\cF}} \alpha_w = 1,$ and for each $w \in V^{\cF}$, there exists
$a_w \in \cF_w^{\times}$ such that $(a_w , t)_{\cF_w} = \alpha_w:$
indeed, if $w \vert v$, where $v \in \cS$, then one takes for $a_w$
the $w$-component of $s_v;$ for any $w \neq w_0^{(k_0)}$ lying over
$v \in V^{\cK} \setminus \cS$ we can takes $a_w = 1,$ and finally,
such $a_w$ exists for $w = w_0^{(k_0)}$ because $t \notin
{\cF_w^{\times}}^{2}.$ Now, our claim follows from Lemma~\ref{L:HS}.
\end{proof}

\vskip2mm

\noindent {\it Proof of  Proposition \ref{P:O-101}.} Let
$$
S_1 = \{ v \in V^K_f \ \vert \ A \otimes_K K_v \not\simeq M_n(K) \}
\cup V^K_r,
$$
$$
S_2 = \{ v \in V^K  \vert \ [C(A, \theta, \phi) \otimes_K K_v] \neq
[C(A, \tau, \psi) \otimes_K K_v] \ \ \text{in} \ \ \Br(Z \otimes_K
K_v) \},
$$
and $S = S_1 \cup S_2.$ Using $(*)$ for $V=S$, we can find $v_0 \in V^K
\setminus S$ with the properties described therein, and then it
follows from Lemma \ref{L:HS1} that there exists an $a \in F^{\times}$
such that
$$
aa_v^{-1} \in {F_v^{\times}}^2\ \ \text{for all} \ \ v
\in S \ \ \text{and} \  \ (a , d)_{F_v} = 1 \ \ \text{for all} \ \ v
\in V^K \setminus (S \cup \{ v_0 \}),
$$
where $F_v =F\otimes_K K_v$. We claim that $a$ is as required, i.e.,
\begin{equation}\label{E:O-101}
(A , \theta_a) \simeq (A , \tau) \ \ \text{as}\ K\text{-algebras
with involution.}
\end{equation}
According to the result of Lewis and Tignol mentioned above, to
establish (\ref{E:O-101}), it is enough to show that $\theta_a$ and
$\tau$ have the same signature at every real places of $K,$ i.e.,
\begin{equation}\label{E:O-102}
(A \otimes_K K_v , \theta_a \otimes \id_{K_v}) \simeq (A \otimes_K
K_v , \tau \otimes \id_{K_v}) \ \ \text{for all} \ \ v \in V^K_r,
\end{equation}
and
\begin{equation}\label{E:O-103}
C(A , \theta_a) \simeq C(A , \tau) \ \ \text{as}\ K\text{-algebras.}
\end{equation}
We notice that (\ref{E:O-102}) immediately follows from condition
(a) in the statement of the proposition and the fact that $aa_v^{-1}
\in {(F \otimes_K K_v)^{\times}}^{2}$ for all $v \in V^K_r.$ To
prove (\ref{E:O-103}), we set $\psi_0 = \psi$ if the first family of
equalities in condition (b) holds, and $\psi_0 = \overline{\psi},$
the other isomorphism between $Z$ and $Z(C(A , \tau)),$ if the
second family of equalities in condition (b) hold.  Then it follows
from (\ref{E:O-150}) that
\begin{equation}\label{E:O-104}
[C(A \otimes_K K_v , (\theta \otimes \id_{K_v})_{a_v}, \phi_{a_v})]
= [C(A, \tau, \psi_0) \otimes_K K_v] \ \ \text{for all} \ \ v \in
V^K.
\end{equation}
We now recall that by our construction, $v_0$ has the property that
if $Z/K$ is a quadratic field extension, then so is $Z \otimes_K
K_{v_0}/K_{v_0},$ which implies that the map of the Brauer groups
$$
\Br(Z) \longrightarrow \bigoplus_{v \neq v_0} \Br(Z \otimes_K K_v)
$$
is injective. So, to prove that $[C(A , \theta_a , \phi_a)] = [C(A,
\tau, \psi_0)]$ in  $\Br(Z),$ which will immediately yield
(\ref{E:O-103}), it is enough to show that
\begin{equation}\label{E:O-105}
[C(A , \theta_a , \phi_a) \otimes_K K_v] = [C(A, \tau, \psi_0)
\otimes_K K_v] \ \ \text{in} \ \ \Br(Z \otimes_K K_v),
\end{equation}
for all $v \in V^K\setminus \{ v_0 \}.$ If $v \in S$, then
$aa_v^{-1} \in {(F \otimes_K K_v)^{\times}}^{2},$ so
$$
[C(A , \theta_a , \phi_a) \otimes_K K_v] = [C(A \otimes_K K_v ,
(\theta \otimes \id_{K_v})_{a_v}, \phi_{a_v})],
$$
and (\ref{E:O-105}) follows from (\ref{E:O-104}). Now, suppose $v
\in V^K \setminus (S \cup  \{ v_0 \}).$ Since $v \notin S_2$, and by
our construction $(a , d)_{F_v} = 1,$ using (\ref{E:O-155}), we
obtain that
$$
[C(A , \theta_a, \phi_a) \otimes_K K_v] = [C(A, \theta, \phi)
\otimes_K K_v] = [C(A , \tau, \psi) \otimes_K K_v].
$$
On the other hand, since $v \notin S_1,$ according to
(\ref{E:O-150}), we have
$$ [C(A , \tau, \psi) \otimes_K K_v] = [C(A , \tau, \psi_0)
\otimes_K K_v],
$$
and again (\ref{E:O-105}) follows.

\vskip2mm

Finally, we will show that $(*)$ automatically holds if $F/K$ is a
field extension of odd degree. Indeed, if $d \in {F^{\times}}^{2}$
then all we need to prove is that there exists $v_0 \in V^K
\setminus V$ such that $Z \otimes_K K_{v_0}$ is a field if $Z$ is a
field, which immediately follows from Tchebotarev's Density Theorem.
Thus, we may suppose that $d \notin {F^{\times}}^{2},$ so that $E =
F(\sqrt{d})$ is a quadratic extension of $F,$ and then we let $L =
E$ if $Z = K \times K,$ and let $L = EZ$  if $Z/K$ is a quadratic
field extension. Then $L/F$ is a Galois extension with Galois group
isomorphic to $\Z/2\Z$ or $\Z/2\Z \times \Z/2\Z.$ In either case,
there exists $\phi \in \Ga(L/F)$ that acts {\it nontrivially} on
$E,$ and also on $Z$ if $Z/K$ is a quadratic extension (notice that
in this case $Z \not\subset F$ as $F$ has odd degree over $K$). By
Tchebotarev's Density Theorem, there exist infinitely many $w_0 \in
V^F_f$ such that $L/F$ is unramified at $w_0$ and the corresponding
Frobenius automorphism is $\phi.$ In particular, we can choose such
a $w_0$ which lies over some $v_0 \in V^K \setminus V,$ and then
this $v_0$ is as required.

\vskip3mm

We will derive Theorem \ref{T:O-101} from the following result which
applies also in the case where $m$ is even.
\begin{thm}\label{T:O-102}
Let $A = M_m(D)$, where $D$ is a quaternion division algebra over a
global field $K$ of characteristic $\ne 2$, and $\tau$ be an orthogonal involution of $A.$
Furthermore, let $F$ be a commutative \'etale $K$-algebra of degree
$m,$ and $E = F[x]/(x^2 - d)$ for some $d \in F^{\times}$ with the
involution $\sigma \colon x \mapsto -x.$ Assume that for every $v
\in V^K$ there exists a $K_v$-embedding
$$
\iota_v \colon (E \otimes_K K_v , \sigma \otimes \id_{K_v})
\hookrightarrow (A \otimes_K K_v , \tau \otimes \id_{K_v}).
$$
Moreover, assume that condition {$ (*)$} of Proposition
\ref{P:O-101} holds along with the following condition

\vskip2mm

\noindent {$(\#)$} \parbox[t]{11.5cm}{for all $v \in V^K$ such
that $A \otimes_K K_v \not\simeq M_n(K_v)$ and $Z \otimes_K K_v
\simeq K_v \times K_v,$ we have $d \notin {(F \otimes_K K_v)^{\times}}^
{2}.$}

\vskip2mm

\noindent Then there exists a $K$-embedding $\iota \colon (E ,
\sigma) \hookrightarrow (A , \tau).$ Furthermore, condition {$(\#)$} holds
automatically if $m$ is odd.
\end{thm}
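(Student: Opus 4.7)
The approach is to reduce to Proposition \ref{P:O-101}; its condition $(*)$ is given as a hypothesis, so only conditions (a) and (b) of that proposition remain to be verified. For each $v\in V^K$, the local embedding $\iota_v$ together with Theorem \ref{T:G1} produces an $a_v\in(F\otimes_K K_v)^{\times}$ with $(A\otimes_K K_v,(\theta\otimes\id)_{a_v})\simeq(A\otimes_K K_v,\tau\otimes\id)$; condition (a) at real $v$ is then immediate. The resulting $K_v$-isomorphism of Clifford algebras forces, by the dichotomy explained after (\ref{E:O-150}), that the equality of condition (b) holds at $v$ with some sign $\epsilon_v\in\{0,1\}$ indicating which of the two possibilities is satisfied. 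The crux is that (b) demands the \emph{same} family hold at every $v$ simultaneously, so the local data must be arranged so that all $\epsilon_v$ agree.

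Since restriction from a local field to a quadratic field extension kills $\Br(\cdot)_2$ (as recorded in the excerpt), $\Res_{Z\otimes_K K_v/K_v}[A\otimes_K K_v]$ already vanishes except at the ``bad'' places---those $v$ for which $A\otimes_K K_v$ is non-split \emph{and} $Z\otimes_K K_v\simeq K_v\times K_v$. At a non-bad $v$ both values of $\epsilon_v$ yield valid equalities, so no modification is required. At a bad $v$, hypothesis $(\#)$ gives $d\notin(F\otimes_K K_v)^{\times 2}$, whence the quotient $(F\otimes_K K_v)^{\times}/N_{E\otimes_K K_v/F\otimes_K K_v}((E\otimes_K K_v)^{\times})$ is nontrivial, and one can choose $c_v\in(F\otimes_K K_v)^{\times}$ outside the norm subgroup. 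Replacing $a_v$ by $a_v c_v$ toggles the center-identification $\phi_{a_v}$ to its conjugate (because $\phi_a$ depends only on the class of $a$ modulo $N_{E/F}(E^{\times})$), and via (\ref{E:O-150}) this shifts the Clifford class by exactly $\Res_{Z\otimes_K K_v/K_v}[A\otimes_K K_v]$, thereby flipping $\epsilon_v$. The modification preserves $(A\otimes_K K_v,\theta_{a_v c_v})\simeq(A\otimes_K K_v,\tau\otimes\id)$: the underlying $K_v$-algebra of the Clifford algebra is unchanged (one has merely swapped $\phi_v$ for $\overline{\phi_v}$), and at real bad places $c_v$ is chosen positive on each real component of $F\otimes_K K_v$ so that the signature of the orthogonal involution is preserved.

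After these local modifications, condition (b) of Proposition \ref{P:O-101} holds with a single family, and that proposition then yields $a\in F^{\times}$ with $(A,\theta_a)\simeq(A,\tau)$; Theorem \ref{T:G1} delivers the desired embedding $\iota\colon(E,\sigma)\hookrightarrow(A,\tau)$.

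For the final assertion that $(\#)$ holds automatically when $m$ is odd: at any $v$ with $A\otimes_K K_v$ non-split, if $d$ were a square in $F\otimes_K K_v$, then $E\otimes_K K_v\simeq(F\otimes_K K_v)\times(F\otimes_K K_v)$; by Proposition \ref{P:E1} the algebra embedding implied by $\iota_v$ would force every field component of $F\otimes_K K_v$ to split $A\otimes_K K_v=M_m(D_v)$, and since the non-split quaternion $D_v$ is split over $K_v$ only by even-degree extensions, every local degree $[(F_j)_w:K_v]$ would be even---contradicting that these degrees sum to $[F:K]=m$, which is odd. The main obstacle in the proof is the compatibility at real bad places: choosing $c_v$ there that simultaneously flips the Clifford class and preserves the signature of the involution requires precisely the flexibility supplied by $(\#)$.
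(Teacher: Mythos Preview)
Your reduction to Proposition \ref{P:O-101} and your treatment of the places where $A$ splits or $Z\otimes_K K_v$ is a field are correct, as is your argument that $(\#)$ holds when $m$ is odd. The genuine gap is at the ``bad'' places, in the step where you claim that taking any $c_v$ outside the norm subgroup $N_{E\otimes_K K_v/F\otimes_K K_v}$ toggles $\phi_{a_v}$ to its conjugate. The only property recorded in the paper is that $\phi_a$ is \emph{constant} on norm classes; nothing says it flips when the norm class changes, and indeed this cannot hold as stated: when $F\otimes_K K_v$ has several components with $d$ a non-square, the quotient $F_v^{\times}/N_{E_v/F_v}(E_v^{\times})$ has $2^k$ elements with $k\geqslant 2$, while there are only two isomorphisms $Z\to Z(C(\cdot))$. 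Concretely, by (\ref{E:O-155}) the replacement $a_v\mapsto a_vc_v$ shifts the Clifford class by $\Res_{Z_v/K_v}\Cor_{F_v/K_v}((c_v,d)_{F_v})$, and what you actually need at a bad $v$ is that this equals $\Res_{Z_v/K_v}[A_v]$, i.e.\ (since $Z_v\simeq K_v\times K_v$) that $\Cor_{F_v/K_v}((c_v,d)_{F_v})$ is the \emph{nontrivial} class in $\Br(K_v)_2$. An arbitrary non-norm $c_v$ can produce the trivial class: for instance if $(c_v,d)$ is nontrivial in exactly two components, the two corestrictions land on the unique nontrivial element of $\Br(K_v)_2$ and cancel.

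The paper fixes this by choosing the modifying element to be nontrivial in \emph{exactly one} component $F_{w_{j_0}}$ in which $d$ is not a square (such a component exists precisely by $(\#)$), and then invoking that $\Cor_{F_{w_{j_0}}/K_v}\colon\Br(F_{w_{j_0}})\to\Br(K_v)$ is an isomorphism for finite extensions of non-archimedean local fields (Lemma \ref{L:cor}); for real bad $v$ one has $F_{w_{j_0}}=K_v=\R$ and the same conclusion is immediate. Incidentally, your extra care at real bad places (``choose $c_v$ positive so the signature is preserved'') is both unnecessary and in tension with your own requirement: when $D\otimes_K K_v$ is division over $\R$ there is a single equivalence class of skew-hermitian forms of given rank, so (\ref{E:O-110}) holds for \emph{any} $c_v$; and on a real component with $d<0$ a positive element is automatically a norm, so positivity would obstruct the non-norm condition you want.
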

\begin{proof}
We will keep the notations introduced earlier. By Theorem
\ref{T:G1}, the existence of $\iota_v$ is equivalent to the
existence of $a_v \in (F \otimes_K K_v)^{\times}$ such that
\begin{equation}\label{E:O-109}
(A \otimes_K K_v , (\theta \otimes \id_{K_v})_{a_v}) \simeq (A
\otimes_K K_v , \tau \otimes \id_{K_v}).
\end{equation}
On the other hand, in view of Proposition \ref{P:O-101}, to prove
our assertion, it suffices to exhibit, for each $v \in V^K,$ an
element ${c}_v \in (F \otimes_K K_v)^{\times}$ for which the
following two conditions hold:
\begin{equation}\label{E:O-110}
(A \otimes_K K_v , (\theta \otimes \id_{K_v})_{{c}_v}) \simeq
(A \otimes_K K_v , \tau \otimes \id_{K_v}) \ \  \text{for all}\ \ v
\in V^K_r;
\end{equation}
and
\begin{equation}\label{E:O-111}
[C(A \otimes_K K_v , (\theta \otimes \id_{K_v})_{{c}_v},
\phi_{{c}_v})] = [C(A, \tau, \psi) \otimes_K K_v] \ \
\text{for all} \ \ v \in V^K.
\end{equation}
We notice that (\ref{E:O-109}) implies that there is an isomorphism
of $K_v$-algebras
$$
C(A \otimes_K K_v , (\theta \otimes \id_{K_v})_{a_v}) \simeq C(A
\otimes_K K_v , \tau \otimes \id_{K_v}),
$$
so it follows from (9.9) and Proposition 1.10 of \cite{BoI} that either
\begin{equation}\label{E:O-112}
[C(A \otimes_K K_v , (\theta \otimes \id_{K_v})_{a_v}, \phi_{a_v})]
= [C(A, \tau, \psi) \otimes_K K_v]
\end{equation}
or
\begin{equation}\label{E:O-113}
[C(A \otimes_K K_v , (\theta \otimes \id_{K_v})_{a_v}, \phi_{a_v})]
= [C(A, \tau, \psi) \otimes_K K_v]
\end{equation}
$$
\hskip7.5cm +\Res_{Z
\otimes_K K_v/K_v}[A \otimes_K K_v]
$$
holds. In particular, if $A \otimes_K K_v \simeq M_n(K_v)$, then
(\ref{E:O-110}) and (\ref{E:O-111}) hold for ${c}_v = a_v.$

\vskip2mm

Assume now that $A \otimes_K K_v \not\simeq M_n(K_v).$ If such a $v$
is real, then there is only one equivalence class of involutions
(cf.\:\cite{Sch}, Theorem 3.7 in Ch.\,10), and therefore
(\ref{E:O-110}) holds for any choice of ${c}_v.$ Thus, in all
cases, it suffices to find ${c}_v$ satisfying only
(\ref{E:O-111}). If (\ref{E:O-112}) holds, we can take ${c}_v
= a_v.$ So, suppose that (\ref{E:O-113}) holds. We will  look for
${c}_v$ of the form ${c}_v = a_vb_v$ with $b_v \in (F
\otimes_K K_v)^{\times}.$ It follows from (\ref{E:O-155}) that then
$$
[C(A \otimes_K K_v , (\theta \otimes \id_{K_v})_{{c}_v},
\phi_{{c}_v})] = [C(A \otimes_K K_v , (\theta \otimes
\id_{K_v})_{a_v}, \phi_{a_v})]
$$
$$
\hskip6cm + \Res_{Z \otimes_{K} K_v/K_v} \Cor_{F \otimes_K K_v/K_v} (b_v ,
d)_{F \otimes_K K_v}.
$$
Comparing this with (\ref{E:O-113}), we see that it is enough to
find $b_v \in (F \otimes_K K_v)^{\times}$ such that
\begin{equation}\label{E:O-120}
\Res_{Z \otimes_K K_v/K_v}\Cor_{F \otimes_K K_v/K_v} (b_v , d)_{F\otimes_KK_v} =
\Res_{Z \otimes_K K_v/K_v} [A \otimes_K K_v]
\end{equation}
If $Z \otimes_K K_v/K_v$ is a quadratic field extension, then $\Res_{Z \otimes_K
K_v/K_v}(\Br(K_v)_2)=0$. So, in this case (\ref{E:O-120}) holds automatically for
any $b_v.$ Thus, it remains only to consider the case where $Z \otimes_K
K_v \simeq K_v \times K_v.$ Then (\ref{E:O-120}) amounts to finding
$b_v \in (F \otimes_K K_v)^{\times}$ such that
\begin{equation}\label{E:O-121}
\Cor_{F \otimes_K K_v/K_v} (b_v , d)_{F \otimes_K K_v} = [A
\otimes_K K_v],
\end{equation}
which we will do using condition $(\#)$. First, we observe that since
$[A \otimes_K K_v]$ is the only element of order 2 in $\Br(K_v),$ it
is enough to find $b_v$ for which $\Cor_{F \otimes_K K_v/K_v} (b_v ,
d)_{F\otimes_KK_v}$ is nontrivial. We have
\begin{equation}\label{E:O-130}
F \otimes_K K_v = \prod_{j = 1}^{\ell} F_{w_j},
\end{equation}
where $w_1, \ldots , w_{\ell}$ are the extensions of $v$ to $F.$ If
$d = (d_{w_1}, \ldots , d_{w_{\ell}})$ in terms of this
decomposition, then by $(\#)$ there exists $j_0 \in \{ 1, \ldots ,
\ell \}$ such that $d_{w_{j_0}} \notin F_{w_{j_0}}^{\times^{2}}.$
So, we can find $b_{w_{j_0}} \in F_{w_{j_0}}^{\times}$ such that
$(b_{w_{j_0}} , d_{w_{j_0}})_{F_{w_{j_0}}}$ is nontrivial. We claim
that $\Cor_{F_{w_{j_0}}/K_v} (b_{w_{j_0}} ,
d_{w_{j_0}})_{F_{w_{j_0}}}$ is also nontrivial. This is obvious for
$v$ real (because then $F_{w_{j_0}} = K_v=\R$), and follows from the
next lemma for $v$ nonarchimedean.
\begin{lemma}\label{L:cor}
Let $\cL/\cK$ be a finite extension of nonarchimedean local fields.
Then $\Cor_{\cL/\cK} \colon \Br(\cL) \to \Br(\cK)$ is an
isomorphism.
\end{lemma}

\vskip1.5mm

\noindent {\it Proof.} Cf.\,\cite{NSW}, Corollary 7.1.4.

%\begin{proof}
%Let $m = [\cL : \cK].$ Then the composition
%$$
%\Br(\cK) \stackrel{\Res_{\cL/\cK}}{\longrightarrow} \Br(\cL)
%\stackrel{\Cor_{\cL/\cK}}{\longrightarrow} \Br(\cK)
%$$
%is multiplication by $m.$ Since $\Br(\cK) \simeq \Q/\Z,$ this
%implies that $\Cor_{\cL/\cK}$ is surjective. On the other hand, we
%have the following commutative diagram
%$$
%\begin{array}{rcc}
%\Br(\cK) & \stackrel{\mathrm{inv}_{\cK}}{\longrightarrow}     &
%\Q/\Z \\
%\Res_{\cL/\cK} \downarrow &  & \downarrow m \\
%\Br(\cL) & \stackrel{\mathrm{inv}_{\cL}}{\longrightarrow}     &
%\Q/\Z,
%\end{array}
%$$
%in which $\mathrm{inv}_{\cK},$ $\mathrm{inv}_{\cL}$ are isomorphisms
%of local class field theory (cf.\:\cite{ANT}, Ch.\:VI). Suppose
%$\alpha \in \mathrm{Ker}\: \Cor_{\cL/\cK}.$ We can choose $\beta \in
%\Br(\cK)$ so that $\Res_{\cL/\cK}(\beta) = \alpha.$ Then
%$$
%\Cor_{\cL/\cK} \circ \Res_{\cL/\cK} (\beta) = 0 = m\beta.
%$$
%Then it follows from the above diagram that $\alpha =
%\Res_{\cL/\cK}(\beta)$ is trivial.
%\end{proof}

\vskip1mm

We now see  that the element $b_v = (1, \ldots , b_{w_{j_0}}, \ldots
, 1)$ is as required, completing the proof of the first assertion of Theorem \ref{T:O-102}.

\vskip2mm

Finally, we will show that $(\#)$ holds automatically if $m$ is odd.
Let $v$ be a place of $K$ such that $A\otimes_K K_v \not\simeq M_n(K_v)$.
In the decomposition (\ref{E:O-130}), for some $j_0 \in \{ 1,
\ldots , \ell\},$ the degree $[F_{w_{j_0}} : K_v]$ is odd. We claim
that then the corresponding component $d_{w_{j_0}} \notin
F_{w_{j_0}}^{\times 2},$ and $(\#)$ will follow. Indeed, otherwise $E
\otimes_K K_v$ would have the following structure:
$$
\cdots \times F_{w_{j_0}} \times F_{w_{j_0}}\times \cdots,
$$
which would prevent it from being a maximal commutative \'etale
subalgebra of $A \otimes_K K_v$ as $(A \otimes_K K_v) \otimes_{K_v}
F_{w_{j_0}}$ is a nontrivial element of $\Br(F_{w_{j_0}})$
(cf.\:Proposition \ref{P:E1}).
\end{proof}

\vskip2mm

\begin{cor}\label{C:O-1001}
Let $(A , \tau)$ be as in Theorem \ref{T:O-102}, $Z$ be the center
of the Clifford algebra $C(A , \tau),$ and $E/K$ be a field
extension of degree $n = 2m$ with an automorphism $\sigma$ of order
two. Set $F = E^{\sigma},$ and write $E = F(\sqrt{d})$ with $d \in
F^{\times}.$ Assume that

\vskip1mm

\noindent $(\diamond)$ \parbox[t]{11cm}{if $Z$ is a field, then so is $F
\otimes_K Z,$}

\vskip2mm

\noindent and that condition $(\#)$ of Theorem \ref{T:O-102} holds.
Then the existence of $K_v$-embeddings $\iota_v \colon (E \otimes_K
K_v , \sigma \otimes \id_{K_v}) \hookrightarrow (A \otimes_K K_v ,
\tau \otimes \id_{K_v})$ for all $v \in V^K$ implies the existence
of a $K$-embedding $(E , \sigma) \hookrightarrow (A , \tau).$
\end{cor}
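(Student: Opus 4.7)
The plan is to deduce this from Theorem \ref{T:O-102}. Condition $(\#)$ is already granted, so I only need to verify condition $(*)$ of Proposition \ref{P:O-101}. Since $E/K$ is a field extension, $F$ is a field; in the notation preceding Theorem \ref{T:O-101} this means $r = 1$, $F_1 = F$, $d_1 = d$, and the fact that $E = F(\sqrt{d})$ is a field forces $d \notin F^{\times 2}$. So $(*)$ amounts to producing, for every finite set $V \subset V^K$, a place $v_0 \in V^K \setminus V$ at which $d \notin (F \otimes_K K_{v_0})^{\times 2}$ and, when $Z$ is a field, $Z \otimes_K K_{v_0}$ remains a field.

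I would produce such $v_0$ by a single application of Chebotarev's density theorem. Let $\widetilde{E}$ denote the Galois closure of $E/K$, and set $L = \widetilde{E}\cdot Z$ when $Z$ is a field, and $L = \widetilde{E}$ otherwise. Write $G = \Ga(L/K)$, $H = \Ga(L/F)$, and $H' = \Ga(L/E) \subset H$, so that $[H : H'] = [E : F] = 2$; when $Z$ is a field, let also $J = \Ga(L/Z)$, an index-$2$ subgroup of $G$. The crucial step is to exhibit $\sigma \in G$ lying in $H \setminus H'$ and, when $Z$ is a field, also outside $J$. This is where hypothesis $(\diamond)$ enters: if $Z$ is a field and $H \setminus H' \subset J$, then for any $a \in H \setminus H' \subset J$ we have $aH' = H \setminus H' \subset J$, hence $H' \subset a^{-1}J = J$ (as $a \in J$), and therefore $H = H' \cup (H \setminus H') \subset J$; but this gives $Z = L^J \subset L^H = F$, forcing $F \otimes_K Z$ to split and contradicting $(\diamond)$. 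Hence a suitable $\sigma$ exists (when $Z$ is not a field, any $\sigma \in H \setminus H'$ works, and exists since $H' \subsetneq H$).

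Invoking Chebotarev for $L/K$, I pick $v_0 \in V^K \setminus V$ unramified in $L$ admitting a prime $\mathfrak{P}$ of $L$ with $\mathrm{Frob}_{\mathfrak{P}/v_0} = \sigma$, and let $w_0$ be the prime of $F$ below $\mathfrak{P}$. Since the decomposition group $\langle \sigma \rangle$ is contained in $H$, the $\langle \sigma \rangle$-orbit of the trivial coset $H \in G/H$ is a singleton, so $F_{w_0} = K_{v_0}$. For the prime of $E$ below $\mathfrak{P}$, the residue degree over $F_{w_0}$ equals $[\langle \sigma \rangle : \langle \sigma \rangle \cap H']$, which is $2$ because $\sigma \notin H'$ while $\sigma^2 \in H'$ (using $|H/H'| = 2$). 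Hence $F_{w_0}(\sqrt{d})$ is a proper quadratic extension of $F_{w_0}$, so $d \notin (F \otimes_K K_{v_0})^{\times 2}$. Finally, when $Z$ is a field, $\sigma \notin J$ means $\sigma$ acts nontrivially on $Z$, so $v_0$ is inert in $Z/K$ and $Z \otimes_K K_{v_0}$ is a field. The principal obstacle is the group-theoretic step producing $\sigma$ under $(\diamond)$; granting that, the rest is a routine Chebotarev-and-decomposition-group computation, followed by an appeal to Theorem \ref{T:O-102}.
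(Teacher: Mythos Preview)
Your proof is correct and follows essentially the same approach as the paper: both reduce to verifying condition $(*)$ of Proposition \ref{P:O-101} by using $(\diamond)$ to produce a Galois element acting nontrivially on both $E$ and $Z$, then invoking Chebotarev. The only difference is that the paper works with the small abelian extension $EZ/F$ (of degree $\leqslant 4$) and applies Chebotarev over $F$, whereas you pass to the full Galois closure $\widetilde{E}\cdot Z$ over $K$; the paper's setup avoids the Galois closure and the decomposition-group bookkeeping, but your route is equally valid.
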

\begin{proof}
We only need to show that $(\diamond)$ implies condition $(*)$ of
Proposition \ref{P:O-101}. For this, we observe that the extension
$EZ/F$ admits an automorphism $\phi$ that restricts nontrivially to
both $E$ and $Z.$ Then the required fact is established by the
argument used in last paragraph of the proof of Proposition
\ref{P:O-101}.
\end{proof}

\vskip3mm

\noindent {\it Proof of Theorem \ref{T:O-101}.} If $F/K$ is a field
extension of odd degree, then conditions $(*)$ and $(\#)$ hold
automatically. So, our assertion follows from Theorem \ref{T:O-102}.
\hfill $\Box$

\section{Orthogonal involutions: split case}\label{S:OS}

In this section, we examine the local-global principle for
embeddings in the case where $A = M_n(K)$ with an orthogonal
involution $\tau.$ For $n$ even, these considerations, in principle,
can be built into the analysis given in \S \ref{S:O} for the
nonsplit case, however this would make the statements somewhat
cumbersome. In any case, one would still need to consider the case
of $n$ odd. It turns out that the theory of quadratic forms provides
a natural framework for treating both cases (i.e., $n$ even and $n$ odd)
and in fact all we need
in our analysis is the Hasse-Minkowski Theorem and the
classification of quadratic forms over the completions $K_v$ of a
global field $K$ of characteristic $\ne 2$.
For the reader's convenience, we recall that two
nondegenerate quadratic forms $q_1$ and $q_2$ of equal rank over
$K_v$ are equivalent if and only if (1) $v \in V^K_r$ and $q_1$ and $q_2$ have the same
signature over $K_v = \R;$ (2) $v \in V^K_f$ and $q_1$ and  $q_2$
have the same determinant and the same Hasse invariant (if $q =
a_1x_1^2 + \cdots + a_nx_n^2$, then the determinant and the Hasse
invariant are given by $d_v(q) = a_1 \cdots a_n{K_v^{\times}}^2$ (in
$K_v^{\times}/{K_v^{\times}}^2$) and $h_v(q) = \prod_{i < j} (a_i ,
a_j)_v$ respectively,  where $(\cdot , \cdot)_v \in \{ \pm 1 \}$ is the Hilbert
symbol over $K_v$), cf.\:\cite{Sch}, Ch.\:6, \S 4. Even though the
arguments in this section are considerably simpler than those
in \S \ref{S:O}, they use similar ideas, and the same
auxiliary statements. The fact that the
local-global principle holds for the equivalence of quadratic forms
(while it fails for the skew-hermitian forms over quaternion division 
algebras) is the reason why the split case is easier to analyze than the nonsplit case.

First, let us write $\tau$ in the form $\tau(x) = Q^{-1} x^t Q$ for
some nondegenerate symmetric matrix $Q$ (cf.\:\cite{BoI}, Proposition
2.7), and let $b(v , w) = v^t Q w$ be the corresponding bilinear
form on $K^n$ (notice that $b$ is determined, up to a scalar
multiple, by the property $b(xv , w) = b(v , \tau(x)w)$ for $x \in
A$ and all $v , w \in K^n$).  Let $q$ be the quadratic form
associated with $b.$

Now, let $E$ be a commutative \'etale $K$-algebra of dimension $n,$
with an involutive $K$-automorphism $\sigma.$ Set $F = E^{\sigma}.$
Then for any $a \in F^{\times},$ the bilinear form $b_a(v , w) :=
\mathrm{Tr}_{E/K}(av\sigma(w))$ on $E$ is symmetric and satisfies
$$
b_a(xv ,w) = b_a(v , \sigma(x)w)\ \ \text{for all} \ \ v, w, x \in
E.
$$
Let $q_a$ denote the corresponding quadratic form. The following
proposition is valid over an arbitrary field of characteristic $\ne 2$. It is essentially
Proposition 3.9 of \cite{BC-KM}
formulated in our context; it follows from Theorem \ref{T:G1}, however we give a simple direct proof.
\begin{prop}\label{P:OS-1}
An embedding $\iota \colon (E , \sigma) \hookrightarrow (A , \tau)$
as algebras with involution exists if and only if there is an $a \in
F^{\times}$ such that $(E , b_a)$ and $(K^n , b)$ are isometric.
\end{prop}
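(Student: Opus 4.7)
The plan is to give a direct proof by showing that forms of type $b_a$ exhaust all symmetric $K$-bilinear forms on $E$ with the ``module'' property, and then transporting structure.

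For the forward direction, I would start with an embedding $\iota\colon (E,\sigma)\hookrightarrow(A,\tau)$ and observe that via $\iota$, the space $K^n$ becomes an $E$-module whose $K$-dimension equals $\dim_K E = n$. Since $\iota(E)$ is a maximal commutative \'etale subalgebra of $M_n(K)$ and thus self-centralizing, $K^n$ is free of rank one over $E$. Choosing an $E$-module isomorphism $\phi\colon E\to K^n$ and setting $b'(v,w)=b(\phi(v),\phi(w))$, I get a nondegenerate symmetric $K$-bilinear form on $E$. Using $\iota\circ\sigma=\tau\circ\iota$ together with the defining property $b(xu,w)=b(u,\tau(x)w)$ of $b$, one checks
$$
b'(xv,w)=b'(v,\sigma(x)w)\quad\text{for all }x,v,w\in E.
$$

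The main step is the following assertion: any symmetric $K$-bilinear form $b'$ on $E$ satisfying the above identity has the form $b'(v,w)=\Tr_{E/K}(cv\sigma(w))$ for a unique $c\in F$, and $b'$ is nondegenerate exactly when $c\in F^{\times}$. I would prove this by setting $\mu(t)=b'(1,t)$; the identity (with $v=1$) and the symmetry of $b'$ give $b'(v,w)=\mu(v\sigma(w))$. Since $\mu$ is $K$-linear and $\Tr_{E/K}$ is nondegenerate on the \'etale algebra $E$, there is a unique $c\in E$ with $\mu(t)=\Tr_{E/K}(ct)$. The symmetry $b'(v,w)=b'(w,v)$, combined with $\sigma$-invariance of $\Tr_{E/K}$ (since $\sigma$ fixes $K$), forces $\sigma(c)=c$, i.e.\ $c\in F$. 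Nondegeneracy is immediate: if $c$ is a zero-divisor with $cc'=0$, $c'\in F^{\times}$, then $b'(\cdot,c')\equiv0$; conversely if $c\in F^{\times}$, nondegeneracy of $\Tr_{E/K}$ yields that of $b'$. Applying this to the pulled-back form $b'$ above produces the required $a\in F^{\times}$ with $b'=b_a$, and $\phi$ is then an isometry $(E,b_a)\simeq(K^n,b)$.

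For the backward direction, given an isometry $\phi\colon(E,b_a)\to(K^n,b)$, I define $\iota\colon E\to M_n(K)$ by $\iota(x)=\phi\circ L_x\circ\phi^{-1}$, where $L_x$ denotes left multiplication by $x$ on $E$. This is patently a $K$-algebra embedding. To verify $\tau\circ\iota=\iota\circ\sigma$, I compute, for arbitrary $v,w\in E$,
$$
b(\iota(x)\phi(v),\phi(w))=b(\phi(xv),\phi(w))=b_a(xv,w)=b_a(v,\sigma(x)w)=b(\phi(v),\iota(\sigma(x))\phi(w)),
$$
while on the other hand $b(\iota(x)\phi(v),\phi(w))=b(\phi(v),\tau(\iota(x))\phi(w))$ by the defining property of $\tau$. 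Nondegeneracy of $b$ and surjectivity of $\phi$ then force $\tau(\iota(x))=\iota(\sigma(x))$, finishing the proof.

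The only potentially subtle step is the classification of ``$E$-compatible'' bilinear forms on $E$, but this reduces to the standard fact that every $K$-linear functional on the \'etale algebra $E$ is of the form $t\mapsto\Tr_{E/K}(ct)$; the rest is bookkeeping with the involution $\sigma$.
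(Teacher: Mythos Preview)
Your proof is correct and follows essentially the same approach as the paper's: both directions hinge on the classification of symmetric $K$-bilinear forms on $E$ satisfying $b'(xv,w)=b'(v,\sigma(x)w)$ as exactly the forms $b_a$ with $a\in F$, together with the observation that $K^n$ is free of rank one over $E$ via any embedding $E\hookrightarrow M_n(K)$. The only cosmetic difference is in the proof of the classification step: the paper writes $f(v,w)=\Tr_{E/K}(v\,\varphi(w))$ for some $\varphi\in\mathrm{End}_K(E)$ and shows $\varphi(w)=a\sigma(w)$, whereas you exploit the unit $1\in E$ to reduce immediately to a linear functional $\mu(t)=b'(1,t)$ and then invoke nondegeneracy of the trace form; these are equivalent parametrizations. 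One small slip: in your nondegeneracy remark you wrote ``$c'\in F^{\times}$'' where you clearly intend a nonzero zero-divisor $c'\in F$ (or $E$) with $cc'=0$.
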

\begin{proof}
First, we observe that for a symmetric bilinear form $f$ on $E$
\begin{equation}\label{E:OS-1}
f(xv , w) = f(v , \sigma(x)w)  \ \ \text{for all} \ \ v, w, x \in E
\end{equation}
if and only if there is an $a \in F$ for which $f = b_a.$ Indeed,
suppose (\ref{E:OS-1}) holds. Since $E/K$ is \'etale, the trace form
$(v , w) \mapsto \Tr_{E/K}(vw)$ is nondegenerate and therefore we
can write $f(v , w) = {\Tr}_{E/K}(v\varphi(w))$ for some $\varphi
\in \mathrm{End}_K(E).$ Then (\ref{E:OS-1}) implies that
$$
{\Tr}_{E/K}(xv\varphi(w)) ={ \Tr}_{E/K}(v\varphi(\sigma(x)w))
$$
and consequently, $x\varphi(w) = \varphi(\sigma(x)w),$ for all $w ,
x \in E.$ It follows that for $\psi = \varphi \circ \sigma$ we have
$\psi(xw) = x\psi(w).$ Let $\psi(1) = a \in E$. Then $\psi(x) = a x$,
and hence, $\varphi(w) =
a\sigma(w).$ Thus,
$$
f(v , w) = {\Tr}_{E/K}(av\sigma(w)) = b_a(v , w).
$$
Finally, the fact that $f$ is symmetric implies that $\sigma(a) =
a.$ Conversely, for any $a \in F,$ the form $b_a$ is bilinear and
symmetric, and satisfies (\ref{E:OS-1}).

Now, we identify $E$ with $K^n$ as a $K$-vector space in some way, and
use the resulting identification of $\mathrm{End}(E)$ with
$\mathrm{End}(K^n) = A.$ Let $\lambda \colon E \longrightarrow
\mathrm{End}_K(E)$ be the left regular representation. Pick $\alpha
\in \mathrm{Aut}_K(E)$ and consider the embedding $\iota \colon E
\hookrightarrow \mathrm{End}_K(E)$ given by $\iota(x) = \alpha
\lambda(x) \alpha^{-1}.$ Set $\tilde{b}(v , w) = b(\alpha(v) ,
\alpha(w)).$ We claim that the following
\begin{equation}\label{E:OS-2}
\tilde{b}(xv , w) = \tilde{b}(v , \sigma(x)w)
\end{equation}
is equivalent to the fact that $\iota \colon (E , \sigma)
\hookrightarrow (A , \tau)$ respects involutions. We have $$
\tilde{b}(xv , w) = b(\alpha(xv) , \alpha(w)) = b(\iota(x) \alpha(v)
, \alpha(w)) = b(\alpha(v) , \tau(\iota(x))\alpha(w)).
$$
On the other hand, $$
\tilde{b}(v , \sigma(x)w) = b(\alpha(v) , \alpha(\sigma(x)w)) =
b(\alpha(v) , \iota(\sigma(x))(\alpha(w))),
$$
and our claim follows.

Suppose now that there exists an embedding $\iota \colon (E ,
\sigma) \hookrightarrow (A , \tau)$ of algebras with involution.
Then $\iota$ is of the form $\iota(x) = \alpha
\lambda(x)\alpha^{-1}$ for some $\alpha \in \mathrm{Aut}_K(E),$ and
(\ref{E:OS-2}) holds for the corresponding form $\tilde{b}.$ The
first part of the proof shows that $\tilde{b} = b_a$ for some $a \in
F^{\times}$ (notice that $\tilde{b}$ is nondegenerate), and then
$\alpha$ defines an isometry between $(E , b_a)$ and $(K^n , b).$
Conversely, if $\alpha$ yields such an isometry, then $b = b_a,$ and
consequently (\ref{E:OS-2}) holds. This implies that $\iota \colon E
\hookrightarrow A$ given by $\iota(x) = \alpha \lambda(x)
\alpha^{-1}$ respects the involutions.
\end{proof}

We will now use Proposition \ref{P:OS-1} to reduce the problem of
the existence of an embedding $(E , \sigma) \hookrightarrow (A ,
\tau)$ to the case of even $n.$
\begin{prop}\label{P:OS-101}
Let $A = M_n(K)$ with $n$ odd, and let $\tau$ be an orthogonal
involution of $A.$ Furthermore, let $(E , \sigma)$ be an
$n$-dimensional \'etale $K$-algebra with an involution $\sigma$
such that  {\rm (\ref{E:I1}) of \S \ref{S:I}} holds. Then

\vskip2mm

\noindent {\rm (i)} \parbox[t]{11.8cm}{$E = E' \times K$ for some
$\sigma$-invariant subalgebra $E'$ of  $E$ for which {\rm
(\ref{E:I1}) of \S \ref{S:I}} holds for $\sigma' = \sigma \vert E.$}

\vskip1mm

\noindent {\rm (ii)} \parbox[t]{11.8cm}{Assume that for each $v \in
V^K,$ there exists an embedding $$\iota_v \colon (E \otimes_K K_v ,
\sigma \otimes \id_{K_v}) \hookrightarrow (A \otimes_K K_v , \tau
\otimes \id_{K_v}).$$ Then there exists an involution $\tilde{\tau}$
on $A$ given by $\tilde{\tau}(x) = \widetilde{Q}^{-1}x^t \widetilde{Q}$ with
$\widetilde{Q}$ symmetric of the form $\widetilde{Q} = \mathrm{diag}\left(Q'
, \alpha\right),$ such that $(A , \tau) \simeq (A , \tilde{\tau})$
and for $A' = M_{n-1}(K)$ with the involution $\tau'(x) = (Q')^{-1}
x^t Q'$, there exists an embedding $$\iota'_v
\colon (E' \otimes_K K_v , \sigma' \otimes \id_{K_v})
\hookrightarrow (A' \otimes_K K_v , \tau' \otimes \id_{K_v})$$for all $v\in V^K.$}

\vskip1mm

\noindent {\rm (iii)} \parbox[t]{11.8cm}{With $\tau'$ as in \rm{(ii),
{\it the existence of an embedding $\iota \colon (E , \sigma)
\hookrightarrow (A , \tau)$ is equivalent to the existence of an
embedding $\iota' \colon (E' , \sigma') \hookrightarrow (A' ,
\tau').$}}}\end{prop}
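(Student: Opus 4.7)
For (i), I would apply the decomposition in the proof of Proposition \ref{P:E10}(2): when $n$ is odd and (\ref{E:I1}) holds for $(E, \sigma)$, $E$ decomposes as a product of $\sigma$-stable subalgebras of the three types (a), (b), (c) listed there, with exactly one factor of type (b) --- a field on which $\sigma$ acts trivially --- which moreover must be $1$-dimensional, i.e., equal to $K$. Collecting the remaining (type-(a) and type-(c)) factors into $E'$ yields $E = E' \times K$; a direct dimension count gives $\dim_K E'^{\sigma'} = (n-1)/2 = [((n-1)+1)/2]$, establishing (\ref{E:I1}) for $(E', \sigma' = \sigma | E')$.

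For (ii), Proposition \ref{P:OS-1} applied at each $v$ converts $\iota_v$ into an element $a_v \in (F \otimes_K K_v)^{\times}$ satisfying $(E \otimes_K K_v, b_{a_v}) \simeq (K_v^n, b)$, where $b$ denotes the bilinear form of $\tau$. Writing $a_v = (a'_v, \alpha_v)$ under $F = F' \times K$ gives $b_{a_v} = b_{a'_v} \perp \langle \alpha_v \rangle$. The central step is to find $\alpha \in K^{\times}$ with (a) $\alpha \equiv \alpha_v \pmod{K_v^{\times 2}}$ at each place $v$ of a suitable finite ``bad'' set $S$ (containing $V^K_r$, the dyadic places, and all finite places of ramification for $E$, $F$, or $A$), and (b) $\alpha$ represented by the global quadratic form $q$ associated with $b$. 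Condition (a) is arranged by weak approximation for $K^{\times}/K^{\times 2}$; condition (b) follows from the Hasse-Minkowski theorem, using (a) together with $b_{a_v} \simeq q$ at $v \in S$, and the universality of $n$-dimensional quadratic forms over nondyadic local fields for $v \notin S$. With such $\alpha$ in hand, Witt's extension theorem yields $q \simeq q' \perp \langle \alpha \rangle$ for some $(n-1)$-dimensional form $q'$; letting $Q'$ be a matrix of $q'$ and setting $\widetilde{Q} = \mathrm{diag}(Q', \alpha)$ produces $\tilde q \simeq q$, whence by Proposition \ref{P:G111} an isomorphism $(A, \tilde\tau) \simeq (A, \tau)$. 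Applying Witt cancellation to the local isometry $b_{a'_v} \perp \langle \alpha_v \rangle \simeq q' \perp \langle \alpha \rangle$ (legitimate because $\langle \alpha_v \rangle \simeq \langle \alpha \rangle$ over $K_v$ by (a)) gives $b_{a'_v} \simeq q'$, and Proposition \ref{P:OS-1} then produces $\iota'_v$ at every $v$.

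For (iii), the direction $(\Leftarrow)$ is a direct construction: given $\iota' \colon (E', \sigma') \hookrightarrow (A', \tau')$, define $\iota \colon E' \times K \to A$ by $(x', x_K) \mapsto \mathrm{diag}(\iota'(x'), x_K)$; the block form of $\widetilde Q$ makes $\iota$ respect $\tilde\tau$, and composing with the isomorphism $(A, \tilde\tau) \simeq (A, \tau)$ from (ii) finishes this direction. For $(\Rightarrow)$, Proposition \ref{P:OS-1} yields $a = (a', a_K) \in F^{\times}$ with $b_{a'} \perp \langle a_K \rangle \simeq q' \perp \langle \alpha \rangle$; granted $a_K \equiv \alpha \pmod{K^{\times 2}}$, Witt cancellation produces $b_{a'} \simeq q'$, and Proposition \ref{P:OS-1} again delivers $\iota'$. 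Forcing this square-class equality is the main obstacle of the argument. The plan is to exploit the freedom in the choice of $\alpha$ in (ii) by taking the local data $a_v$ to be the restrictions of the global $a$ (so that the $\alpha_v$ satisfy $\alpha_v \equiv a_K \pmod{K_v^{\times 2}}$), which by construction forces $\alpha \equiv a_K$ at every $v \in S$; the determinant identity $\det(b_{a'}) \cdot a_K \equiv \det(q') \cdot \alpha \pmod{K^{\times 2}}$ combined with the triviality of local obstructions at $v \notin S$ then upgrades this to a local equality at every place, whence $\alpha \equiv a_K \pmod{K^{\times 2}}$ globally by the injectivity of $K^{\times}/K^{\times 2} \to \prod_v K_v^{\times}/K_v^{\times 2}$.
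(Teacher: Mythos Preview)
Part~(i) is correct and matches the paper.

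For (ii) and (iii) there is a real gap, and it comes from a single missed observation: the determinant of the trace form $b'_{a'}(x,y)=\Tr_{E'/K}(a'x\sigma'(y))$ on $E'$ is \emph{independent of $a'\in F'^{\times}$} modulo squares (cf.\ \cite{BC-KM}, Proposition~4.1; or check directly that replacing $a'$ by $a'c$ with $c\in F'^{\times}$ multiplies the Gram determinant by $N_{E'/K}(c)=N_{F'/K}(c)^2$). Call this common value $d'$, and set $d=\det q$.

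The paper uses this at once. From $b_{a_v}=b'_{a'_v}\perp\langle\alpha_v\rangle\simeq q$ one reads off $d'\cdot\alpha_v\equiv d$, so $\alpha_v\equiv d/d'$ at \emph{every} $v$. Hence the paper simply takes $\alpha=d/d'$: then $q$ represents $\alpha$ over each $K_v$, hence over $K$ by Hasse--Minkowski, so $q\simeq q'\perp\langle\alpha\rangle$ with $\det q'=d'$, and Witt cancellation against $\langle\alpha_v\rangle\simeq\langle\alpha\rangle$ gives $b'_{a'_v}\simeq q'$ at all $v$. For (iii), the same determinant computation applied to the global $a=(a',a_K)$ yields $d'\cdot a_K\equiv d=d'\cdot\alpha$, hence $a_K\equiv\alpha$ in one line, and Witt cancellation finishes.

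Your weak-approximation route only arranges $\alpha\equiv\alpha_v$ at $v\in S$, yet you invoke Witt cancellation ``at every $v$.'' At $v\notin S$ nothing in your construction forces $\alpha\equiv\alpha_v$, and if $\alpha\not\equiv d/d'$ at some $v_0$ then $\det q'=d/\alpha\not\equiv d'=\det b'_{c}$ for every $c\in(F'\otimes_K K_{v_0})^{\times}$, so by Proposition~\ref{P:OS-1} no embedding $\iota'_{v_0}$ exists at all --- the $\tau'$ you built is simply wrong at that place. Your plan in (iii) to re-run (ii) using the global $a$ as local data is also problematic: the statement of (iii) fixes $\tau'$ as the one produced in (ii) from the local hypotheses alone, and the ``determinant identity combined with triviality of local obstructions'' you invoke is precisely determinant-independence in disguise, which you never established.
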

\begin{proof}
(i) was actually established in the proof of Proposition
\ref{P:E10}(2). Set $F' = (E')^{\sigma'}.$ To prove (ii), given $a'
\in (F')^{\times},$ we let $b'_{a'}$ denote the
bilinear form on $E'$ defined by $b'_{a'}(x' , y') =
\Tr_{E'/K}(a'x'\sigma'(y')).$ It is easy to see that the determinant
$d'$ of $b'_{a'}$ is independent of $a'$ (cf.\:\cite{BC-KM},
Proposition 4.1), and we set $\alpha = d/d'$, where $d$ is the
determinant of $b.$ We claim that $\alpha$ is represented by $q$ over
$K.$ Indeed, by the Hasse-Minkowski Theorem, it is enough to show
that $\alpha$ is represented by $q$ over $K_v$ for all $v \in V^K.$
According to Proposition \ref{P:OS-1}, it follows from the existence
of $\iota_v$ that there is an $a_v = (a'_v , \alpha_v) \in (F \otimes_K
K_v)^{\times} = (F' \otimes_K K_v)^{\times} \times K_v^{\times}$
such that $b_{a_v} = b'_{a'_v} \perp \langle \alpha_v \rangle,$
where $\langle \alpha_v \rangle$ is the 1-dimensional form
corresponding to $\alpha_v,$ is $K_v$-equivalent to $b.$ As we
observed above, the determinant of $b'_{a'_v}$ is $d',$ so
$$
\det b_{a_v} = \det b'_{a'_v} \cdot \alpha_v = d' \cdot \alpha_v =
\det b = d \ \ \text{in} \ \ K_v^{\times}/{K_v^{\times}}^2,
$$
which implies that $\alpha/\alpha_v \in {K_v^{\times}}^2.$ So $b$, which is equivalent to
$b_{a_v} = b'_{a'_v}\perp \langle\alpha_v\rangle$, is equivalent to
$b'_{a'_v}\perp \langle\alpha\rangle$. Hence, $\alpha$ is a value assumed by $q$
over $K_v$ for all $v$, and therefore, also over $K$. This implies
that  $Q$ is
equivalent to a symmetric matrix $\widetilde{Q}$ of the form $\widetilde{Q}
= \mathrm{diag}\left(Q' , \alpha\right),$ and we will show that
the corresponding involution $\tilde{\tau}$ is as required. Since
$(A , \tau) \simeq (A , \tilde{\tau}),$ we can actually assume that
$Q = \widetilde{Q},$ and we let $b'$ denote the bilinear form
corresponding to $Q'.$

As $Q =\mathrm{diag}\left(Q',\alpha\right)$, $b$ is equivalent to $b'\perp \langle\alpha\rangle$.
We have seen above that it is also equivalent to $b'_{a'_{v}}\perp\langle\alpha\rangle$. Now, it
follows from the Witt Cancelation Theorem (cf.\:\cite{Sch}, Ch.\:I, \S
5) that $b'_{a'_v} \simeq b',$ and therefore by Proposition
\ref{P:OS-1} there exists an embedding $\iota'_v \colon (E'
\otimes_K K_v , \sigma' \otimes \id_{K_v}) \hookrightarrow (A'
\otimes_K K_v , \tau' \otimes \id_{K_v}).$

Finally, to prove (iii),
we observe that the existence of $\iota' \colon (E' , \sigma')
\hookrightarrow (A' , \tau')$ obviously implies the existence of
$\iota \colon (E , \sigma) \hookrightarrow (A , \tau).$ Conversely,
if $\iota$ exists, then by Proposition \ref{P:OS-1} there exists $a =
(a' , \beta) \in F^{\times} = (F')^{\times} \times K^{\times}$ such
that $b_{a} = b'_{a'} \perp \langle \beta \rangle$ is equivalent to
$b = b' \perp \langle \alpha \rangle.$ Taking determinants, we
obtain
$$
\det b_{a} = d' \cdot \beta = \det b = d = d' \cdot \alpha \ \
\text{in} \ \ K^{\times}/{K^{\times}}^2,
$$
so $\alpha/\beta\in {K^{\times}}^2$.
It follows that $b'_{a'} \perp \langle \alpha
\rangle$ is equivalent to $b = b' \perp \langle \alpha \rangle,$ so
by the Witt Cancelation Theorem $b'_{a'} \simeq b',$ implying the
existence of $\iota'.$
\end{proof}

\vskip2mm

Henceforth, we will assume that $n$ is even and $(E , \sigma)$ is an
$n$-dimensional \'etale $K$-algebra with involution satisfying
(\ref{E:I1}) of \S \ref{S:I}. Then, according to Proposition
\ref{P:E12}, we have $E \simeq F[x]/(x^2 - d)$ where $F =
E^{\sigma}$ is an \'etale $K$-algebra of dimension $m = n/2$ and $d
\in F^{\times}.$ We write $F = \prod_{j = 1}^r F_j,$ where $F_j$
is a separable extension of $K,$ and suppose that in terms of this
decomposition $d = (d_1, \ldots , d_r).$ The following result
contains assertion (ii) of Theorem A of the introduction as a particular
case.
\begin{thm}\label{T:M-1}
Assume that for every $v \in V^K$ there exists a $K_v$-embedding
$$
\iota_v \colon (E \otimes_K K_v , \sigma \otimes \id_{K_v})
\hookrightarrow (A \otimes_K K_v , \tau \otimes \id_{K_v}).
$$
If the following condition holds:

\vskip2mm

\noindent $(\Diamond)$ \parbox[t]{12cm}{for any finite subset $V
\subset V^K,$ there exists $v \in V^K\setminus V$ such that for $j \leqslant r$,
if $d_j \notin {F_j^{\times}}^{2}$, then $d_j
\notin {(F_j \otimes_K K_v)^{\times}}^{2}$;}

\vskip2mm

\noindent then there exists an embedding $\iota \colon (E , \sigma)
\hookrightarrow (A , \tau).$ Furthermore, $(\Diamond)$ automatically
holds if $F$ is a field.
\end{thm}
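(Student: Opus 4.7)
The plan is to mirror the nonsplit case (Proposition \ref{P:O-101} and Theorem \ref{T:O-102}), but now using the Hasse--Minkowski principle for quadratic forms in place of the Lewis--Tignol criterion, which is what makes the split case cleaner. By Proposition \ref{P:OS-1}, the existence of an embedding $(E,\sigma) \hookrightarrow (A,\tau)$ reduces to finding $a \in F^\times$ such that the twisted trace form $b_a$ on $E$ is $K$-isometric to the form $b$ on $K^n$ associated with $\tau$. The hypothesis provides, for each $v \in V^K$, an element $a_v \in (F \otimes_K K_v)^\times$ with $b_{a_v} \simeq_{K_v} b$. By Hasse--Minkowski, producing such an $a$ globally amounts to achieving $b_a \simeq_{K_v} b$ at every $v$, which, by the local classification recalled at the start of this section, is the conjunction of (i) matching signatures at each real $v$, (ii) matching discriminants in $K^\times/(K^\times)^2$, and (iii) matching Hasse invariants at each finite $v$.

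Condition (ii) turns out to be automatic: the substitution $a \mapsto ac$ with $c \in F^\times$ multiplies $\det(b_a)$ by $N_{E/K}(c) = N_{F/K}(c)^2 \in (K^\times)^2$, so $\det(b_a) \bmod (K^\times)^2$ does not depend on $a$, and the existence of local $a_v$ then forces this common class to equal $\det(b)$. Condition (i) is arranged by weak approximation in $F^\times$: since $(F \otimes_K K_v)^\times/((F \otimes_K K_v)^\times)^2$ is finite for $v$ real, one picks $a_0 \in F^\times$ lying in the same square class as $a_v$ at every real $v$, whereupon $b_{a_0}$ and $b$ have the same signatures over every real completion of $K$.

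The main obstacle is (iii). I would look for $a$ of the form $a = a_0 c$ with $c \in F^\times$, the key point being that varying $c$ modifies the Hasse invariant of $b_{a_0 c}$ at each place by the $v$-component of a corestricted Hilbert symbol of the shape $\Cor_{F/K}((c,d)_F) \in \Br(K)[2]$; this can be extracted from the decomposition $b_a \sim \langle 2\rangle T_a \perp \langle -2\rangle T_{ad}$ with $T_x(u) = \Tr_{F/K}(xu^2)$, together with standard formulas for the Hasse invariants of scaled trace forms. The local solutions $a_v$ prescribe the required local correction at each $v$, and Hilbert reciprocity (equivalently, the reciprocity law for the corestricted quaternion classes over each $F_j$) guarantees the global consistency of these targets. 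Condition $(\Diamond)$ supplies the auxiliary place $v_0$ at which $d_j$ remains a non-square in $F_j \otimes_K K_{v_0}$ for every $j$ with $d_j \notin F_j^{\times 2}$, and an application of Lemma \ref{L:HS1} (with $\cF = F$ and $t = d$, taking the finite set $\cS$ to be the real places together with those finite places where $b_{a_0}$ already agrees with $b$) produces $c \in F^\times$ that is a square in $(F \otimes_K K_v)^\times$ for $v \in \cS$ and realizes the prescribed corestricted Hilbert-symbol values everywhere else. Then $a = a_0 c$ satisfies (i)--(iii), so $b_a \simeq_K b$ by Hasse--Minkowski, and Proposition \ref{P:OS-1} yields the desired embedding.

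Finally, if $F/K$ is a field extension, condition $(\Diamond)$ holds automatically: one need only produce, for any cofinite subset of $V^K$, a place $v$ at which $d \in F^\times$ (assumed non-square) remains a non-square in $F \otimes_K K_v = \prod_{w|v} F_w$. This is immediate from Chebotarev's density theorem applied to the quadratic extension $F(\sqrt{d})/F$: there are infinitely many places of $F$ that are inert in $F(\sqrt{d})$, and hence infinitely many places of $K$ below them with the required property.
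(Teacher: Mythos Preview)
Your overall strategy matches the paper's: reduce via Proposition~\ref{P:OS-1} to finding $a\in F^\times$ with $q_a\simeq q$, observe that the determinant condition is automatic, and use Lemma~\ref{L:HS1} together with the Hasse--Minkowski theorem and the product formula to control the Hasse invariants. The ``Furthermore'' via Chebotarev is also correct.

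However, your application of Lemma~\ref{L:HS1} is misstated in a way that breaks the argument. You take $\cS$ to be ``the real places together with those finite places where $b_{a_0}$ \emph{already agrees} with $b$.'' That set is cofinite, hence infinite, so Lemma~\ref{L:HS1} (which requires $\cS$ finite) does not apply. Moreover, Lemma~\ref{L:HS1} does not produce prescribed Hilbert-symbol values \emph{outside} $\cS$; it gives $(c,d)_{F_v}=1$ there. So even with the intended finite $\cS$, your description of the output is wrong.

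The fix is to invert the role of $\cS$, and in fact the intermediate $a_0$ is unnecessary. Set $\tilde q=q_1$ and let $V$ be the union of the real places with the finitely many finite $v$ where $h_v(\tilde q)\ne h_v(q)$. Apply Lemma~\ref{L:HS1} with $\cS=V$, $t=d$, and $s_v=a_v$; this yields $a\in F^\times$ with $a a_v^{-1}\in (F\otimes_K K_v)^{\times 2}$ for $v\in V$ and $(a,d)_{F\otimes_K K_v}=1$ for $v\notin V\cup\{v_0\}$. Then for $v\in V$ one has $q_a\simeq q_{a_v}\simeq q$ over $K_v$ (in particular the signatures match at real places), while for $v\notin V\cup\{v_0\}$ the formula $h_v(q_a)=h_v(\tilde q)+\Cor_{F\otimes_K K_v/K_v}(a,d)$ gives $h_v(q_a)=h_v(\tilde q)=h_v(q)$. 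The remaining place $v_0$ is handled not by an a~priori reciprocity argument but by the product formula $\prod_v h_v(q_a)=\prod_v h_v(q)=1$, which forces $h_{v_0}(q_a)=h_{v_0}(q)$. This is exactly the paper's proof.
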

\begin{proof}
We need to show that if for every $v \in V^K,$ there exists an $a_v \in
(F \otimes_K K_v)^{\times}$ such that $q_{a_v}$ is equivalent to $q$
over $K_v$, then there exists an $a \in F^{\times}$ such that $q_a$ is
equivalent to $q$ over $K.$ Let $\tilde{q} = q_a$ for $a=1.$ For any
$v \in V^K,$ we have the following equalities of determinants
$$
d(\tilde{q}) = d(q_{a_v}) =  d(q) \ \ (\text{in} \ \
K_v^{\times}/{K_v^{\times}}^{2}).
$$
It follows that $d(\tilde{q}) = d(q)$ in
$K^{\times}/{K^{\times}}^{2},$ and therefore, $d(q_a) = d(q)$ for
all $a \in F^{\times}.$ So, our task is to find an $a \in F^{\times}$
such that

\vskip2mm

(1) $q_a$ is equivalent to $q$ over $K_v$ for all $v \in V^K_r,$

\vskip1mm

(2) $h_v(q_a) = h_v(q)$ for all $v \in V^K.$

\vskip2mm

\noindent We will use the following formula (written in the additive
notation) for the Hasse invariant (\cite{BC-KM}, Theorem 4.3):
\begin{equation}\label{E:OS27}
h_v(q_a) = h_v(\tilde{q}) + \Cor_{F \otimes_K K_v/K_v}(a , d)_{F \otimes_K
K_v} \ \ \text{for all} \ \ v \in V^K.
\end{equation}
Let $V$ be the (finite) set of places of $K$ containing all the
archimedean ones and those nonarchimedean $v$ for which
$h_v(\tilde{q}) \neq h_v(q),$ and choose $v_0$ as in $(\Diamond)$
for this $V.$ By Lemma \ref{L:HS1}, there exists $a \in
F^{\times}$ such that

\vskip1mm

\noindent (i) $a a_v^{-1} \in {(F \otimes_K K_v)^{\times}}^{2}$ for
all $v \in V,$ and

\vskip1mm

\noindent (ii) $(a , d)_{F \otimes_K K_v} = 1$ for all $v \in V^K
\setminus (V \cup \{ v_0 \}).$

\vskip2mm

\noindent Then (i) implies that $q_a \simeq q$ over $K_v,$ and in
particular, $h_v(q_a) = h(q),$ for all $v \in V.$ On the other hand,
it follows from (ii) and (\ref{E:OS27}) that for $v \in V^K
\setminus (V \cup \{ v_0 \})$ we have
$$
h_v(q_a) = h_v(\tilde{q}) = h_v(q).
$$
Thus, $h_v(q_a) = h_v(q)$ for all $v \neq v_0.$ But the product
formula for the Hilbert symbol implies that
$$
\prod_v h_v(q_a) = \prod_v h_v(q) = 1,
$$
whence $h_v(q_a) = h_v(q)$ holds also for $v = v_0.$ So, $a$ is as
required.

Finally, if $F$ is a field and $d \notin {F^{\times}}^{2}$, then
letting $L$ denote a finite Galois extension of $K$ containing
$F(\sqrt{d}),$ we can choose $\phi \in \Ga(L/F)$ which acts
nontrivially on $\sqrt{d}.$ Then by Tchebotarev's Density Theorem,
we can find $v_0 \in V^K \setminus V$ such that the Frobenius
automorphism of $L/K$ at $v_0$ is $\phi,$ and this $v_0$ is as
required.
\end{proof}

\vskip2mm

\begin{cor}\label{C:OS-1}
Let $(E , \sigma) = (E' , \sigma') \times (K , \id_K)$ where $E'/K$
is a field extension with a $K$-automorphism $\sigma'$ of order two,
$n = \dim_K E.$ Let $A = M_n(K)$ with an orthogonal involution
$\tau.$ Then the existence of embeddings $\iota_v \colon (E
\otimes_K K_v , \sigma \otimes \id_{K_v}) \hookrightarrow (A
\otimes_K K_v , \tau \otimes \id_{K_v})$ for all $v \in V^K$ implies
the existence of an embedding $\iota \colon (E , \sigma)
\hookrightarrow (A , \tau).$
\end{cor}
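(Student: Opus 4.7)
The plan is to reduce the corollary to Theorem \ref{T:M-1} by means of Proposition \ref{P:OS-101}. First I would record the parity of $n$: since $E'/K$ is a field extension with a $K$-automorphism $\sigma'$ of order two, the fixed field $F' = (E')^{\sigma'}$ satisfies $[E':F'] = 2$, so $[E':K]$ is even and hence $n = [E':K] + 1$ is odd. Setting $m = [F':K]$, we have $n = 2m+1$, and
\[
\dim_K E^{\sigma} = \dim_K\bigl((E')^{\sigma'}\bigr) + 1 = m + 1 = \left[\tfrac{n+1}{2}\right],
\]
so $(E,\sigma)$ satisfies condition (\ref{E:I1}) and Proposition \ref{P:OS-101} applies.

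Next I would invoke Proposition \ref{P:OS-101}(ii) to transform the given local embeddings $\iota_v$ into local embeddings on the smaller algebra: after replacing $\tau$ by the isomorphic involution $\tilde\tau$ produced in that statement (which does not affect the conclusion we wish to prove), we obtain an orthogonal involution $\tau'$ on $A' = M_{n-1}(K)$ of the explicit form $\tau'(x) = (Q')^{-1} x^t Q'$ together with $K_v$-embeddings
\[
\iota'_v \colon (E' \otimes_K K_v,\, \sigma' \otimes \id_{K_v}) \hookrightarrow (A' \otimes_K K_v,\, \tau' \otimes \id_{K_v})
\]
for every $v \in V^K$. Note that $n - 1 = 2m$ is even, so $A'$ falls squarely within the scope of Theorem \ref{T:M-1}, and $E'$ carries the structure $E' \simeq F'[x]/(x^2-d)$ for some $d \in (F')^{\times}$ with $\sigma'$ sending $x \mapsto -x$, as required by that theorem (this comes from Proposition \ref{P:E12} applied to $(E',\sigma')$).

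Since $F'$ is a field (being the fixed field of the order-two automorphism $\sigma'$ of the field $E'$), the final clause of Theorem \ref{T:M-1} guarantees that condition $(\Diamond)$ is automatic. Applying Theorem \ref{T:M-1} to $(E',\sigma')$ and $(A',\tau')$ therefore yields a global $K$-embedding $\iota' \colon (E',\sigma') \hookrightarrow (A',\tau')$. Finally, Proposition \ref{P:OS-101}(iii) tells us that the existence of $\iota'$ is equivalent to the existence of a $K$-embedding $\iota \colon (E,\sigma) \hookrightarrow (A,\tau)$, completing the argument.

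There is no substantive obstacle beyond the parity and dimension bookkeeping in the first step; the essential work has already been carried out in Proposition \ref{P:OS-101} (which matches $K$-factors of $E$ with one-dimensional orthogonal summands of $q$ and uses Witt cancelation) and in Theorem \ref{T:M-1} (whose hypothesis $(\Diamond)$ trivializes precisely because $F'$ is a field).
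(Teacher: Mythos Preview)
Your proof is correct and follows exactly the route the paper takes: the paper's own proof is the one-line remark that the corollary follows from Theorem \ref{T:M-1} and Proposition \ref{P:OS-101}, and your argument simply spells out the details of that reduction.
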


This follows from Theorem \ref{T:M-1} and Proposition
\ref{P:OS-101}.

\vskip3mm

\noindent {\bf Example 7.5.} We will now construct an example of an
\'etale $K$-algebra $E$ of dimension $n = 6$ with an involution
$\sigma$ satisfying (\ref{E:I1}) of \S \ref{S:I}, and an orthogonal
involution $\tau$ on $A = M_6(K)$ such that the local-global
principle for embeddings of $(E , \sigma)$ into $(A , \tau)$ fails.
(Notice that then Proposition \ref{P:OS-101} enables one to
construct a similar counter-example also for $n = 7$.)

We begin with the following general observation. Let $K$ be a number
field, and let $a , b \in K^{\times}$ be chosen so that $F =
K(\sqrt{a} , \sqrt{b})$ is a degree four extension of $K.$ Let $V$
denote the subset of $V^K$ consisting of all archimedean places, and those
nonarchimedean places which ramify in $F/K.$ Set $F_1 =
K,$ $F_2 = K(\sqrt{a}),$ and $d_1 = a,$ $d_2 = b.$ Let $v \notin V$
be such that $d_1 \notin {K_v^{\times}}^{2} = {(F_1 \otimes_K
K_v)^{\times}}^{2}.$ Then $[K_v(\sqrt{a}) : K_v] = 2.$ Since
$FK_v/K_v$ is unramified, hence cyclic, we conclude that
$K_v(\sqrt{a} , \sqrt{b}) = K_v(\sqrt{a}),$ i.e., $d_2 \in
{K_v(\sqrt{a})^{\times}}^{2} = {(F_2 \otimes_K K_v)^{\times}}^{2}.$
Thus, for every $v \notin V$, $d_j\in {(F_j
\otimes_K K_v)^{\times}}^{2}$ for at least one $j\in \{1,2\}$.

Let $K = \Q,$ and $p_1, p_2$ be two distinct primes of the form $4k
+ 1,$ with one of them of the form $8k + 1,$ such that $\displaystyle
\left(\frac{p_1}{p_2} \right) = 1$ (one can take, for example, $p_1
= 13$ and $p_2 = 17$). Set
$$F_1 =
\Q, \ \  F_2 = \Q(\sqrt{p_1}), \ \  F = F_1 \times  F_2, \ \  d =
(p_1 , p_2)
$$
and $E = F[x]/(x^2 - d)$ with the involution $\sigma$ defined by $x
\mapsto -x.$ Let $\tilde{q}$ be the 6-dimensional quadratic form on
$E$ corresponding to the bilinear form $\Tr_{E/\Q}(x\sigma(y)).$ Now,
Let $q$ be the quadratic form which is equivalent to $\tilde{q}$ over $\Q_v$
for all $v \neq v_{p_1}, v_{p_2}$ (including the unique real place), and which
has the Hasse invariant $h_v(q) = h_v(\tilde{q}) + 1/2$ for $v =
v_{p_1}, v_{p_2}$ (in the additive notation). It follows from
\cite{Sch}, Theorem 6.10 in Ch.\:6, or \cite{Serre}, Ch.\:IV, 3.3,  that such a
form exists, and we let $\tau$ denote the orthogonal involution on
$A = M_6(K)$ corresponding to (the matrix of) $q.$ We claim that for
each $v \in V^{\Q}$ there exists $a_v \in (F \otimes_{\Q}
\Q_v)^{\times}$ such that the quadratic form $q_{a_v},$
corresponding to the bilinear form
$\mathrm{Tr}_{E/K}(a_vx\sigma(y)),$ is equivalent to $q$ over
$\Q_v,$ but there is no $a \in F^{\times}$ such that $q_a$ is
equivalent to $q.$ (In view of Proposition \ref{P:OS-1}, this will
yield the existence of local embeddings $\iota_v$ for all $v \in
V^{\Q},$ but the absence of a~global embedding $\iota.$)

For the local assertion, we observe that we only need to consider $v \in
\{ v_{p_1} , v_{p_2} \}.$ For $v = v_{p_1},$ we pick $s \in
\Q_{p_1}^{\times}$ such that $(s , p_1)_{p_1} = - 1,$ and then
$a_{v_{p_1}} = (s , 1) \in
\Q^{\times}_{p_1}\times \Q_{p_1}(\sqrt{p_1})^{\times}=(F \otimes_{\Q} \Q_{p_1})^{\times}$ is as
required. Similarly, for $v = v_{p_2},$ we pick $t \in
\Q_{p_2}^{\times}$ so that $(t , p_2)_{p_2} = -1,$ and then
$a_{v_{p_2}} = (1 , t , 1) \in \Q_{p_2}^{\times} \times
\Q_{p_2}^{\times} \times \Q_{p_2}^{\times} = (F \otimes_{\Q}
\Q_{p_2})^{\times}$ is as required.

Now, suppose there exists $a = (a_1 , a_2) \in F^{\times}=F_1^{\times} \times
F_2^{\times}$ such that $q_a$ is equivalent to $q$ over $\Q.$ Then
$$
h_{v_{p_1}}(q_a) = h_{v_{p_1}}(\tilde{q}) + \Cor_{F \otimes
\Q_{p_1}/\Q_{p_1}} (a , d)_{F \otimes_{\Q} \Q_{p_1}} =
h_{v_{p_1}}(\tilde{q}) + 1/2,
$$
so $\Cor_{F \otimes \Q_{p_1}/\Q_{p_1}} (a , d)_{F \otimes_{\Q}
\Q_{p_1}} = 1/2.$ Since $p_2 \in {\Q_{p_1}^{\times}}^{2},$ we
necessarily have $(a_1 , p_1)_{p_1} = -1.$ So, by the product
formula, there exists a $v \neq v_{p_1}$ such that $(a_1 , p_1)_v =
-1.$ Since $p_1 \in {\Q_{p_2}^{\times}}^{2},\: {\R^{\times}}^{2},$ we
have $v \neq v_{p_2}, v_{\infty}.$ But it is easy to see that $F =
\Q(\sqrt{p_1} , \sqrt{p_2})$ is unramified outside $V = \{ v_{p_1} ,
v_{p_2} \},$ so according to the observation made earlier, since
$p_1 \notin {\Q_v^{\times}}^2,$ we necessarily have $p_2 \in {(F_2
\otimes \Q_v)^{\times}}^{2}.$ Then $\Cor_{F \otimes_{\Q}
\Q_v/\Q_v}(a , d)_v = 1/2,$ which contradicts $h_v(q_a) = h_v(q) =
h_v(\tilde{q}).$

\section{Invariant maximal subfields distinguish locally isomorphic
algebras, of degree a multiple of $4$, with orthogonal involutions}\label{S:M}

Let $A$ be a central simple algebra over a global field $K,$ of
dimension $n^2,$ and let $\tau$ be an orthogonal involution of $A.$
In this section, we will deal with the set $\cI = \cI(A , \tau)$ of all orthogonal
involutions $\eta$ of $A$  such that
\begin{equation}\label{E:M1001}
(A \otimes_K K_v , \eta \otimes \id_{K_v}) \simeq (A \otimes_K K_v ,
\tau \otimes \id_{K_v}).
\end{equation}
for all $v \in V^K.$ To put this notion in a more traditional
context, we recall that if $A = M_m(D),$ with $D$ being a division
algebra, then $D$ itself admits an involution $\bar{\ }$ (which may
be trivial) and then any involution $\nu$ of $A$ can be written in
the form $\nu(x) = Q_{\nu}^{-1} x^* Q_{\nu}$, where $(x_{ij})^* =
(\bar{x}_{ji})$ and $Q_{\nu}^* = \pm Q_{\nu}.$ In this case, we let
$h_{\nu}$ denote the corresponding $m$-dimensional (skew)-hermitian
form. Then, according to Proposition \ref{P:G111}, we have $(A ,
\eta) \simeq (A , \tau)$ if and only if the corresponding forms
$h_{\eta}$ and $h_{\tau}$ are {\it similar,} i.e., a scalar multiple of $h_{\eta}$ is
equivalent to $h_{\tau}$. So, the
elements of $\cI$ correspond to the (classes of proportional) forms
that are similar to $ h_{\tau}$ at every place of $K,$ and the
investigation of $\cI$ essentially boils down to the Hasse principle
for similarity of forms of a specific type. The analysis of the
latter was recently completed in \cite{LUG}.

For orthogonal
involutions $\nu,$ we either have $A = M_n(K)$, with $Q_{\nu}$
symmetric, making $h_{\nu}$ a quadratic form (split case),
or $A = M_m(D)$, with $D$ a quaternion division algebra,
$\bar{\ }$ being the canonical involution of $D$, and $Q_{\nu}$
satisfying $Q_{\nu}^* = - Q_{\nu}$, in this case $h_{\nu}$ is a skew-hermitian form
(nonsplit case). It is known (cf.\:references in \cite{LUG}, or
Proposition \ref{P:Ap25} below) that the Hasse
principle does hold for similarity of quadratic forms, which implies that in the split
case $\cI$ consists of a single isomorphism class. On the other hand,
in the nonsplit case, $\cI$ often contains more than one isomorphism
class (cf.\:\cite{LUG}), and therefore in this section we will
entirely focus on this case. In particular, unless stated
otherwise, $A$ will denote an algebra of the form $M_m(D)$, where $D$
is a quaternion division algebra, so that $n = 2m.$ (For the sake of
completeness, we mention that the Hasse principle is known to hold
for similarity of hermitian forms over quaternion division algebras
with the standard involution, and also for similarity of hermitian forms
over division algebras with an
involution of the second kind, cf.\:\cite{LUG} and references
therein, so the nonsplit case above is the {\it only} case where
$\cI$ may not reduce to a single isomorphism class.) Our goal is to
show that when $m$ is even, the isomorphism class of each $
\eta \in \cI$ is determined by the isomorphism classes of
$\eta$-invariant maximal fields in $A$. To give a precise statement of this result, we need to
make some preliminary remarks and introduce some notations. First,
we observe that the isomorphism (\ref{E:M1001}) leads to an
isomorphism
$$
C(A , \eta) \otimes_K K_v \simeq C(A , \tau) \otimes_K K_v
$$
of  the corresponding Clifford algebras for every $v \in V^K.$ In
particular,
$$
Z(C(A , \eta)) \otimes_K K_v \simeq Z(C(A , \tau)) \otimes_K K_v \ \
\text{for all} \ \ v \in V^K,
$$
so by applying Tchebotarev's Density Theorem we see that there
exists a quadratic \'etale $K$-algebra $Z$ such that the center
$Z(C(A , \eta))$ is isomorphic to $Z$ for every $\eta \in \cI.$
We let $V$ denote the finite set of all $v \in V^K$ such that
$$
A \otimes_K K_v \not\simeq M_n(K_v) \ \ \text{and} \ \ Z \otimes_K
K_v \simeq K_v \times K_v.
$$
The following theorem, together with Corollary \ref{C:M2}, implies
assertion (ii) of Theorem B (of the
introduction).

\begin{thm}\label{T:M10}
Assume that $m$ is even.

\vskip2mm

\noindent {\rm \ (i)} \parbox[t]{11.9cm}{Given $\eta \in \cI$,
there is an $n$-dimensional $\eta$-invariant commutative \'etale
subalgebra $E_{\eta}$ of $A$ such that $(E_{\eta} , \eta \vert E_{\eta})$
is isomorphic as algebra with involution to $(F_{\eta}[x]/(x^2 - d)
, \theta)$, where $F_{\eta} = (E_{\eta})^{\eta}$, $d \in
F_{\eta}^{\times}$ is such that $d \in {(F_{\eta} \otimes_K
K_v)^{\times}}^{2}$ for all $v \in V$, and $\theta$ is defined by
$\theta(x) = -x.$}

\vskip2mm

\noindent {\rm (ii)} \parbox[t]{11.9cm}{Let $ \eta \in \cI$ and
let $E_{\eta}$ be any commutative \'etale subalgebra of $A$ with the properties
described in {\rm (i)}. If $\nu \in \cI$ and there exists an
embedding $(E_{\eta} , \eta \vert E_{\eta}) \hookrightarrow (A ,
\nu)$, then $(A , \nu) \simeq (A , \eta).$}
\end{thm}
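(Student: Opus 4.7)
The plan is to prove (i) via a local construction at places in $V$ combined with weak approximation on the variety of maximal tori of $G = \mathrm{SU}(A,\eta)$, and to deduce (ii) from the Lewis--Tignol criterion together with a place-by-place analysis of the Clifford algebra class formula (\ref{E:O-155}) and the Hasse principle for the Brauer group.

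For (i), at each $v \in V$ (so $A_v := A \otimes_K K_v$ is nonsplit and $Z_v := Z \otimes_K K_v$ is split) I plan to exhibit an $\eta_v$-invariant maximal commutative \'etale $K_v$-subalgebra $E_v^0 \subset A_v$ of the form $F_v^0 \times F_v^0$ with $\eta_v$ acting by swap; equivalently, via Proposition \ref{P:E11}, a maximal $K_v$-torus of $G_v := \mathrm{SU}(A_v, \eta_v)$ isomorphic to $\mathrm{R}_{F_v^0/K_v}(\mathrm{GL}_1)$ for an \'etale $K_v$-algebra $F_v^0$ of dimension $m$. Because $m$ is even, one can take $F_v^0$ to be a product of $m/2$ copies of a quadratic extension $L_v/K_v$ splitting the quaternion algebra $D_v$, so that $F_v^0$ splits $A_v$ (Proposition \ref{P:E1}); a direct local analysis of the skew-hermitian form associated to $\eta_v$ then constructs the desired $\eta_v$-invariant embedding $E_v^0 \hookrightarrow A_v$. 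Applying Proposition \ref{P:E50} to these local data yields an $\eta$-invariant \'etale $K$-subalgebra $E_\eta \subset A$ of dimension $n$ with $(E_\eta \otimes_K K_v, \eta \otimes \id_{K_v}) \simeq (E_v^0, \eta_v|_{E_v^0})$ for every $v \in V$; writing $E_\eta \simeq F_\eta[x]/(x^2 - d)$ via Proposition \ref{P:E12}, the element $d$ is then a square in $F_\eta \otimes_K K_v$ for all $v \in V$.

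For (ii), I apply Theorem \ref{T:G1} with $\varepsilon$ taken to be the inclusion $E_\eta \hookrightarrow A$ and target $\tau = \nu$. Proposition \ref{P:G1} yields an orthogonal involution $\theta = \nu \circ \Int g$ on $A$ (with $g$ being $\nu$-symmetric) agreeing with $\eta$ on $E_\eta$. Since $\theta$ and $\eta$ both restrict to $\eta|_{E_\eta}$ on the maximal \'etale subalgebra $E_\eta$, the composition $\theta\eta$ fixes $E_\eta$ pointwise and is therefore inner by some element $c \in E_\eta^\times$; the orthogonality of both $\theta$ and $\eta$, via Proposition 2.7(3) of \cite{BoI}, forces $c$ to be $\eta$-symmetric, i.e.\ $c \in F_\eta^\times$, so $\theta = \eta_{a_0}$ for $a_0 := c^{-1} \in F_\eta^\times$. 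Theorem \ref{T:G1}(i)$\Leftrightarrow$(ii) then produces $a \in F_\eta^\times$ with $(A,\theta_a) \simeq (A,\nu)$; since $\theta_a = \eta_{a_0 a}$, setting $a' := a_0 a \in F_\eta^\times$ yields $(A,\eta_{a'}) \simeq (A,\nu)$. Since the signatures of $\nu$ and $\eta$ at real places match (both in $\cI$), the Lewis--Tignol criterion recalled in \S\,\ref{S:O} reduces $(A,\nu)\simeq(A,\eta)$ to showing $C(A,\eta_{a'}) \simeq C(A,\eta)$ as $K$-algebras. By (\ref{E:O-155}) and the criterion following (\ref{E:O-150}), this in turn amounts to showing that the Brauer class
\begin{equation*}
c \;:=\; \Res_{Z/K} \Cor_{F_\eta/K}\bigl((a',d)_{F_\eta}\bigr) \;\in\; \Br(Z)
\end{equation*}
is either $0$ or equal to $\Res_{Z/K}[A]$.

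I claim that $c = 0$, and verify this place-by-place. For $v \in V$, $d$ is a square in $F_\eta \otimes_K K_v$ by (i), so the local symbol $(a',d)$ vanishes and $c_v = 0$. For $v$ with $Z_v$ a field, $\Res_{Z_v/K_v}$ annihilates $\Br(K_v)_2$, hence $\Res_{Z_v/K_v}[A_v] = 0$; combining this with the local instance of (\ref{E:O-155}) and the local isomorphism $(A_v, (\eta_{a'})_v) \simeq (A_v, \nu_v) \simeq (A_v, \eta_v)$ (the latter since $\nu,\eta \in \cI$), the criterion following (\ref{E:O-150}) forces $c_v = 0$. For $v$ with $A_v$ split, $\Res_{Z_v/K_v}[A_v] = 0$ as well, and the same local comparison gives $c_v = 0$. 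These three cases exhaust $V^K$, so the Albert--Brauer--Hasse--Noether injectivity $\Br(Z) \hookrightarrow \bigoplus_v \Br(Z \otimes_K K_v)$ forces $c = 0$, whence $C(A,\eta_{a'}) \simeq C(A,\eta)$; Lewis--Tignol then gives $(A,\eta_{a'}) \simeq (A,\eta)$, and combining with $(A,\eta_{a'}) \simeq (A,\nu)$ yields $(A,\nu) \simeq (A,\eta)$. The main obstacle is part (i), namely producing the local subalgebras $E_v^0$ of the prescribed split-discriminant form at each $v \in V$ (where the parity hypothesis $2 \mid m$ is essential, allowing a quadratic splitting field of $D_v$ to fit inside $F_v^0$), and then applying weak approximation so that the resulting global discriminant $d$ remains a square at every $v \in V$.
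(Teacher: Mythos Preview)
Your proposal is correct and follows essentially the same strategy as the paper.

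For part (i), your local construction is exactly the content of the paper's Lemma~\ref{L:M4}: at each $v\in V$ one builds an $\eta_v$-invariant subalgebra of the form $(L_v)^{m/2}\times (L_v)^{m/2}$ with the swap involution (using $2\mid m$ and a maximal subfield $L_v\subset D_v$), then globalizes via Proposition~\ref{P:E50}. The paper gives an explicit matrix realization of this embedding and then invokes the local classification of skew-hermitian forms (same discriminant $\Rightarrow$ equivalent) to pass from the model involution to $\eta_v$; your sketch ``direct local analysis of the skew-hermitian form'' points to the same step.

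For part (ii) there is a mild presentational difference. The paper packages the local Clifford data into a map $\delta\colon\cI\to\overline{\cB}$ indexed only by the places in $V$, proves that $\delta$ separates isomorphism classes (Lemma~\ref{L:M1}), and then checks $\delta(\eta_a)=\delta(\eta)$ using only that $d$ is a square at $v\in V$. You instead compute the global class $c=\Res_{Z/K}\Cor_{F_\eta/K}((a',d)_{F_\eta})$ directly and verify $c_v=0$ at \emph{every} place by the trichotomy ($v\in V$; $Z_v$ a field; $A_v$ split). This is the same computation unwound: the paper's observation that $\delta(\eta,\phi_\eta,v)=0$ automatically for $v\notin V$ is precisely your Cases~2 and~3. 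Both routes rest on (\ref{E:O-155}), (\ref{E:O-150}), and Lewis--Tignol. One simplification available to you: since $E_\eta$ is already $\eta$-invariant, the paper takes $\theta=\eta$ directly in Theorem~\ref{T:G1} (any two orthogonal involutions differ by $\Int g$ with $g$ symmetric), so your detour through $\theta=\eta_{a_0}$ is unnecessary, though correct.
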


\vskip3mm

We begin by constructing the
required subalgebras over the completions $K_v$ for $v \in V.$
\begin{lemma}\label{L:M4}
Let $v \in V,$ and assume that $m$ is even. Then for any $\eta
\in \cI,$ the algebra $A_v = A \otimes_K K_v$ contains an
$n$-dimensional commutative \'etale $K_v$-subalgebra $E_v$ which is invariant
under $\eta_v = \eta \otimes \id_{K_v}$ and for which there is an
isomorphism of algebras with involution
$$
(E_v , \eta_v \vert E_v) \simeq (F_v[x]/(x^2 - 1) , \theta_v),
$$
where $F_v := E_v^{\eta_v}$, and $\theta_v$ is defined by $x \mapsto
-x.$
\end{lemma}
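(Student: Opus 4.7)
The plan is to exploit the rigidity imposed by $v\in V$ on $\eta_v$. I will write $A_v = \mathrm{End}_{D_v}(V)$ where $V$ is a right $D_v$-module of reduced dimension $m$ equipped with a skew-hermitian form $h$ (with respect to the canonical involution of $D_v$) such that $\eta_v(f) = h^{-1}\circ f^*\circ h$. The hypothesis $v\in V$ says $Z\otimes_K K_v \simeq K_v\times K_v$, which via the standard description $Z \simeq K[\sqrt{\disc(\eta)}]$ amounts to the discriminant of $\eta_v$ (equivalently, of $h$) being trivial in $K_v^\times/K_v^{\times 2}$.

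The crux of the argument will be to show that $h$ is hyperbolic. Over a non-archimedean local field, skew-hermitian forms over a quaternion division algebra are classified by their dimension and their discriminant (Tsukamoto; cf.\ Scharlau, \emph{Quadratic and Hermitian Forms}, Ch.\ 10), so an even-dimensional form with trivial discriminant is necessarily hyperbolic. Over $\R$ with $D_v = \HH$, one notes that $\Nrd$ maps $\HH\setminus \R$ onto $\R_{>0} = (\R^\times)^2$, so any two nonzero one-dimensional skew-hermitian forms over $\HH$ are equivalent; by Witt cancellation every even-dimensional form is then hyperbolic. This hyperbolicity step is the main obstacle; once it is in hand, the remainder is a direct construction.

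Given hyperbolicity, I fix a decomposition $V = V_+\oplus V_-$ into two complementary totally isotropic $D_v$-submodules of $D_v$-dimension $m/2$, and let $e_\pm\in A_v$ be the associated projections. Using $h(V_\pm,V_\pm)=0$ and the nondegeneracy of the induced pairing $h\colon V_+\times V_-\to D_v$, a direct computation shows that for any block-diagonal endomorphism $\phi = \phi_+\oplus\phi_-$, one has $\eta_v(\phi) = \phi_-^\vee \oplus \phi_+^\vee$, where $\vee$ denotes the adjoint with respect to this pairing (so $\phi_+^\vee\in\mathrm{End}_{D_v}(V_-)$ and $\phi_-^\vee\in\mathrm{End}_{D_v}(V_+)$). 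In particular $\eta_v(e_+) = e_-$, so $\iota := e_+ - e_-$ satisfies $\iota^2 = 1$ and $\eta_v(\iota) = -\iota$.

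To finish, I choose any maximal commutative \'etale $K_v$-subalgebra $\widetilde F\subset \mathrm{End}_{D_v}(V_+)\simeq M_{m/2}(D_v)$ (for instance a maximal subfield, necessarily of $K_v$-dimension $m$) and embed it diagonally into $A_v$ by $\phi\mapsto\phi\oplus\phi^\vee$. Commutativity of $\widetilde F$ together with the identity $(\phi\psi)^\vee = \psi^\vee\phi^\vee$ makes this a $K_v$-algebra homomorphism, and its image is pointwise fixed by $\eta_v$ since $(\phi^\vee)^\vee = \phi$. The subalgebra $E_v := \widetilde F[\iota] \subset A_v$ is then commutative \'etale of $K_v$-dimension $2m = n$, $\eta_v$-invariant, and isomorphic as an algebra with involution to $(\widetilde F[x]/(x^2-1),\, x\mapsto -x)$; taking $F_v := E_v^{\eta_v}\simeq \widetilde F$ provides the required presentation.
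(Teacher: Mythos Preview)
Your argument is correct and rests on the same key input as the paper's proof: the local classification of skew-hermitian forms over a quaternion division algebra (Scharlau, Ch.~10, Theorems~3.6 and~3.7), which implies that an even-dimensional form with trivial discriminant is determined uniquely, hence hyperbolic. The paper uses this classification in a slightly different way: it writes down an explicit skew-hermitian matrix $M=\mathrm{diag}(Q,\ldots,Q)$ with $Q=\left(\begin{smallmatrix}0&g_v\\ g_v&0\end{smallmatrix}\right)$ (visibly hyperbolic), embeds $(L_v\times L_v)^r$ diagonally into $(M_m(D_v),\mu_v)$, checks that $\mathrm{discr}(\mu_v)=1$, and then invokes the classification to obtain an isomorphism $(A_v,\mu_v)\simeq(A_v,\eta_v)$ through which the subalgebra is transported. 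Your version is a coordinate-free reorganization of the same idea: you recognize directly that $(A_v,\eta_v)$ is hyperbolic and extract the element $\iota=e_+-e_-$ and the fixed subalgebra $\widetilde F$ from the Lagrangian decomposition, avoiding the intermediate model $\mu_v$. One minor imprecision: for the real case you should speak of pure quaternions (the skew elements for the canonical involution) rather than $\HH\setminus\R$, though since every nonzero pure quaternion has positive reduced norm your conclusion that all one-dimensional skew-hermitian forms over $\HH$ are equivalent, and hence all even-dimensional ones are hyperbolic, stands.
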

\begin{proof}
We have $A_v = M_m(D_v)$,  where $D_v = D \otimes_K K_v$ is a division
algebra as $v \in V.$ We will first construct certain $K_v$-algebras
and their embeddings into $M_2(D_v).$ Pick a maximal field $L_v$ in
$D_v,$ and let $g_v \in D_v^{\times}$ be an element such that
$\mathrm{Int}\: g_v$ induces the nontrivial automorphism of $L_v;$
notice that $\overline{g_v} = -g_v$ where $\bar{\ }$ denotes the
canonical involution of $D_v.$ Consider the algebra $C_v =
L_v[x]/(x^2 - 1)$ with the involution $\tau_v$ defined by $x \mapsto
-x.$ Then $(C_v , \tau_v)$ is isomorphic to $(L_v \times L_v ,
\varepsilon_v)$, where $\varepsilon_v$ is the involution $(a
, b) \mapsto (b , a).$ Let $^*$ be the (symplectic) involution of
$M_2(D_v)$ given by $(a_{ij})^* = (\overline{a_{ji}}).$ Then the matrix
$Q = \left(\begin{array}{cc} 0 & g_v \\
g_v & 0 \end{array} \right)$ obviously satisfies $Q^* = -Q,$ so
$\sigma$ given by $\sigma(a) = Q^{-1} a^* Q$ is an orthogonal
involution of $M_2(D_v).$ Now, it is easy to check that $(a , b)
\mapsto \mathrm{diag}(a , b)$ defines an embedding
$$\epsilon_v \colon (C_v , \tau_v) \simeq (L_v \times L_v ,
\varepsilon_v) \hookrightarrow (M_2(D_v) , \sigma)$$ of algebras
with involution.

By our assumption, $m$ is even, say $m = 2r.$ Let $S_v$ be the
direct product of $r$ copies of $L_v,$ and let $R_v = S_v[x]/(x^2 -
1)$ with the involution $\theta_v$ defined by $x \mapsto -x.$ Then,
obviously,
$$
(R_v , \theta_v) \simeq \prod_{i = 1}^{r} (C_v , \tau_v) \ \
\text{and} \ \ R_v^{\theta_v} = S_v.
$$
For $(a_1, \ldots , a_r) \in R_v,$ where $a_i \in C_v,$ we set
$$
\iota_v(a_1, \ldots , a_r) =
\mathrm{diag}\left(\epsilon_v(a_1), \ldots ,
\epsilon_v(a_r)\right) \in M_m(D_v).
$$
Then $\iota_v$ yields an embedding of algebras with involution
$$
(R_v , \theta_v) \hookrightarrow (M_m(D_v) , \mu_v) \ \ \text{with}
\ \ \mu_v(a) = M^{-1} a^* M,
$$
where, again, $^*$ is defined by $(a_{ij})^* = (\overline{a_{ji}})$,
and $M = \mathrm{diag}(Q, \ldots , Q).$ It follows from the definitions
that the discriminant of $\mu_v$ equals $\mathrm{discr}(\sigma) = 1 \cdot
{K_v^{\times}}^{ 2}$ (cf.\,\cite{BoI}, Ch.\,II, \S 7). On the other
hand, since $v \in V,$ we have $Z \otimes K_v = K_v \times K_v,$
which implies that $\mathrm{discr}({\eta_v}) = 1 \cdot
{K_v^{\times}}^ 2$ (cf.\,\cite{BoI}, Ch.\,II, Theorem 8.10). But
then $(A_v , \mu_v) \simeq (A_v , \eta_v)$ (cf.\,\cite{Sch},
Ch.\,10, Theorem 3.6 for the nonarchimedean case and Theorem 3.7 for
the real case). Thus, there exists an embedding $(R_v , \theta_v)
\hookrightarrow (A_v , \eta_v),$ the image of which furnishes a
subalgebra $E_v$ of  $A_v$ with the desired properties.
\end{proof}

\vskip2mm

{\it Proof of Theorem \ref{T:M10}}\,(i). For each $v\in V$, pick a  commutative
 \'etale subalgebra $E_v$ of  $A_v := A \otimes_K K_v$ as in the
preceding lemma. Using Proposition \ref{P:E50}, we find an
$n$-dimensional $\eta$-invariant commutative \'etale subalgebra $E$ of
$A$ which satisfies (\ref{E:I1}) of  \S \ref{S:I} and for which
$$
(E \otimes_K K_v , (\eta \vert E) \otimes \id_{K_v}) \simeq (E_v ,
\eta_v \vert E_v).
$$
By Proposition \ref{P:E12}, we have
$$
(E , \eta \vert E) \simeq (F[x]/(x^2 - d) , \theta)
$$
where $F = E^{\eta},$ $d \in F^{\times}$ and $\theta$ is defined by
$x \mapsto -x.$ Then by our construction, for every $v \in V$ we have
$$
(F \otimes_K K_v)[x]/(x^2 - d) \simeq (F \otimes_K K_v)[x]/(x^2 -
1),
$$
implying that $d \in {(F \otimes_K K_v)^{\times}}^{2},$ as required.
\hfill $\Box$

\vskip3mm

{\it Proof of Theorem \ref{T:M10}}\,(ii). The argument relies on
characterizing the isomorphism classes in $\cI$ as fibers of a
certain map $\delta$, which we will now construct. For each $\eta \in
\cI$ we fix an isomorphism $\phi_{\eta} \colon Z \to Z(C(A ,
\eta)),$ and then let $C(A , \eta , \phi_{\eta})$ denote $C(A ,
\eta)$ with the structure of $Z$-algebra defined using
$\phi_{\eta}.$ Consider the following subgroup
$$
\cB = \prod_{v \in V} \ \langle\: \Res_{Z \otimes_K K_v/K_v}([A
\otimes_K K_v]) \:\rangle \ \ \subset \ \ \prod_{v \in V} \Br(Z
\otimes_K K_v).
$$
Furthermore, let $\cB_0$ be the subgroup of $\cB$ generated by the
element
$$
(\: \Res_{Z \otimes_K K_v/K_v}([A \otimes_K K_v])\,)_{v \in V},
$$
and let $\overline{\cB} = \cB/\cB_0.$ This group will be the target of
the required map $\delta.$ To define it, we need to {\it fix} an
element of $\cI;$ to keep our notations simple, we will pick the
the involution $\tau$ used to define $\cI = \cI(A , \tau)$ as
the fixed element, but in fact any other element of $\cI$ can be
utilized equally well. Given $\eta \in \cI,$ for any $v \in V^K$ there is
an isomorphism as in (\ref{E:M1001}). Then with an appropriate choice
of an isomorphism $\psi \colon Z \otimes_K K_v \to Z(C(A \otimes_K
K_v , \eta \otimes \id_{K_v})),$ we will obtain an isomorphism
$$
C(A \otimes_K K_v, \eta \otimes \id_{K_v} , \psi) \ \simeq \ C(A
\otimes_K K_v , \tau \otimes \id_{K_v} , \phi_{\tau} \otimes
\id_{K_v})
$$
of $(Z \otimes_K K_v)$-algebras. Using (\ref{E:O-150}) of  \S \ref{S:O},
we see that in $\Br(Z \otimes K_v)$ the following difference
$$
\delta(\eta , \phi_{\eta} , v) :=
[C(A, \eta, \phi_{\eta})\otimes_K K_v] - [C(A, \tau,
\phi_{\tau})\otimes_K K_v]
$$
equals either $0$ or $\Res_{Z \otimes_K K_v/K_v}([A \otimes_K
K_v]).$ In fact, $\delta(\eta ,\phi_{\eta},  v) = 0$ for all $v
\notin V,$ which leads us to consider the element $(\delta(\eta ,
\phi_{\eta} , v))_{v \in V} \in \cB.$ Now, for a different
isomorphism $\phi'_{\eta} \colon Z \to Z(C(A , \eta)),$ again by
(\ref{E:O-150}) in \S \ref{S:O}, we have
$$
[C(A , \eta , \phi'_{\eta})] - [C(A , \eta , \phi_{\eta})] =
\Res_{Z/K}([A]).
$$
This means that the coset
$$
\delta(\eta) := (\delta(\eta , \phi_{\eta} , v))_{v \in V} + \cB_0 \
\in \ \overline{\cB}
$$
depends only on $\eta,$ not on the choice of $\phi_{\eta},$ and
therefore the map
$$
\delta \colon \cI \to \overline{\cB}, \ \ \eta \mapsto
\delta(\eta),
$$
is well-defined.
\begin{lemma}\label{L:M1}
For $\eta , \nu \in \cI,$ the condition $\delta(\eta) =
\delta(\nu)$ implies that $(A , \eta) \simeq (A , \nu).$
\end{lemma}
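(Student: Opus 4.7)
My plan is to apply the Lewis--Tignol criterion cited in the paper: since $\eta, \nu \in \cI$ are locally isomorphic at every place (in particular at every real place), it suffices to show that the Clifford algebras $C(A,\eta)$ and $C(A,\nu)$ are $K$-isomorphic. By the characterization of $K$-isomorphism of such Clifford algebras recalled just before the theorem, this amounts to proving
\[
\alpha \ := \ [C(A,\eta,\phi_{\eta})] \, - \, [C(A,\nu,\phi_{\nu})] \ \in \ \{\,0,\ \Res_{Z/K}([A])\,\} \ \subset \ \Br(Z),
\]
so the real task is to pin down $\alpha$ in $\Br(Z)$.

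The one computational observation I would use is that for every $v \notin V$ the restriction $\Res_{Z \otimes_K K_v/K_v}([A \otimes_K K_v]) \in \Br(Z \otimes_K K_v)$ vanishes. Indeed, for such $v$, either $A \otimes_K K_v$ already splits, or $Z \otimes_K K_v$ is a quadratic field extension of the local field $K_v$; in the latter case the relation $\Cor \circ \Res = 2$, combined with the fact that $\Cor$ is an isomorphism of Brauer groups of local nonarchimedean fields (the archimedean cases being trivial since $\Br(\C) = 0$), forces $\Res_{Z \otimes_K K_v/K_v}$ to annihilate the $2$-torsion of $\Br(K_v)$. This is precisely why the excerpt records that $\delta(\eta,\phi_{\eta},v) = 0$ for all $v \notin V$.

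Now I localize $\alpha$. By definition of $\delta$, the class $\alpha_v \in \Br(Z \otimes_K K_v)$ equals $\delta(\eta,\phi_{\eta},v) - \delta(\nu,\phi_{\nu},v)$, and so $\alpha_v = 0$ for every $v \notin V$. The hypothesis $\delta(\eta) = \delta(\nu)$ in $\overline{\cB} = \cB/\cB_0$ says that $(\alpha_v)_{v \in V}$ lies in $\cB_0$, a group of order at most two generated by $(\Res_{Z \otimes_K K_v/K_v}([A \otimes_K K_v]))_{v \in V}$. So either $\alpha_v = 0$ for all $v \in V$, in which case $\alpha_v = 0$ for every $v \in V^K$; or $\alpha_v = \Res_{Z \otimes_K K_v/K_v}([A \otimes_K K_v])$ for all $v \in V$, and combining with the vanishing of the previous paragraph this equality actually holds for every $v \in V^K$. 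By the Hasse principle for the Brauer group (injectivity of $\Br(Z) \hookrightarrow \bigoplus_{w \in V^Z} \Br(Z_w)$), the two cases yield $\alpha = 0$ and $\alpha = \Res_{Z/K}([A])$, respectively.

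In either outcome $\alpha$ lies in the target set $\{0, \Res_{Z/K}([A])\}$, so $C(A,\eta) \simeq C(A,\nu)$ as $K$-algebras, and Lewis--Tignol then yields $(A,\eta) \simeq (A,\nu)$ as algebras with involution. I do not expect a hard step: the proof is essentially a book-keeping argument matching the definitions of $\delta$ and $\cB_0$ against the Hasse principle for $\Br(Z)$, the only genuine input being the Brauer-theoretic vanishing of $\Res_{Z \otimes_K K_v/K_v}([A \otimes_K K_v])$ at the places $v \notin V$.
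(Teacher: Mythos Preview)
Your proof is correct and follows essentially the same approach as the paper's. The only cosmetic difference is that the paper, instead of fixing $\phi_{\eta}$ and showing $\alpha \in \{0,\Res_{Z/K}([A])\}$, replaces $\phi_{\eta}$ by the other isomorphism $Z \to Z(C(A,\eta))$ (when the nontrivial element of $\cB_0$ arises) so as to force $\alpha_v = 0$ at all $v \in V$, and then concludes $\alpha = 0$ via the Hasse principle; this is the same computation viewed through relation (\ref{E:O-150}).
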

\begin{proof}
Indeed, $\delta(\eta) = \delta(\nu)$ means that after replacing
$\phi_{\eta}$ with another isomorphism $\phi'_{\eta} \colon Z \to
Z(C(A , \eta))$ if necessary, we can assume that
\begin{equation}\label{E:M1}
[C(A , \eta , \phi_{\eta}) \otimes_K K_v] = [C(A , \nu , \phi_{\nu})
\otimes_K K_v]
\end{equation}
in $\Br(Z \otimes_K K_v)$ for all $v \in V.$ At the same time, as we
observed above, the fact that $\eta, \nu \in \cI$
automatically implies (\ref{E:M1}) for $v \in V^K \setminus V.$
Using the injectivity of $ \Br(Z) \longrightarrow \bigoplus_{v \in
V^K} \Br(Z \otimes_K K_v),$  we conclude that
$$
[C(A , \eta , \phi_{\eta})] = [C(A , \nu , \phi_{\nu})],
$$
and in particular, $C(A , \eta) \simeq C(A , \nu)$ as $K$-algebras.
Since in addition we have
$$
(A \otimes_K K_v , \eta \otimes \id_{K_v}) \simeq (A \otimes_K K_v ,
\nu \otimes_K K_v) \ \  \text{for all}\ \   v \in V_r^K,
$$
by the result of Lewis and Tignol \cite{LT}, mentioned at the
beginning of \S \ref{S:O}, we have $(A , \eta) \simeq (A , \nu).$
\end{proof}

\vskip2mm

Let now $\eta \in \cI,$ and let $E_{\eta}$  be a
commutative \'etale subalgebra of $A$ as in Theorem \ref{T:M10}(i). Furthermore, let $\nu
\in \cI,$ and suppose that there is an embedding $\iota \colon
(E_{\eta} , \eta \vert E_{\eta}) \hookrightarrow (A , \nu).$ By
Lemma \ref{L:M1}, to show that $(A , \eta) \simeq (A , \nu)$ it is
enough to show that $\delta(\eta) = \delta(\nu).$ Observing that for
the involution $\theta$ in Theorem \ref{T:G1} which extends $\eta
\vert E_{\eta},$ one can take $\eta$ itself, we see that the
existence of $\iota$ implies that there is an $a \in F_{\eta}^{\times}$
such that $(A , \eta_a) \simeq (A , \nu),$ where $\eta_a = \eta
\circ \mathrm{Int}\: a.$ Then $ \eta_a \in \cI$ and
$\delta(\eta_a) = \delta(\nu).$ So, to prove Theorem \ref{T:M10}(ii), it remains only
to show that
\begin{equation}\label{E:M25}
\delta(\eta_a) = \delta(\eta).
\end{equation}
But according to (\ref{E:O-155}) in \S \ref{S:O}, for any $v \in V^K,$ we
have
\vskip1mm

\noindent $[C(A \otimes_K K_v , \eta_a \otimes \id_{K_v} , (\phi_{\eta})_a
\otimes \id_{K_v})] =$
$$
[C(A \otimes_K K_v , \eta \otimes \id_{K_v} , \phi_{\eta} \otimes
\id_{K_v})] + \Res_{Z \otimes_K K_v/K_v} \Cor_{F_{\eta} \otimes_K
K_v/K_v}(a , d)_{F_{\eta} \otimes_K K_v}.
$$
If now $v \in V$, then the assumption that $d \in {(F \otimes_K
K_v)^{\times}}^ 2$ implies that
$$
[C(A \otimes_K K_v , \eta_a \otimes \id_{K_v} , (\phi_{\eta})_a
\otimes \id_{K_v})] =[C(A \otimes_K K_v , \eta \otimes \id_{K_v} ,
\phi_{\eta} \otimes \id_{K_v})],
$$
i.e.,
$$
\delta(\eta_a , (\phi_{\eta})_a , v) = \delta(\eta , \phi_{\eta} ,
v),
$$
and (\ref{E:M25}) follows. \hfill $\Box$

\vskip3mm

\begin{cor}\label{C:M1}
Let $A = M_m(D)$, where $D$ is a quaternion division algebra over
$K$ and $m$ is even, and let $\tau$ be an orthogonal involution of
$A.$ Suppose we are given $\eta \in \cI = \cI(A , \tau),$ a
finite set $\mathscr{S} \subset V^K \setminus V,$ and for each $v
\in \mathscr{S}$, an $n$-dimensional (with $n = 2m$) commutative
\'etale subalgebra $E(v)$ of $A_v := A \otimes_K K_v$ invariant
under $\eta_v = \eta \otimes \id_{K_v}$ such that
$\dim_{K_v}E^{\eta_v}_v = m$. Then there exists an $n$-dimensional
$\eta$-invariant commutative \'etale subalgebra $E$ of $A$
with the properties described in Theorem \ref{T:M10}\,{\rm (i)} (with ``$E_{\eta}$" replaced by ``$E$" and ``$F_{\eta}$" by ``$F$"), and
such that for every $v \in \mathscr{S}$ we have
\begin{equation}\label{E:M171}
E(v) = g_v^{-1}(E\otimes_K K_v)g_v \ \ \text{for\ a} \ \ g_v
\in G_{\eta}(K_v),
\end{equation}
where $G_{\eta} = \mathrm{SU}(A , \eta).$
\end{cor}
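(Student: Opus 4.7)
The plan is to enlarge the family of local data from $\mathscr{S}$ to $V \cup \mathscr{S}$ by adjoining, at each $v \in V$, a subalgebra of the special shape produced by Lemma \ref{L:M4}, and then to invoke the weak-approximation result Proposition \ref{P:E50} applied to the algebra with involution $(A,\eta)$ in place of $(A,\tau)$. Concretely, for each $v \in V$ I would apply Lemma \ref{L:M4} (whose hypothesis $m$ even is given) to obtain an $n$-dimensional $\eta_v$-invariant commutative \'etale $K_v$-subalgebra $E(v)$ of $A_v$ together with an isomorphism of algebras with involution
$$
(E(v),\,\eta_v|E(v))\ \simeq\ (F(v)[x]/(x^{2}-1),\,\theta_v),
$$
where $F(v) := E(v)^{\eta_v}$ and $\theta_v:x\mapsto -x$; in particular $\dim_{K_v}F(v)=m$, so (\ref{E:I1}) holds. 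At each $v \in \mathscr{S}$ the hypothesis $\dim_{K_v}E(v)^{\eta_v}=m$ is exactly (\ref{E:I1}), and since $\mathscr{S}\cap V=\varnothing$ (because $\mathscr{S}\subset V^{K}\setminus V$), these local data together form a coherent family on the finite set $V\cup\mathscr{S}$.

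Next, I would apply Proposition \ref{P:E50} to $(A,\eta)$ and this combined family. The conclusion yields an $n$-dimensional $\eta$-invariant commutative \'etale $K$-subalgebra $E\subset A$ satisfying (\ref{E:I1}), and for each $w\in V\cup\mathscr{S}$ an element $g_w\in G_{\eta}(K_w)$ with
$$
E(w)=g_w^{-1}(E\otimes_K K_w)g_w
$$
as algebras with involution. Restricting to $w\in\mathscr{S}$ gives precisely the conjugation assertion of the corollary.

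Finally, to verify that $E$ has the structural property described in Theorem \ref{T:M10}(i), I would invoke Proposition \ref{P:E12} to write $(E,\eta|E)\simeq(F[x]/(x^{2}-d),\theta)$ with $F=E^{\eta}$, $d\in F^{\times}$ and $\theta:x\mapsto -x$, and then check that $d$ becomes a square in $F\otimes_K K_v$ for every $v\in V$. This follows directly from the chain of $K_v$-algebra-with-involution isomorphisms
$$
((F\otimes_K K_v)[x]/(x^{2}-d),\,\theta\otimes\id_{K_v})\ \simeq\ (E\otimes_K K_v,(\eta|E)\otimes\id_{K_v})\ \simeq\ (E(v),\eta_v|E(v))\ \simeq\ (F(v)[x]/(x^{2}-1),\theta_v),
$$
combined with the identification $F\otimes_K K_v\simeq F(v)$ obtained by taking fixed rings of the involutions on both sides.

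There is no genuine obstacle here once Lemma \ref{L:M4} and Proposition \ref{P:E50} are in place: the only subtle point is ensuring that the discriminant element $d$ is simultaneously a local square at every $v\in V$, and this is handled automatically by imposing the appropriate local shape on $E$ at those places and letting weak approximation produce a global subalgebra realizing those shapes. The argument is essentially the same as the one already used to establish Theorem \ref{T:M10}(i), except that the set of prescribed local conditions is enlarged from $V$ to $V\cup\mathscr{S}$.
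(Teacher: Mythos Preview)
Your proposal is correct and follows essentially the same approach as the paper: enlarge the local data on $\mathscr{S}$ by adjoining, at each $v\in V$, a subalgebra of the special shape, then apply Proposition~\ref{P:E50} to $(A,\eta)$ on $V\cup\mathscr{S}$, and finally read off $d\in(F\otimes_K K_v)^{\times 2}$ from the involution-respecting local isomorphisms. The only cosmetic difference is that the paper first invokes Theorem~\ref{T:M10}(i) to obtain a global $E_\eta$ and uses its completions $E_\eta\otimes_K K_v$ as the local data at $v\in V$, whereas you invoke Lemma~\ref{L:M4} directly at those places; since the proof of Theorem~\ref{T:M10}(i) is itself just Lemma~\ref{L:M4} plus Proposition~\ref{P:E50}, the two arguments are the same in substance.
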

\begin{proof}
Let $E_{\eta}$ be a commutative \'etale subalgebra of  $A$ as in Theorem
\ref{T:M10}(i), and for $v \in V,$  set $E(v) = E_{\eta} \otimes_K K_v.$
Applying Proposition \ref{P:E50} we can find an $n$-dimensional
$\eta$-invariant commutative \'etale subalgebra $E$ of $A$
such that
$$
E(v) = g_v^{-1}(E\otimes_K K_v)g_v \ \ \text{with} \ \ g_v
\in G_{\eta}(K_v) \ \ \text{for all} \ \ v \in \mathscr{S} \cup V.
$$
Then (\ref{E:M171}) holds automatically. On the other hand, writing
$E_{\eta}$ and $E$ in the form
$$
E_{\eta} = F_{\eta}[x]/(x^2 - d) \ \ \text{and} \ \ E= F[x]/(x^2 -
d'),
$$
where  $F_{\eta} = (E_{\eta})^{\eta},$ $F= E^{\eta},$  and $d\in
F_{\eta}^{\times}$, $d' \in F^{\times}$ (cf.\:Proposition
\ref{P:E12}), we observe that for $v \in V,$ the fact that the
isomorphism
$$\phi_v \colon E\otimes_K K_v \to E(v) = E_{\eta} \otimes_K K_v, \
\ a \mapsto g_v^{-1}a g_v,$$ commutes with $\eta_v,$ implies that
$\phi_v(F \otimes_K K_v) = F_{\eta}\otimes_K K_v$, and $\phi_v(d')
\in d \cdot {(F_{\eta} \otimes_K K_v)^{\times}}^{2}.$ Since by our
construction, $d \in {(F_{\eta} \otimes_K K_v)^{\times}}^{2},$ we obtain
that $d' \in {(F \otimes_K K_v)^{\times}}^{2},$ as required.
\end{proof}

\vskip1mm

Combining this corollary with the results of \cite{PR4}, we obtain
the following stronger assertion, which we will need in \S
\ref{S:Ap}.
\begin{cor}\label{C:M2}
Keep the notations of Corollary \ref{C:M1}. Then there exists an
$n$-dimensional $\eta$-invariant commutative \'etale subalgebra
$E$ of  $A$ which has the properties described in Theorem
\ref{T:M10}{\rm (i)} (with ``$E_{\eta}$" replaced by ``$E$" and ``$F_{\eta}$" by ``$F$"), satisfies (\ref{E:M171}) for all $v \in
\mathscr{S},$ and for which the corresponding maximal $K$-torus
$T_{\eta}$ of $G_{\eta} = \mathrm{SU}(A , \eta)$ is generic over $K$
(``generic" in the sense of \S \ref{S:E}). This algebra $E$
is automatically a field extension of $K.$
\end{cor}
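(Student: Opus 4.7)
The plan is to strengthen the torus-constructing argument of Corollary \ref{C:M1} by simultaneously imposing the genericity condition, and then to invoke Proposition \ref{P:E55} to conclude that the resulting $E$ must be a field. The essential additional input, beyond what has been established in this section, is a result from \cite{PR4} guaranteeing that generic maximal $K$-tori of a semisimple $K$-group are plentiful; more precisely, given prescribed maximal tori defined over the completions $K_v$ for $v$ in a finite set, one can find a single maximal $K$-torus that is $G(K_v)$-conjugate to each of the prescribed local tori and is generic over $K$.

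First, I would set up the local data exactly as in the proof of Corollary \ref{C:M1}: for $v \in V$ take the subalgebra $E(v)$ of $A_v$ produced by Lemma \ref{L:M4}, while for $v \in \mathscr{S}$ take the subalgebra $E(v)$ prescribed in the statement. By Proposition \ref{P:E11}, each $E(v)$ corresponds to a maximal $K_v$-torus $T(v)$ of $G_{\eta}$. Now, instead of merely applying Proposition \ref{P:E50}, I would invoke the refined approximation-plus-genericity result from \cite{PR4} to produce a maximal $K$-torus $T_{\eta}$ of $G_{\eta}$ which is $G_{\eta}(K_v)$-conjugate to $T(v)$ for every $v \in \mathscr{S} \cup V$ and is simultaneously generic over $K$. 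By Proposition \ref{P:E11}, $T_{\eta}$ corresponds to an $n$-dimensional $\eta$-invariant commutative \'etale subalgebra $E$ of $A$ satisfying (\ref{E:I1}). The conjugacy property at places in $\mathscr{S}$ immediately yields (\ref{E:M171}), and the argument from the last paragraph of the proof of Corollary \ref{C:M1}, using Proposition \ref{P:E12}, shows that writing $E = F[x]/(x^2 - d')$ one has $d' \in (F \otimes_K K_v)^{\times 2}$ for every $v \in V$, so that $E$ enjoys the properties of Theorem \ref{T:M10}(i).

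Finally, because $T_{\eta}$ is generic over $K$, $\eta$ is of the first kind, and $n = 2m \geqslant 4$ is even, Proposition \ref{P:E55} applies and forces $E$ to be a field extension of $K$. The main obstacle in this plan is to establish, or more likely to quote correctly from \cite{PR4}, the precise statement combining $G_{\eta}(K_v)$-conjugacy at the finitely many places of $\mathscr{S} \cup V$ with genericity of the global torus over $K$; once that is in hand, the rest of the argument is a straightforward reassembly of the steps of Corollary \ref{C:M1} together with the field-extension consequence of Proposition \ref{P:E55}.
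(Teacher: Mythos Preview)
Your approach is essentially the same as the paper's, and the overall structure---impose local conditions, approximate by a global torus that is also generic, then invoke Proposition~\ref{P:E55}---is exactly right.

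The one place where the paper is more explicit is precisely the point you flag as the ``main obstacle.'' The result from \cite{PR4} is not stated as ``given prescribed local tori at a finite set of places, one can find a generic global torus conjugate to each''; rather, it says that one can choose an \emph{auxiliary} finite set $S$ of places (disjoint from $\mathscr{S}\cup V$, and at which $G_\eta$ splits) together with maximal $K_v$-tori $T(v)$ for $v\in S$ such that \emph{any} maximal $K$-torus that is $G_\eta(K_v)$-conjugate to $T(v)$ for all $v\in S$ is automatically generic over $K$. The paper then applies Corollary~\ref{C:M1} itself---not Proposition~\ref{P:E50} directly---to the enlarged set $S\cup\mathscr{S}$, which in one stroke produces an $E$ with the properties of Theorem~\ref{T:M10}(i), the conjugacy condition~(\ref{E:M171}) at $\mathscr{S}$, and conjugacy at $S$ forcing genericity. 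This resolves your obstacle cleanly and avoids re-running the $d'\in (F\otimes_K K_v)^{\times 2}$ argument from scratch.
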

\begin{proof}
The group $G_{\eta}$ is semisimple, and we let $r$ denote the number
of nontrivial conjugacy classes in the Weyl group of $G_{\eta}$. Using Tchebotarev's Density Theorem, we choose a subset
$S \subset V^K_f \setminus (\mathscr{S} \cup  V)$ of cardinality $r$
so that $G_{\eta}$ splits over $K_v$ for all $v \in S.$ Then,
according to Theorem 3 of \cite{PR4} (cf.\:also Theorem 3.1 in
\cite{PR2}), one can pick a maximal $K_v$-torus $T(v)$ of $G_{\eta}$,
for each $v \in S$, so that every maximal $K$-torus which is
conjugate to $T(v)$ by an element of $G_{\eta}(K_v),$ for all $v \in
S,$ is generic over $K.$ By Proposition~\ref{P:E11}, $T(v)$
corresponds to an $n$-dimensional $\eta_v$-invariant commutative
\'etale subalgebra $E(v)$ of $A_v$ satisfying (\ref{E:I1}) of \S
\ref{S:I}. Using Corollary \ref{C:M1}, we can find an
$n$-dimensional $\eta$-invariant commutative \'etale subalgebra
$E$ of $A$ which possesses the properties described in
Theorem \ref{T:M10}(i) (with ``$E_{\eta}$" replaced by ``$E$" and ``$F_{\eta}$" by ``$F$") and for
which $E\otimes_K K_v$ is
conjugate to $E(v)$ by an element of $G_{\eta}(K_v),$ for all $v \in
S \cup \mathscr{S}$ (in particular, yielding (\ref{E:M171}) for all
$v \in \mathscr{S}$). Let $T_{\eta}$ be the maximal
$K$-torus of $G_{\eta}$ corresponding to $E$.
Then $T_{\eta}$ is conjugate to $T(v)$ by
an element of $G(K_v),$ for all $v \in S,$ hence is generic. The
fact that $E$ is a field extension of $K$ now follows from
Proposition \ref{P:E55}.
\end{proof}

\addtocounter{thm}{1}

\noindent {\bf Remark 8.6.} Assume that $m$ is even, and let
$\eta$ and $\nu \in \cI.$ Then for any $\eta$-invariant
\'etale subalgebra $E$ of  $A$ and any $v \in V,$ there is an
embedding
$$
(E \otimes_K K_v  , \ (\eta \vert E) \otimes \id_{K_v}) \
\hookrightarrow \ (A \otimes_K K_v , \eta \otimes \id_{K_v}) \simeq
(A \otimes_K K_v  , \ \nu \otimes \id_{K_v}).
$$
Now, let $E_{\eta}$ be a subalgebra having the properties
described in Theorem \ref{T:M10}(i); notice that according to
Corollary \ref{C:M2} we can even choose $E_{\eta}$ to be a field
extension of $K.$ Then according to Theorem \ref{T:M10}(ii) there is an
embedding $(E_{\eta} , \eta \vert E) \hookrightarrow (A , \nu)$ if and only if  $(A
, \eta) \simeq (A , \nu).$ Since $\cI$ typically contains more than
one isomorphism class (cf.\,\cite{LUG} in conjunction with
Proposition \ref{P:G111} of this paper), we see that the local-global
principle for embeddings of fields with involution usually fails for even $m.$

\vskip3mm

We close this section with the Hasse principle for similarity of
quadratic forms. As we already mentioned earlier, an important
consequence of this result in our context is  that the set $\cI$ in
the split case reduces to a single isomorphism class. This fact will be
used in \S \ref{S:Ap}. The Hasse principle in question is known
(cf.\,\cite{O}, \cite{C}), but unfortunately it is not recorded in
the standard books on quadratic forms. So, we decided to sketch the
argument for the sake of completeness, especially since it uses
nothing more than Lemma \ref{L:HS1}.
\begin{prop}\label{P:Ap25}
Let $f$ and $g$ be two nondegenerate quadratic forms of the same
dimension $n$ over a global field $K$ of characteristic $\neq 2.$ If
for every $v \in V^K$ there exists $\lambda_v \in K_v^{\times}$ such
that $g$ is equivalent to $\lambda_v f$ over $K_v$, then there exists
$\lambda \in K^{\times}$ such that $g$ is equivalent to $\lambda f$
over $K.$
\end{prop}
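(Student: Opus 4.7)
The strategy is to translate the local similarity $g \sim \lambda_v f$ into equalities of local invariants, then to construct a single global $\lambda \in K^\times$ producing the same invariants at every place, after which the Hasse--Minkowski theorem yields $g \sim \lambda f$ over $K$. Recall that over $K_v$, equivalence of two nondegenerate quadratic forms of equal rank is detected by the determinant, the Hasse invariant at finite $v$, and the signature at real $v$. A direct bilinearity computation with the Hilbert symbol gives the scaling formulas
\[
d_v(\mu f) = \mu^n d_v(f) \quad \text{and} \quad h_v(\mu f) = (-1,\mu)_v^{n(n-1)/2}(\mu, d(f))_v^{n-1} h_v(f),
\]
so the search for $\lambda$ becomes a Hilbert-symbol problem together with a sign problem at the real places.

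When $n$ is odd, $\mu^n \equiv \mu \pmod{{K_v^\times}^2}$, so the local identity $d_v(g) \equiv \lambda_v^n d_v(f)$ forces $\lambda_v \equiv d_v(g)/d_v(f) \pmod{{K_v^\times}^2}$. Setting $\lambda := d(g)/d(f) \in K^\times$, we have $\lambda \equiv \lambda_v$ in $K_v^\times/{K_v^\times}^2$ for every $v$, whence $\lambda f \sim \lambda_v f \sim g$ over $K_v$ and the Hasse--Minkowski theorem concludes the odd case.

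When $n$ is even the situation is more delicate: $\lambda_v^n$ is automatically a square, so the determinant identity collapses to the globally valid equality $d(g) = d(f) =: d$ in $K^\times/{K^\times}^2$ and yields no information about $\lambda$. Setting $\epsilon_v := h_v(g) h_v(f)^{-1} \in \{\pm 1\}$, the scaling formula simplifies, depending on $n$ modulo $4$, to $\epsilon_v = (\lambda_v, t)_v$, where $t \in K^\times$ equals $d$ if $n \equiv 0 \pmod 4$ and $-d$ if $n \equiv 2 \pmod 4$. The product formula for the Hasse invariants of $f$ and of $g$ gives $\prod_v \epsilon_v = 1$, and the local hypothesis provides $\lambda_v$'s satisfying $(\lambda_v, t)_v = \epsilon_v$. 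Applying Lemma \ref{L:HS} to $t$, to the data $\alpha_v = \epsilon_v$ and $s_v = \lambda_v$, and to the finite set $\mathcal{S}$ of real places of $K$, I would obtain $\lambda \in K^\times$ with $(\lambda, t)_v = \epsilon_v$ for all $v$ and $\lambda \in \lambda_v {K_v^\times}^2$ for each real $v$. The former equalities match the Hasse invariants of $g$ and $\lambda f$ at every finite place; the latter square-class conditions force $\lambda$ to have the same sign as $\lambda_v$ at every real place, which is precisely the condition that $\lambda f$ and $g$ have matching signatures there. A final application of Hasse--Minkowski yields $g \sim \lambda f$ over $K$.

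The only nontrivial step is the even-dimensional case, where one must satisfy simultaneously a global Hilbert-symbol condition and prescribed square classes at the real places. Lemma \ref{L:HS} is tailored precisely to this situation, so no further obstacle is encountered.
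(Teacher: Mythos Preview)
Your argument is correct. The odd case is identical to the paper's. In the even case, both you and the paper reduce to the scaling formula $h_v(\lambda f) = (\lambda, \delta(f))_v\, h_v(f)$ with $\delta(f) = (-1)^{n(n-1)/2} d(f)$ (your $t$), but the two proofs diverge in how they produce the global $\lambda$. The paper splits into cases according to whether $\delta(f)$ is a square in $K^\times$: if it is, any $\lambda$ matches the Hasse invariants and weak approximation alone handles the real places; if it is not, the paper picks via Chebotarev an auxiliary place $v_0$ where $\delta(f)$ is a nonsquare, invokes Lemma~\ref{L:HS1} to control $(\lambda,\delta(f))_v$ away from $v_0$ and the square class of $\lambda$ on a finite set, and then fixes the remaining place $v_0$ by the product formula. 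You instead apply Lemma~\ref{L:HS} once, feeding in the given $\lambda_v$ at every place and taking $\mathcal{S}$ to be the real places; this simultaneously produces the correct Hilbert symbols everywhere and the correct real square classes, with no case split and no auxiliary place. Your route is more economical; the paper's route has the minor advantage of making the degenerate case $\delta(f)\in {K^\times}^2$ explicit, but this is absorbed automatically in your application of Lemma~\ref{L:HS}.
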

\begin{proof}
We will use $d(\cdot)$ and $h_v(\cdot)$ to denote the determinant
and the Hasse invariant over $K_v,$ respectively (cf. \S
\ref{S:OS}).  It is easy to check that
\begin{equation}\label{E:M78}
d(\lambda f) = \lambda^n d(f) \ \ \text{and} \ \ h_v(\lambda f) =
(\lambda , \delta(f))_v \cdot h_v(f)
\end{equation}
where $\delta(f) = (-1)^{n(n-1)/2} \cdot d(f).$

Let now $n$ be odd. Set $\lambda = d(g)/d(f).$ Then $d(g) \equiv
d(\lambda f)$ in $K^{\times}/{K^{\times}}^{2}.$ For $v \in V^K,$
since $g$ and $\lambda_v f$ are equivalent over $K_v,$ by taking
determinants we obtain $\lambda \equiv \lambda_v$ in
$K_v^{\times}/{K_v^{\times}}^2.$ So, being equivalent to $\lambda_v
f,$ the form $g$ is equivalent to $\lambda f$ over $K_v,$ for any $v
\in V^K.$ Applying the Hasse-Minkowski Theorem, we obtain that $g$
is equivalent to $\lambda f.$

Now, we consider the case of even $n.$ Notice that for any
$v \in V^K$ we have $$d(g)/d(f) \equiv d(\lambda_v f)/d(f) \equiv 1
\ \ \text{in} \ \ K_v^{\times}/{K_v^{\times}}^{2}.
$$
So, $d(g)\equiv d(f)$ in $K^{\times}/{K^{\times}}^{2},$ and
therefore, $d(g) \equiv d(\lambda f)$ for any $\lambda \in
K^{\times}.$ First, assume that $\delta(f) \in {K^{\times}}^2.$ Then
it follows from (\ref{E:M78}) that for any $v \in V^K$ we have
$$
h_v(g) = h_v(\lambda_v f) = h_v(f),
$$
consequently
$$
h_v(g) = h_v(\lambda f)
$$
for any $\lambda \in K^{\times}.$ In particular, this means that $g$
and $\lambda f$ are equivalent over $K_v$ for any $\lambda \in
K^{\times}$ and any $v \in V^K_f.$ Now, choose $\lambda \in
K^{\times}$ so that $\lambda \lambda_v^{-1} \in {K_v^{\times}}^{2}$
for all $v \in V^K_r.$ Then $g$ is equivalent to $\lambda f$ over
$K_v$ for all $v \in V^K,$ hence over $K.$

Finally, we consider the case where $\delta(f) \notin
{K^{\times}}^2.$ Let $S$ be a finite set of places of $K$
containing all the archimedean ones and those nonarchimedean $v$ for which
$h_v(f) \neq h_v(g).$ By Tchebotarev's Density Theorem, we can find
$v_0 \in V^K \setminus S$ such that $\delta(f) \notin
{K_{v_0}^{\times}}^2.$ Then by Lemma \ref{L:HS1} there exists
$\lambda \in K^{\times}$ such that $\lambda \lambda_v^{-1} \in
{K_v^{\times}}^2$ for all $v \in S$ and $(\lambda , \delta(f))_v =
1$ for all $v \in V^K \setminus (S \cup \{ v_0 \}).$ Using
(\ref{E:M78}), we see that $h_v(g) = h_v(\lambda f)$ for all $v \neq
v_0.$ Since $$\prod_v h_v(g) = \prod_v h_v(\lambda f) = 1,$$ we infer  that
$h_{v_0}(g) = h_{v_0}(\lambda f)$ as well. Arguing as above, we
conclude that $g$ and $\lambda f$ are equivalent over $K.$
\end{proof}

\section{Application to weakly commensurable arithmetic subgroups}\label{S:Ap}

In this section, we will show how our previous results
(particularly, Theorem \ref{T:M10}) can be used to complete the
analysis of weakly commensurable arithmetic subgroups in the case
that was left open in the original version of \cite{PR2},
viz.\,where the ambient algebraic groups are of type $D_{2r},$ with
$r \geqslant 3$ (for obvious reasons, type $D_4$ requires a special
treatment, we hope to study groups of this type later).

We first recall the notion of weak commensurability introduced in
\cite{PR2}. Let $G_1$ and $G_2$ be two connected semi-simple
algebraic groups defined over a field $F$ of characteristic zero.
Semi-simple elements $\gamma_i \in G_i(F),$ where $i = 1, 2,$ are
said to be {\it weakly commensurable} if there exist maximal
$F$-tori $T_i$ of $G_i$ such that $\gamma_i \in T_i(F),$ and for
some characters $\chi_i$ of $T_i$ (defined over an algebraic closure
$\overline{F}$ of $F$) we have
$$
\chi_1(\gamma_1) = \chi_2(\gamma_2) \neq 1.
$$
Furthermore, (Zariski-dense) subgroups $\Gamma_i$ of $G_i(F)$ are
{\it weakly commensurable} if given a semi-simple element $\gamma_1
\in \Gamma_1$ of infinite order, there is a semi-simple element
$\gamma_2 \in \Gamma_2$ of infinite order which is weakly
commensurable to $\gamma_1$, and conversely, given a semi-simple
element $\gamma_2 \in \Gamma_2$ of infinite order, there is a
semi-simple element $\gamma_1 \in \Gamma_1$ of infinite order weakly
commensurable to $\gamma_2$. In the present paper, we will be
concerned exclusively with the situation where {\it both} groups
$G_1$ and $G_2$ are absolutely almost simple of the same type
$D_{2r}$ with $r \geqslant 3.$ (We note that in general, for
absolutely almost simple groups $G_i,$ the existence of finitely
generated weakly commensurable Zariski-dense subgroups 
$\Gamma_i$ of $G_i(F),$ for $i = 1, 2,$ implies that either $G_1$ and
$G_2$ are of the same type, or one of them is of type $B_n$ and the
other of type $C_n,$ cf.\:Theorem 1 in \cite{PR2}.) It is easy to
show (\cite{PR2}, Lemma 2.4) that weak commensurability of finitely
generated Zariski-dense subgroups is preserved by the $F$-isogenies
of ambient algebraic groups (\cite{PR2}, Lemma 2.4). So, by
replacing the given groups $G_i$'s with isogenous ones and enlarging
the field $F$ if necessary, we reduce our analysis of weakly
commensurable subgroups to the situation where $G_1 = G_2 = G,$ and
moreover, $G$ is a $F$-form of $\mathrm{SO}_n,$ hence
$F$-isomorphic to $\mathrm{SU}(A , \tau)$ for some central simple
$n^2$-dimensional $F$-algebra $A$ with an orthogonal involution
$\tau.$

One of the central issues in \cite{PR2} was to determine when weak
commensurability of $S$-arithmetic subgroups implies their
commensurability, which in turn led to some interesting results
about length-commensurable and isospectral locally symmetric spaces 
(see \cite{PR3} for a nontechnical exposition of these results). We
used the following definition of $S$-arithmeticity. Let $G$ be a
connected absolutely almost simple algebraic group defined over a
field $F$ of characteristic zero, $\overline{G}$ be its adjoint
group and $\pi: G \to \overline{G}$ be the natural isogeny. Two
subgroups $\Gamma' , \Gamma''$ of $G(F)$ are said to be {\it
commensurable up to an $F$-automorphism of} $\overline{G}$ if there
exists an $F$-automorphism $\sigma$ of $\overline{G}$ such that
$\sigma(\pi(\Gamma'))$ and $\pi(\Gamma'')$ are commensurable in the
usual sense. Now, suppose we are given a number field $K,$ an
embedding $K \hookrightarrow F,$ and a connected semi-simple 
algebraic $K$-group ${\cG}$ such that the $F$-group $_F\overline{\cG}$ obtained
from the adjoint group $\overline{\cG}$ of $\cG$ by extension of scalars $K \hookrightarrow F,$ is
$F$-isomorphic to $\overline{G}$ (in other words, $\overline{\cG}$
is an $F/K$-form of $\overline{G}$). Then we have an embedding
$\overline{\iota}: \: \overline{\cG}(K) \hookrightarrow
\overline{G}(F)$ which is well-defined up to an $F$-automorphism of
$\overline{G}.$ Now, given a subset $S$ of $V^K$ containing
$V^K_{\infty},$ but not containing any nonarchimedean places where
$\cG$ is anisotropic, a subgroup $\Gamma$  of $G(F)$ such that
$\pi(\Gamma)$ is commensurable with
$\overline{\iota}(\overline{\cG}(\cO_K(S)))$ (where $\cO_K(S)$ is
the ring of $S$-integers of $K$) up to an $F$-automorphism of
$\overline{G}$ is called a $(\overline{\cG}, K , S)$-{\it arithmetic
subgroup}. We note that in the situation at hand, i.e., where $G$ is
a form of $\mathrm{SO}_n$, with $n = 4r$ and $r \geqslant 3,$ for
every $F/K$-form $\overline{\cG}$ of $\overline{G},$ there exists a
unique $F/K$-form $\cG$ of $G$ admitting a $K$-isogeny 
$\cG \to \overline{\cG}$ compatible with $\pi.$ Furthermore,
two $F/K$-forms $\overline{\cG}_1$ and $\overline{\cG}_2$ of
$\overline{G}$ are $K$-isomorphic if and only if the corresponding
$F/K$-forms $\cG_1$ and $\cG_2$ of $G$ are $K$-isomorphic. Finally, a
subgroup $\Gamma$ of  $G(F)$ is $(\overline{\cG}, K,
S)$-arithmetic if and only if it is commensurable up to an
$F$-automorphism of $G$ with $\iota(\cG(\cO_K(S)))$ where $\iota: \:
\cG(K) \hookrightarrow G(F)$ is the natural embedding lifting
$\overline{\iota}$ (in view of this, $(\overline{\cG}, K,
S)$-arithmetic subgroups of $G(F)$ may be referred to as
$(\cG, K, S)$-arithmetic subgroups, as we will often do in this section).

We showed in \cite{PR2} for a general absolutely almost simple
algebraic $F$-group $G$ that if $\Gamma_i$ is a Zariski-dense
$(\overline{\cG}_i , K_i , S_i)$-arithmetic subgroup of $G(F)$ for
$i = 1 , 2$, then the weak commensurability of $\Gamma_1$ and
$\Gamma_2$ implies that $K_1 = K_2$ and $S_1 = S_2$ (Theorem 3), and
then their commensurability up to an $F$-automorphism of
$\overline{G}$ is equivalent to the assertion that $\overline{\cG}_1
\simeq \overline{\cG}_2$ over $K$ (Proposition 2.5 of \cite{PR2}).
Furthermore, we showed that the latter follows from weak
commensurability of $\Gamma_1$ and $\Gamma_2$ if $G$ is of type
different from $A_n$, $D_n$, and $E_6.$ On the other hand, we showed
that groups of types $A_n\: (n>1)$, $D_n$ ($n$ odd), and $E_6$
contain weakly commensurable, but not commensurable, $S$-arithmetic
subgroups (cf.\:Examples 6.5, 6.6 and \S 9 in \cite{PR2}). The only
unresolved question in the original version of  \cite{PR2} involved
the groups of type $D_n$, with $n$ even. We are now able to show
that, as far as weak commensurability is concerned, these groups
behave like ``good groups" if $n> 4.$
\begin{thm}\label{T:Ap1}
Let $G$ be an absolutely almost simple algebraic group of type
$D_{2r},$ with $r > 2,$  defined over a field $F$ of characteristic
zero, and let $\Gamma_i$ be a Zariski-dense $({\cG}_i, K,
S)$-arithmetic subgroup of $G(F)$ for $i = 1, 2.$ If $\Gamma_1$ and
$\Gamma_2$ are weakly commensurable, then $\overline{\cG}_1 \simeq
\overline{\cG}_2$ (hence $\cG_1 \simeq \cG_2$) over $K,$ and
consequently $\Gamma_1$ and $\Gamma_2$ are commensurable up to an
$F$-automorphism of $\overline{G}$.
\end{thm}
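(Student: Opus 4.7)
The plan is to exploit the matching of generic maximal tori developed in \cite{PR2} together with the rigidity provided by Theorem \ref{T:M10}. By the reductions described preceding the theorem, I may assume $G$ is an $F$-form of $\mathrm{SO}_n$ with $n = 4r$, and that $\cG_i = \mathrm{SU}(A_i, \tau_i)$ for central simple $K$-algebras $A_i$ of dimension $n^2$ with orthogonal involutions $\tau_i$. The task is then reduced to showing $(A_1, \tau_1) \simeq (A_2, \tau_2)$ as $K$-algebras with involution, since Proposition 2.5 of \cite{PR2} then supplies the commensurability statement at the end.

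The first step is to verify that $(A_1 \otimes_K K_v, \tau_1 \otimes \id_{K_v}) \simeq (A_2 \otimes_K K_v, \tau_2 \otimes \id_{K_v})$ for every $v \in V^K$. This local isomorphism is essentially contained in \cite{PR2}: the weak commensurability of the Zariski-dense $S$-arithmetic subgroups $\Gamma_i$ forces $\cG_1$ and $\cG_2$ to share their $K_v$-forms, via the local comparison results of \cite{PR2} combined with the Hasse principle. In particular, $A_1 \simeq A_2$; after identifying the two algebras as $A$, we may regard $\tau_2$ as an element $\nu \in \cI := \cI(A, \tau_1)$. If $A$ is split, then Proposition \ref{P:Ap25} yields $\cI = \{[\tau_1]\}$ and we are done, so we may assume $A = M_m(D)$ for some quaternion division $K$-algebra $D$ with $m = 2r$.

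Next, I would apply Corollary \ref{C:M2} with $\eta = \tau_1$ (and $\mathscr{S} = \emptyset$) to produce an $n$-dimensional $\tau_1$-invariant commutative \'etale $K$-subalgebra $E$ of $A$ which is a field extension of $K$, satisfies the structural condition of Theorem \ref{T:M10}(i), and corresponds to a maximal $K$-torus $T_1$ of $\cG_1$ that is generic over $K$. The crucial input from \cite{PR2} is that, when applied to the generic torus $T_1$, weak commensurability of $\Gamma_1$ and $\Gamma_2$ produces a maximal $K$-torus $T_2$ of $\cG_2$ with $T_1 \simeq T_2$ over $K$; this is built into the argument of Theorem 7.5 of \cite{PR2} and follows from the Galois-theoretic reformulation of weak commensurability combined with the genericity of $T_1$. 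Translating through Proposition \ref{P:E11}, the $\tau_2$-invariant subalgebra $E' \subset A$ corresponding to $T_2$ is then isomorphic to $(E, \tau_1|E)$ as an algebra with involution, yielding a $K$-embedding $(E, \tau_1|E) \hookrightarrow (A, \nu)$.

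At this point Theorem \ref{T:M10}(ii) applies directly: with $\eta = \tau_1$ and the distinguished field $E_\eta = E$ constructed above, the existence of this embedding forces $(A, \nu) \simeq (A, \tau_1)$, i.e., $(A_2, \tau_2) \simeq (A_1, \tau_1)$, completing the proof that $\cG_1 \simeq \cG_2$ (hence $\overline{\cG}_1 \simeq \overline{\cG}_2$) over $K$. The main technical obstacle I anticipate is cleanly extracting from \cite{PR2} both the local isomorphism at every place and the matching of a chosen generic $K$-torus $T_1$ of $\cG_1$ by a $K$-isomorphic maximal torus of $\cG_2$; these are precisely the points where the nontriviality of $\cI$ prevented the original treatment of type $D_{2r}$ in \cite{PR2} from going through, and Theorem \ref{T:M10} is exactly the tool needed to eliminate this ambiguity.
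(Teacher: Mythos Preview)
Your overall strategy matches the paper's, but there is a genuine gap in the step where you pass from weak commensurability of $\Gamma_1,\Gamma_2$ to the existence of a maximal $K$-torus $T_2$ of $\cG_2$ with $T_1\simeq T_2$. Weak commensurability is a statement about \emph{elements}: you must first produce a semi-simple $\gamma_1\in T_1(K)\cap\Gamma_1$ of infinite order, then use weak commensurability to get a matching $\gamma_2\in\Gamma_2$, and only then apply the Isogeny Theorem (Theorem~4.2 of \cite{PR2}, packaged here as Proposition~\ref{P:Ap1}) to conclude that the torus $T_2\ni\gamma_2$ is $K$-isomorphic to $T_1$. The existence of such a $\gamma_1$ is not automatic: since $T_1$ is generic it is $K$-anisotropic, so $T_1(\cO_K(S))$ is infinite if and only if $\prod_{v\in S}T_1(K_v)$ is noncompact. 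Your choice $\mathscr{S}=\emptyset$ in Corollary~\ref{C:M2} gives no control over this. The paper handles it by a case split: if $S\cap V\neq\emptyset$ the structure $d\in (F\otimes_K K_{v_0})^{\times 2}$ at $v_0\in S\cap V$ forces $T_1(K_{v_0})\simeq \mathrm{R}_{F\otimes_K K_{v_0}/K_{v_0}}(\mathrm{GL}_1)$ to be noncompact; if $S\cap V=\emptyset$ one must pick $v_0\in S$ with $\cG_1(K_{v_0})$ noncompact and apply Corollary~\ref{C:M2} with $\mathscr{S}=\{v_0\}$, prescribing $E(v_0)$ so that the resulting $T_1(K_{v_0})$ is noncompact.

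Two smaller points. First, the local isomorphism $(A_1\otimes_K K_v,\tau_1)\simeq(A_2\otimes_K K_v,\tau_2)$ for all $v$ is not quite ``essentially contained in \cite{PR2}''; it is Proposition~\ref{P:Ap2}(ii) of the present paper, whose proof combines the rank and inner-form results of \cite{PR2} with a nontrivial case analysis (split vs.\ nonsplit, discriminant comparison, etc.). Second, the passage from a $K$-isomorphism $T_1\simeq T_2$ of tori to an isomorphism $(E_1,\tau_1|E_1)\simeq(E_2,\tau_2|E_2)$ of algebras with involution does not follow from Proposition~\ref{P:E11} alone; one needs that the torus isomorphism comes from (the restriction of) an ambient isomorphism of the groups, which is the content of Lemma~\ref{L:Ap1} and Proposition~\ref{P:Ap1} (and uses $L_1=L_2$, supplied by Theorem~6.3 of \cite{PR2}).
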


The proof of the theorem relies on Theorem \ref{T:M10} and a
connection, valid over arbitrary fields, between weak
commensurability of elements and isomorphism of commutative \'etale
subalgebras associated to the corresponding maximal tori, which we
will now describe. As we explained earlier, we may (and we will)
assume that $G = \mathrm{SU}(A , \tau)$ where $A$ is a central
simple $F$-algebra of dimension $n^2$ (if $G$ is of type $D_{2r}$, 
then $n = 4r,$ $r > 2,$ but some of our considerations are valid for
arbitrary $n \geqslant 3,$ $n \neq 8$) and $\tau$ is orthogonal
involution of $A.$ Then any $F/K$-form $\cG$ of $G$ equals 
$\mathrm{SU}(\mathscr{A} , \tau_{\mathscr{A}})$ for a suitable
central simple $n^2$-dimensional algebra $\mathscr{A}$ over $K$
equipped with an orthogonal involution $\tau_{\mathscr{A}}$ such
that $(\mathscr{A} \otimes_K F , \tau_{\mathscr{A}} \otimes
\mathrm{id}_F) \simeq (A , \tau)$ (cf. Lemma \ref{L:Ap1}(1) below).
So, as a preparation for the proof of Theorem \ref{T:Ap1}, we first
consider the following general situation. For $i=1, 2$, let $A_i$ be
two central simple algebras over the same (infinite) field $K,$ of
dimension $n^2,$ endowed with orthogonal involutions $\tau_i.$
Furthermore, let $F/K$ be a field extension such that
$$
(A_1 \otimes_K F , \tau_1 \otimes \id_F) \simeq (A_2 \otimes_K F ,
\tau_2 \otimes \id_F);
$$
we will denote this common $F$-algebra with involution by $(A ,
\tau).$ Then $\cG_i := \mathrm{SU}(A_i , \tau_i)$ is an $F/K$-form
of $G := \mathrm{SU}(A , \tau)$ for $i = 1, 2,$ and in the sequel,
we will view the groups $\cG_i(K)$ as subgroups of the group $G(F).$
We refer the reader to \S \ref{S:E} for the definition of a {\it
generic} maximal $K$-torus.
\begin{prop}\label{P:Ap1}
Assume that $n \geqslant 3,$ $n \neq 4, 8,$ and  let $L_i$ be the
minimal Galois extension of $K$ over which $\cG_i$ becomes an inner
form. Furthermore, let $E_i$ be a $\tau_i$-invariant maximal
commutative \'etale subalgebra of $A_i$ satisfying (\ref{E:I1}) of
\S \ref{S:I}, and let $\cT_i$ be the corresponding maximal $K$-torus
of $\cG_i.$ Assume that

\vskip2mm

{\rm (a)} $L_1 = L_2;$

\vskip1mm

{\rm (b)} $\cT_1$ is a generic maximal $K$-torus of $\cG_1.$

\vskip2mm

\noindent If there exists an element $\gamma_1 \in
\cT_1(K)$ of infinite order which is weakly commensurable to some $\gamma_2 \in \cT_2(K)$,
then $(E_1 , \tau_1 \vert E_1) \simeq (E_2 , \tau_2 \vert E_2)$ as
algebras with involution.
\end{prop}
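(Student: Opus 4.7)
The plan is to extract from the weak commensurability of $\gamma_1$ and $\gamma_2$, combined with the genericity of $\cT_1$ and the equality $L_1 = L_2$, a $K$-isomorphism $\cT_1 \simeq \cT_2$ that is compatible with the root data of $\cG_1$ and $\cG_2$, and then to translate this into an isomorphism of the pairs $(E_i , \tau_i|E_i)$ via the dictionary of Proposition \ref{P:E11} together with the explicit presentation of Proposition \ref{P:E12}. The approach parallels the weak commensurability analysis of \cite{PR2}, adapted to our orthogonal setting.

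First, I would exploit the weak commensurability relation. By definition, there exist characters $\chi_i \in X^*(\cT_i \otimes_K \overline{K})$ with $\chi_1(\gamma_1) = \chi_2(\gamma_2) \neq 1$. Since $\gamma_1$ has infinite order, the nontriviality of $\chi_1(\gamma_1)$ persists under the Galois action: for any $\sigma \in \Ga(\overline{K}/K)$ one has $(\sigma \chi_1)(\gamma_1) = (\sigma \chi_2)(\gamma_2) \neq 1$. Genericity of $\cT_1$ forces $\theta_{\cT_1}(\Ga(\overline{K}/K))$ to contain $W(\cG_1 , \cT_1)$, so the Galois orbit of $\chi_1$ sweeps out the full Weyl orbit. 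Following the strategy of \cite{PR2}, I would reduce to the case where $\chi_1$ is a root of $\cG_1$ with respect to $\cT_1$, then propagate the argument to all roots to produce a $\Ga(\overline{K}/K)$-equivariant bijection $\Phi(\cG_1, \cT_1) \simeq \Phi(\cG_2 , \cT_2)$, matching Weyl orbits on both sides.

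Next, the hypothesis $L_1 = L_2$ upgrades this to an isomorphism of tori. Since $L_i$ is the fixed field of the kernel of the outer Galois action on the Dynkin diagram of $\cG_i$, equality $L_1 = L_2$ makes these outer actions coincide, and combined with the root-system bijection of the previous step yields a $\Ga(\overline{K}/K)$-equivariant isomorphism of character lattices $X^*(\cT_1) \simeq X^*(\cT_2)$; by Galois descent this gives a $K$-isomorphism $\cT_1 \simeq \cT_2$. By Proposition \ref{P:E11} this transports to an isomorphism of $n$-dimensional $K$-algebras $E_1 \simeq E_2$; genericity together with Proposition \ref{P:E55} further ensures that each $E_i$ is a field. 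To check that this isomorphism respects involutions, I would use Proposition \ref{P:E12} to present $(E_i , \tau_i|E_i) \simeq (F_i[x]/(x^2 - d_i) , x \mapsto -x)$ with $F_i = E_i^{\tau_i}$, and note that $F_i$ and the square class of $d_i$ are both recovered intrinsically from the $K$-torus $\cT_i$ equipped with the canonical involution $-1$ on its root datum, hence are preserved by the above isomorphism.

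The main obstacle is the first step: promoting a single character identity $\chi_1(\gamma_1) = \chi_2(\gamma_2) \neq 1$ to a full Galois-equivariant identification of root systems. This is precisely where the genericity of $\cT_1$ enters in an essential way, since only then is the Galois action on $X^*(\cT_1)$ rich enough to propagate the single identity into a statement about the entire Weyl orbit. The restriction $n \neq 4, 8$ is also used here, to avoid the triality of $D_4$ (where $\mathrm{Out}(\Phi)$ is $S_3$ rather than $\Z/2\Z$) and the non-simple case $\mathrm{SO}_4$, both of which would obstruct a clean $\Ga(\overline{K}/K)$-equivariant matching of root systems.
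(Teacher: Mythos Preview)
Your first step is right and is precisely Theorem~4.2 and Remark~4.4 of \cite{PR2}, which the paper cites as a black box. The gap is in your second step: Proposition~\ref{P:E11} sets up the correspondence between maximal tori and invariant maximal \'etale subalgebras inside a \emph{fixed} $(A,\tau)$; it does not say that an abstract $K$-isomorphism $\cT_1 \simeq \cT_2$ of tori sitting in different groups yields $E_1 \simeq E_2$. To recover $E_i$ from $\cT_i$ one must identify, among all characters of $\cT_i$, those coming from the $K$-embeddings $E_i \hookrightarrow \overline{K}$---i.e.\ the weights of the standard $n$-dimensional representation---and this is not intrinsic to the bare torus. Your phrase ``the canonical involution $-1$ on its root datum'' does not supply this, and the appeal to Proposition~\ref{P:E12} presupposes the very algebra isomorphism you are trying to prove.

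The paper closes the gap by working one level up. From \cite{PR2} one obtains a $\overline{K}$-isomorphism $\widetilde{\varphi} \colon \widetilde{\cG}_1 \to \widetilde{\cG}_2$ of the simply connected covers whose restriction to $\widetilde{\cT}_1$ is $K$-defined; the hypothesis $n \neq 4, 8$ ensures that $\widetilde{\varphi}$ preserves $\ker(\mathrm{Spin}_n \to \mathrm{SO}_n)$ and hence descends to $\varphi \colon \cG_1 \to \cG_2$. The missing bridge is then Lemma~\ref{L:Ap1}: every $\overline{K}$-automorphism of $\cG_i = \mathrm{SU}(A_i,\tau_i)$ is conjugation by an element of $\mathrm{U}(A_i,\tau_i)(\overline{K})$, so $\varphi$ extends to an isomorphism $(A_1 \otimes_K \overline{K}, \tau_1) \to (A_2 \otimes_K \overline{K}, \tau_2)$ of algebras with involution, which carries $E_1$ onto $E_2$ (since $E_i$ is the $K$-span of $\cT_i(K)$ and $\varphi|\cT_1$ is defined over $K$) and respects the involutions automatically. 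In short, the passage from tori to \'etale subalgebras goes through the ambient algebras via Lemma~\ref{L:Ap1}, not directly via Proposition~\ref{P:E11}.
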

\begin{proof}
We begin with the following lemma, which is valid for all $n
\geqslant 3$  (and also for symplectic involutions).
\begin{lemma}\label{L:Ap1}
{\rm (1)} Let $F/K$ be a field extension, and let $\varphi \colon
\cG_1 \to \cG_2$ be an $F$-isomorphism of algebraic groups. Then
$\varphi$ extends uniquely to an isomorphism
$$
\widetilde{\varphi} \colon (A_1 \otimes_K F , \tau_1 \otimes \id_F)
\longrightarrow (A_2 \otimes_K F , \tau_2 \otimes \id_F)
$$
of algebras with involution.

\vskip2mm

\noindent {\rm (2)} \parbox[t]{12cm}{For $i = 1,2$, let $\cT_i$ be a maximal
$K$-torus of $\cG_i$, and let $E_i$ be the corresponding maximal commutative \'etale
$K$-subalgebra of $A_i$. If $\varphi \colon \cG_1 \to \cG_2$
is a $\overline{K}$-isomorphism of algebraic groups such that
$\varphi(\cT_1) = \cT_2$, and the restriction $\varphi \vert \cT_1$ is
defined over $K$, then $(E_1 , \tau_1 \vert E_1) \simeq (E_2 ,
\tau_2 \vert E_2)$ as algebras with involution.}
\end{lemma}
\begin{proof}
(1): As the referee has pointed out, the first assertion follows from Theorem 26.15 of \cite{BoI},
but for the convenience of the reader we give the following direct proof. Since both
involutions are orthogonal, there exists an isomorphism
$$
\widetilde{\psi} \colon (A_1 \otimes_K \overline{F} , \tau_1 \otimes
\mathrm{id}_{\overline{F}}) \longrightarrow (A_2 \otimes_K \overline{F} ,
\tau_2 \otimes \mathrm{id}_{\overline{F}})
$$
of algebras with involution. We let $\psi \colon \cG_1
\longrightarrow \cG_2$ denote the induced isomorphism between the
special unitary groups, and observe that $\alpha := \psi^{-1} \circ
\varphi$ is an $\overline{F}$-automorphism of $\cG_1.$ But it is
well-known that any $\overline{F}$-automorphism of $\cG_1 =
\mathrm{SU}(A_1 , \tau_1)$ is conjugation by a suitable $h \in
\cH_1(\overline{F})$ where $\cH_1 := \mathrm{U}(A_1 , \tau_1).$
(Indeed, over $\overline{F},$ we have $\cG_1 \simeq \mathrm{SO}_n$
and $\cH_1 \simeq \mathrm{O}_n.$ If $n$ is odd, then $\cG_1$ is of
type $B_r,$ and every automorphism of $\cG_1$ is inner. For $n$
even, the group of outer automorphisms of $\cG_1$ has order two, and
conjugation by any element $h \in \cH_1(\overline{F}) \setminus
\cG_1(\overline{F})$ does give an outer automorphism of $\cG_1.$
Thus, any $\overline{F}$-automorphism of $\cG_1$ is conjugation by
an element of $\cH_1(\overline{F}).$) So, we can pick $h \in
\cH_1(\overline{F})$ such that $\varphi = \psi \circ \mathrm{Int}\:
h.$ Then $\widetilde{\varphi} := \widetilde{\psi} \circ
\mathrm{Int}\: h$ is an isomorphism $(A_1 \otimes_K \overline{F} ,
\tau_1 \otimes \mathrm{id}_{\overline{F}}) \longrightarrow (A_2
\otimes_K \overline{F} , \tau_2 \otimes \mathrm{id}_{\overline{F}})$
of algebras with involution. It is easy to check that
$\cG_i(\overline{F})$ spans $A_i \otimes_K \overline{F}$ as a
$\overline{F}$-vector space, so the Zariski-density of $\cG_i(F)$ in
$\cG_i$ (cf.\:\cite{Bor}, 18.3) implies that $\cG_i(F)$ spans $A
\otimes_K F$ as a $F$-vector space. Since $\varphi(\cG_1(F)) =
\cG_2(F),$ we see that $\widetilde{\varphi}(A_1 \otimes_K F) = A_2
\otimes_K F,$ as required.

\vskip2mm

(2): By (1), $\varphi$ extends to an isomorphism
$\widetilde{\varphi} \colon (A_1 \otimes_K \overline{K} , \tau_1 \otimes
\id_{\overline{K}}) \longrightarrow (A_2 \otimes_K \overline{K} , \tau_2
\otimes \id_{\overline{K}})$ of algebras with involution. Since
$\varphi(\cT_1(K)) = \cT_2(K)$ and $E_i$ coincides with the
$K$-subalgebra generated by $\cT_i(K)$ (cf.\:the proof of Proposition
\ref{P:E11}), we obtain that $\widetilde\varphi(E_1) = E_2,$ and assertion (2)
follows.
\end{proof}

\vskip1mm

To prove Proposition \ref{P:Ap1}, we pick simply connected coverings $\widetilde{\cG}_i
\stackrel{\pi_i}{\longrightarrow} \cG_i$ of $\cG_i$ defined over $K$,  and set $\widetilde{\cT}_i =
\pi_i^{-1}(\cT_i).$ In view of our assumptions (a) and (b), the fact
that $\gamma_1$ and $\gamma_2$ are weakly commensurable implies the
existence of a $\overline{K}$-isomorphism $\widetilde{\varphi}
\colon \widetilde{\cG}_1 \longrightarrow \widetilde{\cG}_2$ such that
$\widetilde{\varphi} \vert \widetilde{\cT}_1$ is an isomorphism of
$\widetilde{\cT}_1$ onto $\widetilde{\cT}_2$  defined over
$K$\,(cf.\,Theorem 4.2 and Remark 4.4 in \cite{PR2}). Since $n > 8,$
we automatically have $\widetilde{\varphi}(\ker \pi_1) = \ker
\pi_2,$ and therefore $\widetilde{\varphi}$ descends to a
$\overline{K}$-isomorphism $\varphi \colon \cG_1 \longrightarrow \cG_2$
such that $\varphi \vert \cT_1$ is defined over $K$. Then our
assertion follows from Lemma~\ref{L:Ap1}(2).
\end{proof}

\vskip1mm

The following proposition establishes assertion (i) of
Theorem B. As we already noted in \S 8, assertion (ii) of that theorem is implied by
Theorem \ref{T:M10} and Corollary \ref{C:M2}.
\begin{prop}\label{P:Ap2}
For $i=1, 2$, let $A_i$ be a central simple algebra over a
number field $K,$ of dimension $n^2$, with $n \geqslant 3,$ endowed with an orthogonal involution $\tau_i$, and let $\cG_i = \mathrm{SU}(A_i ,
\tau_i).$ Assume that {\rm either}

\vskip2mm

\noindent $(a)$ \parbox[t]{12cm}{$(A_1 ,\tau_1)$ and $(A_2 ,
\tau_2)$ have the same isomorphism classes of \mbox{$n$-dimensional}
commutative \'etale subalgebras invariant under the involutions and
satisfying {\rm (\ref{E:I1})} (i.e., for any $n$-dimensional
$\tau_1$-invariant commutative \'etale subalgebra $E_1$ of $A_1$
satisfying {\rm (\ref{E:I1})}, there exists an embedding $(E_1 ,
\tau_1 \vert E_1) \hookrightarrow (A_2 , \tau_2),$ and vice versa),}

\vskip1.7mm

\noindent {\rm or}

\vskip1.7mm

\noindent $(b)$ \parbox[t]{12cm}{$n\neq 4$, and for some finite $S
\subset V^K,$ for $i = 1,\,2$, any $(\cG_i, K,
S)$-arithmetic subgroup $\Gamma_i$ of $\cG_i(K)$ is Zariski-dense in $\cG_i$, and $\Gamma_1$ and $\Gamma_2$ are weakly commensurable.}

\vskip2mm

\noindent Then

\vskip2mm

\noindent \ {\rm (i)} \parbox[t]{12cm}{$A_1 \simeq A_2$ (in other
words, $A_1$ and $A_2$ involve the same division algebra in their
description);}

\vskip2mm

\noindent {\rm (ii)} $(A_1 \otimes_K K_v, \tau_1 \otimes \id_{K_v})
\simeq (A_2 \otimes_K K_v , \tau_2 \otimes \id_{K_v})$ for all $v
\in V^K.$
\vskip2mm

\noindent If $n$ is even, then the same conclusion holds if $A_1$
and $A_2$ just have the same isomorphism classes of maximal {\rm
subfields} invariant under the involutions.

\end{prop}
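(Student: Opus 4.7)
The plan is to first prove (i) and (ii) under hypothesis (a) by a local analysis reducing to Theorem \ref{T:M10}, then reduce hypothesis (b) to (a) using weak commensurability combined with Proposition \ref{P:Ap1}.

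\emph{Under (a):} Assertion (i) follows from (ii) by the injectivity of $\Br(K) \to \bigoplus_{v \in V^K} \Br(K_v)$ (applicable since both $A_i$ have Brauer class of exponent dividing $2$), so I focus on proving the local isomorphisms (ii). For each $v \in V^K$, I would exhibit a $\tau_1$-invariant maximal commutative \'etale subalgebra $E \subset A_1$ satisfying (\ref{E:I1}) whose local structure at $v$ is rigid enough to determine $(A_1 \otimes_K K_v, \tau_1 \otimes \id_{K_v})$ up to isomorphism. Hypothesis (a) then yields an embedding $(E, \tau_1|E) \hookrightarrow (A_2, \tau_2)$ of algebras with involution, and localizing at $v$ produces matching $\tau_i$-invariant \'etale subalgebras with involution inside both $A_i \otimes_K K_v$, forcing the local isomorphism. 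The crucial case $4 \mid n$ is handled by taking $E = E_{\tau_1}$ produced by Theorem \ref{T:M10}(i), refined via Corollary \ref{C:M2} to be a field extension of $K$ corresponding to a generic maximal $K$-torus of $\cG_1 = \mathrm{SU}(A_1, \tau_1)$. Local analysis at each $v$---exploiting the critical property $d \in ((F_{\tau_1} \otimes_K K_v)^{\times})^{2}$ for $v$ in the set $V$ of Theorem \ref{T:M10}(i)---shows $A_1 \otimes_K K_v$ and $A_2 \otimes_K K_v$ have matching Brauer class; identifying these $K_v$-algebras, $\tau_2 \otimes \id_{K_v}$ lies in $\cI(A_1 \otimes_K K_v, \tau_1 \otimes \id_{K_v})$, and Theorem \ref{T:M10}(ii) applied over $K_v$ completes the argument. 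For $n$ odd, $A_i \simeq M_n(K)$ automatically (the Brauer class is annihilated both by $2$ and by $n$), and (ii) follows from Proposition \ref{P:Ap25} via the analysis of quadratic forms $b_a$ in Proposition \ref{P:OS-1}; for $n = 2m$ with $m$ odd, Theorem \ref{T:O-101} plays the role of Theorem \ref{T:M10}, combined with the split analysis of \S \ref{S:OS}.

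\emph{Reduction of (b) to (a):} Given a $\tau_1$-invariant maximal \'etale $E_1 \subset A_1$, I would apply Proposition \ref{P:E50} together with Corollary \ref{C:M2} to replace $E_1$ by one whose associated maximal $K$-torus $\cT_1 \subset \cG_1$ is generic over $K$; by Proposition \ref{P:E55}, $E_1$ is then a field extension of $K$. Zariski-density of $\Gamma_1$ in $\cG_1$ ensures that $\Gamma_1 \cap \cT_1(K)$ contains a semisimple element $\gamma_1$ of infinite order. By weak commensurability, some $\gamma_2 \in \Gamma_2$ of infinite order is weakly commensurable to $\gamma_1$; let $\cT_2 \subset \cG_2$ be a maximal $K$-torus containing $\gamma_2$, with associated $\tau_2$-invariant $E_2 \subset A_2$. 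Theorem 3 of \cite{PR2} guarantees $L_1 = L_2$ under weak commensurability of $S$-arithmetic subgroups, so Proposition \ref{P:Ap1} applies and yields $(E_1, \tau_1|E_1) \simeq (E_2, \tau_2|E_2)$ as algebras with involution, providing the required embedding. The converse direction is symmetric; since the analysis under (a) only requires (a) for the generic $E_{\tau_1}$ supplied by Corollary \ref{C:M2}, this reduction is sufficient.

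The final assertion that maximal invariant \emph{fields} suffice when $n$ is even is accommodated automatically, since the key subalgebra $E_{\tau_1}$ chosen via Corollary \ref{C:M2} is a field. The main obstacle is the local analysis under (a)---specifically, establishing that the local embedding of $(E_{\tau_1} \otimes_K K_v, \sigma \otimes \id_{K_v})$ into both $(A_i \otimes_K K_v, \tau_i \otimes \id_{K_v})$ forces $[A_1 \otimes_K K_v] = [A_2 \otimes_K K_v]$ in $\Br(K_v)$. This requires careful use of the discriminant and Clifford invariants of the orthogonal involutions via the formula (\ref{E:O-155}) of \S \ref{S:O}, together with the specific local structure of $E_{\tau_1}$ granted by Theorem \ref{T:M10}(i).
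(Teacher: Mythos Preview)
Your plan contains a genuine circularity in the local analysis under (a). You propose to show $[A_1 \otimes_K K_v] = [A_2 \otimes_K K_v]$, identify the $K_v$-algebras, observe that $\tau_2 \otimes \id_{K_v} \in \cI(A_1 \otimes_K K_v,\, \tau_1 \otimes \id_{K_v})$, and then apply Theorem~\ref{T:M10}(ii) \emph{over $K_v$}. But Theorem~\ref{T:M10} is an inherently global statement: over a local field $K_v$ there is only one place, so the set $\cI$ defined there is already a single isomorphism class, and the statement becomes vacuous. Put differently, asserting $\tau_2 \otimes \id_{K_v} \in \cI(A_1 \otimes_K K_v,\, \tau_1 \otimes \id_{K_v})$ is precisely the conclusion (ii) you are trying to establish. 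The Clifford-algebra formula~(\ref{E:O-155}) alone will not rescue this, because it compares involutions on the \emph{same} algebra, and you have not yet shown $A_1 \otimes_K K_v \simeq A_2 \otimes_K K_v$; moreover the set $V$ in Theorem~\ref{T:M10}(i) is defined relative to $A_1$ and $\tau_1$, so the condition $d \in ((F_{\tau_1}\otimes_K K_v)^\times)^2$ for $v \in V$ carries no a~priori information about $A_2$.

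The paper's route is quite different and avoids Theorem~\ref{T:M10} entirely in this proposition. It first establishes two invariants shared by $\cG_1$ and $\cG_2$: equality of local ranks $\mathrm{rk}_{K_v}\cG_1 = \mathrm{rk}_{K_v}\cG_2$ (hence equal Witt indices of the associated forms) and equality of the fields $L_1 = L_2$ (hence equal discriminants of $\tau_1,\tau_2$). Under (b) these come from \cite{PR2}; under (a) the rank equality is proved by approximating, via Proposition~\ref{P:E50}, a maximal $K_v$-torus containing a maximal split torus by a global $\tau_1$-invariant \'etale subalgebra and transporting it to $A_2$. With equal local Witt index and equal discriminant in hand, (i) follows by a short argument exploiting that an \'etale subalgebra with a $K_v\times K_v$ factor cannot embed in $M_m(D_v)$ with $D_v$ division, and (ii) is read off directly from the local classification of quadratic and skew-hermitian forms (Theorems~3.6 and~3.7 of \cite{Sch}, Ch.~10). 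Your reduction of (b) to (a) also has a gap: Zariski-density of $\Gamma_1$ does not by itself guarantee an infinite-order element in $\Gamma_1 \cap \cT_1(K)$; one needs $\prod_{v\in S}\cT_1(K_v)$ noncompact, which requires the additional care taken in the proof of Theorem~\ref{T:Ap1}.
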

\begin{proof}
We begin by establishing the following two key properties of the
$K$-groups $\cG_i = \mathrm{SU}(A_i , \tau_i):$

\vskip2mm

\noindent $(\alpha)$ $\mathrm{rk}_{K_v}\,\cG_1 = \mathrm{rk}_{K_v}\,
\cG_2$ for all $v \in V^K;$

\vskip2mm

\noindent $(\beta)$ \parbox[t]{12cm}{$L_1 = L_2,$ where $L_i$ is the
minimal Galois extension of $K$ over which $\cG_i$ becomes an inner
form.}

\vskip2.5mm

\noindent These properties have been proven in \cite{PR2}, Theorems 6.2 and
6.3, if $(b)$ holds, so we will prove them assuming that
$(a)$ holds. (In condition $(b)$ we have assumed that $n \neq 4$
since if $n=4$, the corresponding special unitary groups are
semi-simple but not absolutely simple, which prevents us from using
the results of \cite{PR2}.) To prove $(\alpha)$ we basically repeat
the argument given in the proof of Theorem 6.2 in \cite{PR2}. More
precisely, by symmetry it is enough to show that
\begin{equation}\label{E:Ap250}
\mathrm{rk}_{K_v}\, \cG_1 \leqslant\mathrm{rk}_{K_v}\, \cG_2.
\end{equation}
Let $\cT_1(v)$ be a maximal $K_v$-torus of $\cG_1$ that contains a
maximal $K_v$-split torus, and let $E_1(v)$ be the corresponding
commutative \'etale subalgebra of $A_1 \otimes_K K_v.$ By Proposition \ref{P:E50}, there
exists a $\tau_1$-invariant commutative \'etale subalgebra $E_1$ of $A_1$ satisfying
(\ref{E:I1}) of \S \ref{S:I} such that the corresponding $K$-torus
$\cT_1$ is conjugate to $\cT_1(v)$ by an element of $\cG_1(K_v);$ in
particular, $\mathrm{rk}_{K_v}\: \cT_1 = \mathrm{rk}_{K_v}\: \cT_1(v).$
By our assumption, there exists an embedding $(E_1 , \tau_1 \vert
E_1) \hookrightarrow (A_2 , \tau_2),$ which implies that there is a
$K$-embedding $\cT_1 \hookrightarrow \cG_2,$ and (\ref{E:Ap250})
follows.

\vskip1mm

Next, we observe that the argument given in the proof of Theorem 6.3
in \cite{PR2} shows that $(\beta)$ is a consequence of $(\alpha).$
Indeed, there exists a finite subset $S$ of  $V^K$ such that $\cG_1$
and $\cG_2$ are quasi-split over $K_v$ for any $v \in V^K \setminus S$
(cf.\:\cite{PlR}, Theorem 6.7). Then $(\alpha)$ implies that a place
$v \in V^K \setminus S$ splits in $L_1$ if and only if it splits in
$L_2,$ and then $L_1 = L_2$ by Tchebotarev's Density Theorem.

\vskip2mm

(i): We will now use $(\alpha)$ and $(\beta)$ to prove (i).
For $n$ odd, we have $A_1 \simeq M_n(K) \simeq A_2,$ and there
is nothing to prove. So, we assume that $n$ is even and write $A_i =
M_m(D_i)$ for some quaternion central simple $K$-algebra $D_i,$ where $m = n/2.$ To
show that $D_1 \simeq D_2$ (which will prove our claim) it is enough
to show that $D_1$ and $D_2$ are ramified at exactly the same
places. By
symmetry it suffices to show that for $v \in V^K$ if ${D_1}_v := D_1\otimes_K K_v$ is a
division algebra, then ${D_2}_v := D_2\otimes_K K_v$ is also a division algebra. Assume
the contrary. First, let us show that $\cG_2$ is $K_v$-isotropic. This
is obvious if $n > 4$ and $v \in V^K_f.$ If $v \in V^K_r$, then our
assumption that ${D_1}_v$ is a division algebra implies that $\cG_1$
is $K_v$-isotropic (cf.\:\cite{Sch}, Ch.\:10, Theorem 3.7). But then,
by $(\alpha),$ $\cG_2$  must also be $K_v$-isotropic. It remains to
consider the case $n = 4$ and $v \in V^K_f.$ Here we need to use
$(\beta)$ and the description of $L_i$ in terms of discriminant
(\cite{BoI}, Ch.\:2, Theorem 8.10). The unique
anisotropic quadratic form in four variables over $K_v$ has
determinant (which coincides with its discriminant) in ${K_v^{\times}}^2,$ so
if $\cG_2$ happens to be $K_v$-anisotropic, then $v$ splits in $L_2$.
But then $v$ must split in $L_1$, which means that the
binary skew-hermitian form over ${D_1}_v$ corresponding to $\tau_1$
has determinant (discriminant) in ${K_v^{\times}}^{2}.$
However, it is known that any such form is necessarily isotropic
(\cite{Sch}, Ch.\:10, Theorem 3.6). So, $\cG_1$ is $K_v$-isotropic,
contradicting $(\alpha).$

Now, the assumption that $A_2 \otimes_K K_v = M_n(K_v)$ and $\cG_2$ is
isotropic means that $(A_2 \otimes_K K_v , \tau_2 \otimes
\id_{K_v})$ is isomorphic to $(M_n(K_v) , \sigma_2)$ where
$\sigma_2(x) = Q_2^{-1} x^t Q_2$ with $Q_2 = \mathrm{diag}(R , T)$
and $R = \left(\begin{array}{cc} 0 & 1 \\ 1 & 0 \end{array}
\right).$ Notice that if $\epsilon$ is the nontrivial $K_v$-automorphism of
$K_v \times K_v$,  then the map $(a , b) \mapsto \mathrm{diag}(a , b)$
defines an embedding $(K_v \times K_v , \epsilon) \hookrightarrow
(M_2(K_v) , \rho)$ where $\rho(x) = R^{-1} x^t R.$ Using Proposition
\ref{P:E50} we now see that there exists a $n$-dimensional $\tau_2$-invariant
commutative \'etale subalgebra $E_2$ of  $A_2$ satisfying (\ref{E:I1}) of  \S \ref{S:I}
such that $(E_2 \otimes_K K_v , (\tau_2 \vert E_2) \otimes
\id_{K_v})$ contains $(K_v \times K_v , \epsilon)$ as a direct
factor. By our assumption, $(E_2 , \tau_2 \vert E_2)$ can be
embedded into $(A_1 , \tau_1).$ But then $A_1 \otimes_K K_v$
contains an $n$-dimensional commutative \'etale subalgebra which has $K_v \times
K_v$ as a direct factor which, by Proposition \ref{P:E1},
contradicts the assumption that ${D_1}_v$ is a division algebra.

\vskip1mm

If $n$ is even, we will let $D$ denote the common quaternion central
{\it simple} $K$-algebra involved in the description of $A_1$ and
$A_2$ (thus, $D$ may be $M_2(K)$), and assume (as we may) in the
rest of the proof that $A_1$ and $A_2$ coincide with $A = M_m(D).$

\vskip2mm

(ii): In this paragraph, we treat the case where $n$ is odd.
%We will
%now prove assertion (ii). First, we dispose of the case where $n$ is
%odd. So let us assume in this paragraph that $n$ is odd.
Then $A_1 = A_2 = M_n(K),$ and $\tau_i(x) = Q_i^{-1} x^t Q_i$ with
$Q_i$ symmetric, $i = 1, 2.$ Let $q_i$ be the quadratic form with
matrix $Q_i.$ We need to show that for any $v \in V^K,$ the forms
$q_1$ and $q_2$ are similar over $K_v$ (cf.\:Proposition
\ref{P:G111}).
%As $\cG_1$ and $\cG_2$ have the same Tits index over
%$K_v$ (cf.\:part (i)), they
By $(\alpha),$ the groups $\cG_1$ and $\cG_2$ have the same
$K_v$-rank, and therefore the forms $q_1$ and $q_2$ have the same
Witt index over $K_v.$ For $v \in V^K_r,$ this immediately implies
that $q_1$ is equivalent to $\pm q_2,$ as required. Let now $v \in
V^K_f.$ Replacing one of the forms by a proportional form, we can
assume that $d(q_1) = d(q_2)$ in $K_v^{\times}/{K_v^{\times}}^{2}$
(cf.\:(\ref{E:M78})). We can write $q_i = q_i^h \perp q_i^a$ where
$q_i^h$ is hyperbolic and $q_i^a$ is anisotropic over $K_v.$ Then
$q_1^a$ and $q_2^a$ have the same dimension $s$ (which can only be 1
or 3) and the same determinant. But then $q_1^a$ and $q_2^a$ are
equivalent: for $s= 1,$ this is obvious, and for $s = 3$ it follows
from the fact that, up to equivalence, there is a unique anisotropic
ternary quadratic form of a given determinant. Thus, $q_1$ and $q_2$
are equivalent over $K_v,$ and the required isomorphism in (ii)
follows from Proposition \ref{P:G111}. \vskip1mm

Let now $n$ be even, $m = n/2$ and $A = M_m(D)$, where $D$ is a
quaternion central simple $K$-algebra.
Notice that it follows from $(\beta)$ that the involutions $\tau_1$
and $\tau_2$ have the same discriminant (equivalently, the same
determinant). Let now $v \in V^K$ be such that $D_v = D \otimes_K
K_v$ is a division algebra. Write $\tau_i$ in the form $\tau_i(x) =
Q_i^{-1} x^* Q_i,$ where $(x_{ij})^* = (\overline{x_{ji}})$ and
$\bar{\ }$ is the standard involution of $D_v,$ $Q_i \in M_m(D_v)$
is an invertible skew-hermitian matrix, and let $h_i$ be the
corresponding skew-hermitian form. Then $h_1$ and $h_2$ have the
same discriminant,  and therefore are equivalent over $D_v$: for $v$
nonarchimedean this follows from Theorem 3.6 of \cite{Sch}, Ch.\,10,
and for $v$ real it follows from Theorem 3.7 of loc.\,cit. As above,
this leads to the required isomorphism.

Next, we consider the case where $D_v \simeq M_2(K_v),$ and hence $A
\simeq M_n(K_v).$ Then the involutions $\tau_i \otimes \id_{K_v},$
which for simplicity we will denote by $\tau_i,$ can be
written in the form $\tau_i(x) = Q_i^{-1} x^t Q_i$, where $Q_i \in
M_n(K_v)$ is an invertible symmetric matrix. Let $q_i$ be the
quadratic form with matrix $Q_i.$ As above, we conclude that $q_1$
and $q_2$ have the same Witt index (over $K_v$) and the same
determinant: $d(q_1) = d(q_2),$ or, equivalently, the same
discriminant: $\delta(q_1) = \delta(q_2)$, where $\delta(q) =
(-1)^{n/2}\cdot d(q),$ and to establish our claim we need to show
that $q_1$ and $q_2$ are similar over $K_v.$ If $v \in V^K_r$, then
the mere fact that $q_1$ and $q_2$ have the same Witt index 
implies that $q_1$ is equivalent to $\pm q_2,$ yielding the required
fact. Let now $v \in V^K_f.$ 
First, suppose that the common discriminant $\delta \in
{K_v^{\times}}^{2}.$ Since binary forms whose discriminant is a
square, are isotropic, the common value of the Witt index of $q_1$
and $q_2$ can only be $n/2$ or $(n-4)/2.$ It is well-known that
there is a unique anisotropic quadratic form over $K_v$ in four
variables (viz., the reduced norm form of the unique quaternion
division algebra over $K_v$), so in both cases, $q_1$ and $q_2$ are
equivalent. It remains to consider the case where the common
discriminant $\delta \notin {K_v^{\times}}^{2}.$ Let $q$ be any
$n$-dimensional quadratic form with discriminant $\delta,$ and let
$\lambda \in K_v^{\times}$ be such that the Hilbert symbol $(\delta
, \lambda)_v = -1$ (which exists as $\delta \notin
{K_v^{\times}}^{2}$). Then it follows from (\ref{E:M78}) that the
Hasse invariant $h_v(\lambda q)$ equals  $- h_v(q),$ and therefore
the forms $q$ and $\lambda q$ represent the two equivalence classes
of $n$-dimensional forms of discriminant $\delta.$ This, clearly,
implies that in our situation $q_1$ and $q_2$ are similar, as
required. \vskip2mm

Finally, we note that arguing as in the proof of Corollary \ref{C:M2}, we see that the
subalgebras $E$ used in the above argument can be chosen so that the
corresponding $K$-torus $\cT$ is generic. If $n$ is even, then such an
$E$ is automatically a field extension of $K$ (Proposition
\ref{P:E55}), so effectively our argument only relies on the
assumption that $A_1$ and $A_2$ contain the same
isomorphism classes of maximal fields invariant under the
given involutions. \end{proof}

Using the fact that for $A = M_n(K)$ and any orthogonal involution
$\tau,$ the set $\cI = \cI(A , \tau)$ reduces to a single
isomorphism class (Proposition \ref{P:Ap25}), we obtain the
following interesting consequence of Proposition
\ref{P:Ap2}.
\begin{cor}\label{C:Ap2}
Let $A_i$,  $i = 1, 2,$ be central simple algebras over a number
field $K,$ of dimension $n^2$, where $n \geqslant 3,$ $n \neq 4,$
given with orthogonal involutions $\tau_i$, and let $\cG_i =
\mathrm{SU}(A_i , \tau_i).$ Assume that for some finite $S \subset
V^K,$ for $i = 1,\,2$, any $(\cG_i, K,
S)$-arithmetic subgroup $\Gamma_i$ of $\cG_i (K)$ is Zariski-dense in $\cG_i$, 
and $\Gamma_1$ and $\Gamma_2$ are weakly commensurable. Then, if one of
the algebras is isomorphic to $M_n(K),$ the groups  $\cG_1$ and
$\cG_2$ are $K$-isomorphic, and hence the $S$-arithmetic subgroups
$\Gamma_1$ and $\Gamma_2$ are commensurable.
\end{cor}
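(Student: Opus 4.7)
The plan is to combine Proposition \ref{P:Ap2} (applied under hypothesis $(b)$) with the Hasse principle for similarity of quadratic forms (Proposition \ref{P:Ap25}).

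First, since the hypotheses of Corollary \ref{C:Ap2} match exactly those of condition $(b)$ in Proposition \ref{P:Ap2}, I would invoke that proposition to conclude that $A_1 \simeq A_2$ and, more importantly, that $(A_1 \otimes_K K_v, \tau_1 \otimes \id_{K_v}) \simeq (A_2 \otimes_K K_v, \tau_2 \otimes \id_{K_v})$ for every $v \in V^K.$ Now I use the extra hypothesis that one of the algebras is isomorphic to $M_n(K)$: by Proposition \ref{P:Ap2}(i), the other one is split as well, so after a $K$-isomorphism I may regard $A_1 = A_2 = A := M_n(K),$ with $\tau_1$ and $\tau_2$ two orthogonal involutions on $A$ that become isomorphic at every completion. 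In the notation of \S \ref{S:M}, this says precisely that $\tau_2 \in \cI(A, \tau_1).$

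Next, I would invoke the split case of the analysis in \S \ref{S:M}: as noted just before Proposition \ref{P:Ap25}, the Hasse principle for similarity of quadratic forms (Proposition \ref{P:Ap25}) implies that when $A = M_n(K)$ the set $\cI(A, \tau_1)$ consists of a single isomorphism class. Consequently $(A, \tau_1) \simeq (A, \tau_2)$ as $K$-algebras with involution, which yields an isomorphism $\cG_1 \simeq \cG_2$ of $K$-groups.

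Finally, once we have a $K$-isomorphism between the two semisimple $K$-groups $\cG_1$ and $\cG_2$ (and hence also between their adjoint forms $\overline{\cG}_1$ and $\overline{\cG}_2$), the commensurability of $\Gamma_1$ and $\Gamma_2$ up to an $F$-automorphism of $\overline{G}$ follows by the general principle recalled in \S \ref{S:Ap} (Proposition 2.5 of \cite{PR2}), giving commensurability in the standard sense since we are already working with $K$-arithmetic subgroups inside $\cG_i(K)$. There is no real obstacle here — the corollary is essentially a packaging of Proposition \ref{P:Ap2} with the one-class conclusion in the split case — so the proof amounts to spelling out these two citations in order.
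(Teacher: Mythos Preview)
Your proposal is correct and follows essentially the same route as the paper: apply Proposition~\ref{P:Ap2} under hypothesis~$(b)$ to get $A_1\simeq A_2$ and local isomorphisms at all places, then in the split case invoke Proposition~\ref{P:Ap25} to conclude that $\cI(A,\tau_1)$ is a single isomorphism class, hence $(A,\tau_1)\simeq(A,\tau_2)$ and $\cG_1\simeq\cG_2$ over $K$. The paper's own derivation is just this two-line combination of Propositions~\ref{P:Ap2} and~\ref{P:Ap25}.
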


{\it Proof of Theorem \ref{T:Ap1}.} There exist central simple
algebras with orthogonal involutions $(A_1 , \tau_1)$ and $(A_2 ,
\tau_2)$ over $K,$ of dimension $n^2$, where $n = 4r$ and $r>
2,$ such that $\cG_i = \mathrm{SU}(A_i , \tau_i).$ We need to show
that the existence of Zariski-dense weakly commensurable
$S$-arithmetic subgroups $\Gamma_1$ of  $\cG_1(K)$ and $\Gamma_2$ of
$\cG_2(K)$ implies that $(A_1 , \tau_1) \simeq (A_2 , \tau_2).$
According to Proposition~\ref{P:Ap2}(i), $A_1$ and $A_2$ involve the
same division algebra $D$ in their description. If $D = K$ (i.e.,
$A_1 = A_2 = M_n(K)$), then the assertion of the theorem follows from
Corollary \ref{C:Ap2}. So, we can assume in the rest of the proof
that $D$ is a quaternion division algebra over $K$, and $A_1$ and $A_2$
coincide with $A = M_m(D)$, where $m = 2r.$ Let $\cI = \cI(A ,
\tau_1).$ Using Corollary \ref{C:M2}, one can find for each $\eta \in \cI,$
an $n$-dimensional $\eta$-invariant commutative
\'etale subalgebra $E_{\eta}$ of  $A$ satisfying (\ref{E:I1}) of \S 1
so that if  $\cT_{\eta}$ is the corresponding maximal $K$-torus of
$\cG_{\eta} := \mathrm{SU}(A , \eta)$, and  $V$ is the finite set of places of $K$
described just before the
statement of Theorem \ref{T:M10}, then the following conditions
hold:

\vskip2mm

\noindent (a) $E_{\eta}$ is as  in Theorem \ref{T:M10}(i);

\vskip2mm

\noindent (b) $\cT_{\eta}$ is generic (in the sense of \S \ref{S:E});

\vskip2mm

\noindent (c) $\displaystyle {\cT_{\eta}}_S := \prod_{v \in S}
\cT_{\eta}(K_v)$ is noncompact.

\vskip2mm

\noindent Indeed, first assume that there exists $v_0 \in S \cap V$.
Then applying Corollary \ref{C:M2} with
$\mathscr{S} = \emptyset$ we find a subalgebra $E_{\eta}$ such that
(a) and (b) hold. To see that (c) holds automatically in this case,
one needs to observe that since $(E_{\eta} , \eta \vert E_{\eta}) =
(F_{\eta}[x]/(x^2 - d) , \theta)$ (notations as in Theorem
\ref{T:M10}) and $d \in {(F_{\eta} \otimes_K
K_{v_0})^{\times}}^{2},$ there is a $K_{v_0}$-isomorphism $\cT_{\eta}
\simeq \mathrm{R}_{F_{\eta} \otimes_K K_{v_0}/K_{v_0}}(\mathrm{GL}_1),$
implying that $\cT_{\eta}(K_{v_0})$ is noncompact. It remains to
consider the case where $S \cap V = \emptyset.$ Since $\cG_1 (K)$ contains a
Zariski-dense $S$-arithmetic group, the group ${\cG_1}_S = \prod_{v\in S}\cG_1(K_v)$ is
noncompact, i.e., there exists $v_0 \in S$ such that $\cG_1(K_{v_0})$
is noncompact. But the groups $\cG_1$ and $\cG_{\eta}$ are isomorphic
over $K_{v_0},$ so $\cG_{\eta}(K_{v_0})$ is noncompact as
well.\footnote{This, in particular, shows that $S$-arithmetic
subgroups in $\cG_{\eta}$ are Zariski-dense, for any $\eta \in
\cI.$} Then $\cG_{\eta}$ contains a maximal $K_{v_0}$-torus $\cT_0$ such
that $\cT_0(K_{v_0})$ is noncompact, and we let $E(v_0)$ denote the
corresponding commutative \'etale subalgebra of $A \otimes_K K_{v_0}.$ Applying
Corollary \ref{C:M2} to $\mathscr{S} = \{ v_0 \},$ we can find
$E_{\eta}$ so that both (a) and (b) hold, and in addition
$$(E_{\eta} \otimes_K K_{v_0} , (\eta \otimes \id_{K_{v_0}}) \vert E_{\eta} \otimes_K
K_{v_0}) \simeq (E(v_0) , (\eta \otimes \id_{K_{v_0}}) \vert
E(v_0)).
$$
Then $\cT_{\eta} \simeq \cT_0$ over $K_{v_0},$ implying that
$\cT_{\eta}(K_{v_0})$ is noncompact and yielding (c).

\vskip1mm

Now, let $\cT_1 := \cT_{\tau_1}$ in the above notation. Then $\cT_1$ is
$K$-anisotropic, hence the quotient ${{\cT_1}}_S/\cT_1(\cO(S))$
is compact (\cite{PlR}, Theorem 5.7), where
${{\cT_1}}_S =\prod_{v\in S} \cT_1(K_v)$,
and $\cO(S)$ is the ring of $S$-integers in $K$.  Since, by (c), ${\cT_1}_S$
is noncompact, the group
$\cT_1(\cO(S))$ is infinite, and therefore there exists an element
$\gamma_1 \in \cT_1(K) \cap \Gamma_1$ of infinite order. By our
assumption, $\gamma_1$ is weakly commensurable to some semi-simple
$\gamma_2 \in \Gamma_2.$ Let $\cT_2$ be a maximal $K$-torus of $\cG_2$
containing $\gamma_2,$ and let $E_1$ and $E_2$ be the
$n$-dimensional commutative \'etale subalgebras of $A$ corresponding to $\cT_1$ and $\cT_2$
respectively. By Theorem 6.3 of \cite{PR2}, we have $L_1 = L_2,$
where $L_i$ is the minimal Galois extension of $K$ over which $\cG_i$
becomes an inner form. So, condition (b) above permits an
application of Proposition \ref{P:Ap1}, from which we get $(E_1 ,
\tau_1 \vert E_1) \simeq (E_2 , \tau_2 \vert E_2).$ In particular,
there is an embedding $(E_1 , \tau_1 \vert E_1) \hookrightarrow (A ,
\tau_2).$ Due to condition (a), we can apply Theorem
\ref{T:M10}(ii), to obtain $(A , \tau_1) \simeq (A , \tau_2)$.

\vskip3mm

\noindent {\bf Remark 9.6.}  Theorem \ref{T:Ap1} implies that if $K$
is a number field and $G$ is a connected absolutely simple $K$-group
of type $D_{2r}$ with $r> 2,$ then any $K$-form $G'$ of $G$
having the same set of isomorphism classes of maximal $K$-tori as
$G,$ is necessarily $K$-isomorphic to $G;$ see Theorem 7.5 in \cite{PR2}.
\vskip4mm

\noindent{\bf 9.7.} We take this opportunity to point out the
following corrections in \cite{PR2}. \vskip2mm

\noindent(i) In assertion (2) of Theorem 4.2, replace the condition ``{\it if}  $L_1=L_2,$"  by ``{\it if} $L_1 = L_2 =:L$, {\it and} $\theta_{T_1}(\rm{Gal} (L_{T_1}/L))\supset W(G_1,T_1),$".  (ii) In the proof of Proposition 5.6, after the proof of Lemma 5.7, replace  ``$G$", occurring without a subscript, with ``$G_2$"  everywhere.  (iii) In the fourth line of the proof of Theorem 4 (in \S 6), replace ``$G$" by ``$G_1$", and in the next line, replace ``obtained from $\mathscr{G}$" by ``obtained from $\overline{\mathscr{G}}$".

\vskip5mm

\centerline{\sc Appendix}

\vskip5mm

The goal of this appendix is to describe a Galois-cohomological approach to
the problem of embedding of a commutative \'etale algebra with an involutive
automorphism into a simple algebra with an involution, and also to
interpret the latter as a problem of finding rational points on
certain homogeneous spaces. Even though these methods do provide
some additional insight, it appears that neither of them is likely
to yield any simplification in the proofs of our embedding theorems,
nor can they be used to give an alternative proof of Theorem
\ref{T:M10}, which is one of the central results of the current
paper. For this reason, we chose to present the results in the main
body of the paper in the set-up of simple algebras with involution and
their subalgebras, and confine a discussion of relevant
Galois-cohomological techniques to this appendix.

As in the main body of the paper, we let $(A , \tau)$ denote a
central simple $L$-algebra, with $\dim_L A = n^2,$ endowed with an
involution $\tau.$ Furthermore, we let $(E , \sigma)$ be an
$n$-dimensional commutative  \'etale $L$-algebra with an involutive
automorphism $\sigma$ that leaves $L$ invariant and satisfies
$\sigma \vert L = \tau \vert L$ and also condition (\ref{E:I1}) \S
\ref{S:I}. Set $K = L^{\tau}.$ To streamline the exposition, we will
leave out the case where $\tau$ is of the first kind and $n$ is odd
as otherwise we find ourselves in the split case which is
well-understood in terms of the classical results of the theory of
quadratic forms, cf.\:\S \ref{S:OS}. So we will assume that either
$\tau$ is of the second kind, or $\tau$ is of the first kind (hence
$K = L$), $n$ is even and $\dim_K F = n/2$ where $F = E^{\sigma}.$
Then it follows from Propositions \ref{P:E10} and \ref{P:E12} that
$E$ is a 2-dimensional free $F$-module, and hence the corresponding
unitary group $\mathrm{U}(E , \sigma)$ is a torus which we will
denote by $T.$ Clearly,
\begin{equation}\tag{A1}
T \simeq \mathrm{R}_{F/K}(\mathrm{R}_{E/F}^{(1)}(\mathrm{GL}_1))
\end{equation}
in the standard notations. Furthermore, we let $H$ denoted the
unitary group $\mathrm{U}(A , \tau)$ regarded as an algebraic
$K$-group.

Next, we assume that there is an embedding $\varepsilon \colon E
\hookrightarrow A$ which may or may not respect involutions. In the
sequel, we will use the same notations $\varepsilon,$ $\tau$ for the
natural extensions of these maps to $E \otimes_K K_{sep},$ $A
\otimes_K K_{sep}$ etc. According to Proposition \ref{P:G1}, there
exists a $\tau$-symmetric $g \in A^{\times}$ such that
\begin{equation}\tag{A2}
\varepsilon(\sigma(x)) = g^{-1} \tau(\varepsilon(x)) g \ \ \text{for
all} \ \ x \in E.
\end{equation}
Pick $s \in (A \otimes_K K_{sep})^{\times}$ so that
\begin{equation}\tag{A3}
g = \tau(s)s.
\end{equation}
In the sequel, we will use the standard notation and conventions
from Galois cohomology of algebraic groups (cf., for example,
\cite{PlR}, Ch.\:VI, or \cite{SerreG}, Ch.\:III); in particular, for
an algebraic $K$-group $G$ we let $Z^1(K , G)$ denote the set of
1-cocycles on $\Ga(K_{sep}/K)$ with values in $G(K_{sep}),$ and let
$H^1(K , G)$ denote the corresponding cohomology set.

\vskip3mm

\noindent {\bf Proposition A.} (i) {\it Given $\xi = \{
\xi_{\theta} \} \in Z^1(K , T),$ set $\zeta_{\theta} =
s\varepsilon(\xi_{\theta}) \theta(s)^{-1}.$ Then $\zeta = \{
\zeta_{\theta} \} \in Z^1(K , H).$ Furthermore, the correspondence
$\xi \mapsto \zeta$ yields a well-defined map
$$
\varphi \colon H^1(K , T) \longrightarrow H^1(K , H).
$$}

\noindent {(ii)} \parbox[t]{12cm}{\it The equation
$g\varepsilon(b) = \tau(h)h$ has a solution $(b , h) \in F^{\times}
\times A^{\times}$ (which is equivalent to the existence of an
embedding $(E , \sigma) \hookrightarrow (A , \tau)$ as algebras with
involutions- cf.\:Theorem \ref{T:G1}) if and only if $\mathrm{Im}\:
\varphi$ contains the trivial element of $H^1(K , H).$}

\vskip3mm

\begin{proof}
{ (i):} First, we observe that
\begin{equation}\tag{A4}
s\varepsilon(T)s^{-1} \subset H.
\end{equation}
Indeed, for any $x \in T(K),$ using (A2) and (A3), we obtain
$$
\tau(s\varepsilon(x)s^{-1})(s\varepsilon(x)s^{-1}) =
\tau(s)^{-1}\tau(\varepsilon(x))g\varepsilon(x)s^{-1} =
\tau(s)^{-1}g\varepsilon(\sigma(x)x)s^{-1}
$$
$$\ \ \ \ \ \ \ \ \ \ \ \ \ \ \ \ \ \ \ \ \ \ \ \ \ \
= \tau(s)^{-1}gs^{-1} = 1.
$$
It follows that for any $\theta \in \Ga(K_{sep}/K),$ we have
$$
\zeta_{\theta} = (s\varepsilon(\xi_{\theta})s^{-1})(s\theta(s)^{-1})
\in H(K_{sep})
$$
as
$$
g = \tau(s)s = \theta(g) = \theta(\tau(s))\theta(s) =
\tau(\theta(s))\theta(s),
$$
hence $s\theta(s)^{-1} \in H(K_{sep}).$ Furthermore, for any
$\theta_1, \theta_2 \in \Ga(K_{sep}/K),$ we have
$$
\zeta_{\theta_1}\theta_1(\zeta_{\theta_2}) =
s\varepsilon(\xi_{\theta_1\theta_2})(\theta_1\theta_2)(s)^{-1} =
\zeta_{\theta_1\theta_2},
$$
proving that $\zeta = \{ \zeta_{\theta} \} \in Z^1(K , H).$ Finally,
we show that the correspondence $\xi \mapsto \zeta$ takes cohomologus
cocycles into cohomologus cocycles. Indeed, for any $t \in
T(K_{sep})$ we have
$$
s\varepsilon(t\xi_{\theta}\theta(t)^{-1})\theta(s)^{-1} =
(s\varepsilon(t)s^{-1})\zeta_{\theta}\theta(s\varepsilon(t)s^{-1})^{-1},
$$
which defines a cocycle cohomologus to $\zeta_{\theta},$ in view of
(A4). So, the correspondence $\xi \mapsto \zeta$ gives rise to a
well-defined map $\varphi \colon H^1(K , T) \longrightarrow H^1(K ,
H).$

\vskip2mm

{(ii):} First, recall that if $a \in A^{\times}$ is
$\tau$-symmetric and $a = \tau(x)x$ with $x \in (A \otimes_K
K_{sep})^{\times}$, then $\zeta = \{ \zeta_{\theta} \}$, where
$\zeta_{\theta} = x\theta(x)^{-1},$ is a cocycle in $Z(K , H),$
which is cohomologus to the trivial cocycle if and only if the equation $a = \tau(x)x$ has a
solution in $A^{\times}.$ Next, it follows from (A1) that
\begin{equation}\tag{A5}
H^1(K , T) \simeq F^{\times}/N_{E/F}(E^{\times}),
\end{equation}
and the inverse of this isomorphism can be described as follows.
Given $b \in F^{\times},$ pick $c \in (E \otimes_K
K_{sep})^{\times}$ so that $b = \sigma(c)c$ ($=N_{E/F}(c)$), and for
$\theta \in \Ga(K_{sep}/K)$ set $\xi_{\theta} = c\theta(c)^{-1}.$
Then $\xi = \{ \xi_{\theta} \} \in Z^1(K , T),$ and the
correspondence
$$
bN_{E/F}(E^{\times}) \ \mapsto \ (\text{class of}\ \xi)
$$
gives the inverse of the isomorphism (A5).

Now, suppose that the equation $g\varepsilon(b) = \tau(h)h$ has a
solution $(b , h) \in F^{\times} \times A^{\times}.$ We then choose
$c \in (E \otimes_K K_{sep})^{\times}$ and construct $\xi \in Z^1(K
, T)$ as in the previous paragraph, for that $b.$ Then
with $s$ as in (A3), we have
\begin{equation}\tag{A6}
g\varepsilon(b) = g\varepsilon(\sigma(c)c) =
\tau(\varepsilon(c))g\varepsilon(c) =
\tau(s\varepsilon(c))(s\varepsilon(c)).
\end{equation}
So, $x := s\varepsilon(c)$ is a solution to $g\varepsilon(b) =
\tau(x)x,$ which also has the solution $h \in A^{\times}.$ By the
remark above, this means that the cocycle $\zeta \in Z^1(K , H)$, corresponding to $x$,
given by
\begin{equation}\tag{A7}
\zeta_{\theta} = x\theta(x)^{-1} =
s\varepsilon(c\theta(c)^{-1})\theta(s)^{-1},
\end{equation}
lies in the trivial class in $H^1(K , H).$ On the other hand,
$\varphi(\xi) = \zeta,$ and therefore $\mathrm{Im}\: \varphi$
contains the trivial element of $H^1(K , H).$ Conversely, suppose
$\xi \in Z^1(K , T)$ is such that $\varphi(\xi)$ represents the trivial element of
$H^1(K , H)$. Using (A5) and subsequent remarks, we can write $\xi = \{
\xi_{\theta} \}$, where
$$
\xi_{\theta} = c\theta(c)^{-1} \ \text{for} \ \text{some} \ c \in (E
\otimes_K K_{sep})^{\times} \ \text{such that} \ b:= \sigma(c)c \in
F^{\times}.
$$
Then (A6) shows that $x = s\varepsilon(c)$ satisfies
$g\varepsilon(b) = \tau(x)x,$ and (A7) combined with the definition
of $\varphi$ implies that the class in $H^1(K , H)$ corresponding
to $x,$ coincides with $\varphi(\xi),$ hence is trivial. So, the
equation $g\varepsilon(b) = \tau(h)h$ has a solution $h \in
A^{\times},$ as required.
\end{proof}

\vskip2mm

We can now reformulate the question about the local-global principle
for the existence of an embedding $(E , \sigma) \hookrightarrow (A ,
\tau)$ as algebras with involutions as follows. For $v \in V^K,$
define the corresponding local map $\varphi_v \colon H^1(K_v , T)
\longrightarrow H^1(K_v , H)$ just as we defined $\varphi$ in Proposition A1(i). {\it
Does the fact that $\mathrm{Im}\: \varphi_v$ contains the trivial
element of $H^1(K_v , H)$ for all $v \in V^K$ imply that
$\mathrm{Im}\: \varphi$ contains the trivial element of $H^1(K ,
H)$?} To analyze this question, we consider the following diagram
\begin{equation}\tag{A8}
\begin{array}{ccc}
H^1(K , T) & \stackrel{\varphi}{\longrightarrow} & H^1(K , H)\\
\alpha \downarrow &  & \downarrow \beta \\
\displaystyle \prod_{v \in V^K} H^1(K_v , T) &
\stackrel{\Phi}{\longrightarrow} &\displaystyle \prod_{v \in V^K}
H^1(K_v , H),
\end{array}
\end{equation}
in which $\Phi = \prod \varphi_v$ and $\alpha,$ $\beta$ are induced
by restrictions. Clearly, the above question is much more tractable
if $\beta$ is injective, i.e., $H$ satisfies the Hasse principle for
Galois cohomology. The Hasse principle may fail for orthogonal
involutions in the non-split case - see below, but it is valid in
all other cases at hand. We will now use this to explain why the
proof of Theorem \ref{T:Sym1}, which yields the unconditional
local-global principle for embeddings if $\tau$ is symplectic, was
so easy. In this case, $H$ is connected and simply connected (of
type $C_{\ell},$ for $\ell = n/2$), so $H^1(K_v , H) = 1$ for all $v
\in V^K_f$ (cf.\:\cite{PlR}, Theorem 6.4). So, instead of (A8), we
can work with the following:
\begin{equation}\tag{A9}
\begin{array}{ccc}
H^1(K , T) & \stackrel{\varphi}{\longrightarrow} & H^1(K , H)\\
\alpha \downarrow &  & \downarrow \beta \\
\displaystyle \prod_{v \in V^K_{\infty}} H^1(K_v , T) &
\stackrel{\Phi}{\longrightarrow} &\displaystyle \prod_{v \in
V^K_{\infty}} H^1(K_v , H).
\end{array}
\end{equation}
It is known that $\alpha$ is surjective (\cite{PlR}, Proposition 6.17), and $\beta$ is injective (in fact,
bijective) (\cite{PlR}, Theorem 6.6). So, a simple diagram chase
shows that if $\mathrm{Im}\: \varphi_v$ contains the trivial class
in $H^1(K_v , H)$ for all $v \in V^K_{\infty}$, then $\mathrm{Im}\:
\varphi$ contains the trivial class in $H^1(K , H),$ as required.
(Notice that this is not an alternative proof of Theorem
\ref{T:Sym1}, but rather a cohomological interpretation of the
argument given in \S \ref{S:Sym}.)

Next, we consider the case where $\tau$ is of the second kind. Then
$H$ is a connected reductive group, whose commutator subgroup $G =
\mathrm{SU}(A , \tau)$ is simply connected. So, $H^1(K_v , G) = 1$
for all $v \in V^K_f,$ however $H^1(K_v , H) \neq~1$ for $v \in
V^K_f$ that do not split in $L,$ and therefore it is not enough to work with (A9) in this case.
To study cohomology of $H$ we consider the
exact sequence
\begin{equation}\tag{A10}
1 \to G \longrightarrow H \stackrel{\det}{\longrightarrow} S \to 1,
\end{equation}
where $S = \mathrm{R}_{L/K}^{(1)}(\mathrm{GL}_1)$, and $\det$ is the
homomorphism of reduced norm, and the corresponding sequence of
cohomology
$$
H^1(K , G) \stackrel{\gamma}{\longrightarrow} H^1(K , H)
\stackrel{\delta}{\longrightarrow} H^1(K , S).
$$
We have an isomorphism $H^1(K , S) \simeq
K^{\times}/N_{L/K}(L^{\times})$ similar to (A5), and it is easy to
compute that in terms of these isomorphisms the composite map
$\delta \circ \varphi$ can be described as follows
$$
H^1(K , T) \ni bN_{E/F}(E^{\times}) \ \stackrel{\delta \circ
\varphi}{\longrightarrow} \ (\mathrm{Nrd}_{A/L}(g) \cdot N_{F/K}(b))
N_{L/K}(L^{\times}) \in H^1(K , S).
$$
The compositions $\delta_v \circ \varphi_v,$ where $\delta_v
\colon H^1(K_v , H) \to H^1(K_v , S)$ is obtained from (A10) over
$K_v,$ have a similar description. For every $v \in V^K,$ there
exists $b_v \in (F \otimes_K K_v)^{\times}$ such that for the
corresponding cocycle $\xi_v \in H^1(K_v , T),$ the element
$\varphi_v(\xi_v) \in H^1(K_v , H)$ is trivial. Applying $\delta_v$
and using the above description, we obtain that
$$
\mathrm{Nrd}_{A/L}(g) \cdot N_{F \otimes_K K_v/K_v}(b_v) \in N_{L
\otimes_K K_v/K_v}((L \otimes_K K_v)^{\times}).
$$
Now, assuming that $E/L$ is a field extension, which enables us to
use the multinorm principle (Proposition \ref{P:U1}) and the
subsequent argument in \S \ref{S:U}, we conclude that there exists
$b \in F^{\times}$ such that
\begin{equation}\tag{A11}
\mathrm{Nrd}_{A/L}(g) \cdot N_{F/K}(b) \in N_{L/K}(L^{\times})
\end{equation}
and
\begin{equation}\tag{A12}
b \in b_v N_{E \otimes_K K_v/F \otimes_K K_v}((E \otimes_K
K_v)^{\times}) \ \ \text{for all} \ \ v \in V^K_{\infty}.
\end{equation}
We claim that if $\xi \in H^1(K , T)$ is the cocycle corresponding
to $b$ then $\zeta := \varphi(\xi)$ is trivial. Indeed, (A11)
implies that $\delta(\zeta) = 1,$ and therefore $\zeta \in
\gamma(H^1(K , G)).$ But for any $v \in V^K_f$ we have  $H^1(K_v ,
G) = 1,$ which yields that the image of $\zeta$ in $H^1(K_v , G)$ is
trivial. On the other hand, due to (A12), for any $v \in
V^K_{\infty},$ the image of $\zeta$ in $H^1(K_v , H)$ coincides with
that $\varphi_v(\xi_v),$ hence is also trivial. Thus, $\beta(\zeta)
= 1,$ so the injectivity of $\beta$ (which is equivalent to
Landherr's theorem, cf.\:\cite{PlR}, \S 6.7, implies that $\zeta =
1,$ as required. (Again, this argument is simply the cohomological
version of the proof of Theorem \ref{T:U1}.)

For an orthogonal involution $\tau,$ the group $H$ is no longer
connected, and more importantly, may fail to satisfy the Hasse
principle for Galois cohomology, i.e., $\beta$ need not be injective,
in the nonsplit case (cf.\:\cite{K}, \S 5.11, or \cite{PlR}, \S 6.6).
This is a serious obstacle to obtaining a purely cohomological proof of
Theorem \ref{T:O-101}. To overcome this obstacle, we were forced to
introduce some new techniques in \S \ref{S:O} and study the classes $[C(A , \nu , \phi)]$.

\vskip2mm

Finally, one can view Theorem \ref{T:G1} as the assertion that the
existence of an embedding $(E , \sigma) \hookrightarrow (A , \tau)$
is equivalent to the existence of a $K$-rational point on the
variety
$$
Y := \{ (b , h) \ \in \mathrm{R}_{F/K}(\mathrm{GL}_1) \times
\mathrm{GL}_{1 , A} \ \vert \ g\varepsilon(b) = \tau(h)h \}.
$$
So, we would like to point out that $Y$ is in fact a homogeneous
space of the group ${\cG} := H \times
\mathrm{R}_{E/K}(\mathrm{GL}_1)$ under the following action
$$
(x , z) \cdot (b , h) = (\sigma(z) b z, \: xhz).
$$
Furthermore, in our previous notations, $(1 , s) \in Y,$ and the
stabilizer of this point is the torus $\{ (st^{-1}s^{-1} , t) \:
\vert \: t \in T \}.$ Thus, the question about the local-global
principle for the existence of an embedding $(E , \sigma)
\hookrightarrow (A , \tau)$ fits into the general framework of the
Hasse principle for homogeneous spaces of linear algebraic groups.
Among early results in this area one can mention the validity of the
Hasse principle for projective homogeneous  varieties (Harder
\cite{Ha0}) and for symmetric spaces of absolutely simple simply
connected groups (Rapinchuk \cite{R}). Later, Borovoi in a series of
papers developed cohomological methods for analyzing the Hasse
principle for homogeneous spaces with connected stabilizers, of an
arbitrary connected group whose maximal semi-simple subgroups are
simply connected. In particular, in \cite{Bor1}, he proved that the
Brauer-Manin obstruction is the only obstruction to the Hasse
principle in this situation, and in \cite{Bor2}, computed this
obstruction in terms of Galois cohomology (some methods for
computing the Brauer group of a compactification of a given
homogeneous space are given in \cite{C-TK}). It would probably be
interesting to use these techniques to show that the Brauer-Manin
obstruction for $Y$ is trivial if $\tau$ is a symplectic involution,
and to compute it precisely when $\tau$ is of the second kind
(apparently, it is related to the Tate-Shafarevich group of the
multinorm torus associated with the pair of \`etale algebras $(F ,
L)$). However, because of the  concrete description of $Y$, one can
give a direct Galois-cohomological analysis of the existence of a
$K$-rational point on it which results in the condition described in
Proposition A(ii). We feel that the general results on homogeneous
spaces are unlikely to lead to an alternative proof of our results.
Moreover, for an orthogonal involution $\tau,$ the group ${\cG}$ is
disconnected, which makes Borovoi's results inapplicable, but this
can serve as  a motivation to extend these results to some class of
disconnected groups which includes~${\cG}.$

\vskip7mm

\bibliographystyle{amsplain}

\end{document}